\newtheorem{teo}{Theorem}[section]
\newtheorem{lem}[teo]{Lemma}
\newtheorem{cor}[teo]{Corollary}
\newtheorem{prop}[teo]{Proposition}
\newcommand\restrict[1]{\raisebox{-.5ex}{$|$}_{#1}}
\theoremstyle{remark}
\newtheorem{oss}[teo]{Remark}
\theoremstyle{definition}
\newtheorem{defi}[teo]{Definition}
\DeclareMathOperator*{\diver}{div}
\newcommand{\average}{{\mathchoice {\kern1ex\vcenter{\hrule height.4pt
width 6pt
depth0pt} \kern-9.7pt} {\kern1ex\vcenter{\hrule height.4pt width 4.3pt
depth0pt}
\kern-7pt} {} {} }}
\def\vint_#1{\mathchoice%
          {\mathop{\kern 0.2em\vrule width 0.6em height 0.69678ex depth -0.58065ex
                  \kern -0.8em \intop}\nolimits_{\kern -0.4em#1}}%
          {\mathop{\kern 0.1em\vrule width 0.5em height 0.69678ex depth -0.60387ex
                  \kern -0.6em \intop}\nolimits_{#1}}%
          {\mathop{\kern 0.1em\vrule width 0.5em height 0.69678ex
              depth -0.60387ex
                  \kern -0.6em \intop}\nolimits_{#1}}%
          {\mathop{\kern 0.1em\vrule width 0.5em height 0.69678ex depth -0.60387ex
                  \kern -0.6em \intop}\nolimits_{#1}}}
\def\vintslides_#1{\mathchoice%
          {\mathop{\kern 0.1em\vrule width 0.5em height 0.697ex depth -0.581ex
                  \kern -0.6em \intop}\nolimits_{\kern -0.4em#1}}%
          {\mathop{\kern 0.1em\vrule width 0.3em height 0.697ex depth -0.604ex
                  \kern -0.4em \intop}\nolimits_{#1}}%
          {\mathop{\kern 0.1em\vrule width 0.3em height 0.697ex depth -0.604ex
                  \kern -0.4em \intop}\nolimits_{#1}}%
          {\mathop{\kern 0.1em\vrule width 0.3em height 0.697ex depth -0.604ex
                  \kern -0.4em \intop}\nolimits_{#1}}}
\newcommand{\kintint}[2]{\mathchoice%
          {\mathop{\kern 0.2em\vrule width 0.6em height 0.69678ex depth -0.58065ex
                  \kern -0.8em \intop}\nolimits_{\kern -0.45em#1}^{#2}}%
          {\mathop{\kern 0.1em\vrule width 0.5em height 0.69678ex depth -0.60387ex
                  \kern -0.6em \intop}\nolimits_{#1}^{#2}}%
          {\mathop{\kern 0.1em\vrule width 0.5em height 0.69678ex depth -0.60387ex
                  \kern -0.6em \intop}\nolimits_{#1}^{#2}}%
          {\mathop{\kern 0.1em\vrule width 0.5em height 0.69678ex depth -0.60387ex
                  \kern -0.6em \intop}\nolimits_{#1}^{#2}}}
\def\cleardoublepage{\clearpage\if@twoside \ifodd\c@page\else
\hbox{}
\thispagestyle{empty}
\newpage
\if@twocolumn\hbox{}\newpage\fi\fi\fi}
\title{Some remarks about the existence of an Alt-Caffarelli-Friedman monotonicity formula in the Heisenberg group}
\author{Fausto Ferrari}
\address{Fausto Ferrari: Dipartimento di Matematica\\ Universit\`a di Bologna\\ Piazza di Porta S.Donato 5\\ 40126, Bologna-Italy}
\email{fausto.ferrari@unibo.it }
\author{Nicol\`o Forcillo}
\address{Nicol\`o Forcillo: Dipartimento di Matematica\\ Universit\`a di Bologna\\ Piazza di Porta S.Donato 5\\ 40126, Bologna-Italy}
\email{nicolo.forcillo2@unibo.it }
\thanks{}
\date{\today}
\begin{document}

\begin{abstract}
The aim of this paper is to study the existence of an Alt-Caffarelli-Friedman monotonicity type formula in the Heisenberg group.
\end{abstract}
\maketitle

\tableofcontents

\section{Introduction}
In this paper, starting from the structure of the functions that satisfy the following problem
$$
\left\{\begin{array}{ll}
\Delta_{\mathbb{H}^1}u=0& \mbox{in }\mathcal{P}_\Gamma\cap B_R^{\mathbb{H}^1}(0),\\
u=0&\mbox{on }\Gamma,
\end{array}
\right.
$$ 
where
$$
\mathcal{P}_\Gamma\coloneqq \{(x,y,t)\in \mathbb{H}^1:\quad (x,y,t)=\delta_{\lambda} (\xi,\eta,\tau),\:\:\lambda>0,\:\: (\xi,\eta,\tau)\in \Gamma \subset \partial B_1^{{\mathbb{H}^1}}(0) \},
$$
we examine the existence of an Alt-Caffarelli-Friedman formula in the Heisenberg group.

Here
$\delta_{\lambda} (\xi,\eta,\tau)\coloneqq (\lambda \xi,\lambda \eta,\lambda^2\tau),$ $\lambda>0,$ is the dilation semigroup in the smallest Heisenberg group $\mathbb{H}^1,$ and
  $$B_R^{\mathbb{H}^1}(0)\coloneqq \{(x,y,t)\in \mathbb{H}^1:\quad (x^2+y^2)^2+t^2<R^4 \}$$ is the Koranyi ball  centered at $(0,0,0)$ of radius $R.$   
  
For instance, if $\mathcal{C}=\{(x,y,t)\in \mathbb{H}^1:\quad x^2+y^2<Mt\}$
 is a paraboloid, being $M>0$  constant,  then
$\mathcal{C}= \mathcal{P}_\Gamma$ for $\Gamma=\{(x,y,t)\in \partial B_1^{{\mathbb{H}^1}}(0) :\quad x^2+y^2<Mt\}.$ This type of problem has been faced (the authors having in mind different applications respect to our ones) in \cite{Jerison} and \cite{Biri}. However in those papers, the authors deal with Heisenberg group $\mathbb{H}^{n},$
a more abstract approach, with respect to our computation developed in $\mathbb{H}^1$ only, has been applied. For this reason we think that our approach permits to understanding better some details about our problems and represents by itself a useful tools note for further applications, see Section \ref{laplace_beltrami}.

In fact, as a consequence of our analysis, we obtain necessary and sufficient conditions concerning the existence of an Alt-Caffarelli-Friedman monotonicity formula in $\mathbb{H}^1.$

The Alt-Caffarelli-Friedman  monotonicity formula, was introduced in \cite{ACF} as one of the fundamental tools for studying the main properties of the solutions of some two-phase free boundary problems.

We recall briefly the result in the Euclidean setting. Roughly saying, see \cite{ACF}, there exists $r_0>0$ such that for every given non-negative $u_1,u_2\in C(B_1(0))\cap H^1(B_1(0)),$ if  $u_1u_2=0$ in $B_1(0),$ $u_i(0)=0$ and $\Delta u_i\geq 0,$ $i=1,2,$ where  $B_1(0)$ is the Euclidean ball centered at $0$ of radius $1$ in $\mathbb{R}^{n},$  the function
\begin{equation}\label{expression-of-Phi}
\Phi(r)\coloneqq r^{-4}\int_{B_r(0)}\frac{|\nabla u_1(x)|^2}{|x|^{n-2}}dx\int_{B_r(0)}\frac{|\nabla u_2(x)|^2}{|x|^{n-2}}dx
\end{equation}
is well defined, bounded and monotone increasing in $[0,r_0),$ see also \cite{CS}.

 The original motivation for proving the previous result, as we have just recalled, was associated with the global regularity of the solutions of the following two-phase free boundary problem:
\begin{equation}\label{two_phase_classical}
\left\{\begin{array}{ll}
\Delta u=0& \mbox{in }\Omega^+(u)\coloneqq  \{x\in \Omega:\quad u(x)>0\},\\
\Delta u=0& \mbox{in }\Omega^-(u)\coloneqq \mbox{Int}(\{x\in \Omega:\quad u(x)\leq 0\}),\\
|\nabla u^+|^2-|\nabla u^-|^2=1&\mbox{on }\mathcal{F}(u)\coloneqq \partial \Omega^+(u)\cap \Omega,
\end{array}
\right.
\end{equation}
starting from its variational formulation.
%

In particular, the monotonicity formula was applied for obtaining the global Lipschitz continuous regularity of the solutions of \eqref{two_phase_classical}.

After \cite{ACF}, many other important papers on this topic appeared.  For instance, in \cite{C3} it was proved that monotonicity formula holds for linear uniformly elliptic operators in divergence form with H\"older continuous coefficients, in \cite{CJK} a formula for non-homogeneous free boundary problems was discovered,  in \cite{TZ} the Riemannian case was treated, while in \cite{MP} it was faced the non-divergence form case. Moreover, some very partial results have been obtained also in the non-linear case in dimension $n=2$: see \cite{DiKar} for the $p-$Laplacian case.

Further applications to two-phase free boundary problems may be found in \cite{C1}, dealing with the elliptic homogeneous case, in \cite{ACS1} and \cite{FS_p} in the parabolic homogeneous setting, as well as in \cite{DFS_APDE} for the linear elliptic non-homogeneous problems. In addition, we also recall its applications to some segregation problems, see e.g. \cite{NTV}, \cite{Q}, \cite{TVZ} and \cite{TTV}.

The existence of such a tool for elliptic degenerate operators, for instance sublaplacians on groups, is not yet understood. Anyhow, concerning other formulas about sublaplacians on groups, we find in literature some important contributions to the degenerate linear operators, see \cite{Garofalo_Lanconelli} and in particular \cite{Garofalo_Rotz}, where the authors deal with the frequency function of Almgren in Carnot groups. Moreover, considering further contributions in the non-commutative setting about other free boundary problems, for instance the obstacle problem, we point out \cite{DGS} and \cite{DGP_ob}. 

As regards our research, we have been mainly motivated by our interest in studying the solutions of the following two-phase free boundary problem in the Heisenberg group
\begin{equation}\label{two_phase_Heisenberg}
\begin{cases}
\Delta_{\mathbb{H}^n} u=f& \mbox{in }\Omega^+(u)\coloneqq  \{x\in \Omega:\hspace{0.1cm} u(x)>0\},\\
\Delta_{\mathbb{H}^n} u=f& \mbox{in }\Omega^-(u)\coloneqq \mbox{Int}(\{x\in \Omega:\hspace{0.1cm} u(x)\leq 0\}),\\
|\nabla_{\mathbb{H}^n} u^+|^2-|\nabla_{\mathbb{H}^n} u^-|^2=1&\mbox{on }\mathcal{F}(u)\coloneqq \partial \Omega^+(u)\cap \Omega,
\end{cases}
\end{equation}
where $f\in C(\Omega)\cap L^{\infty}(\Omega).$ In \cite{Fe} the authors obtain this formulation starting from the notion of domain variation solution of a Bernoulli type functional like
 \[
 \mathcal{E}_{\mathbb{H}^n}(v)\coloneqq \int_{\Omega}\left(|\nabla_{\mathbb{H}^n} v|^2+\chi_{\{v>0\}}+2fv\right)dx.
\]


Concerning our contribution to this subject in the Heisenberg group $\mathbb{H}^1,$ see Section \ref{notation_Heisenberg} for the basic notation about this Carnot group, we remark  that we obtain only some partial results based on the size of the following Rayleigh quotient type:
\begin{equation*}\label{def-lambda-varphixx}
\lambda_{\varphi}(\Sigma)=\inf_{v\hspace{0.05cm}\in\hspace{0.05cm}H^{1}_{0}(\Sigma)}\frac{\displaystyle \int_{\Sigma}\frac{\left|\nabla^{\varphi}_{\mathbb{H}^1}v(\xi)\right|^{2}}{\sqrt{x^{2}+y^{2}}}\hspace{0.1cm}d\sigma_{\mathbb{H}^1}(\xi)}{\displaystyle \int_{\Sigma}v^{2}(\xi)\sqrt{x^{2}+y^{2}}\hspace{0.1cm}d\sigma_{\mathbb{H}^1}(\xi)},
\end{equation*}
where $\Sigma\subset \partial B_1^{\mathbb{H}^1}(0)$ is a rectifiable set.
Here $\sigma_{\mathbb{H}^1}$ denotes the perimeter measure in the Heisenberg group, see Section \ref{notation_Heisenberg} for a short introduction, and \cite{GN},  \cite{FSSC_houston} for a detailed exposition.

Furthermore, we denote by
$\nabla^\varphi_{\mathbb{H}^1}v(\xi)=\langle \nabla_{\mathbb{H}^1}v(\xi),e_\varphi(\xi)\rangle e_\varphi(\xi),$ being
$e_\varphi\coloneqq \frac{\nabla_{\mathbb{H}^1}\varphi}{|\nabla_{\mathbb{H}^1}\varphi|},$ where $\varphi$ is the variable associated with the point $\xi\in \mathbb{H}^1$ via the following appropriate spherical coordinates given in the Heisenberg group $\mathbb{H}^1$ by
\begin{equation*}
\begin{cases}
x=\rho \sqrt{\sin \varphi}\cos\theta \\
y=\rho \sqrt{\sin \varphi}\sin \theta\\
t=\rho^{2}\cos \varphi,
\end{cases}
\end{equation*}
see Section \ref{laplace_beltrami} for the notation and for the discussion of some remarks about these coordinates.

We precisely get the following result.

\begin{teo}\label{lowerbound_f}
Let  $u_1,u_2\in C(B_1^{\mathbb{H}^1}(0))\cap H^1_{\mathbb{H}^1}(B_1^{\mathbb{H}^1}(0)),$   $u_1u_2=0$ in $B_1^{\mathbb{H}^1}(0),$ $u_i(0)=0$ and $\Delta_{\mathbb{H}^1} u_i\geq 0,$ $i=1,2.$ Then
\begin{equation}\label{general-term-condition-monotonicity-lower-bound_x}
\sum_{i=1}^2\frac{\displaystyle \int_{\partial B_1^{\mathbb{H}^1}(0)}\frac{\left|\nabla_{\mathbb{H}^1}u_i\right|^{2}}{\sqrt{x^{2}+y^{2}}}\hspace{0.1cm}d \sigma_{\mathbb{H}^1}(\xi)}{\displaystyle \int_{B_1^{\mathbb{H}^1}(0)}\frac{\left|\nabla_{\mathbb{H}^1}u_i\right|^{2}}{\left|\xi\right|_{\mathbb{H}^1}^{2}}\hspace{0.1cm}d\xi}\geq 2\sum_{i=1}^2\left(\sqrt{1+\lambda_{\varphi}\left(\Sigma_i\right)}-1\right).
\end{equation}
\end{teo}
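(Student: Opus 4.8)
The plan is to prove the stronger, term-by-term inequality
\begin{equation*}
\frac{\displaystyle\int_{\partial B_1^{\mathbb{H}^1}(0)}\frac{|\nabla_{\mathbb{H}^1}u_i|^2}{\sqrt{x^2+y^2}}\,d\sigma_{\mathbb{H}^1}}{\displaystyle\int_{B_1^{\mathbb{H}^1}(0)}\frac{|\nabla_{\mathbb{H}^1}u_i|^2}{|\xi|_{\mathbb{H}^1}^2}\,d\xi}\geq 2\left(\sqrt{1+\lambda_\varphi(\Sigma_i)}-1\right),\qquad i=1,2,
\end{equation*}
and then sum; note that the hypothesis $u_1u_2=0$ is not used in this reduction (it only serves to identify $\Sigma_i:=\partial B_1^{\mathbb{H}^1}(0)\cap\{u_i>0\}$ with the trace of the support of $u_i$). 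Writing $\rho=|\xi|_{\mathbb{H}^1}$, $I_i(r)=\int_{B_r^{\mathbb{H}^1}(0)}\rho^{-2}|\nabla_{\mathbb{H}^1}u_i|^2\,d\xi$ and $J_i(r)=\int_{\partial B_r^{\mathbb{H}^1}(0)}(x^2+y^2)^{-1/2}|\nabla_{\mathbb{H}^1}u_i|^2\,d\sigma_{\mathbb{H}^1}$, I would first record the pointwise identity $\sqrt{x^2+y^2}=\rho\,|\nabla_{\mathbb{H}^1}\rho|$ (a direct computation with the Koranyi gauge, carried out in Section \ref{laplace_beltrami}) together with the relation $d\sigma_{\mathbb{H}^1}=\tfrac{|\nabla_{\mathbb{H}^1}\rho|}{|\nabla\rho|}\,d\mathcal{H}^{2}$ between the perimeter measure and the Euclidean surface measure on $\{\rho=r\}$. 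Combining these with the coarea formula gives $J_i(r)=r\,I_i'(r)$, so the left-hand side at $r=1$ equals $\sum_i I_i'(1)/I_i(1)$ and it suffices to prove $I_i'(1)\geq 2\gamma_i\,I_i(1)$ with the characteristic exponent $\gamma_i:=\sqrt{1+\lambda_\varphi(\Sigma_i)}-1$, i.e.\ the positive root of $\gamma_i(\gamma_i+2)=\lambda_\varphi(\Sigma_i)$ (here $2=Q-2$ is forced by the homogeneous dimension $Q=4$).

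The second step is a Rellich--Pohozaev integration by parts against the weight $w=\rho^{-2}$, which is (up to a constant) the fundamental solution of $\Delta_{\mathbb{H}^1}$ and hence $\mathbb{H}$-harmonic away from the origin. Applying the horizontal divergence theorem to $wu_i\nabla_{\mathbb{H}^1}u_i$ and moving the term $u_i\langle\nabla_{\mathbb{H}^1}w,\nabla_{\mathbb{H}^1}u_i\rangle$ onto $u_i^2$ yields
\begin{equation*}
I_i(1)=\int_{\partial B_1^{\mathbb{H}^1}(0)}u_i\,\partial_\nu u_i\,d\sigma_{\mathbb{H}^1}+\int_{\partial B_1^{\mathbb{H}^1}(0)}u_i^2\,|\nabla_{\mathbb{H}^1}\rho|\,d\sigma_{\mathbb{H}^1}-\int_{B_1^{\mathbb{H}^1}(0)}w\,u_i\,\Delta_{\mathbb{H}^1}u_i\,d\xi,
\end{equation*}
where $\partial_\nu u_i=\langle\nabla_{\mathbb{H}^1}u_i,\nabla_{\mathbb{H}^1}\rho/|\nabla_{\mathbb{H}^1}\rho|\rangle$ is the horizontal normal derivative. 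Since $u_i\geq 0$, $w>0$ and $\Delta_{\mathbb{H}^1}u_i\geq 0$, the last integral is nonnegative, so $I_i(1)$ is bounded above by the two boundary integrals; the condition $u_i(0)=0$ is what guarantees that the singularity of $w$ at the origin is harmless.

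The third step splits the horizontal gradient on the unit sphere into its normal and tangential parts. Using the coordinate analysis of Section \ref{laplace_beltrami}, the horizontal plane is spanned by $\nabla_{\mathbb{H}^1}\rho$ and $e_\varphi$, so that $|\nabla_{\mathbb{H}^1}u_i|^2=(\partial_\nu u_i)^2+|\nabla^\varphi_{\mathbb{H}^1}u_i|^2$ on $\partial B_1^{\mathbb{H}^1}(0)$ and hence
\begin{equation*}
I_i'(1)=\int_{\partial B_1^{\mathbb{H}^1}(0)}\frac{(\partial_\nu u_i)^2}{|\nabla_{\mathbb{H}^1}\rho|}\,d\sigma_{\mathbb{H}^1}+\int_{\partial B_1^{\mathbb{H}^1}(0)}\frac{|\nabla^\varphi_{\mathbb{H}^1}u_i|^2}{|\nabla_{\mathbb{H}^1}\rho|}\,d\sigma_{\mathbb{H}^1}.
\end{equation*}
Because the trace of $u_i$ on $\partial B_1^{\mathbb{H}^1}(0)$ is supported in $\Sigma_i$ and $\sqrt{x^2+y^2}=|\nabla_{\mathbb{H}^1}\rho|$ there, the definition of $\lambda_\varphi(\Sigma_i)$ bounds the second integral from below by $\lambda_\varphi(\Sigma_i)\int_{\partial B_1^{\mathbb{H}^1}(0)}u_i^2\,|\nabla_{\mathbb{H}^1}\rho|\,d\sigma_{\mathbb{H}^1}$. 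Inserting this together with the upper bound for $I_i(1)$ into $I_i'(1)-2\gamma_i I_i(1)$ and completing the square gives
\begin{equation*}
I_i'(1)-2\gamma_i I_i(1)\geq\int_{\partial B_1^{\mathbb{H}^1}(0)}\frac{\big(\partial_\nu u_i-\gamma_i\,u_i\,|\nabla_{\mathbb{H}^1}\rho|\big)^2}{|\nabla_{\mathbb{H}^1}\rho|}\,d\sigma_{\mathbb{H}^1}+\big(\lambda_\varphi(\Sigma_i)-\gamma_i^2-2\gamma_i\big)\int_{\partial B_1^{\mathbb{H}^1}(0)}u_i^2\,|\nabla_{\mathbb{H}^1}\rho|\,d\sigma_{\mathbb{H}^1},
\end{equation*}
and the coefficient $\lambda_\varphi(\Sigma_i)-\gamma_i^2-2\gamma_i$ vanishes precisely because $\gamma_i(\gamma_i+2)=\lambda_\varphi(\Sigma_i)$, leaving a nonnegative right-hand side. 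This closes the term-by-term estimate and hence the theorem.

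The step I expect to be most delicate is the decomposition $|\nabla_{\mathbb{H}^1}u_i|^2=(\partial_\nu u_i)^2+|\nabla^\varphi_{\mathbb{H}^1}u_i|^2$: one must check that, in the Heisenberg spherical coordinates, the only horizontal direction tangent to the Koranyi sphere is $e_\varphi$ (the $\theta$-derivative being absorbed into the non-horizontal part of the frame), since otherwise an extra tangential term would survive that the scalar quotient $\lambda_\varphi$ cannot control. A secondary technical point is the rigorous justification of the integration by parts for functions only in $C\cap H^1_{\mathbb{H}^1}$ with $\Delta_{\mathbb{H}^1}u_i\geq 0$ in the distributional sense: I would approximate on the annuli $\{\varepsilon<\rho<1\}$ and use $u_i(0)=0$ together with the local integrability of $w=\rho^{-2}$ to pass to the limit $\varepsilon\to 0$.
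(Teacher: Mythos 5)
Your proposal is correct and follows essentially the same route as the paper: the divergence-theorem upper bound $\int_{B_1^{\mathbb{H}^1}(0)}\rho^{-2}|\nabla_{\mathbb{H}^1}u_i|^2\,d\xi\le\int_{\partial B_1^{\mathbb{H}^1}(0)}\left(u_i\partial_\nu u_i+u_i^2|\nabla_{\mathbb{H}^1}\rho|\right)d\sigma_{\mathbb{H}^1}$ (the paper's \eqref{denominator-general-term-condition-monotonicity-5}), the orthogonal splitting of the horizontal gradient into its $e_\rho$ and $e_\varphi$ components — legitimate, as you correctly flag, precisely because $H\mathbb{H}^1_p$ is two-dimensional and $\langle\nabla_{\mathbb{H}^1}\rho,\nabla_{\mathbb{H}^1}\varphi\rangle=0$, which is \eqref{norm-quad-lapl-horiz-u-two-components_f} — and the Rayleigh-quotient bound on the $e_\varphi$ part. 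The only point of divergence is the endgame: where the paper applies Cauchy--Schwarz to $\int u_i\partial_\nu u_i$ and then minimizes $F(s)=(s+\lambda_{\varphi})/(1+\sqrt{s})$ over $s=A_\rho/A_u$ (or, in its alternative proof, tunes a parameter $\beta\in(0,1)$), you complete the square pointwise with the characteristic exponent $\gamma_i$; the two are equivalent, and yours is arguably the cleaner packaging, since the vanishing of $\lambda_\varphi(\Sigma_i)-\gamma_i^2-2\gamma_i$ is visibly the defining equation of $\gamma_i$.
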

Let us also define
$$
J_{\beta,\mathbb{H}^1}(r)= r^{-\beta}\int_{B_r^{\mathbb{H}^1}(0)}\frac{\mid\nabla_{\mathbb{H}^1} u_1(\xi)\mid^2}{\left|\xi\right|_{\mathbb{H}^1}^{2}}d\xi\int_{B_r^{\mathbb{H}^1}(0)}\frac{\mid\nabla_{\mathbb{H}^1} u_2(\xi)\mid^2}{\left|\xi\right|_{\mathbb{H}^1}^{2}}d\xi,
$$
where $\beta>0$ is a parameter. The function $J_{\beta,\mathbb{H}^1}$ takes the place of $\Phi,$ defined in \eqref{expression-of-Phi}, in the Heisenberg group $\mathbb{H}^1,$ using the parameter $\beta>0$ instead of $\beta=4,$ substituting the Euclidean balls with Koranyi  balls and recalling that the fundamental solution of the Kohn-Laplace operator $\Delta_{\mathbb{H}^1}$ in the Heisenberg group $\mathbb{H}^1$ is, up to a constant, $|\xi|_{\mathbb{H}_1}^{-2}.$
Analogously,  in $\mathbb{H}^n$ the function  $J_{\beta,\mathbb{H}^1}$ becomes:
$$
J_{\beta,\mathbb{H}^n}(r)= r^{-\beta}\int_{B_r^{{\mathbb{H}^n}}(0)}\frac{\mid\nabla_{\mathbb{H}^n} u_1(\zeta)\mid^2}{|\zeta|_{\mathbb{H}^n}^{Q-2}}d\zeta\int_{B_r^{{\mathbb{H}^n}}(0)}\frac{\mid\nabla_{\mathbb{H}^n} u_2(\zeta)\mid^2}{|\zeta|_{\mathbb{H}^n}^{Q-2}}d\zeta,
$$
where $Q\coloneqq 2n+2$ is the homogeneous dimension in $\mathbb{H}^n.$ We state our results in $\mathbb{H}^1,$ even if the proof holds in every $\mathbb{H}^n,$ only because the result we obtain is an intermediate step with respect to the existence of a monotonicity formula in the Heisenberg group. Indeed, see the next Corollary \ref{corcor1}, it still remains open the problem of determining the best configuration in splitting the Koranyi ball in two parts. This fact depends on the best profile of the set that realizes the equality in the isoperimetric inequality in the Heisenberg group. Concerning the same problem in the Euclidean setting, the question is well understood, see the proof of the Alt-Caffarelli-Friedman formula, \cite{ACF}, \cite{FH}, \cite{Sperner}, \cite{CS} and \cite{FeFo} for a recent review of the problem.

In order to state an application of Theorem \ref{lowerbound_f}, let us introduce the following function, see Section \ref{phidependence},
\begin{equation}\label{h_func_introduction}
\begin{split}
&h(\varphi)\coloneqq 2(\sqrt{1+\lambda_0(\varphi)}-1)+2(\sqrt{1+\lambda_0(\pi-\varphi)}-1).
\end{split}
\end{equation}
where $\lambda_0(\varphi)$ is associated with a cap on the Koranyi ball around the $t-$axis of half-opening $\varphi$.
About this fact,  it is worth to recall that the Koranyi ball is not symmetric along all the directions like the Euclidean ball. For instance, we can not obtain $\partial B_1^{\mathbb{H}^1}\cap\{(x,y,t)\in \mathbb{H}^1:\:\:t>0\}$ from $\partial B_1^{\mathbb{H}^1}\cap\{(x,y,t)\in \mathbb{H}^1:\:\:x>0\}$ via anyone rotation in $\mathbb{H}^1\equiv\mathbb{R}^3$.
 
Nevertheless, the function $h$ is symmetric with respect to $\frac{\pi}{2}$ in $[0,\pi].$ Unfortunately, however,  we do not know if the minimum of the function $h$ is realized by $\varphi=\frac{\pi}{2},$ even if this fact would result almost expected. In any case, if it were true that the two half parts of the Koranyi ball split in two half parts by the plane $t=0$ realize  the minimum for $h$, then $\min_{\varphi\in [0,\pi]}{h(\varphi)}\geq 16.$ 

As a consequence, the following corollary holds, see also Corollary \ref{corolmain}.

\begin{cor}\label{corcor1} If there exists a positive number $\beta$ for which $J_{\beta,\mathbb{H}^1}$
 is monotone 
 for every $u_1, u_2\in C(B_1^{\mathbb{H}^1}(0))\cap H_{\mathbb{H}^1}^1(B_1^{{\mathbb{H}^1}}(0))$, such that $\Delta_{\mathbb{H}^1}u_i\geq 0,$ $u_i(0)=0,$ $i=1,2$ and $u_1u_2=0,$   then $\beta\leq 4.$
 Moreover, if the minimum of $J_{\beta,\mathbb{H}^1}$ were realized by two functions like $u_1=(ax+by)^+$ and $u_2=(ax+by)^-$, $a,b\in \mathbb{R},$ $a^2+b^2>0,$ or that
  $$\min_{\varphi\in [0,\pi]}h(\varphi)\geq 16,$$
  then for $\beta=4$ the function $J_{\beta,\mathbb{H}^1}$ is monotone, that is there exists $r_0>0$ such that
   $$
J_{4,\mathbb{H}^1}(r)= r^{-4}\int_{B_r^{{\mathbb{H}^1}}(0)}\frac{\mid\nabla_{\mathbb{H}^1} u_1(\zeta)\mid^2}{|\zeta|_{\mathbb{H}^1}^{2}}d\zeta\int_{B_r^{{\mathbb{H}^1}}(0)}\frac{\mid\nabla_{\mathbb{H}^1} u_2(\zeta)\mid^2}{|\zeta|_{\mathbb{H}^1}^{2}}d\zeta
$$ 
is monotone increasing in $[0,r_0).$
\end{cor}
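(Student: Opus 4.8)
The plan is to pass to the logarithmic derivative of $J_{\beta,\mathbb{H}^1}$ and feed it Theorem \ref{lowerbound_f}, reducing the whole statement to a single geometric inequality on the Koranyi sphere that I can test against explicit homogeneous competitors. Setting $I_i(r)\coloneqq\int_{B_r^{\mathbb{H}^1}(0)}\frac{|\nabla_{\mathbb{H}^1}u_i|^2}{|\xi|_{\mathbb{H}^1}^2}\,d\xi$, so that $J_{\beta,\mathbb{H}^1}(r)=r^{-\beta}I_1(r)I_2(r)$, I would first record
\begin{equation*}
\frac{r\,J_{\beta,\mathbb{H}^1}'(r)}{J_{\beta,\mathbb{H}^1}(r)}=-\beta+\sum_{i=1}^2\frac{r\,I_i'(r)}{I_i(r)},
\end{equation*}
where the $I_i'(r)$ are obtained, via the coarea formula, as the boundary integrals over $\partial B_r^{\mathbb{H}^1}(0)$ corresponding to those in Theorem \ref{lowerbound_f}. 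Since the weight $|\xi|_{\mathbb{H}^1}^{-2}$ is the fundamental solution of $\Delta_{\mathbb{H}^1}$ and $\lambda_\varphi$ is intrinsic to the unit sphere, the whole construction is $\delta_\lambda$-homogeneous, so applying Theorem \ref{lowerbound_f} to the rescaled functions $\xi\mapsto u_i(\delta_r\xi)$ yields, for every $r$,
\begin{equation*}
\sum_{i=1}^2\frac{r\,I_i'(r)}{I_i(r)}\ge 2\sum_{i=1}^2\Big(\sqrt{1+\lambda_\varphi(\Sigma_i(r))}-1\Big),
\end{equation*}
with $\Sigma_i(r)\subset\partial B_1^{\mathbb{H}^1}(0)$ the rescaled trace of $\{u_i\neq0\}$. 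Hence $J_{\beta,\mathbb{H}^1}$ is monotone increasing wherever the $I_i$ are absolutely continuous as soon as $2\sum_i\big(\sqrt{1+\lambda_\varphi(\Sigma_i(r))}-1\big)\ge\beta$ for all admissible configurations and all $r$.

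To prove that monotonicity forces $\beta\le4$ I would evaluate everything on the pair $u_1=(ax+by)^+$, $u_2=(ax+by)^-$. The function $ax+by$ is $\Delta_{\mathbb{H}^1}$-harmonic, so $u_1,u_2$ are admissible, and it is $\delta_\lambda$-homogeneous of degree one, hence $|\nabla_{\mathbb{H}^1}u_i|^2$ is homogeneous of degree zero. A change of variables along $\delta_\lambda$ then gives $I_i(r)=c_i\,r^{2}$ with $0<c_i<\infty$, so that
\begin{equation*}
J_{\beta,\mathbb{H}^1}(r)=c_1c_2\,r^{\,4-\beta}.
\end{equation*}
This is nondecreasing precisely when $\beta\le4$; since a $\beta$ for which $J_{\beta,\mathbb{H}^1}$ is monotone for \emph{every} admissible pair must work in particular for this one, necessarily $\beta\le4$.

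For the sufficiency of $\beta=4$ one needs, writing $\alpha_i\coloneqq\sqrt{1+\lambda_\varphi(\Sigma_i)}-1$, the bound $\alpha_1+\alpha_2\ge2$ for every admissible partition of $\partial B_1^{\mathbb{H}^1}(0)$. The homogeneous pair above pins down where the extremal sits: each $u_i$ is degree-one homogeneous, so its blow-up exponent equals $1$, forcing $\alpha_1=\alpha_2=1$ and $\alpha_1+\alpha_2=2$ exactly; thus $\min_{\text{configs}}(\alpha_1+\alpha_2)\le2$, and (using that each spherical configuration is realized by a homogeneous competitor, for which the bound is an equality) $J_{4,\mathbb{H}^1}$ is monotone for every admissible pair if and only if the half-space splitting minimizes $\alpha_1+\alpha_2$. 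This is exactly the first hypothesis, under which the differential inequality closes with $rJ_{4,\mathbb{H}^1}'/J_{4,\mathbb{H}^1}\ge-4+2\cdot2=0$. Under the second hypothesis, $\min_{\varphi}h(\varphi)\ge16$, one instead controls the rotationally symmetric cap competitors: by the definition \eqref{h_func_introduction} of $h$ one has $\alpha_1+\alpha_2=\tfrac12 h(\varphi)\ge8\ge2$ on that family, so if these realize the minimum of $\alpha_1+\alpha_2$ (the alternative extremal scenario envisaged in the Introduction), the bound again holds and $J_{4,\mathbb{H}^1}$ is monotone.

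The crux, and the reason the statement stays conditional, is precisely the geometric inequality behind sufficiency, namely the determination of the partition of $\partial B_1^{\mathbb{H}^1}(0)$ minimizing $\sqrt{1+\lambda_\varphi(\Sigma_1)}+\sqrt{1+\lambda_\varphi(\Sigma_2)}-2$. This is the Heisenberg counterpart of the Friedland--Hayman inequality, tied to the optimal profile in the isoperimetric problem on the Koranyi sphere, which is not known with the precision required here; moreover $\partial B_1^{\mathbb{H}^1}(0)$ lacks the full rotational symmetry that makes spherical rearrangement effective in the Euclidean setting, so one cannot simply symmetrize a general competitor to a cap. This is exactly why the conclusion must be anchored either to the optimality of the half-space splitting or to the quantitative control of the symmetric family encoded by $h$.
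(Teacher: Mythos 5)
Your overall strategy coincides with the paper's: reduce to the logarithmic derivative at $r=1$ via Lemma \ref{lemma-quotient-derivative-of-J-value-of-J}, test the linear pair for necessity, and invoke Theorem \ref{lowerbound_f} plus the conditional hypotheses for sufficiency. The necessity half is correct and, if anything, cleaner than the paper's: your direct computation $I_i(r)=c_ir^2$, hence $J_{\beta,\mathbb{H}^1}(r)=c_1c_2r^{4-\beta}$, is equivalent to the paper's combination of Lemma \ref{lemma-integral-quotient-gradient-x-+} with formula \eqref{quotient-derivative-of-J-value-of-J}, which gives $J_{\beta,\mathbb{H}^1}'(1)/J_{\beta,\mathbb{H}^1}(1)=2+2-\beta$ for that pair.

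The gap is in your sufficiency step under the first hypothesis. You set $\alpha_i\coloneqq\sqrt{1+\lambda_\varphi(\Sigma_i)}-1$ and assert that the degree-one homogeneity of $(ax+by)^{\pm}$ forces $\alpha_1=\alpha_2=1$, the lower bound of Theorem \ref{lowerbound_f} being ``an equality'' for homogeneous competitors. Neither claim holds here. The quantity $\lambda_\varphi$ is built only from the component $\nabla^\varphi_{\mathbb{H}^1}$ of the tangential gradient; the $\theta$-part is discarded. Lemma \ref{lemma-eigenvalues-depending-on-theta-phi-and-phi} computes, for $u=x^+$, a Rayleigh quotient equal to $2$, whence $\lambda_\varphi(\Sigma)\le 2$ and $2\bigl(\sqrt{1+\lambda_\varphi(\Sigma)}-1\bigr)\le 2(\sqrt{3}-1)<2$, whereas the true ratio appearing in \eqref{quotient-derivative-of-J-value-of-J} equals $2$ by Lemma \ref{lemma-integral-quotient-gradient-x-+}. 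So the Euclidean identity between the characteristic exponent and $\sqrt{1+\lambda}-1$ breaks down in this framework, and the bound of Theorem \ref{lowerbound_f} is strictly lossy at the half-space configuration: ``$\alpha_1+\alpha_2=2$ exactly'' is false. The paper's route under hypothesis (a) does not pass through Theorem \ref{lowerbound_f} at all: assuming the linear pair minimizes the sum of ratios, that minimum equals $2+2=4$ by direct computation, so $J_{4,\mathbb{H}^1}'(1)/J_{4,\mathbb{H}^1}(1)\ge 4-4=0$ and Lemma \ref{lemma-quotient-derivative-of-J-value-of-J} gives monotonicity near the origin. You should replace your $\alpha_i$ argument by this direct computation. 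Your treatment of the second hypothesis ($\min h\ge 16$, which controls only the rotationally symmetric cap configurations) is arithmetically fine and honestly flags the same conditional gap present in the paper.
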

We comment this last result pointing out that our result moves the problem, about the existence of a monotonicity formula in the Heisenberg group $\mathbb{H}^1,$ in solving the new problem of knowing the {\it smallest} configuration, on the Koranyi sphere of radius one, determined by the lower bound described by the left hand side of \eqref{general-term-condition-monotonicity-lower-bound_x}, that is by the value of:
$$
\inf_{\cup_{i=1}^2\overline{\Sigma}_i=\partial B^{\mathbb{H}^1}_1(0),\:\:\mbox{int}\Sigma_1 \cap \mbox{int}\Sigma_2=\emptyset }2\sum_{i=1}^2\left(\sqrt{1+\lambda_{\varphi}\left(\Sigma_i\right)}-1\right).
$$
More precisely, in case this smallest configuration is realized by the half spheres on the Koranyi ball of radius one, obtained by splitting the ball in two half parts determined by the plane $t=0,$ then $J_{4,\mathbb{H}^1}$ is monotone increasing.

\section{The main notation in the Heisenberg group}\label{notation_Heisenberg}
Let
$\mathbb{H}^n$ be the set $\mathbb{R}^{2n+1},$ $n\in \mathbb{N},$ $n\geq 1,$ endowed with the following non-commutative inner law:

 for every $(x_1,y_1,t_1)\in \mathbb{R}^{2n+1},$ $(x_2,y_2,t_2)\in \mathbb{R}^{2n+1},$ $x_{i}\in \mathbb{R}^n,$ $y_{i}\in \mathbb{R}^n,$ $i=1,2:$ 
$$
(x_1,y_1,t_1)\circ(x_2,y_2,t_2)=(x_1+x_2,y_1+y_2,t_1+t_2+2(\langle x_2, y_1\rangle-\langle x_1,y_2\rangle)),
$$
where $\langle x_i,y_i\rangle$ denotes the usual inner product in $\mathbb{R}^n.$

Let  $X_i=(e_i,0,2y_i)$ and $Y_i=(0,e_i,-2x_i),$ $i=1,\dots,n,$ where $\{e_i\}_{1\leq i\leq n}$ is the canonical basis for $\mathbb{R}^n.$

We use the same symbols to denote the vector fields associated with the previous vectors so that for $i=1,\dots,n$
$$
X_i=\partial_{x_i}+2y_i\partial_t,\quad Y_i=\partial_{y_i}-2x_i\partial_t.
$$
The commutator between the vector fields is for every $i=1,\dots, n:$
$$
[X_i,Y_i]=-4\partial_t,
$$
otherwise is $0.$ 
The intrinsic gradient of a smooth function $u$ in a point $P$ is  
$$
\nabla_{\mathbb{H}^n}u(P)=\sum_{i=1}^n(X_iu(P)X_i(P)+Y_iu(P)Y_i(P)).
$$
There exists a unique metric on $H\mathbb{H}^n_P=\mbox{span}\{X_1,\dots,X_n,Y_1,\dots,Y_n\}$ which makes orthonormal the set of vectors $\{X_1,\dots,X_n,Y_1,\dots,Y_n\}.$ Thus for every $P\in \mathbb{H}^n$ and  for every $U,W\in H\mathbb{H}^n_P,$ $U=\sum_{j=1}^n(\alpha_{1,j}X_{j}(P)+\beta_{1,j}Y_j(P)),$
$V=\sum_{j=1}^n(\alpha_{2,j}X_{j}(P)+\beta_{2,j}Y_j(P))$
$$
\langle U,V\rangle=\sum_{j=1}^n(\alpha_{1,j}\alpha_{2,j}+\beta_{1,j}\beta_{2,j}).
$$  
In particular, we get a norm associated with the metric on $\mbox{span}\{X_1,\dots,X_n,Y_1,\dots,Y_n\}$  and
$$
\mid U\mid=\sqrt{\sum_{i=1}^n\left(\alpha_{1,j}^2+\beta_{1,j}^2\right)}.
$$
For example, the norm of the intrinsic gradient of a smooth function $u$ in $P$ is
$$
\mid \nabla_{\mathbb{H}^n} u(P)\mid=\sqrt{\sum_{i=1}^n\left((X_iu(P))^2+(Y_iu(P))^2\right)}.
$$
Moreover, if $\nabla_{\mathbb{H}^n} u(P)\not=0,$ the norm of
$$
\frac{\nabla_{\mathbb{H}^n}u(P)}{\mid \nabla_{\mathbb{H}^n}u(P)\mid}
$$
is one. If $\nabla_{\mathbb{H}^n} u(P)=0,$ instead, we say that the point $P$ is characteristic for the smooth surface $\{u=u(P)\}.$
Hence, for every point $M\in \{u=u(P)\},$ which is not characteristic, is well defined the intrinsic normal to the surface $\{u=u(P)\}$ as follows: 
$$
\nu(M)=\frac{\nabla_{\mathbb{H}^n} u(M)}{\mid \nabla_{\mathbb{H}^n} u(M)\mid}.
$$
At this point, we introduce in the Heisenberg group $\mathbb{H}^n$ the following gauge norm:
$$
\mid(x,y,t)\mid_{\mathbb{H}^n}\coloneqq \sqrt[4]{(\mid x\mid^2+\mid y\mid^2)^2+t^2},\quad P=(x,y,t)\in \mathbb{H}^n.
$$
In particular, for every positive number $r,$ the gauge ball of radius $r$ centered at $0$ is
$$
B^{\mathbb{H}^n}_r(0)\coloneqq \{P\in \mathbb{H}^n :\:\:|P|_{\mathbb{H}^n}<r\}.
$$
In the Heisenberg group, a dilation semigroup is defined as follows: for every $r>0$ and for every $P=(x,y,t)\in \mathbb{H}^n,$ let
$$
\delta_r(P)\coloneqq (rx,ry,r^2t).
$$

It is well known that, fixed $R\in \mathbb{H}^n,$
$|R^{-1}\circ P|_{\mathbb{H}^n}^{2-Q},$ $P\in \mathbb{H}^n,$ is, up to a multiplicative constant, the fundamental solution of the sublaplacian $\Delta_{\mathbb{H}^n}$ in the Heisenberg group, where $Q=2n+2$  is the homogeneous dimension in $\mathbb{H}^n$ introduced before.

The definition of $\mathbb{H}^n-$subharmonic function, as well as the one of $\mathbb{H}^n-$superharmonic function in a set $\Omega\subset \mathbb{H}^{n},$ can be stated, as usual, in the classical way, requiring respectively that $\Delta_{\mathbb{H}^n}u(P)\geq 0$ for every $P\in \Omega,$  for the $\mathbb{H}^n-$subharmonicity, and that $\Delta_{\mathbb{H}^n}u(P)\leq 0$ for every $P\in \Omega$ for having $\mathbb{H}^n-$superharmonicity. We refer to \cite{BLU} for further details.  

Concerning the natural Sobolev spaces to consider in the Heisenberg group $\mathbb{H}^n$, we refer to the literature, see for instance \cite{GN}. Here, we simply recall that: 
$$
\mathcal{L}^{1,2}(\Omega)\coloneqq \{f\in L^{2}(\Omega):\:\:X_if,\:Y_if\in L^{2}(\Omega),\:\:i=1,\dots, n\}
$$ 
is a Hilbert space with respect to the norm
$$
|f|_{\mathcal{L}^{1,2}(\Omega)}=\left(\int_{\Omega}\bigg(\sum_{i}^n(\left|X_if\right|^2+\left|Y_if\right|^2))+|f|^2\bigg)dx\right)^{\frac{1}{2}}.
$$
Moreover
$$
 H_{\mathbb{H}^n}^1(\Omega)=\overline{C^{\infty}(\Omega)\cap \mathcal{L}^{1,2}(\Omega)}^{|\cdot |_{\mathcal{L}^{1,2}(\Omega)}}.
$$

Now, if $E\subset\mathbb{H}^n$ is a measurable set, a notion of $\mathbb{H}^n$-perimeter measure
$|\partial E|_{\mathbb{H}^n}$ has been introduced in \cite{GN} in a more general setting, even if here we recall some results in the framework of the Heisenberg group, the simplest  non-trivial example of Carnot group. We refer to \cite{GN},
\cite{FSSC_houston}, \cite{FSSC_CAG}, \cite{FSSC_step2} for a detailed presentation. For
our applications, we restrict ourselves to remind that, if $E$ has locally finite $\mathbb{H}^n$-perimeter
(is a $\mathbb{H}^n$-Caccioppoli set),
then $|\partial E|_{\mathbb{H}^n}$ is a Radon measure in $\mathbb{H}^n$, invariant under
group translations and $\mathbb{H}^n$-homogeneous of degree $Q-1$. In addition, the following
representation theorem holds (see  \cite{capdangar}).

\begin{prop}\label{perimetro regolare}
If $E$ is a $\mathbb{H}^{n}\coloneqq \mathbb{R}^{2n+1}$-Caccioppoli set with Euclidean ${\mathbf C}^1$
boundary, then there is an explicit representation of the
$\mathbb{H}^{n}$-perimeter in terms of the Euclidean $2n$-dimensional
Hausdorff measure $\mathcal H^{2n}$
\begin{equation*}
\sigma_{\mathbb{H}^n}^{\Omega, E}(\partial E)=\int_{\partial
E\cap\Omega}\bigg(\sum_{j=1}^{n}\left(\langle
X_j,n_E\rangle_{\mathbb{R}^{2n+1}}^2+\langle
Y_j,n_E\rangle_{\mathbb{R}^{2n+1}}^2\right)\bigg)^{1/2}d{\mathcal {H}}^{2n},
\end{equation*}
where $n_E=n_E(x)$ is the Euclidean unit outward normal to $\partial
E$.
\end{prop}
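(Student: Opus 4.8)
The plan is to reduce the computation of the horizontal perimeter to the classical Euclidean Gauss--Green formula, which is legitimate precisely because $\partial E$ is Euclidean $C^1$. Recall that, for an open set $\Omega$,
$$
\sigma_{\mathbb{H}^n}^{\Omega,E}(\partial E)=\sup\Big\{\int_E\operatorname{div}_{\mathbb{H}^n}\Phi\,dx:\ \Phi=\sum_{j=1}^n(\phi_jX_j+\psi_jY_j),\ \phi_j,\psi_j\in C^1_c(\Omega),\ \|\Phi\|_\infty\le1\Big\},
$$
where $\operatorname{div}_{\mathbb{H}^n}\Phi=\sum_{j=1}^n(X_j\phi_j+Y_j\psi_j)$ and $\|\Phi\|_\infty=\sup_\Omega\big(\sum_{j=1}^n(\phi_j^2+\psi_j^2)\big)^{1/2}$. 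Equivalently, this supremum is the total variation of the $\mathbb{R}^{2n}$-valued Radon measure $D_{\mathbb{H}^n}\chi_E=(X_1\chi_E,\dots,X_n\chi_E,Y_1\chi_E,\dots,Y_n\chi_E)$, so it suffices to identify this measure explicitly and then take its total variation.

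The first step is the observation that each generator, read as a Euclidean vector field on $\mathbb{R}^{2n+1}$, is divergence-free: from $X_j=\partial_{x_j}+2y_j\partial_t$ and $Y_j=\partial_{y_j}-2x_j\partial_t$ one reads off $\operatorname{div}_{\mathbb{R}^{2n+1}}X_j=\operatorname{div}_{\mathbb{R}^{2n+1}}Y_j=0$, since $2y_j$ and $-2x_j$ are independent of $t$. Letting $V$ be the ordinary Euclidean vector field with the same components as $\Phi$ (using $X_j=(e_j,0,2y_j)$ and $Y_j=(0,e_j,-2x_j)$), the Leibniz rule then gives $\operatorname{div}_{\mathbb{R}^{2n+1}}V=\sum_{j=1}^n(X_j\phi_j+Y_j\psi_j)=\operatorname{div}_{\mathbb{H}^n}\Phi$. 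Thus the horizontal and the Euclidean divergences coincide on horizontal fields, which is the bridge between the two settings.

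Applying the Euclidean divergence theorem to $V$ over $E$, legitimate by the $C^1$-regularity of $\partial E$, I would obtain, for every admissible $\Phi$,
$$
\int_E\operatorname{div}_{\mathbb{H}^n}\Phi\,dx=\int_{\partial E\cap\Omega}\langle V,n_E\rangle_{\mathbb{R}^{2n+1}}\,d\mathcal H^{2n}=\int_{\partial E\cap\Omega}\sum_{j=1}^n\big(\phi_j\langle X_j,n_E\rangle+\psi_j\langle Y_j,n_E\rangle\big)\,d\mathcal H^{2n}.
$$
This identifies each component of $D_{\mathbb{H}^n}\chi_E$, up to sign, with $\langle X_j,n_E\rangle\,\mathcal H^{2n}\res(\partial E\cap\Omega)$ and $\langle Y_j,n_E\rangle\,\mathcal H^{2n}\res(\partial E\cap\Omega)$: all components are absolutely continuous with respect to $\mathcal H^{2n}\res\partial E$, with continuous densities because $n_E$ is continuous on a $C^1$ boundary.

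It remains to compute the total variation of this vector-valued measure, and the attainment of the supremum is the main obstacle. By the polar (Radon--Nikodym) decomposition, the total variation of a vector measure with density $g=(\langle X_1,n_E\rangle,\dots,\langle Y_n,n_E\rangle)$ against $\mathcal H^{2n}\res\partial E$ is $|g|\,\mathcal H^{2n}\res\partial E$, with $|g|$ the Euclidean norm in $\mathbb{R}^{2n}$; concretely, the supremum over $\|\Phi\|_\infty\le1$ is realized at each non-characteristic point by the Cauchy--Schwarz-optimal choice $(\phi_j,\psi_j)$ proportional to $(\langle X_j,n_E\rangle,\langle Y_j,n_E\rangle)$. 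The delicate point is that this optimal field is built from $n_E$ on $\partial E$ rather than being, a priori, a global $C^1_c$ field; I would overcome this by extending the optimal coefficients to a neighborhood, mollifying to admissible fields with $\|\Phi\|_\infty\le1$, and passing to the limit by dominated convergence. At characteristic points $g$ vanishes, so they contribute nothing and need no separate treatment. Evaluating $|g|$ then yields exactly
$$
\sigma_{\mathbb{H}^n}^{\Omega,E}(\partial E)=\int_{\partial E\cap\Omega}\Big(\sum_{j=1}^n\big(\langle X_j,n_E\rangle_{\mathbb{R}^{2n+1}}^2+\langle Y_j,n_E\rangle_{\mathbb{R}^{2n+1}}^2\big)\Big)^{1/2}\,d\mathcal H^{2n},
$$
which is the claimed representation.
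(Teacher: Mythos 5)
The paper offers no proof of this proposition: it is recalled from the literature with a citation to \cite{capdangar}, and your argument is, in substance, the standard proof given there. Your proof is correct and follows essentially that same route: the key observations that each $X_j$, $Y_j$ is Euclidean divergence-free, so that $\operatorname{div}_{\mathbb{R}^{2n+1}}V=\operatorname{div}_{\mathbb{H}^n}\Phi$ for horizontal fields, then the Euclidean Gauss--Green theorem (available because $\partial E$ is Euclidean ${\mathbf C}^1$), and finally the identification of the variational supremum with the total variation via pointwise Cauchy--Schwarz plus an extend-and-mollify approximation of the optimal field --- with the characteristic set harmless since the density $\big(\sum_{j=1}^{n}(\langle X_j,n_E\rangle^2+\langle Y_j,n_E\rangle^2)\big)^{1/2}$ vanishes there, exactly as you note.
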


We also have:
\begin{prop}\label{divergence}
If $E$ is a regular bounded open set with Euclidean ${\mathbf C}^1$
boundary and $\phi$ is a horizontal vector field,
continuously differentiable on $ \overline{E} $, then
$$
\int_E \mathrm{div}_{\mathbb{H}^n}\ \phi\, dx = \int_{\partial E} \langle \phi, \nu_{\mathbb{H}^n}\rangle d \sigma_{\mathbb{H}^n}^{E},
$$
where $\nu_{\mathbb{H}^n}(x)$ is the intrinsic horizontal unit outward normal to $\partial E$,
given by the (normalized) projection of $n_E(x)$ on the fiber $H\mathbb{H}^n_x$ of
the horizontal fiber bundle $H\mathbb{H}^n$.
\end{prop}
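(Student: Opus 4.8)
The plan is to reduce the statement to the classical Euclidean divergence theorem in $\mathbb{R}^{2n+1}$, exploiting the fact that the generating vector fields of the horizontal bundle are divergence-free in the Euclidean sense. First I would write the horizontal field in the moving frame, $\phi=\sum_{j=1}^n(a_jX_j+b_jY_j)$ with $a_j,b_j\in {\mathbf C}^1(\overline{E})$, so that by definition (equivalently, by the skew-adjointness of $X_j,Y_j$ on $L^2(dx)$) one has $\mathrm{div}_{\mathbb{H}^n}\phi=\sum_{j=1}^n(X_ja_j+Y_jb_j)$.

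The key observation is that, regarded as ordinary vector fields on $\mathbb{R}^{2n+1}$, each $X_j=\partial_{x_j}+2y_j\partial_t$ and $Y_j=\partial_{y_j}-2x_j\partial_t$ has vanishing Euclidean divergence. Consequently, if $\Phi$ denotes $\phi$ read as a Euclidean vector field (with components $a_j$ in the $x_j$-slot, $b_j$ in the $y_j$-slot and $\sum_j(2y_ja_j-2x_jb_j)$ in the $t$-slot), a direct computation gives $\mathrm{div}_{\mathrm{eucl}}\Phi=\sum_j(X_ja_j+Y_jb_j)=\mathrm{div}_{\mathbb{H}^n}\phi$ pointwise. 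Since $E$ has Euclidean ${\mathbf C}^1$ boundary and $\Phi\in {\mathbf C}^1(\overline{E})$, the classical Gauss--Green theorem applies and yields
\[
\int_E \mathrm{div}_{\mathbb{H}^n}\phi\,dx=\int_E \mathrm{div}_{\mathrm{eucl}}\Phi\,dx=\int_{\partial E}\langle \Phi,n_E\rangle_{\mathbb{R}^{2n+1}}\,d\mathcal{H}^{2n}.
\]

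It then remains to match the boundary integrand with the intrinsic one. Since $\Phi$ is horizontal, $\langle \Phi,n_E\rangle_{\mathbb{R}^{2n+1}}=\sum_j(a_jc_j+b_jd_j)$, where $c_j\coloneqq\langle X_j,n_E\rangle_{\mathbb{R}^{2n+1}}$ and $d_j\coloneqq\langle Y_j,n_E\rangle_{\mathbb{R}^{2n+1}}$. Writing $N\coloneqq\big(\sum_j(c_j^2+d_j^2)\big)^{1/2}$, the definition of $\nu_{\mathbb{H}^n}$ as the normalized horizontal projection of $n_E$ gives $\nu_{\mathbb{H}^n}=N^{-1}\sum_j(c_jX_j+d_jY_j)$, so that in the intrinsic metric (in which the frame is orthonormal) $\langle \phi,\nu_{\mathbb{H}^n}\rangle=N^{-1}\sum_j(a_jc_j+b_jd_j)$. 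On the other hand Proposition \ref{perimetro regolare} reads precisely $d\sigma_{\mathbb{H}^n}=N\,d\mathcal{H}^{2n}$. Multiplying these two identities the factors $N$ cancel, and one obtains $\langle \phi,\nu_{\mathbb{H}^n}\rangle\,d\sigma_{\mathbb{H}^n}=\langle \Phi,n_E\rangle_{\mathbb{R}^{2n+1}}\,d\mathcal{H}^{2n}$ pointwise on the non-characteristic part of $\partial E$, which together with the displayed equality closes the proof.

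I expect the genuine content to lie in this last reconciliation step rather than in any analytic estimate: the subtlety is that the frame $\{X_j,Y_j\}$ is orthonormal only for the intrinsic metric, not the Euclidean one, so one must keep rigorously separate the two inner products and the two surface measures and verify that the normalization hidden in $\nu_{\mathbb{H}^n}$ exactly compensates the density $N$ relating $\sigma_{\mathbb{H}^n}$ to $\mathcal{H}^{2n}$. The only further point to address is the characteristic set $\{N=0\}$, where $\nu_{\mathbb{H}^n}$ is undefined; there both $d\sigma_{\mathbb{H}^n}$ and the horizontal boundary flux vanish, and since $\partial E$ is Euclidean ${\mathbf C}^1$ the characteristic set is $\mathcal{H}^{2n}$-negligible, hence contributes nothing to either integral.
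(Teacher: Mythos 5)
Your argument is correct, and there is in fact no proof in the paper to measure it against: Proposition \ref{divergence} is stated there as a known result imported from the literature (in the spirit of \cite{capdangar}, \cite{GN}, \cite{FSSC_houston}), so your reduction supplies exactly the argument the paper leaves implicit. The mechanism you isolate is the right one: each generator $X_j=\partial_{x_j}+2y_j\partial_t$, $Y_j=\partial_{y_j}-2x_j\partial_t$ is divergence-free as a Euclidean field on $\mathbb{R}^{2n+1}$, whence $\mathrm{div}_{\mathrm{eucl}}\Phi=\sum_j(X_ja_j+Y_jb_j)=\mathrm{div}_{\mathbb{H}^n}\phi$ pointwise, and the classical Gauss--Green theorem applies. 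Your boundary reconciliation is also the genuinely delicate step, and you handle it correctly: writing $c_j=\langle X_j,n_E\rangle_{\mathbb{R}^{2n+1}}$, $d_j=\langle Y_j,n_E\rangle_{\mathbb{R}^{2n+1}}$ and $N=\big(\sum_j(c_j^2+d_j^2)\big)^{1/2}$, the normalization hidden in $\nu_{\mathbb{H}^n}$ contributes a factor $N^{-1}$ to $\langle\phi,\nu_{\mathbb{H}^n}\rangle$ (computed in the intrinsic metric, the only one in which the frame is orthonormal), and this cancels precisely against the density $N$ of $\sigma_{\mathbb{H}^n}$ with respect to $\mathcal{H}^{2n}$ furnished by Proposition \ref{perimetro regolare}. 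Note that this reading of ``(normalized) projection'' --- coefficients given by the \emph{Euclidean} pairings $c_j,d_j$ against the frame, normalized intrinsically --- is the convention of the literature and the only one under which the proposition is true, since the frame is not Euclidean-orthonormal; your proof makes that consistency visible rather than assuming it.

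One side-claim should be deleted, though it costs you nothing. The assertion that the characteristic set $\{N=0\}$ is $\mathcal{H}^{2n}$-negligible for a merely Euclidean ${\mathbf C}^1$ boundary is false in general: by a result of Z.~Balogh (J. Reine Angew. Math. 564 (2003)) there exist ${\mathbf C}^1$ hypersurfaces in $\mathbb{H}^n$ whose characteristic set has positive $\mathcal{H}^{2n}$-measure; negligibility requires more regularity (e.g. ${\mathbf C}^{1,1}$). Fortunately the first half of your own sentence already contains the correct and sufficient argument: on the characteristic set one has $c_j=d_j=0$ for all $j$, so the Euclidean flux integrand $\langle\Phi,n_E\rangle_{\mathbb{R}^{2n+1}}=\sum_j(a_jc_j+b_jd_j)$ vanishes \emph{pointwise} there, while on the intrinsic side the set is $\sigma_{\mathbb{H}^n}$-null because the density $N$ vanishes on it (and $\nu_{\mathbb{H}^n}$ is defined $\sigma_{\mathbb{H}^n}$-a.e.\ by the Remark following the proposition, citing \cite{magnani}). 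Hence both integrals ignore the characteristic set regardless of its $\mathcal{H}^{2n}$-size, and the proof closes without any negligibility statement.
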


\begin{oss}
The definition of $\nu_{\mathbb{H}^n}$ is well done, since $H\mathbb{H}^n_x$ is transversal to
the tangent space of $E$ at $x,$ for  $\sigma_{\mathbb{H}^n}^{E}(\partial E)$-a.e. $x\in\partial E$
(see \cite{magnani}).
\end{oss}

Following \cite{ACF} and \cite{Fe} in the Heisenberg framework, we conclude, by applying the definition of solution in the sense of the domain variation to the functional
 \[
 \mathcal{E}_{\mathbb{H}^n}(v)\coloneqq \int_{\Omega}\left(|\nabla_{\mathbb{H}^n} v|^2+\chi_{\{v>0\}}\right)dx,
\]
$\Omega \subset\mathbb{H}^n,$
 that the parallel two-phase problem to (\ref{two_phase_classical})
  is:
\begin{equation}\label{two_phase_Heisenberg}
\begin{cases}
\Delta_{\mathbb{H}^n} u=0& \mbox{in }\Omega^+(u)\coloneqq  \{x\in \Omega:\hspace{0.1cm} u(x)>0\},\\
\Delta_{\mathbb{H}^n} u=0& \mbox{in }\Omega^-(u)\coloneqq \mbox{Int}(\{x\in \Omega:\hspace{0.1cm} u(x)\leq 0\}),\\
|\nabla_{\mathbb{H}^n} u^+|^2-|\nabla_{\mathbb{H}^n} u^-|^2=1&\mbox{on }\mathcal{F}(u)\coloneqq \partial \Omega^+(u)\cap \Omega.
\end{cases}
\end{equation}
Thus, the candidate to give an Alt-Caffarelli-Friedman monotonicity formula in the Heisenberg group is the following function:
\begin{equation}\label{monofondformula}
J_{\beta,\mathbb{H}^n}(r)= r^{-\beta}\int_{B_r^{{\mathbb{H}^n}}(0)}\frac{\mid\nabla_{\mathbb{H}^n} u^+\mid^2}{|\zeta |_{\mathbb{H}^n}^{Q-2}}d\zeta\int_{B_r^{{\mathbb{H}^n}}(0)}\frac{\mid\nabla_{\mathbb{H}^n} u^-\mid^2}{|\zeta|_{\mathbb{H}^n}^{Q-2}}d\zeta,
\end{equation}
where $\beta>0$ is a suitable fixed exponent and $u^+\coloneqq \sup\{u,0\}$ and $u^-\coloneqq \sup\{-u,0\},$ being $0\in \mathcal{F}(u).$

 We refer to \cite{BLU} for the following statements:
\begin{defi}\label{definition-mollifier}
Let $u:\mathbb{H}^n\to [-\infty,+\infty)$ be a function. Let $J\in C_0^{\infty}(\mathbb{H}^n),$ $J\geq 0$ such that $\mbox{supp}(J)\subset D_1(0)$ and $\int_{\mathbb{H}^n}J=1.$ For every positive number $\varepsilon,$ we define $u_{\varepsilon}$ to be the Friedrichs mollifier of $u$ as:
$$
u_{\varepsilon}(x)=\varepsilon^{-Q}\int_{\mathbb{H}^n}u(-y\circ x)J(\delta_{\varepsilon^{-1}}(y))dy.
$$ 
\end{defi}
\begin{prop}\label{subharmonicity-mollifier}
Let $u:\mathbb{H}^n\to [-\infty,+\infty)$ be a $\mathbb{H}^n-$ subharmonic function. Then $u_\varepsilon\in C^\infty(\mathbb{H}^n)$ is $\mathbb{H}^n-$ subharmonic, and $u_\varepsilon \to u$ in $L^1_{\mathrm{loc}}(\mathbb{H}^n)$ as $\varepsilon\to 0^+.$
\end{prop}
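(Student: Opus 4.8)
The plan is to view $u_\varepsilon$ as a left-invariant group convolution of $u$ against the non-negative kernel $J_\varepsilon(y)\coloneqq \varepsilon^{-Q}J(\delta_{\varepsilon^{-1}}(y))$, and to exploit two structural facts: that $\Delta_{\mathbb{H}^n}$ is left-invariant, being built from the left-invariant fields $X_i,Y_i$, and that a $\mathbb{H}^n$-subharmonic $u$ lies in $L^1_{\mathrm{loc}}(\mathbb{H}^n)$ and carries a non-negative Riesz measure $\mu\coloneqq \Delta_{\mathbb{H}^n}u\geq 0$ (standard sub-Laplacian potential theory, see \cite{BLU}). Note first that $\int_{\mathbb{H}^n}J_\varepsilon=1$, by the dilation change of variables $y=\delta_\varepsilon z$, whose Jacobian is $\varepsilon^Q$.

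For the smoothness I would perform the substitution $w=y^{-1}\circ x$, that is $y=x\circ w^{-1}$; since the Haar measure of $\mathbb{H}^n$ is Lebesgue measure on $\mathbb{R}^{2n+1}$, is bi-invariant, and is preserved by inversion, this yields
\[
u_\varepsilon(x)=\varepsilon^{-Q}\int_{\mathbb{H}^n}u(w)\,J\big(\delta_{\varepsilon^{-1}}(x\circ w^{-1})\big)\,dw.
\]
Because $J\in C_0^\infty(\mathbb{H}^n)$ and the map $(x,w)\mapsto \delta_{\varepsilon^{-1}}(x\circ w^{-1})$ is smooth, while for $x$ in a fixed compact set the integrand is supported in a fixed compact set of $w$'s on which $u$ is integrable, one may differentiate under the integral sign to every order; hence $u_\varepsilon\in C^\infty(\mathbb{H}^n)$.

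For the preservation of subharmonicity I would observe that $u(y^{-1}\circ x)=(u\circ L_{y^{-1}})(x)$, with $L_g(x)\coloneqq g\circ x$, exhibits $u_\varepsilon$ as a $J_\varepsilon$-weighted average of left translates of $u$. Testing against an arbitrary $0\le \phi\in C_0^\infty(\mathbb{H}^n)$ and using Fubini together with the left-invariance of both $\Delta_{\mathbb{H}^n}$ and the Haar measure (so that $(\Delta_{\mathbb{H}^n}\phi)(y\circ z)=\Delta_{\mathbb{H}^n}\big[\phi(y\circ\,\cdot\,)\big](z)$), I expect
\[
\int_{\mathbb{H}^n}u_\varepsilon\,\Delta_{\mathbb{H}^n}\phi\,dx=\int_{\mathbb{H}^n}J_\varepsilon(y)\Big(\int_{\mathbb{H}^n}u(z)\,\Delta_{\mathbb{H}^n}\big[\phi(y\circ\,\cdot\,)\big](z)\,dz\Big)\,dy\ge 0,
\]
the inner integral being $\langle\mu,\phi(y\circ\,\cdot\,)\rangle\ge 0$ and $J_\varepsilon\ge 0$. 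Since $u_\varepsilon$ is already smooth, this distributional inequality upgrades to $\Delta_{\mathbb{H}^n}u_\varepsilon\ge 0$ pointwise, so $u_\varepsilon$ is $\mathbb{H}^n$-subharmonic.

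Finally, the convergence $u_\varepsilon\to u$ in $L^1_{\mathrm{loc}}(\mathbb{H}^n)$ is the usual approximate-identity statement: $\supp J_\varepsilon\subset \delta_\varepsilon(D_1(0))$ shrinks to $\{0\}$, $\int_{\mathbb{H}^n} J_\varepsilon=1$, and left translation is continuous in $L^1$ on compact sets. The main obstacle I anticipate is purely the bookkeeping forced by non-commutativity: one must convolve on the side for which the left-invariance of $X_i,Y_i$ genuinely applies, and must justify transferring $\Delta_{\mathbb{H}^n}$ from the merely-measure object $u$ onto the smooth factor $\phi(y\circ\,\cdot\,)$ before invoking $\mu\ge0$. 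Once the correct convolution side is fixed, the remaining steps are routine.
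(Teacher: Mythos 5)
The paper does not actually prove this proposition: it is quoted, together with Definition \ref{definition-mollifier}, directly from \cite{BLU}, so there is no in-paper argument to compare yours against. Your proof is the standard one and is correct as a sketch: the substitution $w=y^{-1}\circ x$ (measure-preserving since Lebesgue measure is the bi-invariant Haar measure and is preserved by inversion) gives smoothness by differentiation under the integral; left-invariance of $X_i,Y_i$, hence of $\Delta_{\mathbb{H}^n}$, plus Fubini reduces subharmonicity of $u_\varepsilon$ to the nonnegativity of $\langle\Delta_{\mathbb{H}^n}u,\phi(y\circ\cdot)\rangle$; and the $L^1_{\mathrm{loc}}$ convergence is the usual approximate-identity argument. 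The one substantive ingredient you import without proof --- that an $\mathbb{H}^n$-subharmonic function (u.s.c., not identically $-\infty$) is locally integrable and has nonnegative distributional sub-Laplacian --- is exactly the nontrivial piece of potential theory for which the paper cites \cite{BLU}, so your argument is self-contained only modulo that same reference; it would also be worth stating explicitly that $u\not\equiv-\infty$ is assumed, as the proposition's codomain $[-\infty,+\infty)$ otherwise makes $u_\varepsilon$ ill-defined.
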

Let $\mathcal{S}(\mathbb{H}^n)$ be the set of the $\mathbb{H}^n-$subharmonic functions in $\mathbb{H}^n.$ Then if $u\in \mathcal{S}(\mathbb{H}^n),$  $L_u:C_0^\infty(\mathbb{H}^n)\to \mathbb{R},$
$$
 L_u(\varphi)=\int_{\mathbb{H}^n}u(x)\Delta_{\mathbb{H}^n}\varphi(x)dx 
 $$
 is positive, i.e. if $u\in \mathcal{S}(\mathbb{H}^n)$ then $\Delta_{\mathbb{H}^n}u\geq 0$ in the distributional sense.
 
 \section{Some key steps in the Euclidean case}
Since we closely follow the Euclidean proof, we recall the main steps.
After a straightforward differentiation, it results, in view of \eqref{expression-of-Phi},
$$
\Phi'(r)=I_{1}(r)I_{2}(r)r^{-5}\left(-4+r\left(\frac{I_{1}'}{I_{1}}+\frac{I_{2}'}{I_{2}}\right)\right),
$$
where for $i=1,2:$
$$
I_i(r)=\int_{B_r(0)}\frac{|\nabla u_i(x)|^2}{|x|^{n-2}}dx.
$$
By a rescaling argument the problem may be reduced to 
\begin{equation}\label{derivative-of-Phi-rescaled}
\Phi'(r)=I_{1}(r)I_{2}(r)r^{-5}\left(-4+\frac{\displaystyle\int_{\partial B_1(0)}|\nabla u_1(x)|^2d\sigma}{\displaystyle\int_{ B_1(0)}\frac{|\nabla u_1(x)|^2}{|x|^{n-2}}dx}+\frac{\displaystyle\int_{\partial B_1(0)}|\nabla u_2(x)|^2d\sigma}{\displaystyle\int_{ B_1(0)}\frac{|\nabla u_2(x)|^2}{|x|^{n-2}}dx}\right).
\end{equation}
Thus, if 
$$
-4+\frac{\displaystyle\int_{\partial B_1(0)}|\nabla u_1(x)|^2d\sigma}{\displaystyle\int_{ B_1(0)}\frac{|\nabla u_1(x)|^2}{|x|^{n-2}}dx}+\frac{\displaystyle\int_{\partial B_1(0)}|\nabla u_2(x)|^2d\sigma}{\displaystyle\int_{ B_1(0)}\frac{|\nabla u_2(x)|^2}{|x|^{n-2}}dx}\geq 0,
$$
then, from \eqref{derivative-of-Phi-rescaled}, $\Phi'(r)\geq 0.$ 
Hence, in order to prove that previous inequality holds,  the following ratios
$$
\frac{\displaystyle\int_{\partial B_1(0)}|\nabla u_i(x)|^2d\sigma}{\displaystyle\int_{ B_1(0)}\frac{|\nabla u_i(x)|^2}{|x|^{n-2}}dx}
$$
for $i=1,2,$ have to be estimated.

Since the gradient may split in two orthogonal parts involving the radial part and the tangential part, respectively denoted by $\nabla^\rho u_i$ and $\nabla^\theta u_i,$ it holds
$$
|\nabla u_i(x)|^2=|\nabla^\rho u_i(x)|^2+|\nabla^\theta u_i(x)|^2,
$$
so that, using a further integration by parts, H\"older and Cauchy inequality and the facts that $\left|x\right|^{2-n}$ is, up to a multiplicative constant, the fundamental solution of $\Delta$ and $0\in \mathcal{F}(u_i),$ $i=1,2,$ we achieve, for every $\beta_i\in (0,1),$
\begin{equation*}\begin{split}
&\frac{\displaystyle\int_{\partial B_1(0)}|\nabla u_i(x)|^2d\sigma}{\displaystyle\int_{ B_1(0)}\frac{|\nabla u_i(x)|^2}{|x|^{n-2}}dx}=\frac{\displaystyle\int_{\Gamma_i}\left(|\nabla^\rho u_i(x)|^2+|\nabla^\theta u_i(x)|^2\right)d\sigma}{\displaystyle\int_{ \Gamma_i}\Big(u_i(x)|\nabla^\rho u_i(x)|+\frac{n-2}{2} u_i^2(x)\Big)d\sigma}\\
&\geq \frac{2\bigg(\displaystyle\int_{\Gamma_i}|\nabla^\rho u_i(x)|^2d\sigma\bigg)^{\frac{1}{2}}\bigg(\int_{\Gamma_i}\beta_i \lambda(\Gamma_i)  u_i^2(x)d\sigma\bigg)^{\frac{1}{2}}+(1-\beta_i)\lambda(\Gamma_i)\int_{\Gamma_i}u_i^2(x)d\sigma}{\bigg(\displaystyle \int_{ \Gamma_i} |\nabla^\rho u_i(x)|^2d\sigma\bigg)^{\frac{1}{2}}\bigg(\int_{ \Gamma_i}u_i^2(x)d\sigma\bigg)^{\frac{1}{2}}+\frac{n-2}{2} \int_{ \Gamma_i}u_i^2(x)d\sigma}\\
&=:\frac{2(\beta_i\lambda(\Gamma_i))^{\frac{1}{2}}\xi_i\eta_i+(1-\beta_i)\lambda (\Gamma_i)\eta_i^2}{\xi_i\eta_i+\frac{n-2}{2} \eta_i^2}\\.
\end{split}
\end{equation*}
Therefore,
\begin{equation*}\begin{split}
&\frac{\displaystyle\int_{\partial B_1(0)}|\nabla u_i(x)|^2d\sigma}{\displaystyle\int_{ B_1(0)}\frac{|\nabla u_i(x)|^2}{|x|^{n-2}}dx}= \frac{2(\beta_i\lambda(\Gamma_i))^{\frac{1}{2}}+(1-\beta_i)\lambda (\Gamma_i)\frac{\eta_i}{\xi_i}}{1+\frac{n-2}{2} \frac{\eta_i}{\xi_i}}\\
&\geq \inf_{z\geq 0} \frac{2(\beta_i\lambda(\Gamma_i))^{\frac{1}{2}}+(1-\beta_i)\lambda (\Gamma_i)z}{1+\frac{n-2}{2} z}=2\min\left\{\frac{\lambda (\Gamma_i)}{n-2}(1-\beta_i),(\beta_i\lambda(\Gamma_i))^{\frac{1}{2}}\right\},
\end{split}
\end{equation*}
where $\Gamma_i\coloneqq \{x\in \partial B_1(0):\hspace{0.2cm} u_i(x)>0\}$ and
$$
\lambda(\Gamma_i)\coloneqq \inf_{v\hspace{0.05cm}\in\hspace{0.05cm} H_0^1(\Gamma_i)}\frac{\displaystyle\int_{\Gamma_i}|\nabla^\theta v(x)|^2d\sigma}{\displaystyle\int_{\Gamma_i}v^2(x)\hspace{0.05cm}d\sigma}
$$
are the Rayleigh quotients for $i=1,2.$

At this point, if we choose $\beta_i$ in such a way that 
$$
\frac{\lambda (\Gamma_i)}{n-2}(1-\beta_i)=(\beta_i\lambda(\Gamma_i))^{\frac{1}{2}}
$$
we realize, by denoting  $\alpha_i\coloneqq (\beta_i\lambda(\Gamma_i))^{\frac{1}{2}},$ that previous equation is satisfied if and only if
$$
 \alpha_i^2+(n-2)\alpha_i-\lambda(\Gamma_i)=0.
$$
On the other hand, because a function $u=\rho^{\alpha}g(\theta),$ $\theta\coloneqq (\theta_1,\dots,\theta_{n-1}),$ is harmonic in a cone determined by a domain $\Gamma$ whenever
$$
\rho^{\alpha-1}\left(( \alpha(\alpha-1)+\alpha (n-1))g(\theta)+\Delta_{\theta} g\right)=0,
$$
we deduce that there exists $\alpha_i$ such that
$$
\alpha_i(\alpha_i-1)+\alpha_i (n-1)=\lambda(\Gamma_i).
$$
By the structure of the equation, it immediately comes that there always exists a strictly positive solution, so that we have to prove the existence of $\beta_i\in (0,1)$ such that
\begin{equation}\label{betaf}
\frac{-(n-2)+\sqrt{(n-2)^2+4\lambda(\Gamma_i)}}{2}=(\beta_i\lambda(\Gamma_i))^{\frac{1}{2}}.
\end{equation}
Specifically, the last relationship is equivalent to solve
$$
\frac{4\lambda(\Gamma_i)}{(n-2)+\sqrt{(n-2)^2+4\lambda(\Gamma_i)}}=2(\beta_i\lambda(\Gamma_i))^{\frac{1}{2}},
$$
that is
$$
\frac{2\lambda(\Gamma_i)^{\frac{1}{2}}}{(n-2)+\sqrt{(n-2)^2+4\lambda(\Gamma_i)}}=\beta_i^{\frac{1}{2}}.
$$
Now, since the continuous positive function defined in $[0+\infty)$ as 
$$
z\to\frac{z}{(n-2)+\sqrt{(n-2)^2+z^2}},
$$
is strictly increasing, $\bigg(\frac{z}{(n-2)+\sqrt{(n-2)^2+z^2}}\bigg)(0)=0$ and $\sup_{[0,+\infty)} \frac{z}{(n-2)+\sqrt{(n-2)^2+z^2}}=1,$ we conclude that for every $\lambda(\Gamma_i)>0,$ there exists $\beta_i$ such that
(\ref{betaf}) holds. In particular, we get
$$
\beta_i=\left(\frac{2\lambda(\Gamma_i)^{\frac{1}{2}}}{(n-2)+\sqrt{(n-2)^2+4\lambda(\Gamma_i)}}\right)^2.
$$
Hence, with previous choice of $\beta_i,$ if we denote by
$$
\alpha_i\coloneqq \min\left\{\frac{\lambda (\Gamma_i)}{n-2}(1-\beta_i),(\beta_i\lambda(\Gamma_i))^{\frac{1}{2}}\right\},
$$
which is also the exponent corresponding to the eigenvalue given by the Rayleigh quotient $\lambda (\Gamma_i),$ we conclude that, whenever
\begin{equation}\label{belowestimate}
\alpha_1+\alpha_2\geq 2,
\end{equation}
then $\Phi'\geq 0.$
The number $\alpha_i$ is called the characteristic constant of $\Gamma_i,$ see (\cite{FH}). So, for concluding this proof we would need to know that (\ref{belowestimate}) holds.
Indeed, assuming that the minimal configuration  in splitting the surface ball is given by two convex components that are symmetric, the result in the Euclidean setting is proved, see \cite{BKP} and \cite{CS}. This last part passes through a non-trivial discussion based on a rearrangement argument that we omit here, see \cite{CS}, \cite{Sperner}, \cite{FH} and \cite{FeFo} for further details.

\section{Some estimates in the Heisenberg group}
In this section, we provide some partial steps in order to deal with a monotonicity formula associated with $J_{\beta,\mathbb{H}^n}$.

\begin{lem}\label{limitato}
There exists a positive constant $c=c(Q)$ such that for every non-negative $\mathbb{H}^n-$subharmonic function $u\in C(B^{\mathbb{H}^n}_1(0)),$ if $u(0)=0$ then 
$$
\int_{B^{\mathbb{H}^n}_\rho(0)}\frac{\mid\nabla_{\mathbb{H}^n} u(\zeta)\mid^2}{|\zeta|_{\mathbb{H}^n}^{Q-2}}d\zeta\leq  c\rho^{-Q}\int_{{B^{\mathbb{H}^n}_{2\rho}(0)}\setminus{B^{\mathbb{H}^n}_\rho(0)}}u^2(\zeta)\hspace{0.05cm}d\zeta.
$$
\end{lem}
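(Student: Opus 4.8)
The plan is to adapt the classical Caffarelli--Friedman argument that bounds a Dirichlet-type energy by an $L^2$ norm on an annulus, working with the Kohn--Laplace operator in place of $\Delta$ and with the fundamental solution $|\zeta|_{\mathbb{H}^n}^{2-Q}$ in place of $|x|^{2-n}$. The key identity to exploit is that, for a smooth $\mathbb{H}^n$-subharmonic function $u$ with $u(0)=0$, one has on $B^{\mathbb{H}^n}_{2\rho}(0)$ the integration-by-parts relation
$$
\int |\nabla_{\mathbb{H}^n} u|^2\, \eta\, |\zeta|_{\mathbb{H}^n}^{2-Q}\,d\zeta
= -\int u\,\nabla_{\mathbb{H}^n} u\cdot \nabla_{\mathbb{H}^n}\!\big(\eta\,|\zeta|_{\mathbb{H}^n}^{2-Q}\big)\,d\zeta
+ \int u\,\eta\,|\zeta|_{\mathbb{H}^n}^{2-Q}\,\Delta_{\mathbb{H}^n} u\,d\zeta,
$$
where $\eta$ is a suitable cutoff. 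Since $\Delta_{\mathbb{H}^n}u\geq 0$ and $u\geq 0$, the last term has a favourable sign, so it can be dropped after I arrange the cutoff correctly; the $u(0)=0$ hypothesis and subharmonicity will let me discard the singular contribution of the fundamental solution at the origin (as in the Euclidean case, $|\zeta|_{\mathbb{H}^n}^{2-Q}$ is, up to a constant, the fundamental solution of $\Delta_{\mathbb{H}^n}$, so $\nabla_{\mathbb{H}^n}(|\zeta|_{\mathbb{H}^n}^{2-Q})$ produces a measure concentrated at $0$ that is killed by $u(0)=0$).

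First I would reduce to smooth $u$ by mollification: by Proposition \ref{subharmonicity-mollifier} the Friedrichs mollifiers $u_\varepsilon$ are smooth, $\mathbb{H}^n$-subharmonic and converge to $u$ in $L^1_{\mathrm{loc}}$, so once the inequality is established for smooth subharmonic functions I can pass to the limit (the right-hand side is an $L^2$ integral on the annulus, which is stable under the convergence, and Fatou handles the left-hand side). Second, for smooth $u$ I choose a cutoff $\eta$ that equals $1$ on $B^{\mathbb{H}^n}_\rho(0)$ and is supported in $B^{\mathbb{H}^n}_{2\rho}(0)$, with $|\nabla_{\mathbb{H}^n}\eta|\lesssim \rho^{-1}$; this localizes the left-hand integral to $B^{\mathbb{H}^n}_\rho(0)$ from below. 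Third, I insert $\eta$ into the energy, integrate by parts, and use the sign of $\Delta_{\mathbb{H}^n}u$ to throw away the resulting nonnegative term. Fourth, I estimate the remaining term $\int u\,\nabla_{\mathbb{H}^n} u\cdot\nabla_{\mathbb{H}^n}(\eta\,|\zeta|_{\mathbb{H}^n}^{2-Q})$ by Cauchy--Schwarz, absorbing the factor $\int |\nabla_{\mathbb{H}^n} u|^2\,\eta\,|\zeta|_{\mathbb{H}^n}^{2-Q}$ back into the left-hand side and leaving a term controlled by $\int_{\mathrm{supp}\,\nabla_{\mathbb{H}^n}\eta} u^2\,|\nabla_{\mathbb{H}^n}(\eta\,|\zeta|_{\mathbb{H}^n}^{2-Q})|^2\,|\zeta|_{\mathbb{H}^n}^{Q-2}\,d\zeta$.

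The homogeneity bookkeeping is what makes the power $\rho^{-Q}$ come out correctly. On the annulus $B^{\mathbb{H}^n}_{2\rho}\setminus B^{\mathbb{H}^n}_\rho$ one has $|\zeta|_{\mathbb{H}^n}\approx\rho$, so $|\zeta|_{\mathbb{H}^n}^{2-Q}\approx\rho^{2-Q}$ and $|\nabla_{\mathbb{H}^n}(|\zeta|_{\mathbb{H}^n}^{2-Q})|\approx\rho^{1-Q}$ (the horizontal gradient of the gauge is bounded, being $\mathbb{H}^n$-homogeneous of degree $0$), while $|\nabla_{\mathbb{H}^n}\eta|\lesssim\rho^{-1}$; combining these, the weight $|\nabla_{\mathbb{H}^n}(\eta\,|\zeta|_{\mathbb{H}^n}^{2-Q})|^2\,|\zeta|_{\mathbb{H}^n}^{Q-2}$ scales like $\rho^{2(1-Q)}\cdot\rho^{Q-2}=\rho^{-Q}$, which is exactly the claimed exponent, with a constant $c$ depending only on $Q$. \textbf{The main obstacle} I anticipate is the boundary/origin term: unlike the Euclidean gauge, $|\zeta|_{\mathbb{H}^n}$ is only the gauge (Kor\'anyi) norm, so I must be careful that $\nabla_{\mathbb{H}^n}|\zeta|_{\mathbb{H}^n}^{2-Q}$ really is (a constant times) the fundamental-solution kernel whose distributional Laplacian is a Dirac mass at $0$, and that the characteristic set of the gauge sphere does not spoil the integration by parts. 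This is precisely where $u(0)=0$ enters, and making the limiting/approximation argument around the singularity rigorous — rather than the scaling count — is the delicate point.
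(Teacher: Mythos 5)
Your overall architecture (mollify, insert a cutoff times the fundamental solution, use $\Delta_{\mathbb{H}^n}|\zeta|_{\mathbb{H}^n}^{2-Q}=c\,\delta_0$ together with $u(0)=0$ to kill the singular term, and read off $\rho^{-Q}$ from homogeneity on the annulus) is the same as the paper's, and your scaling count is correct. But your step ``Fourth'' has a genuine gap. After the first integration by parts the remaining term splits as
$$
-\int u\,\nabla_{\mathbb{H}^n}u\cdot\nabla_{\mathbb{H}^n}(\eta\gamma)\,d\zeta
=-\int u\,\gamma\,\nabla_{\mathbb{H}^n}u\cdot\nabla_{\mathbb{H}^n}\eta\,d\zeta
-\int u\,\eta\,\nabla_{\mathbb{H}^n}u\cdot\nabla_{\mathbb{H}^n}\gamma\,d\zeta ,
$$
and only the first piece is supported in $\supp\nabla_{\mathbb{H}^n}\eta$. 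The second piece lives on all of $B^{\mathbb{H}^n}_{2\rho}(0)$, including near the origin, and if you try to absorb it by Cauchy--Schwarz you are led to the weight $u^2\,|\nabla_{\mathbb{H}^n}\gamma|^2/\gamma\approx u^2\,|\zeta|_{\mathbb{H}^n}^{-Q}$, which is \emph{not} integrable near $0$ under the sole hypothesis that $u$ is continuous with $u(0)=0$ (e.g.\ $u\sim(\log(1/|\zeta|_{\mathbb{H}^n}))^{-1/2}$ makes it diverge). So the absorption argument as you describe it does not close. The fix is to treat this piece by a \emph{second} integration by parts: write $u\,\nabla_{\mathbb{H}^n}u=\tfrac12\nabla_{\mathbb{H}^n}(u^2)$ and transfer the derivative onto $\eta\,\nabla_{\mathbb{H}^n}\gamma$, which produces exactly the Dirac mass $\eta\,\Delta_{\mathbb{H}^n}\gamma$ (killed by $u^2(0)=0$) plus a term with $\nabla_{\mathbb{H}^n}\eta\cdot\nabla_{\mathbb{H}^n}\gamma$, which is again confined to the annulus and scales like $\rho^{-Q}$. (Note also a sign slip: in your displayed identity the subharmonicity term should enter with a minus sign; with your sign it could not be dropped.)

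The paper avoids this difficulty altogether by never splitting $|\nabla_{\mathbb{H}^n}u|^2$ as $\nabla u\cdot\nabla u$: it first proves the pointwise (then distributional) inequality $2|\nabla_{\mathbb{H}^n}u|^2\le\Delta_{\mathbb{H}^n}(u^2)$, valid because $u\ge0$ and $\Delta_{\mathbb{H}^n}u\ge0$, and then tests $\Delta_{\mathbb{H}^n}(u^2)$ against $\psi\gamma_\varepsilon$. Expanding $\Delta_{\mathbb{H}^n}(\psi\gamma_\varepsilon)=\psi\Delta_{\mathbb{H}^n}\gamma_\varepsilon+2\langle\nabla_{\mathbb{H}^n}\psi,\nabla_{\mathbb{H}^n}\gamma_\varepsilon\rangle+\gamma_\varepsilon\Delta_{\mathbb{H}^n}\psi$, every term either carries the Dirac mass or a derivative of $\psi$, hence is automatically localized to the annulus; no Cauchy--Schwarz or absorption is needed. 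You should either adopt that route or add the second integration by parts described above.
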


\begin{proof}
Let $u_\varepsilon$ be the Friedrichs mollifier of $u.$ Then, by hypothesis, Definition \ref{definition-mollifier} and Proposition \ref{subharmonicity-mollifier}, we have
$$
\Delta_{\mathbb{H}^n}u_\varepsilon^2=2u_\varepsilon\Delta_{\mathbb{H}^n}u_\varepsilon+2\mid\nabla_{\mathbb{H}^n}u_\varepsilon\mid^2\geq 2\mid\nabla_{\mathbb{H}^n}u_\varepsilon\mid^2.
$$
Hence, for every positive test function $\varphi\in C_0^\infty(B^{\mathbb{H}^n}_1(0))$

\begin{equation*}
\begin{split}
&\int_{B^{\mathbb{H}^n}_1(0)}u^2\Delta_{\mathbb{H}^n}\varphi \hspace{0.1cm}d\zeta=\lim_{\varepsilon \to 0}\int_{B^{\mathbb{H}^n}_1(0)}u_\varepsilon^2\Delta_{\mathbb{H}^n}\varphi\hspace{0.1cm}d\zeta=\lim_{\varepsilon \to 0}\int_{B^{\mathbb{H}^n}_1(0)}\varphi\Delta_{\mathbb{H}^n}u_\varepsilon^2\hspace{0.1cm}d\zeta\\
&\geq \lim_{\varepsilon \to 0} 2 \int_{B^{\mathbb{H}^n}_1(0)}\varphi|\nabla_{\mathbb{H}^n}u_\varepsilon|^2d\zeta = 2 \int_{B^{\mathbb{H}^n}_1(0)}\varphi|\nabla_{\mathbb{H}^n}u|^2d\zeta.
\end{split}
\end{equation*}
Thus
\begin{equation}\label{inequality-distributional-sense}
\int_{B^{\mathbb{H}^n}_1(0)}u^2\Delta_{\mathbb{H}^n}\varphi \hspace{0.1cm}d\zeta\geq 2 \int_{B^{\mathbb{H}^n}_1(0)}\varphi|\nabla_{\mathbb{H}^n}u|^2d\zeta.
\end{equation}
As a consequence, $u\in H^1_{{\mathbb{H}^1}_{\mathrm{loc}}}(B^{\mathbb{H}^n}_1(0))$ and $\Delta_{\mathbb{H}^n}u^2\geq 2\mid\nabla_{\mathbb{H}^n}u\mid^2$ as a distribution. 
Let now $\psi$ be a cutoff function, $\psi\equiv 1$ in $B^{\mathbb{H}^n}_{\rho}(0),$ $\psi\equiv 0$ in $B^{\mathbb{H}^n}_{\rho}(0),$ $0<\rho<\frac{1}{2}.$ We also set
\begin{equation}\label{definition-gamma-epsilon}
\gamma_\varepsilon=\eta_\varepsilon\ast \gamma,
\end{equation}
where $\gamma(\zeta)=|\zeta|_{\mathbb{H}^n}^{2-Q}$ and $\eta_\varepsilon$ is an approximation of the identity. Then, $\psi\gamma_\varepsilon$ is a positive test function in $B^{\mathbb{H}^n}_{\rho}(0),$ thus, in view of \eqref{inequality-distributional-sense} and \eqref{definition-gamma-epsilon}, we achieve
\begin{equation*}
\begin{split}
&2\int_{B^{\mathbb{H}^n}_{\rho}(0)}\psi\gamma_\varepsilon\mid\nabla_{\mathbb{H}^n}u\mid^2 d\zeta\leq 2\int_{B^{\mathbb{H}^n}_{2\rho}(0)}\psi\gamma_\varepsilon\mid\nabla_{\mathbb{H}^n}u\mid^2 d\zeta\leq\int_{B^{\mathbb{H}^n}_{2\rho}(0)}\psi\gamma_\varepsilon\Delta_{\mathbb{H}^n}u^2\hspace{0.1cm}d\zeta\\
&=\int_{B^{\mathbb{H}^n}_{2\rho}(0)}\Delta_{\mathbb{H}^n}(\psi\gamma_\varepsilon)u^2\hspace{0.1cm}d\zeta= \int_{B^{\mathbb{H}^n}_{2\rho}(0)}\psi\Delta_{\mathbb{H}^n}(\gamma_\varepsilon)u^2\hspace{0.1cm}d\zeta+2\int_{B^{\mathbb{H}^n}_{2\rho}(0)\setminus B^{\mathbb{H}^n}_{\rho}(0)}\langle\nabla_{\mathbb{H}^n}\psi,\nabla_{\mathbb{H}^n}\gamma_\varepsilon\rangle u^2\hspace{0.1cm}d\zeta\\
&+\int_{B^{\mathbb{H}^n}_{2\rho}(0)\setminus B^{\mathbb{H}^n}_{\rho}(0)}\Delta_{\mathbb{H}^n}(\psi)\gamma_\varepsilon u^2\hspace{0.1cm}d\zeta=\int_{B^{\mathbb{H}^n}_{2\rho}(0)}\psi(\eta_\varepsilon\ast\Delta_{\mathbb{H}^n}(\gamma))u^2\hspace{0.1cm}d\zeta\\
&+2\int_{B^{\mathbb{H}^n}_{2\rho}(0)\setminus B^{\mathbb{H}^n}_{\rho}(0)}\langle\nabla_{\mathbb{H}^n}\psi,\eta_\varepsilon\ast\nabla_{\mathbb{H}^n}\gamma\rangle u^2\hspace{0.1cm}d\zeta+\int_{B^{\mathbb{H}^n}_{2\rho}(0)\setminus B^{\mathbb{H}^n}_{\rho}(0)}\Delta_{\mathbb{H}^n}(\psi)\gamma_\varepsilon u^2\hspace{0.1cm}d\zeta,
\end{split}
\end{equation*}
which yields, letting $\varepsilon$ go to $0,$ because $\gamma_\varepsilon \to \gamma$ in $L^1_{\mathrm{loc}}(\mathbb{H}^n),$ $\psi\equiv 1$ in $B^{\mathbb{H}^n}_{\rho}(0),$ $\gamma$ is, up to a multiplicative constant, the fundamental solution of $\Delta_{\mathbb{H}^n}$ and $u(0)=0:$
\begin{align*}
&\int_{B^{\mathbb{H}^n}_{\rho}(0)}\frac{\mid\nabla_{\mathbb{H}^n}u\mid^2}{|\zeta|_{\mathbb{H}^n}^{Q-2}} d\zeta\leq \int_{B^{\mathbb{H}^n}_{2\rho}(0)}\psi\Delta_{\mathbb{H}^n}(\gamma)u^2\hspace{0.1cm}d\zeta+2\int_{B^{\mathbb{H}^n}_{2\rho}(0)\setminus B^{\mathbb{H}^n}_{\rho}(0)}\langle\nabla_{\mathbb{H}^n}\psi,\nabla_{\mathbb{H}^n}\gamma\rangle u^2\hspace{0.1cm}d\zeta\\
&+\int_{B^{\mathbb{H}^n}_{2\rho}(0)\setminus B^{\mathbb{H}^n}_{\rho}(0)}\Delta_{\mathbb{H}^n}(\psi)|\zeta|_{\mathbb{H}^n}^{2-Q} u^2\hspace{0.1cm}d\zeta\leq c\rho^{-Q}\int_{B^{\mathbb{H}^n}_{2\rho}(0)\setminus B^{\mathbb{H}^n}_{\rho}(0)}u^2,
\end{align*}
that is
\[\int_{B^{\mathbb{H}^n}_{\rho}(0)}\frac{\mid\nabla_{\mathbb{H}^n}u\mid^2}{|\zeta|_{\mathbb{H}^n}^{Q-2}} d\zeta\leq c\rho^{-Q}\int_{B^{\mathbb{H}^n}_{2\rho}(0)\setminus B^{\mathbb{H}^n}_{\rho}(0)}u^2.\]
\end{proof}


Let us define now the following function

\begin{equation}\label{expression-of-J-beta-H-^-n}
J_{\beta,\mathbb{H}^n}(r)= r^{-\beta}\int_{B_r^{{\mathbb{H}^n}}(0)}\frac{\mid\nabla_{\mathbb{H}^n} u_1(\zeta)\mid^2}{|\zeta|_{\mathbb{H}^n}^{Q-2}}d\zeta\int_{B_r^{{\mathbb{H}^n}}(0)}\frac{\mid\nabla_{\mathbb{H}^n} u_2(\zeta)\mid^2}{|\zeta|_{\mathbb{H}^n}^{Q-2}}d\zeta,
\end{equation}
where $\beta>0$ is a parameter.

\begin{lem}\label{lemma-quotient-derivative-of-J-value-of-J}
For every non-negative $\mathbb{H}^n-$subharmonic functions  $u_i\in C(B^{\mathbb{H}^n}_1(0)),$ $i=1,2,$  such that $u_1u_2=0$ and  $u_1(0)=u_2(0)=0,$ we have
\begin{equation}\label{quotient-derivative-of-J-value-of-J}
 \frac{J_{\beta,\mathbb{H}^n}'(1)}{J_{\beta,\mathbb{H}^n}(1)}=\frac{\displaystyle\int_{\partial B_1^{{\mathbb{H}^n}}(0)}\frac{\mid\nabla_{\mathbb{H}^n} u_1(\kappa)\mid^2}{\sqrt{\mid x\mid^2+\mid y\mid^2}}d\sigma_{\mathbb{H}^n}(\kappa)}{\displaystyle\int_{B_1^{{\mathbb{H}^n}}(0)}\frac{\mid\nabla_{\mathbb{H}^n} u_1(\kappa)\mid^2}{|\kappa|_{{\mathbb{H}^n}}^{Q-2}}d\kappa}+\frac{\displaystyle\int_{\partial B_1^{{\mathbb{H}^n}}(0)}\frac{\mid\nabla_{\mathbb{H}^n} u_2(\kappa)\mid^2}{\sqrt{\mid x\mid^2+\mid y\mid^2}}d\sigma_{\mathbb{H}^n}(\kappa)}{\displaystyle\int_{B_1^{{\mathbb{H}^n}}(0)}\frac{\mid\nabla_{\mathbb{H}^n} u_2(\kappa)\mid^2}{|\kappa|_{{\mathbb{H}^n}}^{Q-2}}d\kappa}-\beta.
\end{equation}
Moreover, if $\frac{J'_{\beta,\mathbb{H}^n}(1)}{J_{\beta,\mathbb{H}^n}(1)}\geq 0,$ then there exists $r_0>0$ such that $J_{\beta,\mathbb{H}^n}$ is monotone increasing in $[0,r_0)$.
\end{lem}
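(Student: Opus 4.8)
The plan is to reduce the whole statement to a logarithmic differentiation followed by a coarea computation. Writing $I_i(r):=\int_{B_r^{\mathbb{H}^n}(0)}|\nabla_{\mathbb{H}^n}u_i|^2\,|\zeta|_{\mathbb{H}^n}^{2-Q}\,d\zeta$ so that $J_{\beta,\mathbb{H}^n}(r)=r^{-\beta}I_1(r)I_2(r)$, differentiating the logarithm gives
\[
\frac{J_{\beta,\mathbb{H}^n}'(r)}{J_{\beta,\mathbb{H}^n}(r)}=-\frac{\beta}{r}+\frac{I_1'(r)}{I_1(r)}+\frac{I_2'(r)}{I_2(r)}.
\]
Thus $\beta$ is already accounted for by the first term, and the task is to identify $I_i'(1)$ with the boundary integral in \eqref{quotient-derivative-of-J-value-of-J}.

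First I would apply the Euclidean coarea formula to the gauge $N(\zeta):=|\zeta|_{\mathbb{H}^n}$, whose sublevel sets are the Koranyi balls. Denoting by $D$ the Euclidean gradient, this produces $I_i'(r)=\int_{\partial B_r^{\mathbb{H}^n}(0)}|\nabla_{\mathbb{H}^n}u_i|^2\bigl(N^{Q-2}\,|DN|\bigr)^{-1}\,d\mathcal{H}^{2n}$. The crucial step is to convert $\mathcal{H}^{2n}$ into the perimeter measure $\sigma_{\mathbb{H}^n}$: since the Euclidean outer normal to $\{N=r\}$ is $n_E=DN/|DN|$, one has $\langle X_j,n_E\rangle_{\mathbb{R}^{2n+1}}=X_jN/|DN|$ and $\langle Y_j,n_E\rangle_{\mathbb{R}^{2n+1}}=Y_jN/|DN|$, so the integrand of Proposition \ref{perimetro regolare} is exactly $|\nabla_{\mathbb{H}^n}N|/|DN|$ and $d\sigma_{\mathbb{H}^n}=\bigl(|\nabla_{\mathbb{H}^n}N|/|DN|\bigr)\,d\mathcal{H}^{2n}$. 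A direct computation from $N^4=(|x|^2+|y|^2)^2+t^2$ gives $|\nabla_{\mathbb{H}^n}N|=\sqrt{|x|^2+|y|^2}/N$, and substituting cancels $|DN|$:
\[
I_i'(r)=\int_{\partial B_r^{\mathbb{H}^n}(0)}\frac{|\nabla_{\mathbb{H}^n}u_i|^2}{N^{Q-2}\,|\nabla_{\mathbb{H}^n}N|}\,d\sigma_{\mathbb{H}^n}.
\]
Evaluating at $r=1$, where $N\equiv1$ and $|\nabla_{\mathbb{H}^n}N|=\sqrt{|x|^2+|y|^2}$, produces precisely the numerators of \eqref{quotient-derivative-of-J-value-of-J} (the characteristic set where $\nabla_{\mathbb{H}^n}N$ vanishes is $\partial B_r^{\mathbb{H}^n}(0)$ intersected with the $t$-axis, which is $\sigma_{\mathbb{H}^n}$-negligible, so it does not affect the identity).

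For the monotonicity claim I would exploit the intrinsic dilations. Fixing $r\in(0,1)$ and setting $v_i:=u_i\circ\delta_r$, the pair $(v_1,v_2)$ is again admissible (non-negative, $\mathbb{H}^n$-subharmonic on $B_1^{\mathbb{H}^n}(0)$, vanishing at $0$, with $v_1v_2=0$), so the admissible class is invariant under $u\mapsto u\circ\delta_r$. Using $\nabla_{\mathbb{H}^n}(u_i\circ\delta_r)=r\,(\nabla_{\mathbb{H}^n}u_i)\circ\delta_r$, the homogeneity of degree $Q-1$ of $\sigma_{\mathbb{H}^n}$, the homogeneity of degree $1$ of $\sqrt{|x|^2+|y|^2}$, and the change of variables $\eta=\delta_r\zeta$ of Jacobian $r^{Q}$, I would verify the two scaling identities
\[
\int_{B_1^{\mathbb{H}^n}(0)}\frac{|\nabla_{\mathbb{H}^n}v_i|^2}{|\zeta|_{\mathbb{H}^n}^{Q-2}}\,d\zeta=I_i(r),\qquad \int_{\partial B_1^{\mathbb{H}^n}(0)}\frac{|\nabla_{\mathbb{H}^n}v_i|^2}{\sqrt{|x|^2+|y|^2}}\,d\sigma_{\mathbb{H}^n}=r\,I_i'(r).
\]
Denoting by $J_{\beta,\mathbb{H}^n}^{v}$ the functional built from $(v_1,v_2)$, the formula of the first part then yields $(J_{\beta,\mathbb{H}^n}^{v})'(1)/J_{\beta,\mathbb{H}^n}^{v}(1)=r\,J_{\beta,\mathbb{H}^n}'(r)/J_{\beta,\mathbb{H}^n}(r)$, so the sign of $J_{\beta,\mathbb{H}^n}'(r)$ coincides with the sign of the logarithmic derivative at radius $1$ of the rescaled admissible pair. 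Reading the hypothesis $J_{\beta,\mathbb{H}^n}'(1)/J_{\beta,\mathbb{H}^n}(1)\ge0$ as holding on the whole (dilation-invariant) admissible class, it then transfers to every $r\in(0,r_0)$ with $r_0\le1$, giving $J_{\beta,\mathbb{H}^n}'\ge0$ and hence monotonicity on $[0,r_0)$.

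The step I expect to be the main obstacle is the conversion $d\mathcal{H}^{2n}\to d\sigma_{\mathbb{H}^n}$: it hinges on recognising the integrand of Proposition \ref{perimetro regolare} as $|\nabla_{\mathbb{H}^n}N|/|DN|$ on the Koranyi spheres and on the exact value of $|\nabla_{\mathbb{H}^n}N|$, where any misplaced constant or exponent would alter the admissible value of $\beta$. The bookkeeping of the dilation weights in the scaling identities is the other delicate point, though conceptually routine.
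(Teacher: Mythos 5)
Your argument is correct and follows essentially the same route as the paper: logarithmic differentiation of $J_{\beta,\mathbb{H}^n}$, identification of $I_i'(r)$ as the boundary integral with weight $\bigl(N^{Q-2}\,|\nabla_{\mathbb{H}^n}N|\bigr)^{-1}$ via the coarea formula, and reduction to $r=1$ through the intrinsic dilations and the invariance of the admissible class. The only differences are presentational: you derive the conversion $d\mathcal{H}^{2n}\to d\sigma_{\mathbb{H}^n}$ explicitly from the Euclidean coarea formula together with Proposition \ref{perimetro regolare} (where the paper simply invokes the Heisenberg coarea formula), and you correctly make explicit the reading of the hypothesis $J_{\beta,\mathbb{H}^n}'(1)/J_{\beta,\mathbb{H}^n}(1)\ge 0$ as holding on the whole dilation-invariant class, which the paper leaves implicit.
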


 \begin{proof}
 It follows from Lemma \ref{limitato} that $J_{\beta,\mathbb{H}^n}$ is well defined in $(0,1).$ Differentiating with respect to $r$ and recalling the co-area formula in the Heisenberg group, we get
 
 \begin{equation}
\begin{split}
J_{\beta,\mathbb{H}^n}'(r)&=-\beta r^{-\beta-1}\int_{B_r^{{\mathbb{H}^n}}(0)}\frac{\mid\nabla_{\mathbb{H}^n} u_1(\zeta)\mid^2}{|\zeta|_{{\mathbb{H}^n}}^{Q-2}}d\zeta \int_{B_r^{{\mathbb{H}^n}}(0)}\frac{\mid\nabla_{\mathbb{H}^n} u_2(\zeta)\mid^2}{|\zeta|_{{\mathbb{H}^n}}^{Q-2}}d\zeta\\
&+ r^{-\beta}\int_{\partial B_r^{{\mathbb{H}^n}}(0)}|\zeta|_{\mathbb{H}^n}\frac{\mid\nabla_{\mathbb{H}^n} u_1(\zeta)\mid^2}{r^{Q-2}\sqrt{\mid x\mid^2+\mid y\mid^2}}d\sigma_{\mathbb{H}^n}(\zeta)\int_{B_r^{{\mathbb{H}^n}}(0)}\frac{\mid\nabla_{\mathbb{H}^n}
	 u_2(\zeta)\mid^2}{|\zeta|_{{\mathbb{H}^n}}^{Q-2}}d\zeta 
\\
&+r^{-\beta}\int_{B_r^{{\mathbb{H}^n}}(0)}\frac{\mid\nabla_{\mathbb{H}^n} u_1(\zeta)\mid^2}{|\zeta|_{{\mathbb{H}^n}}^{Q-2}}d\zeta \int_{\partial B_r^{{\mathbb{H}^n}}(0)}|\zeta|_{\mathbb{H}^n}\frac{\mid\nabla_{\mathbb{H}^n}u_2(\zeta)\mid^2}{r^{Q-2}\sqrt{\mid x\mid^2+\mid y\mid^2}}d\sigma_{\mathbb{H}^n}(\zeta)\\
&=-\beta r^{-\beta-1}\int_{B_r^{{\mathbb{H}^n}}(0)}\frac{\mid\nabla_{\mathbb{H}^n} u_1(\zeta)\mid^2}{|\zeta|_{{\mathbb{H}^n}}^{Q-2}}d\zeta \int_{B_r^{{\mathbb{H}^n}}(0)}\frac{\mid\nabla_{\mathbb{H}^n} u_2(\zeta)\mid^2}{|\zeta|_{{\mathbb{H}^n}}^{Q-2}}d\zeta\\
&+ r^{-\beta}\int_{\partial B_r^{{\mathbb{H}^n}}(0)}\frac{\mid\nabla_{\mathbb{H}^n} u_1(\zeta)\mid^2}{r^{Q-3}\sqrt{\mid x\mid^2+\mid y\mid^2}}d\sigma_{\mathbb{H}^n}(\zeta)\int_{B_r^{{\mathbb{H}^n}}(0)}\frac{\mid\nabla_{\mathbb{H}^n}
	u_2(\zeta)\mid^2}{|\zeta|_{{\mathbb{H}^n}}^{Q-2}}d\zeta 
\\
&+r^{-\beta}\int_{B_r^{{\mathbb{H}^n}}(0)}\frac{\mid\nabla_{\mathbb{H}^n} u_1(\zeta)\mid^2}{|\zeta|_{{\mathbb{H}^n}}^{Q-2}}d\zeta \int_{\partial B_r^{{\mathbb{H}^n}}(0)}\frac{\mid\nabla_{\mathbb{H}^n}u_2(\zeta)\mid^2}{r^{Q-3}\sqrt{\mid x\mid^2+\mid y\mid^2}}d\sigma_{\mathbb{H}^n}(\zeta)
 \end{split}
 \end{equation}
 
 Notice that by a change of variables, denoting $\kappa \in \partial B^{\mathbb{H}^n}_1(0)$ as $\kappa=(\kappa_x,\kappa_y,k_t)$ with $\kappa_x,$ $\kappa_y\in \mathbb{R}^n$ and $\kappa_t\in \mathbb{R},$ we obtain
 
 \begin{equation*}
\begin{split}
J_{\beta,\mathbb{H}^n}'(r)&= -\beta r^{-\beta-1}r^{Q}\int_{B_1^{{\mathbb{H}^n}}(0)}\frac{\mid\nabla_{\mathbb{H}^n} u_1(\delta_r(\kappa))\mid^2}{|\delta_r(\kappa)|^{Q-2}_{\mathbb{H}^n}}d\kappa\hspace{0.3cm} r^{Q}\int_{B_1^{{\mathbb{H}^n}}(0)}\frac{\mid\nabla_{\mathbb{H}^n} u_2(\delta_r(\kappa))\mid^2}{|\delta_r(\kappa)|^{Q-2}_{\mathbb{H}^n}}d\kappa\\
&+r^{-\beta}r^{Q-1}\int_{\partial B_1^{{\mathbb{H}^n}}(0)}\frac{\mid\nabla_{\mathbb{H}^n} u_1(\delta_r(\kappa))\mid^2}{r^{Q-3}\sqrt{\mid r \kappa_x\mid^2+\mid r\kappa_y\mid^2}}d\sigma_{\mathbb{H}^n}(\kappa)\hspace{0.3cm}r^Q\int_{B_1^{{\mathbb{H}^n}}(0)}\frac{\mid\nabla_{\mathbb{H}^n} u_2(\delta_r(\kappa))\mid^2}{|\delta_r(\kappa)|^{Q-2}_{\mathbb{H}^n}}d\kappa\\
&+r^{-\beta}r^Q\int_{B_1^{{\mathbb{H}^n}}(0)}\frac{\mid\nabla_{\mathbb{H}^n} u_1(\delta_r(\kappa))\mid^2}{|\delta_r(\kappa)|^{Q-2}_{\mathbb{H}^n}}d\kappa \hspace{0.3cm}r^{Q-1}\int_{\partial B_1^{{\mathbb{H}^n}}(0)}\frac{\mid\nabla_{\mathbb{H}^n}u_2(\delta_r(\kappa))\mid^2}{r^{Q-3}\sqrt{\mid r\kappa_x\mid^2+\mid r\kappa_y\mid^2}}d\sigma_{\mathbb{H}^n}(\kappa)\\
&=-\beta r^{-\beta-1}r^Q\int_{B_1^{{\mathbb{H}^n}}(0)}\frac{\mid\nabla_{\mathbb{H}^n} u_1(\delta_r(\kappa))\mid^2}{r^{Q-2}|\kappa|_{\mathbb{H}^n}^{Q-2}}d\kappa\hspace{0.3cm}r^Q \int_{B_1^{{\mathbb{H}^n}}(0)}\frac{\mid\nabla_{\mathbb{H}^n} u_2(\delta_r(\kappa))\mid^2}{r^{Q-2}|\kappa|_{\mathbb{H}^n}^{Q-2}}d\kappa\\
&+r^{-\beta}r^{2}\int_{\partial B_1^{{\mathbb{H}^n}}(0)}\frac{\mid\nabla_{\mathbb{H}^n} u_1(\delta_r(\kappa))\mid^2}{r\sqrt{\mid \kappa_x\mid^2+\mid \kappa_y\mid^2}}d\sigma_{\mathbb{H}^n}(\kappa)\hspace{0.3cm}r^Q\int_{B_1^{{\mathbb{H}^n}}(0)}\frac{\mid\nabla_{\mathbb{H}^n} u_2(\delta_r(\kappa))\mid^2}{r^{Q-2}|\kappa|_{\mathbb{H}^n}^{Q-2}}d\kappa\\
&+r^{-\beta}r^Q\int_{B_1^{{\mathbb{H}^n}}(0)}\frac{\mid\nabla_{\mathbb{H}^n} u_1(\delta_r(\kappa))\mid^2}{r^{Q-2}|\kappa|_{\mathbb{H}^n}^{Q-2}}d\kappa \hspace{0.3cm}r^{2}\int_{\partial B_1^{{\mathbb{H}^n}}(0)}\frac{\mid\nabla_{\mathbb{H}^n}u_2(\delta_r(\kappa))\mid^2}{r\sqrt{\mid \kappa_x\mid^2+\mid \kappa_y\mid^2}}d\sigma_{\mathbb{H}^n}(\kappa),
\end{split}
\end{equation*}
which gives
\begin{equation}\label{derivative-of-J-beta-H-^-n-1}
\begin{split}
J'_{\beta,\mathbb{H}^n}(r)&=r^{-\beta-1}r^4\Big(-\beta \int_{B_1^{{\mathbb{H}^n}}(0)}\frac{\mid\nabla_{\mathbb{H}^n} u_1(\delta_r(\kappa))\mid^2}{|\kappa|_{\mathbb{H}^n}^{Q-2}}d\kappa\hspace{0.3cm} \int_{B_1^{{\mathbb{H}^n}}(0)}\frac{\mid\nabla_{\mathbb{H}^n} u_2(\delta_r(\kappa))\mid^2}{|\kappa|_{\mathbb{H}^n}^{Q-2}}d\kappa,\\
&+\int_{\partial B_1^{{\mathbb{H}^n}}(0)}\frac{\mid\nabla_{\mathbb{H}^n} u_1(\delta_r(\kappa))\mid^2}{\sqrt{\mid \kappa_x\mid^2+\mid \kappa_y\mid^2}}d\sigma_{\mathbb{H}^n}(\kappa)\hspace{0.3cm}\int_{B_1^{{\mathbb{H}^n}}(0)}\frac{\mid\nabla_{\mathbb{H}^n} u_2(\delta_r(\kappa))\mid^2}{|\kappa|_{\mathbb{H}^n}^{Q-2}}d\kappa\\
&+\int_{B_1^{{\mathbb{H}^n}}(0)}\frac{\mid\nabla_{\mathbb{H}^n} u_1(\delta_r(\kappa))\mid^2}{|\kappa|_{\mathbb{H}^n}^{Q-2}}d\kappa \hspace{0.3cm}\int_{\partial B_1^{{\mathbb{H}^n}}(0)}\frac{\mid\nabla_{\mathbb{H}^n}u_2(\delta_r(\kappa))\mid^2}{\sqrt{\mid \kappa_x\mid^2+\mid \kappa_y\mid^2}}d\sigma_{\mathbb{H}^n}(\kappa)\Big).\\
 \end{split}
 \end{equation}
 Let now $v_i(\kappa)=\frac{u_i(\delta_r(\kappa))}{r},$ $i=1,2.$ Then $\nabla_{\mathbb{H}^n} v_i(\kappa)=(\nabla_{\mathbb{H}^n}u_i)(\delta_r(\kappa)).$
 Hence, by \eqref{derivative-of-J-beta-H-^-n-1},
 
 \begin{equation}\label{derivative-of-J-beta-H-^-n-2}
\begin{split}
J_{\beta,\mathbb{H}^n}'(r)=&r^{-\beta+3}\Big(-\beta \int_{B_1^{{\mathbb{H}^n}}(0)}\frac{\mid\nabla_{\mathbb{H}^n} v_1(\kappa)\mid^2}{|\kappa|_{\mathbb{H}^n}^{Q-2}}d\kappa\hspace{0.3cm} \int_{B_1^{{\mathbb{H}^n}}(0)}\frac{\mid\nabla_{\mathbb{H}^n} v_2(\kappa)\mid^2}{|\kappa|_{\mathbb{H}^n}^{Q-2}}d\kappa\\
&+\int_{\partial B_1^{{\mathbb{H}^n}}(0)}\frac{\mid\nabla_{\mathbb{H}^n} v_1(\kappa)\mid^2}{\sqrt{\mid \kappa_x\mid^2+\mid \kappa_y\mid^2}}d\sigma_{\mathbb{H}^n}(\kappa)\hspace{0.3cm}\int_{B_1^{{\mathbb{H}^n}}(0)}\frac{\mid\nabla_{\mathbb{H}^n} v_2(\kappa)\mid^2}{|\kappa|_{\mathbb{H}^n}^{Q-2}}d\kappa\\
&+\int_{B_1^{{\mathbb{H}^n}}(0)}\frac{\mid\nabla_{\mathbb{H}^n} v_1(\kappa)\mid^2}{|\kappa|_{\mathbb{H}^n}^{Q-2}}d\kappa \hspace{0.3cm}\int_{\partial B_1^{{\mathbb{H}^n}}(0)}\frac{\mid\nabla_{\mathbb{H}^n}v_2(\kappa)\mid^2}{\sqrt{\mid \kappa_x\mid^2+\mid \kappa_y\mid^2}}d\sigma_{\mathbb{H}^n}(\kappa)\Big).
\end{split}
 \end{equation}
 
 In particular, $J_{\beta,\mathbb{H}^n}'(r)r^{\beta-3}=J_{\beta,\mathbb{H}^n}'(1),$ thus it is enough to prove that $J_{\beta,\mathbb{H}^n}'(1)\geq 0.$
 Moreover, using \eqref{derivative-of-J-beta-H-^-n-2} and \eqref{expression-of-J-beta-H-^-n}, we have
 $$
 \frac{J_{\beta,\mathbb{H}^n}'(1)}{J_{\beta,\mathbb{H}^n}(1)}=\frac{\displaystyle\int_{\partial B_1^{{\mathbb{H}^n}}(0)}\frac{\mid\nabla_{\mathbb{H}^n} v_1(\kappa)\mid^2}{\sqrt{\mid \kappa_x\mid^2+\mid \kappa_y\mid^2}}d\sigma_{\mathbb{H}^n}(\kappa)}{\displaystyle\int_{B_1^{{\mathbb{H}^n}}(0)}\frac{\mid\nabla_{\mathbb{H}^n} v_1(\kappa)\mid^2}{|\kappa|_{\mathbb{H}^n}^{Q-2}}d\kappa}+\frac{\displaystyle\int_{\partial B_1^{{\mathbb{H}^n}}(0)}\frac{\mid\nabla_{\mathbb{H}^n} v_2(\kappa)\mid^2}{\sqrt{\mid \kappa_x\mid^2+\mid \kappa_y\mid^2}}d\sigma_{\mathbb{H}^n}(\kappa)}{\displaystyle\int_{B_1^{{\mathbb{H}^n}}(0)}\frac{\mid\nabla_{\mathbb{H}^n} v_2(\kappa)\mid^2}{|\kappa|_{\mathbb{H}^n}^{Q-2}}d\kappa}-\beta.
 $$
  \end{proof}
 In the next section, we reduce ourselves to the simplest case given by $\mathbb{H}^1.$ 
  \section{Laplace--Kohn operator on $\mathbb{H}^1$ Koranyi ball boundary}\label{laplace_beltrami}
  
  This section is devoted to represent the Laplace-Kohn operator in spherical coordinates in the Heisenberg group. An analogous computation has been faced in \cite{Jerison} by using an abstract and more elegant approach, even if very theoretical, see also \cite{Greiner} and \cite{Biri}. Here we describe with explicit computations  the $\mathbb{H}^1$ case.
  
Specifically, we consider the following coordinates in $\mathbb{H}^{1}:$ 
\begin{equation}\label{pol-coord}
\begin{cases}
x=\rho \sqrt{\sin \varphi}\cos\theta \\
y=\rho \sqrt{\sin \varphi}\sin \theta\\
t=\rho^{2}\cos \varphi,
\end{cases}
\end{equation}
see \cite{Greiner}.

They mimic the classical polar coordinates in $\mathbb{R}^3.$
From \eqref{pol-coord}, we obtain the expression of $\rho,$ $\varphi$ and $\theta$ with respect to the cartesian coordinates $x,$ $y$ and $t,$ that is: 
\begin{equation}\label{cartes-coord}
\begin{cases}
\rho=((x^{2}+y^{2})^{2}+t^{2})^{1/4}\\
\theta=\arctan\left({\frac{y}{x}}\right)\\
\varphi=\arccos\left(\frac{t}{\rho^{2}}\right).
\end{cases}
\end{equation}
Recalling the vector fields
\begin{equation}\label{heisen-vect-fields}
\begin{cases}
X=\frac{\partial}{\partial x}+2y\frac{\partial}{\partial t}\\
Y=\frac{\partial}{\partial y}-2x\frac{\partial}{\partial t},
\end{cases}
\end{equation}
and the operators:
\begin{equation}\label{grad-lapl-horiz}
\nabla_{\mathbb{H}^1}\equiv(X,Y),\quad \Delta_{\mathbb{H}^1}=X^{2}+Y^{2},
\end{equation}
we want to determine the following: $\nabla_{\mathbb{H}^1}\rho,$ $\nabla_{\mathbb{H}^1}\theta,$ $\nabla_{\mathbb{H}^1}\varphi,$ using \eqref{heisen-vect-fields}, \eqref{cartes-coord} and \eqref{grad-lapl-horiz}.
\begin{lem}\label{lemma-nabla-horiz-rho-theta-phi}
It results that:
\begin{align*}
&\nabla_{\mathbb{H}^{1}}\varphi=\frac{2}{\rho(x^{2}+y^{2})}\left(t\nabla_{\mathbb{H}^{1}}\rho+\rho(-y,x)\right)\nonumber,\\
&\nabla_{\mathbb{H}^{1}}\rho=\rho^{-3}((x^{2}+y^{2})x+ty,(x^{2}+y^{2})y-tx)\nonumber,\\
&\nabla_{\mathbb{H}^{1}}\theta=\frac{1}{x^{2}+y^{2}}(-y,x).
\end{align*}
\end{lem}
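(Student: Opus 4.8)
The plan is to compute each horizontal gradient directly from its definition $\nabla_{\mathbb{H}^1} f = (Xf, Yf)$, applying the vector fields $X = \partial_x + 2y\,\partial_t$ and $Y = \partial_y - 2x\,\partial_t$ to the explicit expressions for $\rho$, $\theta$, $\varphi$ given in \eqref{cartes-coord} via the chain rule. The only auxiliary inputs needed are the two elementary identities $\nabla_{\mathbb{H}^1} t = (Xt, Yt) = (2y, -2x)$ and $\nabla_{\mathbb{H}^1}(x^2+y^2) = (2x, 2y)$, where in the latter the $\partial_t$ contributions vanish because $x^2+y^2$ is independent of $t$. I would treat the three functions in the order $\rho$, $\theta$, $\varphi$, since $\varphi$ is most naturally expressed in terms of $\rho$.

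For $\rho$ I would start from $\rho^4 = (x^2+y^2)^2 + t^2$ and differentiate along $X$ and $Y$, obtaining $4\rho^3 X\rho = 4x(x^2+y^2) + 4ty$ and $4\rho^3 Y\rho = 4y(x^2+y^2) - 4tx$; dividing by $4\rho^3$ yields the stated expression for $\nabla_{\mathbb{H}^1}\rho$. For $\theta = \arctan(y/x)$ the computation is shorter still: since $\theta$ does not depend on $t$, the $\partial_t$ terms in $X$ and $Y$ drop out, and $\partial_x\arctan(y/x) = -y/(x^2+y^2)$ together with $\partial_y\arctan(y/x) = x/(x^2+y^2)$ give $\nabla_{\mathbb{H}^1}\theta = (x^2+y^2)^{-1}(-y,x)$ immediately.

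The main computation is that of $\varphi = \arccos(t/\rho^2)$. Setting $u = t\rho^{-2}$, the chain rule gives $\nabla_{\mathbb{H}^1}\varphi = -(1-u^2)^{-1/2}\,\nabla_{\mathbb{H}^1} u$, and the product rule together with $\nabla_{\mathbb{H}^1}(\rho^{-2}) = -2\rho^{-3}\nabla_{\mathbb{H}^1}\rho$ gives $\nabla_{\mathbb{H}^1} u = \rho^{-2}\nabla_{\mathbb{H}^1} t - 2t\rho^{-3}\nabla_{\mathbb{H}^1}\rho = 2\rho^{-2}(y,-x) - 2t\rho^{-3}\nabla_{\mathbb{H}^1}\rho$. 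The key simplification is the algebraic identity $1 - u^2 = 1 - t^2/\rho^4 = (\rho^4 - t^2)/\rho^4 = (x^2+y^2)^2/\rho^4$, so that $(1-u^2)^{-1/2} = \rho^2/(x^2+y^2)$. Substituting this factor and collecting the resulting power $\rho^{-1}$ then produces exactly the claimed formula $\nabla_{\mathbb{H}^1}\varphi = \frac{2}{\rho(x^2+y^2)}\big(t\,\nabla_{\mathbb{H}^1}\rho + \rho(-y,x)\big)$, where $\nabla_{\mathbb{H}^1}\rho$ is the quantity already computed.

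The computation is entirely routine, so I expect no genuine obstacle; the single point deserving care is the choice of branch in $(1-u^2)^{-1/2}$. Here one must invoke that on the Koranyi sphere the angular coordinate $\varphi$ ranges in $[0,\pi]$, whence $\sin\varphi \geq 0$ and $x^2+y^2 = \rho^2\sin\varphi \geq 0$; this justifies taking the positive root $\rho^2/(x^2+y^2)$ and guarantees that the three expressions in the statement are mutually consistent.
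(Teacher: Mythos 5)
Your proposal is correct and follows essentially the same route as the paper: a direct application of the vector fields $X=\partial_x+2y\partial_t$, $Y=\partial_y-2x\partial_t$ to the explicit formulas \eqref{cartes-coord}, with the same key simplification $\rho^4-t^2=(x^2+y^2)^2$ in the $\varphi$ computation. The only cosmetic differences are that you differentiate $\rho^4$ implicitly rather than the fourth root directly, and you carry the gradient as a vector instead of computing $X\varphi$ and $Y\varphi$ separately; the observation that $x^2+y^2\geq 0$ fixes the sign of the square root is exactly what the paper uses implicitly.
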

\begin{proof}
Let us begin by calculating:
\begin{align*}
X\varphi&=X\left(\arccos\left(\frac{t}{\rho^{2}}\right)\right)=-\frac{1}{\sqrt{1-\left(\frac{t}{\rho^{2}}\right)^{2}}}X\left(\frac{t}{\rho^{2}}\right)=-\frac{1}{\sqrt{1-\frac{t^{2}}{\rho^{4}}}}\left(\frac{2y}{\rho^{2}}-2\rho^{-3}tX\rho\right)\\
&=-\frac{2}{\sqrt{\frac{\rho^{4}-t^{2}}{\rho^{4}}}}\frac{1}{\rho^{2}}\left(y-\frac{tX\rho}{\rho}\right)=-\frac{2\rho^{2}}{\sqrt{\rho^{4}-t^{2}}}\frac{1}{\rho^{2}}\left(\frac{\rho y-tX\rho}{\rho}\right)\\
&=-\frac{2}{\rho\sqrt{\rho^{4}-t^{2}}}\left(\rho y-tX\rho\right)=\frac{2}{\rho\sqrt{\rho^{4}-t^{2}}}\left(tX\rho-\rho y\right)
\end{align*}
and 
\begin{align*}
Y\varphi&=-\frac{1}{\sqrt{1-\left(\frac{t}{\rho^{2}}\right)^{2}}}Y\left(\frac{t}{\rho^{2}}\right)=-\frac{\rho^2}{\sqrt{\rho^{4}-t^2}}\left(-\frac{2x}{\rho^{2}}-2\rho^{-3}tY\rho\right)=-\frac{2\rho^2}{\sqrt{\rho^{4}-t^2}}\frac{1}{\rho^{2}}\left(-x-\frac{tY\rho}{\rho}\right)\\
&=-\frac{2}{\sqrt{\rho^{4}-t^{2}}}\left(\frac{-x\rho-tY\rho}{\rho}\right)=\frac{2}{\rho\sqrt{\rho^{4}-t^{2}}}\left(x\rho+tY\rho\right),
\end{align*}
which give
\begin{align}\label{grad-horiz-varphi}
\nabla_{\mathbb{H}^{1}}\varphi&=(X\varphi,Y\varphi)=\left(\frac{2}{\rho\sqrt{\rho^{4}-t^{2}}}\left(tX\rho-\rho y\right),\frac{2}{\rho\sqrt{\rho^{4}-t^{2}}}\left(x\rho+tY\rho\right)\right)\nonumber\\
&=\frac{2}{\rho\sqrt{\rho^{4}-t^{2}}}\left(tX\rho-\rho y,x\rho+tY\rho\right)=\frac{2}{\rho\sqrt{(x^{2}+y^{2})^{2}+t^{2}-t^{2}}}\left(\left(tX\rho,tY\rho\right)+(-\rho y,\rho x)\right)\nonumber\\
&=\frac{2}{\rho(x^{2}+y^{2})}\left(t\nabla_{\mathbb{H}^{1}}\rho+\rho(-y,x)\right).
\end{align}
At this point, we compute
\begin{align*}
X\rho&=X(((x^{2}+y^{2})^{2}+t^{2})^{1/4})=\frac{1}{4}((x^{2}+y^{2})^{2}+t^{2})^{\frac{1}{4}-1}X((x^{2}+y^{2})^{2}+t^{2})\\
&=\frac{1}{4}((x^{2}+y^{2})^{2}+t^{2})^{-3/4}(2(x^{2}+y^{2})2x+2y2t)=\rho^{-3}((x^{2}+y^{2})x+yt)
\end{align*}
and
\begin{align*}
Y\rho&=Y(((x^{2}+y^{2})^{2}+t^{2})^{1/4})=\frac{1}{4}((x^{2}+y^{2})^{2}+t^{2})^{-3/4}Y((x^{2}+y^{2})^{2}+t^{2})\\
&=\frac{1}{4}((x^{2}+y^{2})^{2}+t^{2})^{-3/4}(2(x^{2}+y^{2})2y-2x2t)=\rho^{-3}((x^{2}+y^{2})y-xt),
\end{align*}
which entail
\begin{align}\label{grad-horiz-rho}
\nabla_{\mathbb{H}^{1}}\rho=(X\rho,Y\rho)=\rho^{-3}((x^{2}+y^{2})x+yt,(x^{2}+y^{2})y-xt).
\end{align}
Let us calculate now $\nabla_{\mathbb{H}^{1}}\theta.$ For this purpose, we have
\[
X\theta=X\left(\arctan\left(\frac{y}{x}\right)\right)=\frac{1}{1+\left(\frac{y}{x}\right)^{2}}X\left(\frac{y}{x}\right)=\frac{1}{1+\frac{y^{2}}{x^{2}}}\left(-\frac{y}{x^{2}}\right)=-\frac{x^{2}}{x^{2}+y^{2}}\frac{y}{x^{2}}=-\frac{y}{x^{2}+y^{2}}
\]
and
$$Y\theta=\frac{1}{1+\left(\frac{y}{x}\right)^{2}}Y\left(\frac{y}{x}\right)=\frac{x^{2}}{x^{2}+y^{2}}\left(\frac{1}{x}\right)=\frac{x}{x^{2}+y^{2}},$$
which imply
\begin{equation}\label{grad-horiz-theta}
\nabla_{\mathbb{H}^{1}}\theta=(X\theta,Y\theta)=\frac{1}{x^{2}+y^{2}}(-y,x).
\end{equation}
Using \eqref{grad-horiz-varphi}, \eqref{grad-horiz-rho} and \eqref{grad-horiz-theta}, we achieve
\begin{align}\label{grad-horiz-varphi-rho-theta}
&\nabla_{\mathbb{H}^{1}}\varphi=\frac{2}{\rho(x^{2}+y^{2})}\left(t\nabla_{\mathbb{H}^{1}}\rho+\rho(-y,x)\right)\nonumber,\\
&\nabla_{\mathbb{H}^{1}}\rho=\rho^{-3}((x^{2}+y^{2})x+ty,(x^{2}+y^{2})y-tx)\nonumber,\\
&\nabla_{\mathbb{H}^{1}}\theta=\frac{1}{x^{2}+y^{2}}(-y,x).
\end{align}
\end{proof}
\begin{lem}\label{norm-quad-nabla-horiz-theta-phi-rho}
The following relationships hold:
\begin{equation*}
\left|\nabla_{\mathbb{H}^{1}}\varphi\right|^{2}=\frac{4(x^{2}+y^{2})}{\rho^{4}},\quad
\left|\nabla_{\mathbb{H}^{1}}\rho\right|^{2}=\frac{x^{2}+y^{2}}{\rho^{2}},\quad
\left|\nabla_{\mathbb{H}^{1}}\theta\right|^{2}=\frac{1}{x^{2}+y^{2}}.
\end{equation*}
\end{lem}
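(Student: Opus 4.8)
The plan is a direct computation from the explicit formulas established in Lemma \ref{lemma-nabla-horiz-rho-theta-phi}, using that for a horizontal vector written in coordinates $(a,b)$ with respect to the orthonormal frame $\{X,Y\}$ the squared norm is simply $a^2+b^2$. Throughout I abbreviate $s\coloneqq x^2+y^2$ and recall the identity $\rho^4=s^2+t^2$ coming from the first line of \eqref{cartes-coord}.

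First I would dispose of the two easy cases. For $\theta$, squaring $\nabla_{\mathbb{H}^1}\theta=\frac{1}{s}(-y,x)$ gives $|\nabla_{\mathbb{H}^1}\theta|^2=\frac{y^2+x^2}{s^2}=\frac{1}{s}$, which is the claim. For $\rho$, I would expand
$$|\nabla_{\mathbb{H}^1}\rho|^2=\rho^{-6}\big[(sx+ty)^2+(sy-tx)^2\big].$$
The cross terms $\pm 2sxyt$ cancel, and what remains is $s^2(x^2+y^2)+t^2(x^2+y^2)=s(s^2+t^2)=s\rho^4$, so $|\nabla_{\mathbb{H}^1}\rho|^2=s\rho^{-2}$, as desired.

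The substantive case is $\varphi$. Starting from $\nabla_{\mathbb{H}^1}\varphi=\frac{2}{\rho s}\big(t\nabla_{\mathbb{H}^1}\rho+\rho(-y,x)\big)$, I would expand the squared norm of the inner expression as a sum of three scalar products:
$$\big|t\nabla_{\mathbb{H}^1}\rho+\rho(-y,x)\big|^2=t^2|\nabla_{\mathbb{H}^1}\rho|^2+2t\rho\,\langle\nabla_{\mathbb{H}^1}\rho,(-y,x)\rangle+\rho^2 s.$$
The first term is $t^2 s\rho^{-2}$ by the case already settled, and the last is $\rho^2 s$. The only delicate piece is the cross term: substituting the formula for $\nabla_{\mathbb{H}^1}\rho$ and computing $\langle(sx+ty,sy-tx),(-y,x)\rangle$, the $\pm sxy$ terms cancel and one is left with $-t(x^2+y^2)=-ts$, so $\langle\nabla_{\mathbb{H}^1}\rho,(-y,x)\rangle=-ts\rho^{-3}$ and the cross term equals $-2t^2 s\rho^{-2}$. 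Collecting,
$$\big|t\nabla_{\mathbb{H}^1}\rho+\rho(-y,x)\big|^2=\rho^2 s-\frac{t^2 s}{\rho^2}=\frac{s(\rho^4-t^2)}{\rho^2}=\frac{s^3}{\rho^2},$$
where I used $\rho^4-t^2=s^2$. Multiplying by the prefactor $\frac{4}{\rho^2 s^2}$ gives $|\nabla_{\mathbb{H}^1}\varphi|^2=\frac{4s}{\rho^4}$, completing the proof.

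I expect the only place requiring care to be the cross-term cancellation in the $\varphi$ computation: it is precisely the near-orthogonality of $\nabla_{\mathbb{H}^1}\rho$ and the angular direction $(-y,x)$ (their inner product reducing to the pure $t$-contribution) that produces the clean final form, and keeping track of signs there is where a slip is most likely.
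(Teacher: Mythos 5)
Your computation is correct and follows essentially the same route as the paper's proof: square the explicit formulas from Lemma \ref{lemma-nabla-horiz-rho-theta-phi}, cancel the cross terms in the $\rho$ case, and in the $\varphi$ case expand $|t\nabla_{\mathbb{H}^1}\rho+\rho(-y,x)|^2$ and use $\langle\nabla_{\mathbb{H}^1}\rho,(-y,x)\rangle=-t(x^2+y^2)\rho^{-3}$ together with $\rho^4-t^2=(x^2+y^2)^2$. All signs and simplifications check out against the paper's own calculation.
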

\begin{proof}
From \eqref{grad-horiz-varphi-rho-theta}, we obtain:
\begin{align*}
\left|\nabla_{\mathbb{H}^{1}}\theta\right|^{2}&=\left|\frac{1}{x^{2}+y^{2}}(-y,x)\right|^{2}=\frac{1}{(x^{2}+y^{2})^{2}}(y^{2}+x^{2})=\frac{1}{x^{2}+y^{2}},\\
\left|\nabla_{\mathbb{H}^{1}}\rho\right|^{2}&=\left|\rho^{-3}((x^{2}+y^{2})x+ty,(x^{2}+y^{2})y-tx)\right|^{2}=\rho^{-6}(((x^{2}+y^{2})x+ty)^{2}+((x^{2}+y^{2})y-tx)^{2})\\
&=\rho^{-6}((x^{2}+y^{2})^{2}x^{2}+t^{2}y^{2}+2(x^{2}+y^{2})xty+(x^{2}+y^{2})y^{2}+t^{2}x^{2}-2(x^{2}+y^{2})ytx)\\
&=\rho^{-6}((x^{2}+y^{2})^{2}(x^{2}+y^{2})+t^{2}(x^{2}+y^{2}))=\rho^{-6}((x^{2}+y^{2})((x^{2}+y^{2})^{2}+t^{2})\\
&=\rho^{-6}(x^{2}+y^{2})\rho^{4}=\frac{x^{2}+y^{2}}{\rho^{2}},
\end{align*}
i.e.
\begin{align}\label{norm-quad-grad-horiz-theta-rho}
&\left|\nabla_{\mathbb{H}^{1}}\theta\right|^{2}=\frac{1}{x^{2}+y^{2}},
&\left|\nabla_{\mathbb{H}^{1}}\rho\right|^{2}=\frac{x^{2}+y^{2}}{\rho^{2}},
\end{align}
and, using \eqref{norm-quad-grad-horiz-theta-rho} and Lemma \ref{lemma-nabla-horiz-rho-theta-phi},
\begin{align*}
\left|\nabla_{\mathbb{H}^{1}}\varphi\right|^{2}&=\left|\frac{2}{\rho(x^{2}+y^{2})}(t\nabla_{\mathbb{H}^{1}}\rho+\rho(-y,x))\right|^{2}=\left(\frac{2}{\rho(x^{2}+y^{2})}\right)^{2}\langle t\nabla_{\mathbb{H}^{1}}\rho+\rho(-y,x),t\nabla_{\mathbb{H}^{1}}\rho+\rho(-y,x)\rangle\\
&=\frac{4}{\rho^{2}(x^{2}+y^{2})^{2}}(t^{2}\left|\nabla_{\mathbb{H}^{1}}\rho\right|^{2}+\rho^{2}(y^{2}+x^{2})+2t\rho\langle\nabla_{\mathbb{H}^{1}}\rho,(-y,x)\rangle)\\
&=\frac{4}{\rho^{2}(x^{2}+y^{2})^{2}}\bigg(t^{2}\frac{x^{2}+y^{2}}{\rho^{2}}+\rho^{2}(y^{2}+x^{2})+2t\rho\langle\rho^{-3}((x^{2}+y^{2})x+yt,(x^{2}+y^{2})y-xt),(-y,x)\rangle\bigg)\\
&=\frac{4}{\rho^{2}(x^{2}+y^{2})^{2}}\bigg(t^{2}\frac{x^{2}+y^{2}}{\rho^{2}}+\rho^{2}(y^{2}+x^{2})+2t\rho\rho^{-3}(-(x^{2}+y^{2})yx-y^{2}t+(x^{2}+y^{2})yx-x^{2}t)\bigg)\\
&=\frac{4}{\rho^{2}(x^{2}+y^{2})^{2}}\bigg(t^{2}\frac{x^{2}+y^{2}}{\rho^{2}}+\rho^{2}(y^{2}+x^{2})-2t^{2}\frac{x^{2}+y^{2}}{\rho^{2}}\bigg)\\
&=\frac{4}{\rho^{2}(x^{2}+y^{2})^{2}}\bigg(\rho^{2}(x^{2}+y^{2})-t^{2}\frac{x^{2}+y^{2}}{\rho^{2}}\bigg)=\frac{4}{\rho^{2}(x^{2}+y^{2})}\bigg(\frac{\rho^{4}-t^{2}}{\rho^{2}}\bigg)\\
&=\frac{4(x^{2}+y^{2})^2}{\rho^4(x^{2}+y^{2})}=\frac{4(x^{2}+y^{2})}{\rho^{4}},
\end{align*}
namely
\begin{equation}\label{norm-quad-grad-horiz-varphi}
\left|\nabla_{\mathbb{H}^{1}}\varphi\right|^{2}=\frac{4(x^{2}+y^{2})}{\rho^{4}}.
\end{equation}
Putting together \eqref{norm-quad-grad-horiz-theta-rho} and \eqref{norm-quad-grad-horiz-varphi}, we get the expected results:
\begin{equation}\label{norm-quad-grad-horiz-varphi-rho-theta}
\left|\nabla_{\mathbb{H}^{1}}\varphi\right|^{2}=\frac{4(x^{2}+y^{2})}{\rho^{4}},\quad
\left|\nabla_{\mathbb{H}^{1}}\rho\right|^{2}=\frac{x^{2}+y^{2}}{\rho^{2}},\quad
\left|\nabla_{\mathbb{H}^{1}}\theta\right|^{2}=\frac{1}{x^{2}+y^{2}}.
\end{equation}
\end{proof}
\begin{lem}\label{lemma-inner-product-nabla-phi-rho-theta}
The following relationships hold:
$$\langle\nabla_{\mathbb{H}^{1}}\varphi,\nabla_{\mathbb{H}^{1}}\rho\rangle=0,\quad \langle\nabla_{\mathbb{H}^{1}}\rho,\nabla_{\mathbb{H}^{1}}\theta\rangle=-\frac{\cos\varphi}{\rho},\quad \langle\nabla_{\mathbb{H}^{1}}\varphi,\nabla_{\mathbb{H}^{1}}\theta\rangle=\frac{2(x^{2}+y^{2})}{\rho^{4}}.$$
 \end{lem}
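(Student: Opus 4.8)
The plan is to evaluate the three scalar products by substituting the explicit vectors recorded in Lemma \ref{lemma-nabla-horiz-rho-theta-phi} and simplifying, in the same spirit as the proof of Lemma \ref{norm-quad-nabla-horiz-theta-phi-rho}. The observation that streamlines everything is that the vector $(-y,x)$ appears both inside the formula for $\nabla_{\mathbb{H}^{1}}\varphi$ and, up to the scalar $(x^{2}+y^{2})^{-1}$, as $\nabla_{\mathbb{H}^{1}}\theta$ itself. Hence the only genuinely new quantity I would compute is the pairing
\[
\langle (-y,x),\nabla_{\mathbb{H}^{1}}\rho\rangle=\rho^{-3}\big(-y((x^{2}+y^{2})x+ty)+x((x^{2}+y^{2})y-tx)\big)=-t(x^{2}+y^{2})\rho^{-3},
\]
in which the two $(x^{2}+y^{2})xy$ terms cancel. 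Together with $|\nabla_{\mathbb{H}^{1}}\rho|^{2}$ and $|\nabla_{\mathbb{H}^{1}}\theta|^{2}$ from Lemma \ref{norm-quad-nabla-horiz-theta-phi-rho}, this single identity feeds all three evaluations.

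For the first relation I would write $\langle\nabla_{\mathbb{H}^{1}}\varphi,\nabla_{\mathbb{H}^{1}}\rho\rangle=\frac{2}{\rho(x^{2}+y^{2})}\big(t|\nabla_{\mathbb{H}^{1}}\rho|^{2}+\rho\langle(-y,x),\nabla_{\mathbb{H}^{1}}\rho\rangle\big)$ and insert $|\nabla_{\mathbb{H}^{1}}\rho|^{2}=(x^{2}+y^{2})/\rho^{2}$ together with the pairing above; the two contributions are $t(x^{2}+y^{2})/\rho^{2}$ and $-t(x^{2}+y^{2})/\rho^{2}$, so they cancel and the product is $0$, confirming that the $\varphi$- and $\rho$-directions remain orthogonal, exactly as in the Euclidean polar picture.

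For the remaining two I would reuse the same pairing. Expanding $\langle\nabla_{\mathbb{H}^{1}}\rho,\nabla_{\mathbb{H}^{1}}\theta\rangle=(x^{2}+y^{2})^{-1}\langle\nabla_{\mathbb{H}^{1}}\rho,(-y,x)\rangle=-t\rho^{-3}$ and recognizing $\cos\varphi=t/\rho^{2}$ from \eqref{cartes-coord} yields $-\cos\varphi/\rho$. For $\langle\nabla_{\mathbb{H}^{1}}\varphi,\nabla_{\mathbb{H}^{1}}\theta\rangle$ I would pair against $(-y,x)/(x^{2}+y^{2})$, use $|(-y,x)|^{2}=x^{2}+y^{2}$ and the pairing, and collect everything into $\frac{2}{\rho(x^{2}+y^{2})}\cdot\frac{\rho^{4}-t^{2}}{\rho^{3}}$; the final simplification then rests on the gauge identity $\rho^{4}-t^{2}=(x^{2}+y^{2})^{2}$, which reduces this to $2(x^{2}+y^{2})/\rho^{4}$.

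There is no real conceptual obstacle here: every step is an algebraic substitution into formulas already established. The only points demanding care are bookkeeping ones, namely keeping the cross terms straight in the pairing (the cancellation of the $(x^{2}+y^{2})xy$ terms is precisely what makes the first identity vanish), and remembering to convert back to $\varphi$ through $\cos\varphi=t/\rho^{2}$ and to invoke $\rho^{4}=(x^{2}+y^{2})^{2}+t^{2}$ at the last step. These are exactly the identities already exploited in Lemma \ref{norm-quad-nabla-horiz-theta-phi-rho}, so given the two preceding lemmas the computation is entirely routine.
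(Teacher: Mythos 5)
Your proposal is correct and follows essentially the same route as the paper: both proofs substitute the explicit expressions from Lemma \ref{lemma-nabla-horiz-rho-theta-phi}, reduce everything to the single pairing $\langle(-y,x),\nabla_{\mathbb{H}^{1}}\rho\rangle=-t(x^{2}+y^{2})\rho^{-3}$, and finish with $\cos\varphi=t/\rho^{2}$ and $\rho^{4}-t^{2}=(x^{2}+y^{2})^{2}$. The only difference is organizational: you isolate that pairing once up front, whereas the paper recomputes it inline for each of the three products.
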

 \begin{proof}
Let us calculate now $\langle\nabla_{\mathbb{H}^{1}}\varphi, \nabla_{\mathbb{H}^{1}}\rho\rangle,$ $\langle\nabla_{\mathbb{H}^{1}}\varphi, \nabla_{\mathbb{H}^{1}}\theta\rangle$ and $\langle\nabla_{\mathbb{H}^{1}}\rho, \nabla_{\mathbb{H}^{1}}\theta\rangle,$ using \eqref{grad-horiz-varphi-rho-theta} and \eqref{norm-quad-grad-horiz-varphi-rho-theta}. Specifically, we have
\begin{equation*}
\begin{split}
&\langle\nabla_{\mathbb{H}^{1}}\varphi, \nabla_{\mathbb{H}^{1}}\rho\rangle=\langle\frac{2}{\rho(x^{2}+y^{2})}(t\nabla_{\mathbb{H}^{1}}\rho+\rho(-y,x)),\nabla_{\mathbb{H}^{1}}\rho\rangle=\frac{2}{\rho(x^{2}+y^{2})}(t\left|\nabla_{\mathbb{H}^{1}}\rho\right|^{2}+\rho\langle(-y,x),\nabla_{\mathbb{H}^{1}}\rho\rangle)\\
&=\frac{2}{\rho(x^{2}+y^{2})}\bigg(t\frac{x^{2}+y^{2}}{\rho^{2}}+\rho\langle(-y,x),\rho^{-3}((x^{2}+y^{2})x+yt,(x^{2}+y^{2})y-xt)\rangle\bigg)\\
&=\frac{2}{\rho(x^{2}+y^{2})}\left(t\frac{x^{2}+y^{2}}{\rho^{2}}+\rho^{-2}(-(x^{2}+y^{2})yx-y^{2}t+(x^{2}+y^{2})xy-x^{2}t)\right)\\
&=\frac{2}{\rho(x^{2}+y^{2})}\left(t\frac{x^{2}+y^{2}}{\rho^{2}}+\rho^{-2}(-t(x^{2}+y^{2}))\right)=0,
\end{split}
\end{equation*}
and
\begin{equation*}
\begin{split}
&\langle\nabla_{\mathbb{H}^{1}}\rho, \nabla_{\mathbb{H}^{1}}\theta\rangle=\langle\rho^{-3}((x^{2}+y^{2})x+yt,(x^{2}+y^{2})y-xt,\frac{1}{x^{2}+y^{2}}(-y,x)\rangle\\
&=\frac{\rho^{-3}}{x^{2}+y^{2}}(-(x^{2}+y^{2})xy-y^{2}t+(x^{2}+y^{2})xy-x^{2}t)=-\frac{t}{\rho^{3}}=-\frac{\rho^{2}\cos\varphi}{\rho^{3}}=-\frac{\cos\varphi}{\rho}.
\end{split}
\end{equation*}
Thus, it results
\begin{equation}\label{prod-scal-grad-varphi-grad-rho-grad-rho-grad-theta}
\langle\nabla_{\mathbb{H}^{1}}\varphi,\nabla_{\mathbb{H}^{1}}\rho\rangle=0,\quad \langle\nabla_{\mathbb{H}^{1}}\rho,\nabla_{\mathbb{H}^{1}}\theta\rangle=-\frac{\cos\varphi}{\rho},
\end{equation}
and, using \eqref{prod-scal-grad-varphi-grad-rho-grad-rho-grad-theta},
\begin{align*}
&\langle\nabla_{\mathbb{H}^{1}}\varphi, \nabla_{\mathbb{H}^{1}}\theta\rangle=\langle\frac{2}{\rho(x^{2}+y^{2})}(t\nabla_{\mathbb{H}^{1}}\rho+\rho(-y,x)),\frac{1}{x^{2}+y^{2}}(-y,x)\rangle\\
&=\frac{2}{\rho(x^{2}+y^{2})^{2}}(t\langle\rho^{-3}((x^{2}+y^{2})x+yt,(x^{2}+y^{2})y-xt),(-y,x)\rangle+\rho(y^{2}+x^{2}))\\
&=\frac{2}{\rho(x^{2}+y^{2})^{2}}(t\rho^{-3}(-(x^{2}+y^{2})xy-y^{2}t+(x^{2}+y^{2})xy-x^{2}t)+\rho(y^{2}+x^{2}))\\
&=\frac{2}{\rho(x^{2}+y^{2})^{2}}(-t^2\rho^{-3}(x^{2}+y^{2})+\rho(y^{2}+x^{2}))=\frac{2}{\rho(x^{2}+y^{2})}(-t^{2}\rho^{-3}+\rho)\\
&=\frac{2}{\rho(x^{2}+y^{2})}\left(\frac{-t^{2}+\rho^{4}}{\rho^{3}}\right)=\frac{2(x^{2}+y^{2})^{2}}{\rho^4(x^{2}+y^{2})}=\frac{2(x^{2}+y^{2})}{\rho^{4}},
\end{align*}
that is
\begin{equation}\label{prod-scal-grad-varphi-grad-theta}
\langle\nabla_{\mathbb{H}^{1}}\varphi,\nabla_{\mathbb{H}^{1}}\theta\rangle=\frac{2(x^{2}+y^{2})}{\rho^{4}}.
\end{equation}
Considering together \eqref{prod-scal-grad-varphi-grad-rho-grad-rho-grad-theta} and \eqref{prod-scal-grad-varphi-grad-theta}, we obtain
\begin{equation}\label{prod-scal-grad-varphi-grad-rho-grad-rho-grad-theta-grad-varphi-grad-theta}
\langle\nabla_{\mathbb{H}^{1}}\varphi,\nabla_{\mathbb{H}^{1}}\rho\rangle=0,\quad
\langle\nabla_{\mathbb{H}^{1}}\varphi, \nabla_{\mathbb{H}^{1}}\theta\rangle=\frac{2(x^{2}+y^{2})}{\rho^{4}},\quad
\langle\nabla_{\mathbb{H}^{1}}\rho, \nabla_{\mathbb{H}^{1}}\theta\rangle=-\frac{\cos\varphi}{\rho}.
\end{equation}
\end{proof}
At this point, we are in position to compute $\Delta_{\mathbb{H}^1}\varphi,$ $\Delta_{\mathbb{H}^1}\rho$ and $\Delta_{\mathbb{H}^1}\theta,$ using \eqref{grad-horiz-varphi-rho-theta}.
In particular, we have the following result.
\begin{lem}
The following relationships hold:
$$
\Delta_{\mathbb{H}^1}\theta=0, \quad \Delta_{\mathbb{H}^1}\rho=\frac{3(x^{2}+y^{2})}{\rho^{3}},\quad \Delta_{\mathbb{H}^1}\varphi=\frac{4\cos \varphi}{\rho^{2}}.
$$
\end{lem}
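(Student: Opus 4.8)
The plan is to compute each sublaplacian directly from the first-order data already established in Lemmas \ref{lemma-nabla-horiz-rho-theta-phi} and \ref{norm-quad-nabla-horiz-theta-phi-rho}, using $\Delta_{\mathbb{H}^{1}}=X^{2}+Y^{2}$. The case of $\theta$ is immediate: since $X\theta=-y/(x^{2}+y^{2})$ and $Y\theta=x/(x^{2}+y^{2})$ are both independent of $t$, the $\partial_{t}$-components of $X$ and $Y$ contribute nothing, so $X^{2}\theta=\partial_{x}(X\theta)$ and $Y^{2}\theta=\partial_{y}(Y\theta)$ reduce to ordinary partial derivatives; a one-line differentiation gives $X^{2}\theta=2xy/(x^{2}+y^{2})^{2}$ and $Y^{2}\theta=-2xy/(x^{2}+y^{2})^{2}$, which cancel.

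For $\rho$ I would avoid the brute-force second differentiation and instead exploit the structure of the operator. Recall that the gauge norm equals $\rho$ and that $\rho^{2-Q}=\rho^{-2}$ (here $Q=4$) is, up to a constant, the fundamental solution of $\Delta_{\mathbb{H}^{1}}$, hence $\Delta_{\mathbb{H}^{1}}(\rho^{-2})=0$ away from the origin. For any radial profile one has $\Delta_{\mathbb{H}^{1}}f(\rho)=f''(\rho)\,|\nabla_{\mathbb{H}^{1}}\rho|^{2}+f'(\rho)\,\Delta_{\mathbb{H}^{1}}\rho$, and inserting $f(\rho)=\rho^{-2}$ together with $|\nabla_{\mathbb{H}^{1}}\rho|^{2}=(x^{2}+y^{2})/\rho^{2}$ from Lemma \ref{norm-quad-nabla-horiz-theta-phi-rho} yields $0=6\rho^{-4}(x^{2}+y^{2})/\rho^{2}-2\rho^{-3}\Delta_{\mathbb{H}^{1}}\rho$, i.e. $\Delta_{\mathbb{H}^{1}}\rho=3(x^{2}+y^{2})/\rho^{3}$.

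For $\varphi$ the idea is to pass through $\cos\varphi=t\rho^{-2}$ and differentiate that instead. Using the Leibniz rule $\Delta_{\mathbb{H}^{1}}(fg)=f\Delta_{\mathbb{H}^{1}}g+g\Delta_{\mathbb{H}^{1}}f+2\langle\nabla_{\mathbb{H}^{1}}f,\nabla_{\mathbb{H}^{1}}g\rangle$ with $f=t$, $g=\rho^{-2}$, and the facts $\Delta_{\mathbb{H}^{1}}t=0$ (immediate from $Xt=2y$, $Yt=-2x$) and $\Delta_{\mathbb{H}^{1}}(\rho^{-2})=0$, the computation of $\Delta_{\mathbb{H}^{1}}(\cos\varphi)$ collapses to the cross term $2\langle\nabla_{\mathbb{H}^{1}}t,\nabla_{\mathbb{H}^{1}}\rho^{-2}\rangle=-4\rho^{-3}\langle(2y,-2x),\nabla_{\mathbb{H}^{1}}\rho\rangle$, which evaluates to $-8t(x^{2}+y^{2})\rho^{-6}$. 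On the other hand, the chain rule $\Delta_{\mathbb{H}^{1}}(\cos\varphi)=-\sin\varphi\,\Delta_{\mathbb{H}^{1}}\varphi-\cos\varphi\,|\nabla_{\mathbb{H}^{1}}\varphi|^{2}$, together with $\sin\varphi=(x^{2}+y^{2})/\rho^{2}$, $\cos\varphi=t/\rho^{2}$ and $|\nabla_{\mathbb{H}^{1}}\varphi|^{2}=4(x^{2}+y^{2})/\rho^{4}$, expresses the same quantity through the unknown $\Delta_{\mathbb{H}^{1}}\varphi$. Equating the two and solving gives $\Delta_{\mathbb{H}^{1}}\varphi=4t/\rho^{4}=4\cos\varphi/\rho^{2}$.

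The routine alternative is of course to apply $X$ and $Y$ a second time to the vector fields in \eqref{grad-horiz-varphi-rho-theta}; there the main obstacle is purely bookkeeping, since the $\partial_{t}$-terms of $X$ and $Y$ no longer annihilate the (now $t$-dependent) first derivatives of $\rho$ and $\varphi$, making sign errors easy. The approach above sidesteps this by reducing everything to the single nontrivial input $\Delta_{\mathbb{H}^{1}}(\rho^{-2})=0$ and the already-computed norms and inner products, so the only genuinely new verification is the scalar identity $\sin\varphi=(x^{2}+y^{2})/\rho^{2}$, which follows from $\sin^{2}\varphi=1-t^{2}/\rho^{4}=(x^{2}+y^{2})^{2}/\rho^{4}$.
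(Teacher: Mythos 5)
Your proposal is correct, and for the two nontrivial entries it takes a genuinely different and arguably cleaner route than the paper. The $\theta$ computation is identical to the paper's. For $\rho$, the paper invokes the radial formula $\Delta_{\mathbb{H}^1}f(\rho)=\left|\nabla_{\mathbb{H}^{1}}\rho\right|^{2}\left(f''+\frac{Q-1}{\rho}f'\right)$ from \cite{BLU} with $f(\rho)=\rho$; you instead combine the elementary chain rule $\Delta_{\mathbb{H}^1}f(\rho)=f''\left|\nabla_{\mathbb{H}^1}\rho\right|^2+f'\Delta_{\mathbb{H}^1}\rho$ with the harmonicity of $\rho^{-2}$ off the origin. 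These are essentially equivalent external inputs (the BLU formula is precisely what encodes the harmonicity of $\rho^{2-Q}$), so the gain is mainly that your version makes the dependence on the fundamental solution explicit. The real divergence is in $\Delta_{\mathbb{H}^1}\varphi$: the paper applies $X$ and $Y$ a second time to the components of $\nabla_{\mathbb{H}^1}\varphi$ and grinds through roughly a page of cancellations, whereas you route the computation through $\cos\varphi=t\rho^{-2}$, where $\Delta_{\mathbb{H}^1}t=0$ and $\Delta_{\mathbb{H}^1}\rho^{-2}=0$ kill everything except the cross term $2\langle\nabla_{\mathbb{H}^1}t,\nabla_{\mathbb{H}^1}\rho^{-2}\rangle=-8t(x^2+y^2)\rho^{-6}$, and then invert the scalar chain rule $\Delta_{\mathbb{H}^1}(\cos\varphi)=-\sin\varphi\,\Delta_{\mathbb{H}^1}\varphi-\cos\varphi\left|\nabla_{\mathbb{H}^1}\varphi\right|^2$. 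I checked the arithmetic: the cross term is right, and solving gives $\Delta_{\mathbb{H}^1}\varphi=4t/\rho^4=4\cos\varphi/\rho^2$ as claimed. Two small points worth making explicit: the division by $\sin\varphi=(x^2+y^2)/\rho^2$ requires $x^2+y^2\neq 0$, which is harmless since $\nabla_{\mathbb{H}^1}\varphi$ is only defined there anyway; and the sign choice $\sin\varphi=+(x^2+y^2)/\rho^2$ is justified because $\varphi=\arccos(t/\rho^2)\in[0,\pi]$. What your approach buys is a computation whose only inputs are the already-established first-order data of Lemmas \ref{lemma-nabla-horiz-rho-theta-phi}--\ref{lemma-inner-product-nabla-phi-rho-theta} plus one structural fact, at the cost of being less self-contained than the paper's purely mechanical (if laborious) differentiation.
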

\begin{proof}
We remark that:
\begin{align*}
\Delta_{\mathbb{H}^1}\theta&=(X^{2}+Y^{2})\theta=X^{2}\theta+Y^{2}\theta=X(X\theta)+Y(Y\theta)=X\left(-\frac{y}{x^{2}+y^{2}}\right)+Y\left(\frac{x}{x^{2}+y^{2}}\right)\\
&=-y\left(-\frac{2x}{(x^{2}+y^{2})^{2}}\right)+x\left(-\frac{2y}{(x^{2}+y^{2})^{2}}\right)=\frac{2xy}{(x^{2}+y^{2})^{2}}-\frac{2xy}{(x^{2}+y^{2})^{2}}=0,
\end{align*}
which entails
\begin{equation}\label{lapl-horiz-theta}
\Delta_{\mathbb{H}^1}\theta=0.
\end{equation}
Concerning $\Delta_{\mathbb{H}^1}\rho,$ we can use the following formula, see \cite{BLU},
\begin{equation}\label{lapl-horiz-rho-functions}
\Delta_{\mathbb{H}^1}f(\rho)=\left|\nabla_{\mathbb{H}^{1}}\rho\right|^{2}\left(f''+\frac{Q-1}{\rho}f'\right),
\end{equation}
in the particular case of $f(\rho)=\rho$ and $Q=4,$ and we achieve, in view of \eqref{norm-quad-grad-horiz-varphi-rho-theta},
$$\Delta_{\mathbb{H}^1}\rho=\left|\nabla_{\mathbb{H}^{1}}\rho\right|^{2}\frac{3}{\rho}=\frac{x^{2}+y^{2}}{\rho^{2}}\frac{3}{\rho}=\frac{3(x^{2}+y^{2})}{\rho^{3}},$$
that is 
\begin{equation}\label{lapl-horiz-rho-1}
\Delta_{\mathbb{H}^1}\rho=\frac{3(x^{2}+y^{2})}{\rho^{3}}.
\end{equation}
As regards $\Delta_{\mathbb{H}^1}\varphi,$ we obtain, because from \eqref{grad-horiz-varphi-rho-theta} we have
$$\nabla_{\mathbb{H}^{1}}\varphi=\frac{2}{\rho(x^{2}+y^{2})}(t\nabla_{\mathbb{H}^{1}}\rho+\rho(-y,x))=\frac{2}{\rho(x^{2}+y^{2})}(tX\rho-\rho y,tY\rho+\rho x),$$
and by virtue of \eqref{lapl-horiz-rho-1} and \eqref{norm-quad-grad-horiz-varphi-rho-theta},
\begin{align*}
&\Delta_{\mathbb{H}^1}\varphi=X(X\varphi)+Y(Y\varphi)=X\bigg(\frac{2}{\rho(x^{2}+y^{2})}(tX\rho-\rho y)\bigg)+Y\bigg(\frac{2}{\rho(x^{2}+y^{2})}(tY\rho+\rho x)\bigg)\\
&=X\left(\frac{2}{\rho(x^{2}+y^{2})}\right)(tX\rho-\rho y)+\frac{2}{\rho(x^{2}+y^{2})}(XtX\rho+tX^{2}\rho-(X\rho)y-\rho Xy)\\
&+Y\left(\frac{2}{\rho(x^{2}+y^{2})}\right)(tY\rho+\rho x)+\frac{2}{\rho(x^{2}+y^{2})}(YtY\rho+tY^{2}\rho+(Y\rho) x+\rho Yx),
\end{align*}
so that
\begin{align*}
&\Delta_{\mathbb{H}^1}\varphi=(tX\rho-\rho y)X\bigg(\frac{2}{\rho(x^{2}+y^{2})}\bigg)+(tY\rho+\rho x)Y\bigg(\frac{2}{\rho(x^{2}+y^{2})}\bigg)\\
&+\frac{2t}{\rho(x^{2}+y^{2})}(X^{2}\rho+Y^{2}\rho)+\frac{2}{\rho(x^{2}+y^{2})}((Xt-y)X\rho-\rho Xy+(Yt+x)Y\rho+\rho Yx)\\
&=(tX\rho-\rho y)\bigg(-\frac{2}{\rho^{2}(x^{2}+y^{2})^{2}}\bigg)\bigg(\frac{\partial\rho}{\partial x}(x^{2}+y^{2})+\rho2x+2y\frac{\partial \rho}{\partial t}(x^{2}+y^{2})\bigg)\\
&+(tY\rho+\rho x)\bigg(-\frac{2}{\rho^{2}(x^{2}+y^{2})^{2}}\bigg)\bigg(\frac{\partial \rho}{\partial y}(x^{2}+y^{2})+\rho2y-2x\frac{\partial \rho}{\partial t}(x^{2}+y^{2})\bigg)\\
&+\frac{2t}{\rho(x^{2}+y^{2})}\Delta_{\mathbb{H}^1}\rho+\frac{2}{\rho(x^{2}+y^{2})}((2y-y)X\rho+(-2x+x)Y\rho)\\
&=-2\frac{(tX\rho-\rho y)((x^{2}+y^{2})X\rho+2\rho x)}{\rho^{2}(x^{2}+y^{2})^{2}}-2\frac{(tY\rho+\rho x)((x^{2}+y^{2})Y\rho+2\rho y)}{\rho^{2}(x^{2}+y^{2})^{2}}\\
&+\frac{2t}{\rho(x^{2}+y^{2})}\frac{3(x^{2}+y^{2})}{\rho^{3}}+\frac{2}{\rho(x^{2}+y^{2})}(yX\rho-xY\rho).\\
\end{align*}
Furthermore, continuing the computation, we actually get
\begin{align*}
&\Delta_{\mathbb{H}^1}\varphi=-\frac{2}{\rho^{2}(x^{2}+y^{2})^{2}}(t(x^{2}+y^{2})(X\rho)^{2}-\rho y(x^{2}+y^{2})X\rho+2\rho xtX\rho-2\rho^{2}xy+t(x^{2}+y^{2})(Y\rho)^{2}\\
&+\rho x(x^{2}+y^{2})Y\rho+2\rho ytY\rho+2\rho^{2}xy)+\frac{6t}{\rho^{4}}+\frac{2y}{\rho(x^{2}+y^{2})}X\rho-\frac{2x}{\rho(x^{2}+y^{2})}Y\rho\\
&=-\frac{2}{\rho^{2}(x^{2}+y^{2})^{2}}(t(x^{2}+y^{2})((X\rho)^{2}+(Y\rho)^{2})+\rho(x^{2}+y^{2})(xY\rho-yX\rho)+2t\rho(xX\rho+yY\rho))\\
&+\frac{6t}{\rho^{4}}+\frac{2y}{\rho(x^{2}+y^{2})}X\rho-\frac{2x}{\rho(x^{2}+y^{2})}Y\rho\\
&=-\frac{2t}{\rho^{2}(x^{2}+y^{2})}\left|\nabla_{\mathbb{H}^{1}} \rho\right|^{2}-\frac{2x}{\rho(x^{2}+y^{2})}Y\rho+\frac{2y}{\rho(x^{2}+y^{2})}X\rho-\frac{4t}{\rho(x^{2}+y^{2})^{2}}(xX\rho+yY\rho)+\frac{6t}{\rho^{4}}\\
&+\frac{2y}{\rho(x^{2}+y^{2})}X\rho-\frac{2x}{\rho(x^{2}+y^{2})}Y\rho\\
&=-\frac{2t}{\rho^{2}(x^{2}+y^{2})}\left|\nabla_{\mathbb{H}^{1}} \rho\right|^{2}+\frac{4}{\rho(x^{2}+y^{2})}(yX\rho-xY\rho)-\frac{4t}{\rho(x^{2}+y^{2})^{2}}(xX\rho+yY\rho)+\frac{6t}{\rho^{4}}\\
&=-\frac{2t}{\rho^{2}(x^{2}+y^{2})}\frac{x^{2}+y^{2}}{\rho^{2}}+\frac{4}{\rho(x^{2}+y^{2})}(y\rho^{-3}((x^{2}+y^{2})x+ty)-x\rho^{-3}((x^{2}+y^{2})y-xt))\\
&-\frac{4t}{\rho(x^{2}+y^{2})^{2}}(x\rho^{-3}((x^{2}+y^{2})x+yt)+y\rho^{-3}((x^{2}+y^{2})y-xt))+\frac{6t}{\rho^{4}}
\end{align*}
which finally implies
\begin{align*}
&\Delta_{\mathbb{H}^1}\varphi=-\frac{2t}{\rho^{4}}+\frac{4}{\rho^4(x^{2}+y^{2})}((x^{2}+y^{2})yx+ty^{2}-(x^{2}+y^{2})xy+x^{2}t)\\
&-\frac{4t}{\rho^4(x^{2}+y^{2})^{2}}((x^{2}+y^{2})x^{2}+xyt+(x^{2}+y^{2})y^{2}-yxt)+\frac{6t}{\rho^{4}}\\
&=\frac{4t}{\rho^{4}}+\frac{4}{\rho^{4}(x^{2}+y^{2})}t(x^{2}+y^{2})-\frac{4t}{\rho^{4}(x^{2}+y^{2})^{2}}(x^{2}+y^{2})^2=\frac{4t}{\rho^{4}}=\frac{4\rho^{2}\cos\varphi}{\rho^{4}}=\frac{4\cos\varphi}{\rho^{2}},
\end{align*}
i.e.
\begin{equation}\label{lapl-horiz-phi}
\Delta_{\mathbb{H}^1}\varphi=\frac{4\cos \varphi}{\rho^{2}}.
\end{equation}
Putting together \eqref{lapl-horiz-theta}, \eqref{lapl-horiz-rho-1} and \eqref{lapl-horiz-phi}, we have
\begin{equation}
\label{lapl-horiz-varphi-rho-theta}
\Delta_{\mathbb{H}^1}\theta=0,\quad \Delta_{\mathbb{H}^1}\rho=\frac{3(x^{2}+y^{2})}{\rho^{3}}\quad
\Delta_{\mathbb{H}^1}\varphi=\frac{4\cos\varphi}{\rho^{2}}.
\end{equation}
\end{proof}
At this point, assuming that $u=\rho^{\alpha}f(\theta,\varphi),$  we compute $\Delta_{\mathbb{H}^1}u$ obtaining the following result.
\begin{lem}\label{lapl-horiz-f-theta-phi}
Let $u=\rho^{\alpha}f(\theta,\varphi)$. Then:
\begin{equation*}
\begin{split}
&\Delta_{\mathbb{H}^1}u=\Delta_{\mathbb{H}^1}(\rho^{\alpha}f(\theta,\varphi))=\rho^{\alpha-2}\bigg(\alpha(\alpha+2)(\sin\varphi)f(\theta,\varphi)-2\alpha(\cos\varphi)\frac{\partial f}{\partial \theta}+\frac{1}{\sin\varphi}\frac{\partial^{2}f}{\partial \theta^{2}}\\
&+4\sin\varphi\frac{\partial^{2}f}{\partial \varphi \partial \theta}+4\sin\varphi\frac{\partial^{2}f}{\partial \varphi^{2}}+4\cos\varphi\frac{\partial f}{\partial \varphi}\bigg).
\end{split}
\end{equation*}
\end{lem}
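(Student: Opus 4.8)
The plan is to apply the second-order chain rule for the sub-Laplacian to the composite function $u = F(\rho,\theta,\varphi)$ with $F(\rho,\theta,\varphi) = \rho^{\alpha} f(\theta,\varphi)$, and then to substitute the explicit quantities gathered in the three preceding lemmas. Expanding $\Delta_{\mathbb{H}^1} = X^2 + Y^2$ through the product rule for the vector fields $X,Y$ acting on $F(\rho,\theta,\varphi)$ yields
\[
\Delta_{\mathbb{H}^1} u = \sum_{a} \partial_a F\, \Delta_{\mathbb{H}^1} a + \sum_{a,b}\partial_a\partial_b F\, \langle \nabla_{\mathbb{H}^1} a, \nabla_{\mathbb{H}^1} b\rangle,
\]
where $a,b$ range over $\{\rho,\theta,\varphi\}$ in the double sum, so that each symmetric cross term contributes twice. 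This is the only structural identity needed; all its ingredients, namely $\Delta_{\mathbb{H}^1}\rho,\Delta_{\mathbb{H}^1}\theta,\Delta_{\mathbb{H}^1}\varphi$, the squared norms $|\nabla_{\mathbb{H}^1}\rho|^2,|\nabla_{\mathbb{H}^1}\theta|^2,|\nabla_{\mathbb{H}^1}\varphi|^2$, and the cross terms $\langle\nabla_{\mathbb{H}^1}\rho,\nabla_{\mathbb{H}^1}\theta\rangle$, $\langle\nabla_{\mathbb{H}^1}\rho,\nabla_{\mathbb{H}^1}\varphi\rangle$, $\langle\nabla_{\mathbb{H}^1}\theta,\nabla_{\mathbb{H}^1}\varphi\rangle$, have already been computed.

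Next I would record the nine partial derivatives of $F=\rho^{\alpha}f$: the radial ones produce powers of $\rho$ (e.g.\ $\partial_\rho F = \alpha\rho^{\alpha-1}f$ and $\partial_\rho^2 F = \alpha(\alpha-1)\rho^{\alpha-2}f$), the mixed radial derivatives bring down one factor of $\alpha$, and the purely angular derivatives simply differentiate $f$. The key simplification when inserting the lemmas is to rewrite every occurrence of $x^2+y^2$ by means of the coordinate identity $x^2+y^2 = \rho^2\sin\varphi$, which turns each of the six geometric quantities into a function of $(\rho,\varphi)$ alone; for instance $\Delta_{\mathbb{H}^1}\rho = 3\rho^{-1}\sin\varphi$, $|\nabla_{\mathbb{H}^1}\rho|^2 = \sin\varphi$, $|\nabla_{\mathbb{H}^1}\varphi|^2 = 4\rho^{-2}\sin\varphi$, $|\nabla_{\mathbb{H}^1}\theta|^2 = (\rho^2\sin\varphi)^{-1}$, $\langle\nabla_{\mathbb{H}^1}\rho,\nabla_{\mathbb{H}^1}\theta\rangle = -\rho^{-1}\cos\varphi$ and $\langle\nabla_{\mathbb{H}^1}\theta,\nabla_{\mathbb{H}^1}\varphi\rangle = 2\rho^{-2}\sin\varphi$.

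After this substitution every surviving term carries the common factor $\rho^{\alpha-2}$. The vanishing of $\Delta_{\mathbb{H}^1}\theta$ and of $\langle\nabla_{\mathbb{H}^1}\rho,\nabla_{\mathbb{H}^1}\varphi\rangle$ eliminates two contributions outright, so the only collection step concerns the coefficient of $f$: the term $3\alpha\sin\varphi$ coming from $\partial_\rho F\,\Delta_{\mathbb{H}^1}\rho$ combines with the term $\alpha(\alpha-1)\sin\varphi$ coming from $\partial_\rho^2 F\,|\nabla_{\mathbb{H}^1}\rho|^2$ to produce $\alpha(\alpha+2)\sin\varphi$. The remaining contributions, namely $-2\alpha\cos\varphi\,\partial_\theta f$ from the $\rho\theta$ cross term, $(\sin\varphi)^{-1}\partial_\theta^2 f$ from the $\theta\theta$ term, $4\sin\varphi\,\partial_\varphi^2 f$ from the $\varphi\varphi$ term, $4\sin\varphi\,\partial_\theta\partial_\varphi f$ from the $\theta\varphi$ cross term, and $4\cos\varphi\,\partial_\varphi f$ from $\partial_\varphi F\,\Delta_{\mathbb{H}^1}\varphi$, each appear exactly once and reproduce the claimed expression. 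I do not expect any genuine obstacle here: the substance lies entirely in the three preceding lemmas, and the difficulty is purely the organized bookkeeping of the second-order chain rule, together with keeping the factor $2$ on the symmetric cross terms and systematically applying $x^2+y^2 = \rho^2\sin\varphi$.
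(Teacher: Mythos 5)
Your proposal is correct and follows essentially the same route as the paper: the paper likewise expands $X(Xu)+Y(Yu)$ by the product rule to obtain exactly your second-order chain-rule identity (their intermediate formula for $\Delta_{\mathbb{H}^1}(\rho^{\alpha}f)$ in terms of $\Delta_{\mathbb{H}^1}\rho$, $\Delta_{\mathbb{H}^1}\theta$, $\Delta_{\mathbb{H}^1}\varphi$, the squared norms and the cross products), then substitutes the three preceding lemmas and simplifies via $\frac{x^{2}+y^{2}}{\rho^{2}}=\sin\varphi$. Your bookkeeping of the nine contributions, the factor $2$ on the symmetric cross terms, and the combination $3\alpha+\alpha(\alpha-1)=\alpha(\alpha+2)$ all check out.
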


\begin{proof}
Let us begin the proof by computing $Xu$ and $Yu.$
We point out that $\theta=\theta(x,y,t),$ $\varphi=\varphi(x,y,t)$ and $\rho=\rho(x,y,t).$
As a consequence, we have
\begin{align}\label{grad-horiz-u-components}
&Xu=\alpha \rho^{\alpha-1}(X\rho)f(\theta,\varphi)+\rho^{\alpha}\bigg(\frac{\partial f}{\partial \theta}X\theta+\frac{\partial f}{\partial \varphi}X\varphi\bigg)\nonumber,\\
&Yu=\alpha \rho^{\alpha-1}(Y\rho)f(\theta,\varphi)+\rho^{\alpha}\bigg(\frac{\partial f}{\partial \theta}Y\theta+\frac{\partial f}{\partial \varphi}Y\varphi\bigg).
\end{align}
We now calculate $\Delta_{\mathbb{H}^1}u,$ using \eqref{grad-horiz-u-components}.
Specifically, we achieve, using also the computation done to find \eqref{grad-horiz-u-components},
\begin{align*}
&\Delta_{\mathbb{H}^1}u=X(Xu)+Y(Yu)\\
&=X\bigg(\alpha \rho^{\alpha-1}(X\rho)f(\theta,\varphi)+\rho^{\alpha}\bigg(\frac{\partial f}{\partial \theta}X\theta+\frac{\partial f}{\partial \varphi}X\varphi\bigg)\bigg)+Y\bigg(\alpha\rho^{\alpha-1}(Y\rho)f(\theta,\varphi)+\rho^{\alpha}\bigg(\frac{\partial f}{\partial \theta}Y\theta+\frac{\partial f}{\partial \varphi}Y\varphi\bigg)\bigg)\\
&=\alpha(\alpha-1)\rho^{\alpha-2}((X\rho)^{2}+(Y\rho)^{2})f(\theta,\varphi)+\alpha\rho^{\alpha-1}(X^{2}\rho+Y^{2}\rho)f(\theta,\varphi)+2\alpha\rho^{\alpha-1}(X\rho)\bigg(\frac{\partial f}{\partial \theta}X\theta+\frac{\partial f}{\partial \varphi}X\varphi\bigg)\\
&+2\alpha\rho^{\alpha-1}(Y\rho)\bigg(\frac{\partial f}{\partial \theta} Y\rho Y\theta+\frac{\partial f}{\partial \varphi}Y\rho Y\varphi\bigg)+\rho^{\alpha}\bigg(X\bigg(\frac{\partial f}{\partial \theta}X\theta+\frac{\partial f}{\partial \varphi}X\varphi\bigg)+Y\bigg(\frac{\partial f}{\partial \theta}Y\theta+\frac{\partial f}{\partial \varphi}Y\varphi\bigg)\bigg),\\
\end{align*}
so that it results
\begin{align*}
&\Delta_{\mathbb{H}^1}u=\alpha(\alpha-1)\rho^{\alpha-2}\left|\nabla_{\mathbb{H}^{1}}\rho\right|^{2}f(\theta,\varphi)+\alpha\rho^{\alpha-1}(\Delta_{\mathbb{H}^1}\rho) f(\theta,\varphi)+2\alpha\rho^{\alpha-1}\bigg(\frac{\partial f}{\partial \theta}X\rho X\theta+\frac{\partial f}{\partial \varphi}X\rho X\varphi\\
&+\frac{\partial f}{\partial \theta}Y\rho Y\theta+\frac{\partial f}{\partial \varphi} Y\rho Y\varphi\bigg)+\rho^{\alpha}\bigg(X\bigg(\frac{\partial f}{\partial \theta}\bigg)X\theta+\frac{\partial f}{\partial \theta}X^{2}\theta+X\bigg(\frac{\partial f}{\partial \varphi}\bigg)X\varphi+\frac{\partial f}{\partial \varphi}X^{2}\varphi
+Y\bigg(\frac{\partial f}{\partial \theta}\bigg)Y\theta\\
&+\frac{\partial f}{\partial \theta}Y^{2}\theta+Y\left(\frac{\partial f}{\partial \varphi}\right)Y\varphi+\frac{\partial f}{\partial \varphi}Y^{2}\varphi\bigg).\\
\end{align*}
Therefore, by continuing the computation, we have
\begin{align*}
&\Delta_{\mathbb{H}^1}u=(\alpha(\alpha-1)\rho^{\alpha-2}\left|\nabla_{\mathbb{H}^{1}}\rho\right|^{2}+\alpha \rho^{\alpha-1}\Delta_{\mathbb{H}^1}\rho)f(\theta,\varphi)+2\alpha\rho^{\alpha-1}\bigg(\frac{\partial f}{\partial \theta}X\rho X\theta+\frac{\partial f}{\partial \varphi}X\rho X\varphi\\
&+\frac{\partial f}{\partial \theta}Y\rho Y\theta+\frac{\partial f}{\partial \varphi} Y\rho Y\varphi\bigg)+\rho^{\alpha}\bigg(\bigg(\frac{\partial^{2}f}{\partial \theta^{2}}X\theta+\frac{\partial^{2}f}{\partial \varphi \partial \theta}X\varphi\bigg)X\theta+\bigg(\frac{\partial^{2}f}{\partial \theta \partial \varphi}X\theta+\frac{\partial^{2}f}{\partial \varphi^{2}}X\varphi\bigg)X\varphi\\
&+\bigg(\frac{\partial^{2}f}{\partial \theta^{2}}Y\theta+\frac{\partial^{2}f}{\partial \varphi \partial \theta}Y\varphi\bigg)Y\theta+\bigg(\frac{\partial^{2}f}{\partial \theta \partial \varphi}Y\theta+\frac{\partial^{2}f}{\partial \varphi^{2}}Y\varphi\bigg)Y\varphi+\frac{\partial f}{\partial \theta}(X^{2}\theta+Y^{2}\theta)+\frac{\partial f}{\partial \varphi}(X^{2}\varphi+Y^{2}\varphi)\bigg)\\
&=(\alpha(\alpha-1)\rho^{\alpha-2}\left|\nabla_{\mathbb{H}^{1}}\rho\right|^{2}+\alpha\rho^{\alpha-1}\Delta_{\mathbb{H}^1}\rho)f(\theta,\varphi)+2\alpha\rho^{\alpha-1}\bigg(\frac{\partial f}{\partial \theta}\langle\nabla_{\mathbb{H}^{1}}\rho, \nabla_{\mathbb{H}^{1}}\theta\rangle+\frac{\partial f}{\partial \varphi}\langle\nabla_{\mathbb{H}^{1}}\rho,\nabla_{\mathbb{H}^{1}}\varphi\rangle\bigg)\\
&+\rho^{\alpha}\bigg(\frac{\partial^{2}f}{\partial \theta^{2}}\left|\nabla_{\mathbb{H}^{1}}\theta\right|^{2}+2\frac{\partial^{2}f}{\partial \varphi \partial \theta}\langle\nabla_{\mathbb{H}^{1}}\varphi,\nabla_{\mathbb{H}^{1}}\theta\rangle+\frac{\partial^{2}f}{\partial \varphi^{2}}\left|\nabla_{\mathbb{H}^{1}}\varphi\right|^{2}+\frac{\partial f}{\partial \theta}\Delta_{\mathbb{H}^1}\theta+\frac{\partial f}{\partial \varphi}\Delta_{\mathbb{H}^1}\varphi\bigg),
\end{align*}
which yields
\begin{align}\label{lapl-horiz-u-1}
\Delta_{\mathbb{H}^1}u&=\Delta_{\mathbb{H}^1}(\rho^{\alpha}f(\theta,\varphi))=(\alpha(\alpha-1)\rho^{\alpha-2}\left|\nabla_{\mathbb{H}^{1}}\rho\right|^{2}+\alpha\rho^{\alpha-1}\Delta_{\mathbb{H}^1}\rho)f(\theta,\varphi)+2\alpha\rho^{\alpha-1}\bigg(\frac{\partial f}{\partial \theta}\langle\nabla_{\mathbb{H}^{1}}\rho,\nabla_{\mathbb{H}^{1}}\theta\rangle\nonumber\\
&+\frac{\partial f}{\partial \varphi}\langle\nabla_{\mathbb{H}^{1}}\rho,\nabla_{\mathbb{H}^{1}}\varphi\rangle\bigg)+\rho^{\alpha}\bigg(\frac{\partial^{2}f}{\partial \theta^{2}}\left|\nabla_{\mathbb{H}^{1}}\theta\right|^{2}+2\frac{\partial^{2}f}{\partial \varphi \partial \theta}(\nabla_{\mathbb{H}^{1}}\varphi\cdot \nabla_{\mathbb{H}^{1}}\theta)+\frac{\partial^{2}f}{\partial \varphi^{2}}\left|\nabla_{\mathbb{H}^{1}}\varphi\right|^{2}+\frac{\partial f}{\partial \theta}\Delta_{\mathbb{H}^1}\theta\nonumber\\
&+\frac{\partial f}{\partial \varphi}\Delta_{\mathbb{H}^1}\varphi\bigg).
\end{align}
In particular, using \eqref{norm-quad-grad-horiz-varphi-rho-theta}, \eqref{prod-scal-grad-varphi-grad-rho-grad-rho-grad-theta-grad-varphi-grad-theta} and \eqref{lapl-horiz-varphi-rho-theta}, we get, in view of \eqref{lapl-horiz-u-1},
\begin{align*}
\Delta_{\mathbb{H}^1}u&=\Delta_{\mathbb{H}^1}(\rho^{\alpha}f(\theta,\varphi))=\bigg(\alpha(\alpha-1)\rho^{\alpha-2}\frac{x^{2}+y^{2}}{\rho^{2}}+\alpha\rho^{\alpha-1}\frac{3(x^{2}+y^{2})}{\rho^{3}}\bigg)f(\theta,\varphi)-2\alpha\rho^{\alpha-1}\frac{\partial f}{\partial \theta}\frac{\cos \varphi}{\rho}\\
&+\rho^{\alpha}\bigg(\frac{\partial^{2}f}{\partial \theta^{2}}\frac{1}{x^{2}+y^{2}}+4\frac{\partial^{2}f}{\partial \varphi \partial \theta}\frac{x^{2}+y^{2}}{\rho^{4}}+\frac{\partial^{2}f}{\partial \varphi^{2}}\frac{4(x^{2}+y^{2})}{\rho^{4}}+\frac{\partial f}{\partial \varphi}\frac{4\cos\varphi}{\rho^{2}}\bigg)\\
&=\alpha(\alpha+2)\bigg(\frac{x^{2}+y^{2}}{\rho^{2}}\bigg)\rho^{\alpha-2}f(\theta,\varphi)-2\alpha(\cos\varphi)\rho^{\alpha-2}\frac{\partial f}{\partial \theta}+\rho^{\alpha-2}\bigg(\frac{\rho^{2}}{x^{2}+y^{2}}\frac{\partial^{2}f}{\partial \theta^{2}}\\
&+4\frac{x^{2}+y^{2}}{\rho^{2}}\frac{\partial^{2}f}{\partial \varphi \partial \theta}+4\frac{x^{2}+y^{2}}{\rho^{2}}\frac{\partial^{2}f}{\partial \varphi^{2}}+4\cos\varphi\frac{\partial f}{\partial \varphi}\bigg).
\end{align*}
Thus, we finally get
\begin{align}\label{lapl-horiz-u-final-1}
&\Delta_{\mathbb{H}^1}u=\Delta_{\mathbb{H}^1}(\rho^{\alpha}f(\theta,\varphi))=\alpha(\alpha+2)(\sin\varphi)\rho^{\alpha-2}f(\theta,\varphi)-2\alpha(\cos\varphi)\rho^{\alpha-2}\frac{\partial f}{\partial \theta}+\rho^{\alpha-2}\bigg(\frac{1}{\sin\varphi}\frac{\partial^{2}f}{\partial \theta^{2}}\nonumber\\
&+4\sin\varphi\frac{\partial^{2}f}{\partial \varphi \partial \theta}+4\sin\varphi\frac{\partial^{2}f}{\partial \varphi^{2}}+4\cos\varphi\frac{\partial f}{\partial \varphi}\bigg)\nonumber\\
&=\rho^{\alpha-2}\bigg(\alpha(\alpha+2)(\sin\varphi)f(\theta,\varphi)-2\alpha(\cos\varphi)\frac{\partial f}{\partial \theta}+\frac{1}{\sin\varphi}\frac{\partial^{2}f}{\partial \theta^{2}}+4\sin\varphi\frac{\partial^{2}f}{\partial \varphi \partial \theta}+4\sin\varphi\frac{\partial^{2}f}{\partial \varphi^{2}}\nonumber\\
&+4\cos\varphi\frac{\partial f}{\partial \varphi}\bigg),
\end{align}
since $\frac{x^{2}+y^{2}}{\rho^{2}}=\sin\varphi$ from \eqref{pol-coord}.\newline
To recap, we obtain, by recalling \eqref{lapl-horiz-u-final-1},
\begin{align*}
&\Delta_{\mathbb{H}^1}u=\Delta_{\mathbb{H}^1}(\rho^{\alpha}f(\theta,\varphi))=\rho^{\alpha-2}\bigg(\alpha(\alpha+2)(\sin\varphi)f(\theta,\varphi)-2\alpha(\cos\varphi)\frac{\partial f}{\partial \theta}+\frac{1}{\sin\varphi}\frac{\partial^{2}f}{\partial \theta^{2}}\\
&+4\sin\varphi\frac{\partial^{2}f}{\partial \varphi \partial \theta}+4\sin\varphi\frac{\partial^{2}f}{\partial \varphi^{2}}+4\cos\varphi\frac{\partial f}{\partial \varphi}\bigg).
\end{align*}
\end{proof}

\section{Computation for $f$ independent of $\theta$}

In Lemma \ref{lapl-horiz-f-theta-phi} we proved that if $u=\rho^{\alpha}f(\theta,\varphi),$ then
\begin{align}\label{lapl-horiz-final-theta-varphi}
&\Delta_{\mathbb{H}^1}u=\Delta_{\mathbb{H}^1}(\rho^{\alpha}f(\theta,\varphi))=\rho^{\alpha-2}\bigg(\alpha(\alpha+2)(\sin\varphi)f(\theta,\varphi)-2\alpha(\cos\varphi)\frac{\partial f}{\partial \theta}+\frac{1}{\sin\varphi}\frac{\partial^{2}f}{\partial \theta^{2}}\nonumber\\
&+4\sin\varphi\frac{\partial^{2}f}{\partial \varphi \partial \theta}+4\sin\varphi\frac{\partial^{2}f}{\partial \varphi^{2}}+4\cos\varphi\frac{\partial f}{\partial \varphi}\bigg).
\end{align}
Now, if we evaluate the expression \eqref{lapl-horiz-final-theta-varphi} on $\partial B^{\mathbb{H}^1}_1(0),$ we get, because $\rho=\left|(x,y,t)\right|_{\mathbb{H}^1}=1$ on $\partial B^{\mathbb{H}^1}_1(0),$
\begin{align}\label{lapl-horiz-theta-varphi-boundary-boule-koranyi}
&\Delta_{\mathbb{H}^1}u \restrict{\partial B^{\mathbb{H}^1}_1(0)}=\Delta_{\mathbb{H}^1}(\rho^{\alpha}f(\theta,\varphi)) \restrict{\partial B^{\mathbb{H}^1}_1(0)}=\alpha(\alpha+2)(\sin \varphi)f(\theta,\varphi)-2\alpha(\cos\varphi)\frac{\partial f}{\partial \theta}+\frac{1}{\sin\varphi}\frac{\partial^{2}f}{\partial \theta^{2}}\nonumber\\
&+4\sin\varphi\frac{\partial^{2}f}{\partial \varphi \partial \theta}+4\sin\varphi\frac{\partial^{2}f}{\partial \varphi^{2}}+4\cos\varphi\frac{\partial f}{\partial \varphi}.
\end{align}
\begin{cor}\label{corollary-lapl-horiz-f-phi}
If $u=\rho^{\alpha}f(\varphi),$ then
\begin{equation} \label{lapl-horiz-f-phi-boundary-boule-koranyi}
\Delta_{\mathbb{H}^1}u\restrict{\partial B^{\mathbb{H}^1}_1(0)}=\Delta_{\mathbb{H}^1}(\rho^{\alpha}f(\varphi))\restrict{\partial B^{\mathbb{H}^1}_1(0)}=\alpha(\alpha+2)(\sin \varphi)f(\varphi)+4\frac{\partial}{\partial \varphi}\left(\sin\varphi\frac{\partial f}{\partial \varphi}\right).
\end{equation}
\end{cor}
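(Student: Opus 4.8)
The plan is to obtain this corollary as an immediate specialization of the boundary formula \eqref{lapl-horiz-theta-varphi-boundary-boule-koranyi} already established through Lemma \ref{lapl-horiz-f-theta-phi}. Since here $f$ depends on $\varphi$ alone, the first step is simply to observe that every term in that formula carrying a derivative in $\theta$ vanishes identically: concretely $\frac{\partial f}{\partial \theta}=0$, $\frac{\partial^{2}f}{\partial \theta^{2}}=0$ and $\frac{\partial^{2}f}{\partial \varphi\partial\theta}=0$. This annihilates the second, third and fourth summands on the right-hand side of \eqref{lapl-horiz-theta-varphi-boundary-boule-koranyi}, leaving only
\begin{equation*}
\Delta_{\mathbb{H}^1}u\restrict{\partial B^{\mathbb{H}^1}_1(0)}=\alpha(\alpha+2)(\sin\varphi)f(\varphi)+4\sin\varphi\,\frac{\partial^{2}f}{\partial\varphi^{2}}+4\cos\varphi\,\frac{\partial f}{\partial\varphi}.
\end{equation*}

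The second step is to recognize the last two terms as a single derivative in divergence (Sturm--Liouville) form. By the product rule,
\begin{equation*}
4\frac{\partial}{\partial\varphi}\left(\sin\varphi\,\frac{\partial f}{\partial\varphi}\right)=4\cos\varphi\,\frac{\partial f}{\partial\varphi}+4\sin\varphi\,\frac{\partial^{2}f}{\partial\varphi^{2}},
\end{equation*}
which is exactly the sum of those two summands. Substituting this identity back into the displayed expression yields the claimed form and completes the argument.

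Since all the substantive computation has already been carried out in Lemma \ref{lapl-horiz-f-theta-phi}, there is no genuine obstacle here; the proof is purely a matter of specializing a formula. The only points requiring attention are verifying that the $\theta$-dependence really does enter \eqref{lapl-horiz-theta-varphi-boundary-boule-koranyi} solely through the three terms I discard, and checking that the collapse into divergence form is performed without sign or constant errors. I would in particular double-check the coefficient $4$ and the appearance of $\sin\varphi$ in both the zeroth-order term $\alpha(\alpha+2)(\sin\varphi)f(\varphi)$ and the second-order operator, as these are precisely the places where a careless specialization could slip in a mistake.
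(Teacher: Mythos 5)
Your proposal is correct and follows essentially the same route as the paper: specialize the boundary formula \eqref{lapl-horiz-theta-varphi-boundary-boule-koranyi} by dropping the terms containing $\theta$-derivatives, then collapse $4\sin\varphi\,\partial^{2}_{\varphi}f+4\cos\varphi\,\partial_{\varphi}f$ into $4\,\partial_{\varphi}\left(\sin\varphi\,\partial_{\varphi}f\right)$ by the product rule. Nothing is missing.
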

\begin{proof}
For sake of simplicity, we will identify $\Delta_{\mathbb{H}^1}u$ with $\Delta_{\mathbb{H}^1}u\restrict{\partial B^{\mathbb{H}^1}_1(0)}$ in the following.

In particular, if $f(\theta,\varphi)$ does not depend on $\theta,$ i.e. $f=f(\varphi),$ we obtain, in view of \eqref{lapl-horiz-theta-varphi-boundary-boule-koranyi},
\begin{equation}\label{lapl-horiz-varphi-prov}
\Delta_{\mathbb{H}^1}u=\alpha(\alpha+2)(\sin\varphi)f(\varphi)+4\sin\varphi\frac{\partial^{2}f}{\partial \varphi^{2}}+4\cos\varphi\frac{\partial f}{\partial \varphi}.
\end{equation}

At this point, we note that
\[
4\sin\varphi\frac{\partial^{2}f}{\partial \varphi^{2}}+4\cos\varphi\frac{\partial f}{\partial \varphi}=4\left(\sin\varphi\frac{\partial}{\partial \varphi}\left(\frac{\partial f}{\partial \varphi}\right)+\left(\frac{\partial}{\partial \varphi}(\sin\varphi)\right)\frac{\partial f}{\partial \varphi}\right)=4\frac{\partial}{\partial \varphi}\left(\sin\varphi\frac{\partial f}{\partial \varphi}\right),
\]
which implies, from \eqref{lapl-horiz-varphi-prov}, our thesis:
\[
\Delta_{\mathbb{H}^1}u\restrict{\partial B^{\mathbb{H}^1}_1(0)}=\Delta_{\mathbb{H}^1}(\rho^{\alpha}f(\varphi))\restrict{\partial B^{\mathbb{H}^1}_1(0)}=\alpha(\alpha+2)(\sin \varphi)f(\varphi)+4\frac{\partial}{\partial \varphi}\left(\sin\varphi\frac{\partial f}{\partial \varphi}\right).
\]
\end{proof}
Corollary \ref{corollary-lapl-horiz-f-phi} yields the following lemma as well.
\begin{lem} If $\alpha=2$ and we take $u=\rho^{2}\cos\varphi,$ we have that $u$ is $\Delta_{\mathbb{H}^1}$-harmonic, that is $\Delta_{\mathbb{H}^1}u=0,$ and
\begin{equation}\label{weak-equality-cos-varphi}
8=\frac{4\displaystyle \int_{0}^{\frac{\pi}{2}}\sin\varphi((\cos\varphi)')^{2}\hspace{0.1cm}d\varphi}{\displaystyle \int_{0}^{\frac{\pi}{2}}\sin\varphi\cos^{2}(\varphi)\hspace{0.1cm}d\varphi}.
\end{equation}
\end{lem}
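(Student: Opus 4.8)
The plan is to treat the two assertions separately: first the $\Delta_{\mathbb{H}^1}$-harmonicity of $u=\rho^2\cos\varphi$, then the integral identity \eqref{weak-equality-cos-varphi}.

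For harmonicity I would specialize Lemma \ref{lapl-horiz-f-theta-phi} to a $\theta$-independent profile, which (as recorded in Corollary \ref{corollary-lapl-horiz-f-phi}) gives $\Delta_{\mathbb{H}^1}(\rho^\alpha f(\varphi))=\rho^{\alpha-2}\,E(\varphi)$ with $E(\varphi)=\alpha(\alpha+2)(\sin\varphi)f(\varphi)+4\frac{\partial}{\partial\varphi}(\sin\varphi\,f'(\varphi))$. Since the factor $\rho^{\alpha-2}$ never vanishes, it suffices to check $E\equiv 0$. Putting $\alpha=2$ (so $\alpha(\alpha+2)=8$) and $f(\varphi)=\cos\varphi$ (so $f'=-\sin\varphi$ and $\sin\varphi\,f'=-\sin^2\varphi$), I get $\frac{\partial}{\partial\varphi}(\sin\varphi\,f')=-2\sin\varphi\cos\varphi$, whence $E=8\sin\varphi\cos\varphi-8\sin\varphi\cos\varphi=0$. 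Thus $\Delta_{\mathbb{H}^1}u=0$; as a consistency check one notes $u=\rho^2\cos\varphi=t$ and $\Delta_{\mathbb{H}^1}t=X^2t+Y^2t=0$ by a direct one-line computation.

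For the identity I would use the weak form of the very equation $E\equiv 0$. Writing it in divergence form as $4\frac{\partial}{\partial\varphi}(\sin\varphi\,f')=-8(\sin\varphi)f$, I multiply by $f=\cos\varphi$ and integrate over $[0,\frac{\pi}{2}]$. Integrating the left-hand side by parts yields $4[\,f\sin\varphi\,f'\,]_0^{\pi/2}-4\int_0^{\pi/2}\sin\varphi\,(f')^2\,d\varphi$, and the boundary term drops: at $\varphi=0$ because $\sin 0=0$, and at $\varphi=\frac{\pi}{2}$ because $f(\frac{\pi}{2})=\cos\frac{\pi}{2}=0$. Hence $4\int_0^{\pi/2}\sin\varphi\,(f')^2\,d\varphi=8\int_0^{\pi/2}\sin\varphi\,f^2\,d\varphi$, and dividing gives exactly \eqref{weak-equality-cos-varphi} after substituting $f=\cos\varphi$, $f'=(\cos\varphi)'$.

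Neither step is genuinely hard; the only two points to handle with care are the factorization $\Delta_{\mathbb{H}^1}(\rho^\alpha f)=\rho^{\alpha-2}E(\varphi)$, which is what upgrades the boundary computation of Corollary \ref{corollary-lapl-horiz-f-phi} to harmonicity on all of $\mathbb{H}^1$, and the vanishing of the boundary terms in the integration by parts at \emph{both} endpoints. The latter is precisely why the half-range $[0,\frac{\pi}{2}]$ (the cap $\{t>0\}$) is the natural interval here, since $\cos\varphi$ vanishes at $\varphi=\frac{\pi}{2}$ while the weight $\sin\varphi$ vanishes at $\varphi=0$.
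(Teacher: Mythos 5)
Your proposal is correct and follows essentially the same route as the paper: it verifies harmonicity by specializing the spherical-coordinate formula for $\Delta_{\mathbb{H}^1}(\rho^{\alpha}f(\varphi))$ to $\alpha=2$, $f=\cos\varphi$, and then derives the identity by multiplying the divergence-form equation $-4(\sin\varphi\,f')'=\alpha(\alpha+2)(\sin\varphi)f$ by $f$, integrating over $[0,\tfrac{\pi}{2}]$, and killing the boundary terms via $\sin 0=0$ and $\cos\tfrac{\pi}{2}=0$, exactly as in the paper's argument leading to \eqref{weak-equality-lapl-horiz-4}. The only (harmless) differences are your direct consistency check $u=t$, $\Delta_{\mathbb{H}^1}t=0$, and that you work with $f=\cos\varphi$ immediately rather than deriving the general Rayleigh-quotient identity first and then specializing.
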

\begin{proof}
This result can be found in \cite{Greiner}. However, for helping the reader, we give a straightforward proof of this fact.

First of all, from \eqref{lapl-horiz-final-theta-varphi}, we get that if $u=\rho^{\alpha}f(\varphi),$
\begin{equation}\label{lapl-horiz-f-varphi}
\Delta_{\mathbb{H}^1}u=\Delta_{\mathbb{H}^1}(\rho^{\alpha}f(\varphi))=\rho^{\alpha-2}\bigg(\alpha(\alpha+2)(\sin\varphi)f(\varphi)+4\sin\varphi\frac{\partial^{2}f}{\partial \varphi^{2}}+4\cos\varphi\frac{\partial f}{\partial \varphi}\bigg).
\end{equation}
As a consequence, if $u=\rho^{2}\cos\varphi,$ we have, in view of \eqref{lapl-horiz-f-varphi}, being $\alpha=2$ and $f(\varphi)=\cos\varphi,$
\begin{align*}
\Delta_{\mathbb{H}^1}u&=\Delta_{\mathbb{H}^1}(\rho^{2}\cos\varphi)=8(\sin\varphi)\cos\varphi+4\sin\varphi\frac{\partial^{2}}{\partial \varphi^{2}}(\cos\varphi)+4\cos\varphi\frac{\partial}{\partial \varphi}(\cos\varphi)\\
&=8\sin\varphi\cos\varphi-4\sin\varphi\cos\varphi-4\cos\varphi\sin\varphi=0,
\end{align*}
which gives
\begin{equation}\label{rho-2-cos-varphi-H-harmonic}
\Delta_{\mathbb{H}^1}u=\Delta_{\mathbb{H}^1}(\rho^{2}\cos\varphi)=0.
\end{equation}
Now, if $u=\rho^{\alpha}f(\varphi)$ satisfies $\Delta_{\mathbb{H}^1}u\restrict{\partial B^{\mathbb{H}^1}_1(0)}=0,$ we have, according to \eqref{lapl-horiz-f-phi-boundary-boule-koranyi},
$$\alpha(\alpha+2)(\sin\varphi)f(\varphi)+4\frac{\partial}{\partial \varphi}\left(\sin\varphi\frac{\partial f}{\partial \varphi}\right)=0,$$
which implies, writing $f=f(\varphi)$ and $\frac{\partial}{\partial\varphi}\left(\sin\varphi\frac{\partial f}{\partial \varphi}\right)=(\sin\varphi f')',$ since $f$ is a function depending only on $\varphi,$
$$-4(\sin\varphi f')'=\alpha(\alpha+2)(\sin\varphi)f,$$
and multiplying both the terms of the equality by $\eta$ sufficiently smooth with $\eta\left(\frac{\pi}{2}\right)=0,$
\begin{equation}\label{weak-equality-lapl-horiz-1}
\alpha(\alpha+2)(\sin\varphi)f\eta=-4(\sin\varphi f')'\eta.
\end{equation}
Integrating over $\left[0,\dfrac{\pi}{2}\right]$ the equality in \eqref{weak-equality-lapl-horiz-1}, we then obtain
\begin{align*}
&\int_{0}^{\frac{\pi}{2}}\alpha(\alpha+2)(\sin\varphi)f\eta\hspace{0.1cm}d\varphi=\alpha(\alpha+2)\int_{0}^{\frac{\pi}{2}}(\sin\varphi)f\eta\hspace{0.1cm}d\varphi\\
&=\int_{0}^{\frac{\pi}{2}}-4(\sin\varphi f')'\eta\hspace{0.1cm}d\varphi=-4\int_{0}^{\frac{\pi}{2}}(\sin\varphi f')'\eta\hspace{0.1cm}d\varphi,
\end{align*}
in other words
\begin{equation}\label{weak-equality-lapl-horiz-2}
\alpha(\alpha+2)\int_{0}^{\frac{\pi}{2}}(\sin\varphi)f\eta\hspace{0.1cm}d\varphi=-4\int_{0}^{\frac{\pi}{2}}(\sin\varphi f')'\eta\hspace{0.1cm}d\varphi.
\end{equation}
In particular, if we choose $\eta=f,$ we get, from \eqref{weak-equality-lapl-horiz-2}, by the Theorem of Integration by Parts:
\begin{align*}
&\alpha(\alpha+2)\int_{0}^{\frac{\pi}{2}}(\sin\varphi)f^{2}\hspace{0.1cm}d\varphi=-4\int_{0}^{\frac{\pi}{2}}(\sin\varphi f')'f\hspace{0.1cm}d\varphi=-4\bigg(\bigg[(\sin\varphi f')f\bigg]^{\varphi=\frac{\pi}{2}}_{\varphi=0}-\int_{0}^{\frac{\pi}{2}}\sin\varphi f'f'\hspace{0.1cm}d\varphi\bigg)\\
&=-4\bigg(\sin\left(\frac{\pi}{2}\right)f'\left(\frac{\pi}{2}\right)f\left(\frac{\pi}{2}\right)-\sin(0)f'(0)f(0)-\int_{0}^{\frac{\pi}{2}}\sin\varphi (f')^{2}\hspace{0.1cm}d\varphi\bigg).
\end{align*}
This implies, because $\sin(0)=0$ and $f\big(\frac{\pi}{2}\big)=0$ by virtue of the choice of $f$, that
\begin{equation}\label{weak-equality-lapl-horiz-3}
\alpha(\alpha+2)\int_{0}^{\frac{\pi}{2}}(\sin\varphi)f^{2}\hspace{0.1cm}d\varphi=4\int_{0}^{\frac{\pi}{2}}\sin\varphi(f')^{2}\hspace{0.1cm}d\varphi.
\end{equation}
In addition, in view of \eqref{weak-equality-lapl-horiz-3}, we also have
\begin{equation}\label{weak-equality-lapl-horiz-4}
\alpha(\alpha+2)=\frac{4\displaystyle\int_{0}^{\frac{\pi}{2}}\sin\varphi (f')^{2}\hspace{0.1cm}d\varphi}{\displaystyle\int_{0}^{\frac{\pi}{2}}(\sin\varphi)f^{2}\hspace{0.1cm}d\varphi}.
\end{equation}
At this point, we recall that, from \eqref{rho-2-cos-varphi-H-harmonic}, $\rho^{2}\cos\varphi$ is $\mathbb{H}^1$-harmonic, where $\alpha=2$ and $f(\varphi)=\cos\varphi,$ with $\cos\left(\dfrac{\pi}{2}\right)=0,$ hence, repeating the same argument used to achieve \eqref{weak-equality-lapl-horiz-4}, we have
\[
8=\frac{4\displaystyle \int_{0}^{\frac{\pi}{2}}\sin\varphi((\cos\varphi)')^{2}\hspace{0.1cm}d\varphi}{\displaystyle \int_{0}^{\frac{\pi}{2}}\sin\varphi\cos^{2}(\varphi)\hspace{0.1cm}d\varphi}.
\]
\end{proof}
 
 \section{Estimates in $\mathbb{H}^1$ and characteristic points}

 Let $\nabla_{\mathbb{H}^1}u(p)\in H\mathbb{H}^1_p,$ where $H\mathbb{H}^1_p$ denotes the horizontal vector space at $p\in \mathbb{H}^1,$ see Section \ref{notation_Heisenberg}. Let us define
 $$
 e_\rho\coloneqq \frac{\nabla_{\mathbb{H}^1}\rho}{|\nabla_{\mathbb{H}^1}\rho|},\quad  e_\varphi\coloneqq \frac{\nabla_{\mathbb{H}^1}\varphi}{|\nabla_{\mathbb{H}^1}\varphi|}.
 $$
 We recall, according to Lemma \ref{lemma-inner-product-nabla-phi-rho-theta}, that $\langle e_\rho,e_\varphi\rangle_{\mathbb{R}^2}=0.$  Then, whenever $e_\rho,e_\varphi$ exist, we have:
 $$
 \mbox{span}\{e_\rho(p),e_\varphi(p)\}=H\mathbb{H}^1_p.
 $$ 
 As a consequence, in these cases, since $\{e_\rho,e_\varphi\}$ is an orthonormal basis,
 $$
 \nabla_{\mathbb{H}^1}u(p)=\langle \nabla_{\mathbb{H}^1}u(p),e_\rho(p)\rangle e_\rho(p)+\langle \nabla_{\mathbb{H}^1}u(p),e_\varphi(p)\rangle e_\varphi(p)
 $$
 and denoting 
 \begin{equation}\label{two-components-nabla-horiz}
 \nabla^\rho_{\mathbb{H}^1}u(p)=\langle\nabla_{\mathbb{H}^1}u(p),e_\rho(p)\rangle e_\rho(p),\quad
 \nabla^\varphi_{\mathbb{H}^1}u(p)=\langle \nabla_{\mathbb{H}^1}u(p),e_\varphi(p)\rangle e_\varphi(p), 
 \end{equation}
 we have
 $$
 |\nabla_{\mathbb{H}^1}u(p)|^2=\langle \nabla_{\mathbb{H}^1}u(p),e_\rho(p)\rangle^2+\langle \nabla_{\mathbb{H}^1}u(p),e_\varphi(p)\rangle^2,
 $$
 and
\begin{equation}\label{norm-quad-lapl-horiz-u-two-components_f}
 |\nabla_{\mathbb{H}^1}u(p)|^2= |\nabla^\rho_{\mathbb{H}^1}u(p)|^2+ |\nabla^\varphi_{\mathbb{H}^1}u(p)|^2.
\end{equation}

\begin{lem}
The couple
 $(\nabla_{\mathbb{H}^1}\rho)(p)$ , $(\nabla_{\mathbb{H}^1}\varphi)(p)$ determines a basis of $H\mathbb{H}^1_p,$ for every $p=(x,y,t),$ such that $x^{2}+y^{2}\neq 0.$
\end{lem}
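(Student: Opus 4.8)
The plan is to exhibit the two horizontal vectors $(\nabla_{\mathbb{H}^1}\rho)(p)$ and $(\nabla_{\mathbb{H}^1}\varphi)(p)$ as a pair of nonzero, mutually orthogonal elements of the two-dimensional space $H\mathbb{H}^1_p=\mathrm{span}\{X(p),Y(p)\}$, from which the basis property follows at once by a dimension count. Throughout I identify a horizontal vector with its coordinate pair in the frame $\{X(p),Y(p)\}$, so that $\nabla_{\mathbb{H}^1}\rho=(X\rho,Y\rho)$ and $\nabla_{\mathbb{H}^1}\varphi=(X\varphi,Y\varphi)$, and $\langle\cdot,\cdot\rangle$ is the inner product making $\{X(p),Y(p)\}$ orthonormal.

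First I would observe that the hypothesis $x^2+y^2\neq 0$ forces $\rho=((x^2+y^2)^2+t^2)^{1/4}>0$, so that every quantity below is well defined. By Lemma \ref{norm-quad-nabla-horiz-theta-phi-rho},
\[
|\nabla_{\mathbb{H}^1}\rho|^2=\frac{x^2+y^2}{\rho^2}>0,\qquad |\nabla_{\mathbb{H}^1}\varphi|^2=\frac{4(x^2+y^2)}{\rho^4}>0,
\]
so both vectors are nonzero precisely under the standing assumption $x^2+y^2\neq 0$. Next, Lemma \ref{lemma-inner-product-nabla-phi-rho-theta} gives $\langle\nabla_{\mathbb{H}^1}\varphi,\nabla_{\mathbb{H}^1}\rho\rangle=0$, so the two vectors are orthogonal.

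Finally I would conclude by the elementary linear-algebra fact that two nonzero orthogonal vectors are linearly independent: if $a\,\nabla_{\mathbb{H}^1}\rho+b\,\nabla_{\mathbb{H}^1}\varphi=0$, then taking the inner product with each factor and using orthogonality yields $a\,|\nabla_{\mathbb{H}^1}\rho|^2=0$ and $b\,|\nabla_{\mathbb{H}^1}\varphi|^2=0$, whence $a=b=0$. Since $\dim H\mathbb{H}^1_p=2$, a linearly independent pair of horizontal vectors is automatically a basis of $H\mathbb{H}^1_p$. There is no genuine obstacle here: the statement is a direct corollary of the norm and orthogonality computations already carried out in Lemmas \ref{norm-quad-nabla-horiz-theta-phi-rho} and \ref{lemma-inner-product-nabla-phi-rho-theta}, the only point meriting attention being that the condition $x^2+y^2\neq 0$ — equivalently, the exclusion of the characteristic $t$-axis — is exactly what guarantees the nondegeneracy of both gradients.
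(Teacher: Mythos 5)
Your proof is correct, but it takes a genuinely different and shorter route than the paper's. The paper proves the lemma by brute force: it writes out the systems $\nabla_{\mathbb{H}^1}\rho=0$ and $\nabla_{\mathbb{H}^1}\varphi=0$ in coordinates and solves them case by case, concluding that the first gradient vanishes only on the $t$-axis and the second never vanishes where it is defined; it then asserts the basis property from the non-vanishing of both vectors, leaving the linear-independence step implicit (it tacitly relies on the orthogonality from Lemma \ref{lemma-inner-product-nabla-phi-rho-theta} without citing it at that point). You instead read the non-vanishing directly off the norm formulas $\left|\nabla_{\mathbb{H}^1}\rho\right|^2=(x^2+y^2)/\rho^2$ and $\left|\nabla_{\mathbb{H}^1}\varphi\right|^2=4(x^2+y^2)/\rho^4$ of Lemma \ref{norm-quad-nabla-horiz-theta-phi-rho}, combine this with the orthogonality $\langle\nabla_{\mathbb{H}^1}\varphi,\nabla_{\mathbb{H}^1}\rho\rangle=0$, and conclude by the standard fact that two nonzero orthogonal vectors in a two-dimensional inner product space form a basis. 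What your approach buys is economy and completeness: it avoids redoing computations already encoded in the earlier lemmas and makes the linear-independence argument explicit. What the paper's approach buys is a precise description of the zero set of each gradient separately (in particular that $\nabla_{\mathbb{H}^1}\rho$ vanishes exactly on $x=y=0$), information that is mildly stronger than what is needed for the basis statement alone.
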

\begin{proof}
We look for the points where $\nabla_{\mathbb{H}^1}\rho$ and $\nabla_{\mathbb{H}^1}\varphi$  vanish.
We have that $\nabla_{\mathbb{H}^1}\rho=0$ if
\begin{equation}\label{nabla-horiz-rho-equal-to-0-1}
\begin{cases}
\rho^{-3}((x^{2}+y^{2})x+yt)=0\\
\rho^{-3}((x^{2}+y^{2})y-xt)=0
\end{cases}
\Longleftrightarrow\quad
\begin{cases}
(x^{2}+y^{2})x+yt=0\\
(x^{2}+y^{2})y-xt=0,
\end{cases}
\end{equation}
which gives, multiplying the first row by $y\neq 0$ and the second one by $x\neq 0,$
\begin{equation}\label{nabla-horiz-rho-equal-to-0-2}
\begin{cases}
(x^{2}+y^{2})yx+y^{2}t=0\\
(x^{2}+y^{2})yx-x^{2}t=0.
\end{cases}
\end{equation}
Subtracting the second row to the first one in \eqref{nabla-horiz-rho-equal-to-0-2}, we get
$$0=y^{2}t+x^{2}t=(x^{2}+y^{2})t,$$
which implies $t=0,$ because $x\neq 0$ and $y\neq 0.$
Now, if $t=0,$ we obtain, from the first row in \eqref{nabla-horiz-rho-equal-to-0-1}, $(x^{2}+y^{2})x=0,$ which is a contradiction, recalling that we have supposed that $x\neq 0.$

Therefore, suppose that $y=0,$ and in view of the first row in \eqref{nabla-horiz-rho-equal-to-0-1}, we have $x=0.$ Analogously, if we assume $x=0,$ we achieve, by the second row in \eqref{nabla-horiz-rho-equal-to-0-1}, $y=0.$
To sum up, we have $\nabla_{\mathbb{H}^1}\rho=0$ in points $p=(x,y,t),$ with $x=0$ and $y=0.$

Concerning $\nabla_{\mathbb{H}^1}\varphi,$ we have $\nabla_{\mathbb{H}^1}\varphi=0$ if
$$\frac{2}{\rho(x^{2}+y^{2})}\left(\rho(-y,x)+t\nabla_{\mathbb{H}^1}\rho\right)=0,$$
which immediately yields that $x$ and $y$ can not be equal to $0$ at the same time, so it is equivalent to 
$$\rho(-y,x)+t\nabla_{\mathbb{H}^1}\rho=0,$$
that is
\begin{align*}
&\begin{cases}
-\rho y+tX\rho=0\\
\rho x+tY\rho=0
\end{cases}
\Longleftrightarrow \quad
\begin{cases}
-\rho y+t\rho^{-3}((x^{2}+y^{2})x+yt)=0\\
\rho x+t\rho^{-3}((x^{2}+y^{2})y-xt)=0
\end{cases}\\
&\Longleftrightarrow \quad
\begin{cases}
t\rho^{-3}((x^{2}+y^{2})x+yt)=\rho y\\
t\rho^{-3}((x^{2}+y^{2})y-xt)=-\rho x
\end{cases}
\Longleftrightarrow \quad
\begin{cases}
t((x^{2}+y^{2})x+yt)=\rho^{4}y\\
t((x^{2}+y^{2})y-xt)=-\rho^{4}x,
\end{cases}
\end{align*}
thus, we have to solve
\begin{equation}\label{nabla-horiz-varphi-equal-to-0-1}
\begin{cases}
t((x^{2}+y^{2})x+yt)=\rho^{4}y\\
t((x^{2}+y^{2})y-xt)=-\rho^{4}x.
\end{cases}
\end{equation}
Specifically, multiplying the first row in \eqref{nabla-horiz-varphi-equal-to-0-1} by $y\neq 0$ and the second one by $x\neq 0,$ we get
\begin{equation}\label{nabla-horiz-varphi-equal-to-0-2}
\begin{cases}
t(x^{2}+y^{2})xy+t^{2}y^{2}=\rho^{4}y^{2}\\
t(x^{2}+y^{2})yx-t^{2}x^{2}=-\rho^{4}x^{2}.
\end{cases}
\end{equation}
Subtracting the second row in \eqref{nabla-horiz-varphi-equal-to-0-2} to the first one,
$$(x^{2}+y^{2})t^{2}=(x^{2}+y^{2})\rho^{4},$$
and dividing by $(x^{2}+y^{2})\neq 0,$ recalling that $x$ and $y$ can not be equal to $0$ at the same time,
$$t^{2}=\rho^{4},$$
which implies
$$\left|t\right|=\rho^{2},$$
and hence $t=\pm\rho^{2}.$ Substituting $t=\rho^{2}$ in the first row of \eqref{nabla-horiz-varphi-equal-to-0-2}, we achieve
$$\rho^{2}(x^{2}+y^{2})yx+\rho^{4}y^{2}=\rho^{4}y^{2},$$
which gives
$$\rho^{2}(x^{2}+y^{2})yx=0,$$
which is a contradiction, since $x\neq 0,$ $y\neq 0$ and $\rho\neq 0.$ Analogously, if we take $t=-\rho^{2},$ we have, always from the first row in \eqref{nabla-horiz-varphi-equal-to-0-2},
$$-\rho^{2}(x^{2}+y^{2})yx+\rho^{4}y^{2}=\rho^{4}y^{2},$$
i.e.
$$-\rho^{2}(x^{2}+y^{2})yx=0,$$
which is a contradiction, always because $x\neq 0,$ $y\neq 0$ and $\rho\neq 0.$\newline
Suppose now that $y=0,$ and we have, according the first row in \eqref{nabla-horiz-varphi-equal-to-0-1},
$tx^{3}=0,$ which entails $t=0,$ inasmuch $x$ and $y$ can not be equal to $0$ at the same time. At this point, if $y=t=0,$ we have, by the second row in \eqref{nabla-horiz-varphi-equal-to-0-1}, $\rho^4x=0,$ in other words $x=0,$ 
recalling that $\rho\neq 0,$ which is impossible, since $y=0.$ Analogously, if we assume $x=0,$ we have from the second row in \eqref{nabla-horiz-varphi-equal-to-0-1} that the only possibility is $t=0,$ but this condition yields, by virtue of the first row in \eqref{nabla-horiz-varphi-equal-to-0-1}, $y=0,$ which is impossible, because $x=0.$ To recap, $\nabla_{\mathbb{H}^1}\varphi\neq 0,$ $\forall p \in \mathbb{H}^{1},$ where it is well defined, i.e. in points $p=(x,y,t)$ such that $x^{2}+y^{2}\neq 0.$

This fact, together with $\nabla_{\mathbb{H}^1}\rho=0$ if $x=y=0,$ gives that $(\nabla_{\mathbb{H}^1}\rho)(p)$ and $(\nabla_{\mathbb{H}^1}\varphi)(p)$ are a basis of $H\mathbb{H}^1_p$ in points $p=(x,y,t)$ with $x^{2}+y^{2}\neq 0.$
\end{proof}

 \section{Some general estimates in $\mathbb{H}^1$}
In this section we only work in $\mathbb{H}^1.$ Nevertheless, for improving, hopefully, the presentation of our result we repeat some computations already done in $\mathbb{H}^{n}.$

Let us introduce the following notation:
\begin{equation}\label{def-A-rho-A-varphi-A-u_x}
\begin{split}
A_{\rho}\coloneqq \int_{\partial B^{\mathbb{H}^1}_1(0)}\frac{\left|\nabla_{\mathbb{H}^1}^{\rho}u\right|^{2}}{\sqrt{x^{2}+y^{2}}}&\hspace{0.1cm}d\sigma_{\mathbb{H}^1}(\xi),\quad
A_{\varphi}\coloneqq \int_{\partial B^{\mathbb{H}^1}_1(0)}\frac{\left|\nabla_{\mathbb{H}^1}^{\varphi}u\right|^{2}}{\sqrt{x^{2}+y^{2}}}\hspace{0.1cm}d\sigma_{\mathbb{H}^1}(\xi),\\
&A_{u}\coloneqq \int_{\partial B^{\mathbb{H}^1}_1(0)}u^{2}\sqrt{x^{2}+y^{2}}\hspace{0.1cm}d\sigma_{\mathbb{H}^1}(\xi).
\end{split}
\end{equation}

\begin{lem} Let $u$ be one of the two functions of the Theorem \ref{lowerbound_f}. Then
the following lower bound holds:
\[ \frac{\displaystyle \int_{\partial B^{\mathbb{H}^1}_1(0)}\frac{\left|\nabla_{\mathbb{H}^1}u\right|^{2}}{\sqrt{x^{2}+y^{2}}}\hspace{0.1cm}d\sigma_{\mathbb{H}^1}(\xi)}{\displaystyle \int_{B^{\mathbb{H}^1}_1(0)}\frac{\left|\nabla_{\mathbb{H}^1}u\right|^{2}}{\left|\xi\right|_{\mathbb{H}^1}^{2}}\hspace{0.1cm}d\xi}\geq \frac{A_{\rho}+A_{\varphi}}{A_{u}+A_{u}^{1/2}A_{\rho}^{1/2}}.\]
\end{lem}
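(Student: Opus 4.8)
The plan is to reduce the asserted inequality to a single upper bound for the volume integral in the denominator of the left-hand side. By the orthogonal splitting \eqref{norm-quad-lapl-horiz-u-two-components_f}, namely $|\nabla_{\mathbb{H}^1}u|^2=|\nabla^\rho_{\mathbb{H}^1}u|^2+|\nabla^\varphi_{\mathbb{H}^1}u|^2$, and the definitions \eqref{def-A-rho-A-varphi-A-u_x}, the numerator of the left-hand side equals $A_\rho+A_\varphi$. Since this quantity is non-negative and appears as the numerator on both sides of the claimed inequality, the statement is equivalent to
\[
\int_{B^{\mathbb{H}^1}_1(0)}\frac{|\nabla_{\mathbb{H}^1}u|^2}{|\xi|_{\mathbb{H}^1}^2}\,d\xi\le A_u+A_u^{1/2}A_\rho^{1/2}.
\]
Thus the whole argument is devoted to this bound, which is the Heisenberg counterpart of the integration-by-parts identity exploited in the Euclidean computation of Section 3.

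First I would use the subharmonicity of $u$. As in the proof of Lemma \ref{limitato}, for a non-negative $\mathbb{H}^1$-subharmonic $u$ one has $\Delta_{\mathbb{H}^1}u^2\ge 2|\nabla_{\mathbb{H}^1}u|^2$ in the distributional sense, so pairing against the fundamental solution $\gamma\coloneqq|\xi|_{\mathbb{H}^1}^{-2}=\rho^{-2}$ (recall $Q=4$ in $\mathbb{H}^1$) yields $2\int_{B^{\mathbb{H}^1}_1(0)}\rho^{-2}|\nabla_{\mathbb{H}^1}u|^2\,d\xi\le\int_{B^{\mathbb{H}^1}_1(0)}\gamma\,\Delta_{\mathbb{H}^1}u^2\,d\xi$. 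I would then apply the Green identity coming from the divergence theorem of Proposition \ref{divergence}, applied to the horizontal field $\gamma\nabla_{\mathbb{H}^1}u^2-u^2\nabla_{\mathbb{H}^1}\gamma$ on $B^{\mathbb{H}^1}_1(0)\setminus B^{\mathbb{H}^1}_\varepsilon(0)$. Because $\gamma$ is $\Delta_{\mathbb{H}^1}$-harmonic away from the origin, the volume term containing $\Delta_{\mathbb{H}^1}\gamma$ drops out, and the inner boundary contribution over $\partial B^{\mathbb{H}^1}_\varepsilon(0)$ vanishes as $\varepsilon\to0$ since $u(0)=0$ and $\sigma_{\mathbb{H}^1}(\partial B^{\mathbb{H}^1}_\varepsilon(0))$ scales like $\varepsilon^{Q-1}$.

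It remains to evaluate the surviving boundary term on $\partial B^{\mathbb{H}^1}_1(0)$. There the intrinsic horizontal normal is $\nu_{\mathbb{H}^1}=e_\rho=\nabla_{\mathbb{H}^1}\rho/|\nabla_{\mathbb{H}^1}\rho|$ (away from the two characteristic poles $x=y=0$, which are $\sigma_{\mathbb{H}^1}$-negligible), while $\gamma\equiv1$ on $\rho=1$; using $|\nabla_{\mathbb{H}^1}\rho|^2=(x^2+y^2)/\rho^2$ from Lemma \ref{norm-quad-nabla-horiz-theta-phi-rho} one computes $\langle\nabla_{\mathbb{H}^1}\gamma,e_\rho\rangle=-2\rho^{-3}|\nabla_{\mathbb{H}^1}\rho|=-2\sqrt{x^2+y^2}$ there. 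Hence
\[
2\int_{B^{\mathbb{H}^1}_1(0)}\frac{|\nabla_{\mathbb{H}^1}u|^2}{\rho^2}\,d\xi\le\int_{\partial B^{\mathbb{H}^1}_1(0)}\Big(2u\,\langle\nabla_{\mathbb{H}^1}u,e_\rho\rangle+2u^2\sqrt{x^2+y^2}\Big)\,d\sigma_{\mathbb{H}^1}.
\]
The second summand is exactly $2A_u$. For the first, since $u\ge0$ and $\langle\nabla_{\mathbb{H}^1}u,e_\rho\rangle\le|\nabla^\rho_{\mathbb{H}^1}u|$, the Cauchy--Schwarz inequality with weight $\sqrt{x^2+y^2}$, applied after writing the integrand as $2\big(u(x^2+y^2)^{1/4}\big)\big(|\nabla^\rho_{\mathbb{H}^1}u|(x^2+y^2)^{-1/4}\big)$, gives $\int_{\partial B^{\mathbb{H}^1}_1(0)}2u|\nabla^\rho_{\mathbb{H}^1}u|\,d\sigma_{\mathbb{H}^1}\le 2A_u^{1/2}A_\rho^{1/2}$. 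Combining, $2\int_{B^{\mathbb{H}^1}_1(0)}\rho^{-2}|\nabla_{\mathbb{H}^1}u|^2\le 2A_u+2A_u^{1/2}A_\rho^{1/2}$, which is the required volume bound and hence the lemma.

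The delicate point, and the step I expect to demand the most care, is the rigorous justification of the Green identity with the weight $\gamma=\rho^{-2}$ singular at the origin, for a merely continuous subharmonic $u$ that need not be smooth. I would handle this exactly in the spirit of Lemma \ref{limitato}: replace $u$ by its Friedrichs mollifier $u_\varepsilon$ (smooth and $\mathbb{H}^1$-subharmonic by Proposition \ref{subharmonicity-mollifier}) and $\gamma$ by a regularization $\gamma_\delta=\eta_\delta\ast\gamma$, apply Proposition \ref{divergence} in the smooth setting, and then let $\delta\to0$ and $\varepsilon\to0$, checking the convergence of each boundary and volume term and that the origin contributes nothing thanks to $u(0)=0$. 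A secondary technical issue is confirming that the characteristic set of $\partial B^{\mathbb{H}^1}_1(0)$ carries no perimeter measure, so that $\nu_{\mathbb{H}^1}=e_\rho$ may legitimately be used $\sigma_{\mathbb{H}^1}$-almost everywhere.
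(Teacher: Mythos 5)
Your argument is correct and follows essentially the same route as the paper's proof: split the numerator as $A_\rho+A_\varphi$ via the orthogonal decomposition \eqref{norm-quad-lapl-horiz-u-two-components_f}, bound the denominator above using $\Delta_{\mathbb{H}^1}u^2\geq 2|\nabla_{\mathbb{H}^1}u|^2$ and integration by parts against the fundamental solution $|\xi|_{\mathbb{H}^1}^{-2}$, kill the singular contribution at the origin via $u(0)=0$, evaluate $\langle\nabla_{\mathbb{H}^1}|\xi|_{\mathbb{H}^1}^{-2},\nu_{\mathbb{H}^1}\rangle=-2\sqrt{x^2+y^2}$ on $\partial B^{\mathbb{H}^1}_1(0)$, and conclude with the weighted Cauchy--Schwarz inequality. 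The only cosmetic difference is that you excise a small Koranyi ball and use harmonicity of the kernel on the annulus, whereas the paper works directly with the Dirac mass $\diver_{\mathbb{H}^1}(\nabla_{\mathbb{H}^1}|\xi|_{\mathbb{H}^1}^{-2})=\delta_0$; these are equivalent formalizations of the same computation.
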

\begin{proof}
Using \eqref{norm-quad-lapl-horiz-u-two-components_f}, we get
\begin{align}\label{general-term-condition-monotonicity}
&\frac{\displaystyle \int_{\partial B^{\mathbb{H}^1}_1(0)}\frac{\left|\nabla_{\mathbb{H}^1}u\right|^{2}}{\sqrt{x^{2}+y^{2}}}\hspace{0.1cm}d\sigma_{\mathbb{H}^1}(\xi)}{\displaystyle \int_{B^{\mathbb{H}^1}_1(0)}\frac{\left|\nabla_{\mathbb{H}^1}u\right|^{2}}{\left|\xi\right|_{\mathbb{H}^1}^{2}}\hspace{0.1cm}d\xi}=\frac{\displaystyle\int_{\partial B^{\mathbb{H}^1}_1(0)}\frac{\left|\nabla_{\mathbb{H}^1}^{\rho}u\right|^{2}+\left|\nabla_{\mathbb{H}^1}^{\varphi}u\right|^{2}}{\sqrt{x^{2}+y^{2}}}\hspace{0.1cm}d\sigma_{\mathbb{H}^1}(\xi)}{\displaystyle\int_{B^{\mathbb{H}^1}_1(0)}\frac{\left|\nabla_{\mathbb{H}^1}u\right|^{2}}{\left|\xi\right|_{\mathbb{H}^1}^{2}}\hspace{0.1cm}d\xi}\nonumber\\
&=\frac{\displaystyle \int_{\partial B^{\mathbb{H}^1}_1(0)}\frac{\left|\nabla_{\mathbb{H}^1}^{\rho}u\right|^{2}}{\sqrt{x^{2}+y^{2}}}\hspace{0.1cm}d\sigma_{\mathbb{H}^1}(\xi)+\displaystyle \int_{\partial B^{\mathbb{H}^1}_1(0)}\frac{\left|\nabla_{\mathbb{H}^1}^{\varphi}u\right|^{2}}{\sqrt{x^{2}+y^{2}}}\hspace{0.1cm}d\sigma_{\mathbb{H}^1}(\xi)}{\displaystyle \int_{B^{\mathbb{H}^1}_1(0)}\frac{\left|\nabla_{\mathbb{H}^1}u\right|^{2}}{\left|\xi\right|_{\mathbb{H}^1}^{2}}\hspace{0.1cm}d\xi}.
\end{align}
We now look for an upper bound for
\begin{equation*}
\int_{B^{\mathbb{H}^1}_1(0)}\frac{\left|\nabla_{\mathbb{H}^1}u\right|^{2}}{\left|\xi\right|_{\mathbb{H}^1}^{2}}\hspace{0.1cm}d\xi.
\end{equation*}
Specifically, we have
\begin{align*}
&\Delta_{\mathbb{H}^1}u^{2}=X(X(u^{2}))+Y(Y(u^{2}))=X(2uXu)+Y(2uYu)=2((Xu)^2+uX^{2}u)+2((Yu)^2+uY^{2}u)\\
&=2((Xu)^{2}+(Yu)^{2})+2u(X^{2}u+Y^{2}u)=2\left|\nabla_{\mathbb{H}^1}u\right|^{2}+2u\Delta_{\mathbb{H}^1}u,
\end{align*}
i.e.
\[ \Delta_{\mathbb{H}^1}u^{2}=2\left|\nabla_{\mathbb{H}^1}u\right|^{2}+2u\Delta_{\mathbb{H}^1}u, \]
which implies, if $u$ satisfies $u\Delta_{\mathbb{H}^1}u\geq 0,$
\[2\left|\nabla_{\mathbb{H}^1}u\right|^{2}\leq \Delta_{\mathbb{H}^1} u^{2}, \]
and thus
\begin{equation}\label{ineq-norm-quad-nabla-horiz-u}
\left|\nabla_{\mathbb{H}^1}u\right|^{2}\leq \frac{1}{2}\Delta_{\mathbb{H}^1}u^{2}.
\end{equation}
In view of \eqref{ineq-norm-quad-nabla-horiz-u}, we then achieve
\begin{equation}\label{denominator-general-term-condition-monotonicity-1}
\int_{B^{\mathbb{H}^1}_1(0)}\frac{\left|\nabla_{\mathbb{H}^1}u\right|^{2}}{\left|\xi\right|_{\mathbb{H}^1}^{2}}\hspace{0.1cm}d\xi\leq \frac{1}{2}\int_{B^{\mathbb{H}^1}_1(0)}\frac{\Delta_{\mathbb{H}^1}u^{2}}{\left|\xi\right|_{\mathbb{H}^1}^{2}}\hspace{0.1cm}d\xi.
\end{equation}
At this point, we denote with $\diver_{\mathbb{H}^1}$ the following operator
\begin{equation}\label{def-diver-horiz}
\diver_{\mathbb{H}^1}b=\diver_{\mathbb{H}^1}\left(b_{1},b_{2}\right)=Xb_{1}+Yb_{2},
\end{equation} 
where $b:\mathbb{H}^1\to \mathbb{R}^2.$
Therefore, we have, by \eqref{def-diver-horiz},
\begin{align*}
&\diver_{\mathbb{H}^1}\left(\left|\xi\right|_{\mathbb{H}^1}^{-2}\nabla_{\mathbb{H}^1}u^{2}\right)=X(\left|\xi\right|_{\mathbb{H}^1}^{-2}Xu^{2})+Y(\left|\xi\right|_{\mathbb{H}^1}^{-2}Yu^{2})=X\left|\xi\right|_{\mathbb{H}^1}^{-2}Xu^{2}+\left|\xi\right|_{\mathbb{H}^1}^{-2}X^{2}u^{2}+Y\left|\xi\right|_{\mathbb{H}^1}^{-2}Yu^{2}\\
&+\left|\xi\right|_{\mathbb{H}^1}^{-2}Y^{2}u^{2}=\langle\nabla_{\mathbb{H}^1}\left|\xi\right|_{\mathbb{H}^1}^{-2}, \nabla_{\mathbb{H}^1}u^{2}\rangle+\left|\xi\right|_{\mathbb{H}^1}^{-2}\Delta_{\mathbb{H}^1}u^{2},
\end{align*}
which entails
\begin{equation}\label{laplacian-horiz-u-^-2-equality}
\left|\xi\right|_{\mathbb{H}^1}^{-2}\Delta_{\mathbb{H}^1}u^{2}=\diver_{\mathbb{H}^1}\left(\left|\xi\right|_{\mathbb{H}^1}^{-2}\nabla_{\mathbb{H}^1}u^{2}\right)-\langle\nabla_{\mathbb{H}^1}\left|\xi\right|_{\mathbb{H}^1}^{-2},\nabla_{\mathbb{H}^1}u^{2}\rangle.
\end{equation}
Using \eqref{laplacian-horiz-u-^-2-equality}, we obtain, by virtue of \eqref{denominator-general-term-condition-monotonicity-1},
\begin{align}\label{denominator-general-term-condition-monotonicity-2}
&\int_{B^{\mathbb{H}^1}_1(0)}\frac{\left|\nabla_{\mathbb{H}^1}u\right|^{2}}{\left|\xi\right|_{\mathbb{H}^1}^{2}}\hspace{0.1cm}d\xi\leq\frac{1}{2}\int_{B^{\mathbb{H}^1}_1(0)}\diver_{\mathbb{H}^1}\left(\left|\xi\right|_{\mathbb{H}^1}^{-2}\nabla_{\mathbb{H}^1}u^{2}\right)\hspace{0.1cm}d\xi-\frac{1}{2}\int_{B^{\mathbb{H}^1}_1(0)}\langle\nabla_{\mathbb{H}^1}\left|\xi\right|_{\mathbb{H}^1}^{-2},\nabla_{\mathbb{H}^1}u^{2}\rangle\hspace{0.1cm}d\xi.
\end{align}
Now, we have
\begin{equation}\label{nabla-horiz-u-^-2}
\nabla_{\mathbb{H}^1}u^{2}=(Xu^{2},Yu^{2})=(2uXu,2uYu)=2u\nabla_{\mathbb{H}^1}u,
\nabla_{\mathbb{H}^1}u^{2}=2u\nabla_{\mathbb{H}^1}u.
\end{equation}
Consequently, by the analogous of the Divergence Theorem in $\mathbb{H}^{1}$ and from \eqref{nabla-horiz-u-^-2}, we get
\begin{align*}
&\int_{B^{\mathbb{H}^1}_1(0)}\diver_{\mathbb{H}^1}\left(\left|\xi\right|_{\mathbb{H}^1}^{-2}\nabla_{\mathbb{H}^1}u^{2}\right)\hspace{0.1cm}d\xi=\int_{\partial B^{\mathbb{H}^1}_1(0)}\langle\left|\xi\right|_{\mathbb{H}^1}^{-2}\nabla_{\mathbb{H}^1}u^{2},\nu_{\mathbb{H}^1}\rangle\hspace{0.1cm}d\sigma_{\mathbb{H}^1}(\xi)\\
&=\int_{\partial B^{\mathbb{H}^1}_1(0)}\langle \left|\xi\right|_{\mathbb{H}^1}^{-2}2u\nabla_{\mathbb{H}^1}u,\nu_{\mathbb{H}^1}\rangle\hspace{0.1cm}d\sigma_{\mathbb{H}^1}(\xi)=\int_{\partial B^{\mathbb{H}^1}_1(0)}2\left|\xi\right|_{\mathbb{H}^1}^{-2}u\langle\nabla_{\mathbb{H}^1}u,\nu_{\mathbb{H}^1}\rangle\hspace{0.1cm}d\sigma_{\mathbb{H}^1}(\xi),
\end{align*} 
namely
\begin{equation}\label{divergence-theorem-heisenberg-1}
\int_{B^{\mathbb{H}^1}_1(0)}\diver_{\mathbb{H}^1}\left(\left|\xi\right|_{\mathbb{H}^1}^{-2}\nabla_{\mathbb{H}^1}u^{2}\right)\hspace{0.1cm}d\xi=\int_{\partial B^{\mathbb{H}^1}_1(0)}2\left|\xi\right|_{\mathbb{H}^1}^{-2}u\langle\nabla_{\mathbb{H}^1}u,\nu_{\mathbb{H}^1}\rangle\hspace{0.1cm}d\sigma_{\mathbb{H}^1}(\xi),
\end{equation}
where 
\begin{equation}\label{def-nu-H}
\nu_{\mathbb{H}^1}=\frac{\nabla_{\mathbb{H}^1}\rho}{\left|\nabla_{\mathbb{H}^1}\rho\right|},
\end{equation}
with
\begin{equation}\label{def-rho}
\rho=\left|\xi\right|_{\mathbb{H}^1}.
\end{equation}
In particular, on $\partial B^{\mathbb{H}^1}_1(0),$ we have $\left|\xi\right|_{\mathbb{H}^1}=1,$ therefore, in view of \eqref{divergence-theorem-heisenberg-1}, we achieve
\begin{equation}\label{divergence-theorem-heisenberg-2}
\int_{B^{\mathbb{H}^1}_1(0)}\diver_{\mathbb{H}^1}\left(\left|\xi\right|_{\mathbb{H}^1}^{-2}\nabla_{\mathbb{H}^1}u^{2}\right)\hspace{0.1cm}d\xi=\int_{\partial B^{\mathbb{H}^1}_1(0)}2u\langle\nabla_{\mathbb{H}^1}u, \nu_{\mathbb{H}^1}\rangle\hspace{0.1cm}d\sigma_{\mathbb{H}^1}(\xi).
\end{equation}
In addition, repeating the same calculation done to find \eqref{laplacian-horiz-u-^-2-equality}, we obtain
\[\diver_{\mathbb{H}^1}\left(u^{2}\nabla_{\mathbb{H}^1}\left|\xi\right|_{\mathbb{H}^1}^{-2}\right)=\langle\nabla_{\mathbb{H}^1}u^{2},\nabla_{\mathbb{H}^1}\left|\xi\right|_{\mathbb{H}^1}^{-2}\rangle+u^{2}\diver_{\mathbb{H}^1}\left(\nabla_{\mathbb{H}^1}\left|\xi\right|_{\mathbb{H}^1}^{-2}\right), \]
which implies
\begin{equation}\label{nabla-horiz-u-^-2-cdot-nabla-horiz-norm-horiz-nu}
\langle\nabla_{\mathbb{H}^1}u^{2},\nabla_{\mathbb{H}^1}\left|\xi\right|_{\mathbb{H}^1}^{-2}\rangle=\diver_{\mathbb{H}^1}\left(u^{2}\nabla_{\mathbb{H}^1}\left|\xi\right|_{\mathbb{H}^1}^{-2}\right)-u^{2}\diver_{\mathbb{H}^1}\left(\nabla_{\mathbb{H}^1}\left|\xi\right|_{\mathbb{H}^1}^{-2}\right).
\end{equation}
Hence, from \eqref{nabla-horiz-u-^-2-cdot-nabla-horiz-norm-horiz-nu} and by the analogous of the Divergence Theorem in $\mathbb{H}^{1},$ we have
\begin{align*}
&\int_{B^{\mathbb{H}^1}_1(0)}\langle\nabla_{\mathbb{H}^1}u^{2},\nabla_{\mathbb{H}^1}\left|\xi\right|_{\mathbb{H}^1}^{-2}\rangle\hspace{0.1cm}d\xi=\int_{B^{\mathbb{H}^1}_1(0)}\left(\diver_{\mathbb{H}^1}\left(u^{2}\nabla_{\mathbb{H}^1}\left|\xi\right|_{\mathbb{H}^1}^{-2}\right)-u^{2}\diver_{\mathbb{H}^1}\left(\nabla_{\mathbb{H}^1}\left|\xi\right|_{\mathbb{H}^1}^{-2}\right)\right)\hspace{0.1cm}d\xi\\
&=\int_{B^{\mathbb{H}^1}_1(0)}\diver_{\mathbb{H}^1}\left(u^{2}\nabla_{\mathbb{H}^1}\left|\xi\right|_{\mathbb{H}^1}^{-2}\right)\hspace{0.1cm}d\xi-\int_{B^{\mathbb{H}^1}_1(0)}u^{2}\diver_{\mathbb{H}^1}\left(\nabla_{\mathbb{H}^1}\left|\xi\right|_{\mathbb{H}^1}^{-2}\right)\hspace{0.1cm}d\xi\\
&=\int_{\partial B^{\mathbb{H}^1}_1(0)}u^{2}\langle\nabla_{\mathbb{H}^1}\left|\xi\right|_{\mathbb{H}^1}^{-2}, \nu_{\mathbb{H}^1}\rangle\hspace{0.1cm}d\sigma_{\mathbb{H}^1}(\xi)-\int_{B^{\mathbb{H}^1}_1(0)}u^{2}\diver_{\mathbb{H}^1}\left(\nabla_{\mathbb{H}^1}\left|\xi\right|_{\mathbb{H}^1}^{-2}\right)\hspace{0.1cm}d\xi
\end{align*}
which entails
\begin{align}\label{second-term-denominator-general-term-condition-monotonicity-2}
&\int_{B^{\mathbb{H}^1}_1(0)}\nabla_{\mathbb{H}^1}u^{2}\cdot \nabla_{\mathbb{H}^1}\left|\xi\right|_{\mathbb{H}^1}^{-2}\hspace{0.1cm}d\xi=\int_{\partial B^{\mathbb{H}^1}_1(0)}u^{2}\langle\nabla_{\mathbb{H}^1}\left|\xi\right|_{\mathbb{H}^1}^{-2}, \nu_{\mathbb{H}^1}\rangle\hspace{0.1cm}d\sigma_{\mathbb{H}^1}(\xi)\notag\\
&-\int_{B^{\mathbb{H}^1}_1(0)}u^{2}\diver_{\mathbb{H}^1}\left(\nabla_{\mathbb{H}^1}\left|\xi\right|_{\mathbb{H}^1}^{-2}\right)\hspace{0.1cm}d\xi.
\end{align}
As a consequence, by virtue of \eqref{divergence-theorem-heisenberg-2} and \eqref{second-term-denominator-general-term-condition-monotonicity-2}, we achieve from \eqref{denominator-general-term-condition-monotonicity-2}
\begin{align*}
&\int_{B^{\mathbb{H}^1}_1(0)}\frac{\left|\nabla_{\mathbb{H}^1}u\right|^{2}}{\left|\xi\right|_{\mathbb{H}^1}^{2}}\hspace{0.1cm}d\xi\leq\frac{1}{2}\int_{\partial B^{\mathbb{H}^1}_1(0)}2u\langle\nabla_{\mathbb{H}^1}u, \nu_{\mathbb{H}^1}\rangle\hspace{0.1cm}d\sigma_{\mathbb{H}^1}(\xi)\\
&-\frac{1}{2}\bigg(\int_{\partial B^{\mathbb{H}^1}_1(0)}u^{2}\langle\nabla_{\mathbb{H}^1}\left|\xi\right|_{\mathbb{H}^1}^{-2}, \nu_{\mathbb{H}^1}\rangle\hspace{0.1cm}d\sigma_{\mathbb{H}^1}(\xi)-\int_{B^{\mathbb{H}^1}_1(0)}u^{2}\diver_{\mathbb{H}^1}\left(\nabla_{\mathbb{H}^1}\left|\xi\right|_{\mathbb{H}^1}^{-2}\right)\hspace{0.1cm}d\xi\bigg)\\
&=\int_{\partial B^{\mathbb{H}^1}_1(0)}u\langle\nabla_{\mathbb{H}^1}u, \nu_{\mathbb{H}^1}\rangle\hspace{0.1cm}d\sigma_{\mathbb{H}^1}(\xi)+\frac{1}{2}\int_{B^{\mathbb{H}^1}_1(0)}u^{2}\diver_{\mathbb{H}^1}\left(\nabla_{\mathbb{H}^1}\left|\xi\right|_{\mathbb{H}^1}^{-2}\right)\hspace{0.1cm}d\xi\\
&-\frac{1}{2}\int_{\partial B^{\mathbb{H}^1}_1(0)}u^{2}\langle\nabla_{\mathbb{H}^1}\left|\xi\right|_{\mathbb{H}^1}^{-2},\nu_{\mathbb{H}^1}\rangle\hspace{0.1cm}d\sigma_{\mathbb{H}^1}(\xi),
\end{align*}
that is
\begin{align}\label{denominator-general-term-condition-monotonicity-3}
&\int_{B^{\mathbb{H}^1}_1(0)}\frac{\left|\nabla_{\mathbb{H}^1}u\right|^{2}}{\left|\xi\right|_{\mathbb{H}^1}^{2}}\hspace{0.1cm}d\xi\leq\int_{\partial B^{\mathbb{H}^1}_1(0)}u\langle\nabla_{\mathbb{H}^1}u, \nu_{\mathbb{H}^1}\rangle\hspace{0.1cm}d\sigma_{\mathbb{H}^1}(\xi)\notag\\
&+\frac{1}{2}\int_{B^{\mathbb{H}^1}_1(0)}u^{2}\diver_{\mathbb{H}^1}\left(\nabla_{\mathbb{H}^1}\left|\xi\right|_{\mathbb{H}^1}^{-2}\right)\hspace{0.1cm}d\xi-\frac{1}{2}\int_{\partial B^{\mathbb{H}^1}_1(0)}u^{2}\langle\nabla_{\mathbb{H}^1}\left|\xi\right|_{\mathbb{H}^1}^{-2},\nu_{\mathbb{H}^1}\rangle\hspace{0.1cm}d\sigma_{\mathbb{H}^1}(\xi).
\end{align}
At this point, we have
\begin{equation}\label{nabla-horiz-norm-horiz-xi}
\nabla_{\mathbb{H}^1}\left|\xi\right|_{\mathbb{H}^1}^{-2}=\left(X\left|\xi\right|_{\mathbb{H}^1}^{-2},Y\left|\xi\right|_{\mathbb{H}^1}^{-2}\right)=\left(-2\left|\xi\right|_{\mathbb{H}^1}^{-3}X\left|\xi\right|_{\mathbb{H}^1},-2\left|\xi\right|_{\mathbb{H}^1}^{-3}Y\left|\xi\right|_{\mathbb{H}^1}\right)=-2\left|\xi\right|_{\mathbb{H}^1}^{-3}\nabla_{\mathbb{H}^1}\left|\xi\right|_{\mathbb{H}^1}.
\end{equation}
As a result, using \eqref{nabla-horiz-norm-horiz-xi}, \eqref{def-nu-H} and \eqref{def-rho}, we get
\[
\langle\nabla_{\mathbb{H}^1}\left|\xi\right|_{\mathbb{H}^1}^{-2}, \nu_{\mathbb{H}^1}\rangle=\langle-2\left|\xi\right|_{\mathbb{H}^1}^{-3}\nabla_{\mathbb{H}^1}\left|\xi\right|_{\mathbb{H}^1}, \frac{\nabla_{\mathbb{H}^1}\rho}{\left|\nabla_{\mathbb{H}^1}\rho\right|}\rangle=-2\rho^{-3}\langle\nabla_{\mathbb{H}^1}\rho,\frac{\nabla_{\mathbb{H}^1}\rho}{\left|\nabla_{\mathbb{H}^1}\rho\right|}\rangle=-2\rho^{-3}\left|\nabla_{\mathbb{H}^1}\rho\right|,
\]
which yields
\begin{equation}\label{scal-prod-nabla-horiz-norm-horiz-xi-nu-H}
\langle\nabla_{\mathbb{H}^1}\left|\xi\right|_{\mathbb{H}^1}^{-2}, \nu_{\mathbb{H}^1}\rangle=-2\rho^{-3}\left|\nabla_{\mathbb{H}^1}\rho\right|.
\end{equation}
Notice, in particular, that by \eqref{def-nu-H}, we have 
\begin{equation}\label{nu-H-equal-to-e-rho}
\nu_{\mathbb{H}^1}=e_{\rho}.
\end{equation}
Moreover, it holds 
\begin{equation}\label{rho-on-partial-B-H-0-1}
\rho=1\quad\mbox{on }\partial B^{\mathbb{H}^1}_1(0).
\end{equation}
Thus, by virtue of \eqref{scal-prod-nabla-horiz-norm-horiz-xi-nu-H},\eqref{nu-H-equal-to-e-rho} and \eqref{rho-on-partial-B-H-0-1}, we obtain from \eqref{denominator-general-term-condition-monotonicity-3}, using also \eqref{two-components-nabla-horiz},
\begin{align*}
&\int_{B^{\mathbb{H}^1}_1(0)}\frac{\left|\nabla_{\mathbb{H}^1}u\right|^{2}}{\left|\xi\right|_{\mathbb{H}^1}^{2}}\hspace{0.1cm}d\xi\leq\int_{\partial B^{\mathbb{H}^1}_1(0)}u\hspace{0.05cm}\langle\nabla_{\mathbb{H}^1}u,e_\rho\rangle\hspace{0.1cm}d\sigma_{\mathbb{H}^1}(\xi)+\frac{1}{2}\int_{B^{\mathbb{H}^1}_1(0)}u^{2}\diver_{\mathbb{H}^1}\left(\nabla_{\mathbb{H}^1}\left|\xi\right|_{\mathbb{H}^1}^{-2}\right)\hspace{0.1cm}d\xi\\
&-\frac{1}{2}\int_{\partial B^{\mathbb{H}^1}_1(0)}u^{2}\left(-2\rho^{-3}\left|\nabla_{\mathbb{H}^1}\rho\right|\right)\hspace{0.1cm}d\sigma_{\mathbb{H}^1}(\xi)=\int_{\partial B^{\mathbb{H}^1}_1(0)}u\langle\nabla_{\mathbb{H}^1}u,e_{\rho}\rangle\hspace{0.1cm}d\sigma_{\mathbb{H}^1}(\xi)\\
&+\frac{1}{2}\int_{B^{\mathbb{H}^1}_1(0)}u^{2}\diver_{\mathbb{H}^1}\left(\nabla_{\mathbb{H}^1}\left|\xi\right|_{\mathbb{H}^1}^{-2}\right)\hspace{0.1cm}d\xi+\int_{\partial B^{\mathbb{H}^1}_1(0)}u^{2}\left|\nabla_{\mathbb{H}^1}\rho\right|\hspace{0.1cm}d\sigma_{\mathbb{H}^1}(\xi)\\
&=\int_{\partial B^{\mathbb{H}^1}_1(0)}\left(u\langle\nabla_{\mathbb{H}^1}u,e_{\rho}\rangle+u^{2}\left|\nabla_{\mathbb{H}^1}\rho\right|\right)\hspace{0.1cm}d\sigma_{\mathbb{H}^1}(\xi)+\frac{1}{2}\int_{B^{\mathbb{H}^1}_1(0)}u^{2}\diver_{\mathbb{H}^1}\left(\nabla_{\mathbb{H}^1}\left|\xi\right|_{\mathbb{H}^1}^{-2}\right)\hspace{0.1cm}d\xi,
\end{align*}
which implies
\begin{align}\label{denominator-general-term-condition-monotonicity-4}
&\int_{B^{\mathbb{H}^1}_1(0)}\frac{\left|\nabla_{\mathbb{H}^1}u\right|^{2}}{\left|\xi\right|_{\mathbb{H}^1}^{2}}\hspace{0.1cm}d\xi\leq\int_{\partial B^{\mathbb{H}^1}_1(0)}\left(u\langle\nabla_{\mathbb{H}^1}u,e_{\rho}\rangle+u^{2}\left|\nabla_{\mathbb{H}^1}\rho\right|\right)\hspace{0.1cm}d\sigma_{\mathbb{H}^1}(\xi)+\frac{1}{2}\int_{B^{\mathbb{H}^1}_1(0)}u^{2}\diver_{\mathbb{H}^1}\left(\nabla_{\mathbb{H}^1}\left|\xi\right|_{\mathbb{H}^1}^{-2}\right)\hspace{0.1cm}d\xi.
\end{align}
At this point, we know that $\left|\xi\right|_{\mathbb{H}^1}^{-2}$ is, up to a multiplicative constant, the fundamental solution of $\Delta_{\mathbb{H}^1},$ and in addition
\[\Delta_{\mathbb{H}^1}=\diver_{\mathbb{H}^1}\nabla_{\mathbb{H}^1}, \]
thus
\begin{equation}\label{fundamental-solution-norm-horiz-xi-^--2}
\diver_{\mathbb{H}^1}\left(\nabla_{\mathbb{H}^1}\left|\xi\right|_{\mathbb{H}^1}^{-2}\right)=\Delta_{\mathbb{H}^1}\left|\xi\right|_{\mathbb{H}^1}^{-2}=\delta_{0},
\end{equation}
with $\delta_{0}$ the Dirac delta centered at $0.$\newline
Consequently, recalling that $u(0)=0,$ and therefore also $u^{2}(0)=0,$ we achieve
\[u^{2}\diver_{\mathbb{H}^1}\left(\nabla_{\mathbb{H}^1}\left|\xi\right|_{\mathbb{H}^1}^{-2}\right)=u^{2}\delta_{0}=0 \quad \mbox{in }B^{\mathbb{H}^1}_1(0),\]
which entails, in view of \eqref{denominator-general-term-condition-monotonicity-4},
\begin{equation}\label{denominator-general-term-condition-monotonicity-5}
\int_{B^{\mathbb{H}^1}_1(0)}\frac{\left|\nabla_{\mathbb{H}^1}u\right|^2}{\left|\xi\right|_{\mathbb{H}^1}^{2}}\hspace{0.1cm}d\xi\leq\int_{\partial B^{\mathbb{H}^1}_1(0)}\left(u\langle\nabla_{\mathbb{H}^1}u,e_{\rho}\rangle+u^{2}\left|\nabla_{\mathbb{H}^1}\rho\right|\right)\hspace{0.1cm}d\sigma_{\mathbb{H}^1}(\xi).
\end{equation}
In particular, we have obtained in Lemma \ref{norm-quad-nabla-horiz-theta-phi-rho}
\[\left|\nabla_{\mathbb{H}^1}\rho\right|^{2}=\frac{x^{2}+y^{2}}{\rho^{2}},\]
and so
\[\left|\nabla_{\mathbb{H}^1}\rho\right|=\frac{\sqrt{x^{2}+y^{2}}}{\rho},\]
which gives, from \eqref{rho-on-partial-B-H-0-1},
\begin{equation}\label{nabla-horiz-rho-on-partial-B-H-0-1}
\left|\nabla_{\mathbb{H}^1}\rho\right|=\sqrt{x^{2}+y^{2}}\quad \mbox{on }\partial B^{\mathbb{H}^1}_1(0).
\end{equation}
Substituting \eqref{nabla-horiz-rho-on-partial-B-H-0-1} in \eqref{denominator-general-term-condition-monotonicity-5}, we then get
\begin{equation}\label{denominator-general-term-condition-monotonicity-6}
\int_{B^{\mathbb{H}^1}_1(0)}\frac{\left|\nabla_{\mathbb{H}^1}u\right|^{2}}{\left|\xi\right|_{\mathbb{H}^1}^{2}}\hspace{0.1cm}d\xi\leq\int_{\partial B^{\mathbb{H}^1}_1(0)}\left(u\langle\nabla_{\mathbb{H}^1}u,e_{\rho}\rangle+u^{2}\sqrt{x^{2}+y^{2}}\right)\hspace{0.1cm}d\sigma_{\mathbb{H}^1}(\xi).
\end{equation}
Specifically, we can rewrite the right term in \eqref{denominator-general-term-condition-monotonicity-6} as
\begin{align*}
&\int_{\partial B^{\mathbb{H}^1}_1(0)}\left(u\langle\nabla_{\mathbb{H}^1}u,e_{\rho}\rangle+u^{2}\sqrt{x^{2}+y^{2}}\right)\hspace{0.1cm}d\sigma_{\mathbb{H}^1}(\xi)=\int_{\partial B^{\mathbb{H}^1}_1(0)}u\langle\nabla_{\mathbb{H}^1}u,e_{\rho}\rangle\hspace{0.05cm}\frac{\sqrt[4]{x^{2}+y^{2}}}{\sqrt[4]{x^{2}+y^{2}}}\hspace{0.1cm}d\sigma_{\mathbb{H}^1}(\xi)\\
&+\int_{\partial B^{\mathbb{H}^1}_1(0)}u^{2}\sqrt{x^{2}+y^{2}}\hspace{0.1cm}d\sigma_{\mathbb{H}^1}(\xi),
\end{align*}
which yields, by H\"older inequality,
\begin{align}\label{holder-ineq-result}
&\int_{\partial B^{\mathbb{H}^1}_1(0)}\left(u\langle\nabla_{\mathbb{H}^1}u,e_{\rho}\rangle +u^{2}\sqrt{x^{2}+y^{2}}\right)\hspace{0.1cm}d\sigma_{\mathbb{H}^1}(\xi)\leq \int_{\partial B^{\mathbb{H}^1}_1(0)}u^{2}\sqrt{x^{2}+y^{2}}\hspace{0.1cm}d\sigma_{\mathbb{H}^1}(\xi)\notag\\
&+\bigg(\int_{\partial B^{\mathbb{H}^1}_1(0)}u^{2}\sqrt{x^{2}+y^{2}}\hspace{0.1cm}d\sigma_{\mathbb{H}^1}(\xi)\bigg)^{1/2}\bigg(\int_{\partial B^{\mathbb{H}^1}_1(0)}\frac{\left|\nabla^{\rho}_{\mathbb{H}^1}u\right|^{2}}{\sqrt{x^{2}+y^{2}}}\hspace{0.1cm}d\sigma_{\mathbb{H}^1}(\xi)\bigg)^{1/2}.
\end{align}
As a consequence, by virtue of \eqref{holder-ineq-result}, we have, from \eqref{denominator-general-term-condition-monotonicity-6},
\begin{align}\label{denominator-general-term-condition-monotonicity-final}
&\int_{B^{\mathbb{H}^1}_1(0)}\frac{\left|\nabla_{\mathbb{H}^1}u\right|^{2}}{\left|\xi\right|_{\mathbb{H}^1}^{2}}\hspace{0.1cm}d\xi\leq \int_{\partial B^{\mathbb{H}^1}_1(0)}u^{2}\sqrt{x^{2}+y^{2}}\hspace{0.1cm}d\sigma_{\mathbb{H}^1}(\xi)\notag\\
&+\bigg(\int_{\partial B^{\mathbb{H}^1}_1(0)}u^{2}\sqrt{x^{2}+y^{2}}\hspace{0.1cm}d\sigma_{\mathbb{H}^1}(\xi)\bigg)^{1/2}\bigg(\int_{\partial B^{\mathbb{H}^1}_1(0)}\frac{\left|\nabla^{\rho}_{\mathbb{H}^1}u\right|^{2}}{\sqrt{x^{2}+y^{2}}}\hspace{0.1cm}d\sigma_{\mathbb{H}^1}(\xi)\bigg)^{1/2}.
\end{align}
To recap, we have increased the denominator of \eqref{general-term-condition-monotonicity} in \eqref{denominator-general-term-condition-monotonicity-final} and using this, we then obtain from \eqref{general-term-condition-monotonicity}
\begin{equation}\label{general-term-condition-monotonicity-lower-ineq-1}
\frac{\displaystyle \int_{\partial B^{\mathbb{H}^1}_1(0)}\frac{\left|\nabla_{\mathbb{H}^1}u\right|^{2}}{\sqrt{x^{2}+y^{2}}}\hspace{0.1cm}d\sigma_{\mathbb{H}^1}(\xi)}{\displaystyle\int_{B^{\mathbb{H}^1}_1(0)}\frac{\left|\nabla_{\mathbb{H}^1}u\right|^{2}}{\left|\xi\right|_{\mathbb{H}^1}^{2}}\hspace{0.1cm}d\xi}\geq\frac{A_{\rho}+A_{\varphi}}{A_{u}+A_{u}^{1/2}A_{\rho}^{1/2}}.
\end{equation}

\end{proof}

Let us introduce now the following notation:
 
\begin{equation}\label{def-lambda-varphi}
\lambda_{\varphi_{\left(\Sigma\right)}}\coloneqq \inf_{v\hspace{0.05cm}\in\hspace{0.05cm}H^{1}_{0}(\Sigma)}\frac{\displaystyle \int_{\Sigma}\frac{\left|\nabla^{\varphi}_{\mathbb{H}^1}v\right|^{2}}{\sqrt{x^{2}+y^{2}}}\hspace{0.1cm}d\sigma_{\mathbb{H}^1}(\xi)}{\displaystyle \int_{\Sigma}v^{2}\sqrt{x^{2}+y^{2}}\hspace{0.1cm}d\sigma_{\mathbb{H}^1}(\xi)},
\end{equation}
where
\begin{equation}\label{def-sigma}
\Sigma\coloneqq \partial B^{\mathbb{H}^1}_1(0)\cap\left\{u\neq 0\right\}.
\end{equation}
\begin{teo}\label{teo_teo_lb}
Let $u$ be one of the two functions of the Theorem \ref{lowerbound_f}. Then
\begin{equation}\label{general-term-condition-monotonicity-lower-bound}
\frac{\displaystyle \int_{\partial B^{\mathbb{H}^1}_1(0)}\frac{\left|\nabla_{\mathbb{H}^1}u\right|^{2}}{\sqrt{x^{2}+y^{2}}}\hspace{0.1cm}d\sigma_{\mathbb{H}^1}(\xi)}{\displaystyle \int_{B^{\mathbb{H}^1}_1(0)}\frac{\left|\nabla_{\mathbb{H}^1}u\right|^{2}}{\left|\xi\right|_{\mathbb{H}^1}^{2}}\hspace{0.1cm}d\xi}\geq 2\left(\sqrt{1+\lambda_{\varphi_{\left(\Sigma\right)}}}-1\right).
\end{equation}
\end{teo}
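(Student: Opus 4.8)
The plan is to combine the lower bound established in the preceding lemma with the variational characterization of $\lambda_{\varphi_{(\Sigma)}}$, and then reduce everything to an elementary one-variable minimization. Since \eqref{general-term-condition-monotonicity-lower-ineq-1} already gives
\[
\frac{\displaystyle \int_{\partial B^{\mathbb{H}^1}_1(0)}\frac{\left|\nabla_{\mathbb{H}^1}u\right|^{2}}{\sqrt{x^{2}+y^{2}}}\,d\sigma_{\mathbb{H}^1}(\xi)}{\displaystyle \int_{B^{\mathbb{H}^1}_1(0)}\frac{\left|\nabla_{\mathbb{H}^1}u\right|^{2}}{\left|\xi\right|_{\mathbb{H}^1}^{2}}\,d\xi}\geq\frac{A_{\rho}+A_{\varphi}}{A_{u}+A_{u}^{1/2}A_{\rho}^{1/2}},
\]
it suffices to bound the right-hand quotient from below in terms of $\lambda_{\varphi_{(\Sigma)}}$.

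First I would observe that the restriction $v=u\restrict{\Sigma}$ is an admissible competitor in the Rayleigh quotient \eqref{def-lambda-varphi}. Indeed, by \eqref{def-sigma} we have $\Sigma=\partial B^{\mathbb{H}^1}_1(0)\cap\{u\neq 0\}$, and since $u$ is continuous and vanishes on $\partial\Sigma$, the restriction $v$ lies in $H^1_0(\Sigma)$. Because $\nu_{\mathbb{H}^1}=e_\rho$ by \eqref{nu-H-equal-to-e-rho}, the radial component $\nabla^\rho_{\mathbb{H}^1}u$ is normal to the sphere, so the tangential gradient of $v$ on $\Sigma$ is exactly $\nabla^\varphi_{\mathbb{H}^1}u$; plugging $v$ into \eqref{def-lambda-varphi} therefore yields directly $A_{\varphi}\geq \lambda_{\varphi_{(\Sigma)}}\,A_{u}$.

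Next I would exploit this constraint. Writing $\lambda:=\lambda_{\varphi_{(\Sigma)}}$, the quotient $\tfrac{A_{\rho}+A_{\varphi}}{A_{u}+A_{u}^{1/2}A_{\rho}^{1/2}}$ is increasing in $A_{\varphi}$, since the denominator does not involve $A_\varphi$; hence its infimum subject to $A_{\varphi}\geq\lambda A_{u}$ is attained at $A_{\varphi}=\lambda A_{u}$. Introducing the scale-invariant variable $z:=(A_{\rho}/A_{u})^{1/2}\geq 0$, the expression becomes
\[
g(z)=\frac{z^{2}+\lambda}{1+z},\qquad g'(z)=\frac{z^{2}+2z-\lambda}{(1+z)^{2}},
\]
whose unique nonnegative critical point is $z^{*}=\sqrt{1+\lambda}-1$; as $g'<0$ on $[0,z^{*})$ and $g'>0$ on $(z^{*},\infty)$, this is the global minimum on $[0,\infty)$.

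Finally, evaluating at $z^{*}$ and using the relations $(z^{*})^{2}+2z^{*}=\lambda$ and $1+z^{*}=\sqrt{1+\lambda}$, one finds $(z^{*})^{2}+\lambda=2(\lambda-z^{*})=2\sqrt{1+\lambda}\,(\sqrt{1+\lambda}-1)$, so that $g(z^{*})=2(\sqrt{1+\lambda}-1)$, which is precisely the claimed bound \eqref{general-term-condition-monotonicity-lower-bound}. The only step requiring genuine care is the admissibility argument, namely verifying that $u\restrict{\Sigma}\in H^1_0(\Sigma)$ and that its tangential Rayleigh energy equals $A_\varphi$; the subsequent minimization is routine one-variable calculus, and the degenerate cases $A_u=0$ or $A_\rho=0$ are handled by inspection.
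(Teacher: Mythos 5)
Your proposal is correct and follows essentially the same route as the paper: both start from the bound $\frac{A_\rho+A_\varphi}{A_u+A_u^{1/2}A_\rho^{1/2}}$, replace $A_\varphi$ by $\lambda_{\varphi_{(\Sigma)}}A_u$ via the Rayleigh quotient, and minimize the resulting one-variable function (the paper works with $s=A_\rho/A_u$ and $F(s)=(s+\lambda)/(1+\sqrt{s})$, you with $z=\sqrt{s}$, which is the same minimization). Your additional remarks on the admissibility of $u\restrict{\Sigma}$ in $H^1_0(\Sigma)$ make explicit a point the paper only asserts, but the argument is otherwise identical.
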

\begin{proof}
First of all, we remark that $A_{u}\neq 0,$ hence the right term in \eqref{general-term-condition-monotonicity-lower-ineq-1} becomes
\begin{equation}\label{equality-frac-A-rho-A-varphi-A-u}
\frac{A_{\rho}+A_{\varphi}}{A_{u}+A_{u}^{1/2}A_{\rho}^{1/2}}=\frac{\displaystyle\frac{A_{\rho}}{A_{u}}+\displaystyle\frac{A_{\varphi}}{A_{u}}}{1+\left(\displaystyle\frac{A_{\rho}}{A_{u}}\right)^{1/2}}.
\end{equation}
Substituting \eqref{equality-frac-A-rho-A-varphi-A-u} in \eqref{general-term-condition-monotonicity-lower-ineq-1} we then achieve
\begin{equation}\label{general-term-condition-monotonicity-lower-ineq-2}
\frac{\displaystyle \int_{\partial B^{\mathbb{H}^1}_1(0)}\frac{\left|\nabla_{\mathbb{H}^1}u\right|^{2}}{\sqrt{x^{2}+y^{2}}}\hspace{0.1cm}d\sigma_{\mathbb{H}^1}(\xi)}{\displaystyle \int_{B^{\mathbb{H}^1}_1(0)}\frac{\left|\nabla_{\mathbb{H}^1}u\right|^{2}}{\left|\xi\right|_{\mathbb{H}^1}^{2}}\hspace{0.1cm}d\xi}\geq \frac{\displaystyle\frac{A_{\rho}}{A_{u}}+\displaystyle\frac{A_{\varphi}}{A_{u}}}{1+\left(\displaystyle\frac{A_{\rho}}{A_{u}}\right)^{1/2}}.
\end{equation}

Furthermore, recalling that $u\in H^{1}_{0}\left(\Sigma\right)$ and \eqref{def-sigma}, we have
\[\frac{A_{\varphi}}{A_{u}}\geq \lambda_{\varphi_{\left(\Sigma\right)}},\]
which entails, in view of \eqref{general-term-condition-monotonicity-lower-ineq-2},
\begin{equation}\label{general-term-condition-monotonicity-lower-ineq-3}
\frac{\displaystyle \int_{\partial B^{\mathbb{H}^1}_1(0)}\frac{\left|\nabla_{\mathbb{H}^1}u\right|^{2}}{\sqrt{x^{2}+y^{2}}}\hspace{0.1cm}d\sigma_{\mathbb{H}^1}(\xi)}{\displaystyle\int_{B^{\mathbb{H}^1}_1(0)}\frac{\left|\nabla_{\mathbb{H}^1}u\right|^{2}}{\left|\xi\right|_{\mathbb{H}^1}^{2}}\hspace{0.1cm}d\xi}\geq \frac{\displaystyle\frac{A_{\rho}}{A_{u}}+\lambda_{\varphi_{\left(\Sigma\right)}}}{1+\left(\displaystyle\frac{A_{\rho}}{A_{u}}\right)^{1/2}}.
\end{equation}
At this point, if we call 
\begin{equation}\label{def-s}
s=\frac{A_{\rho}}{A_{u}},
\end{equation}
we can rewrite the right term in \eqref{general-term-condition-monotonicity-lower-ineq-3} as a function depending on $s,$ precisely:
\begin{equation}\label{def-F}
F(s)=\frac{s+\lambda_{\varphi_{\left(\Sigma\right)}}}{1+\sqrt{s}},\quad s\in\mathbb{R},\hspace{0.1cm} s>0.
\end{equation}
Our idea is to find the minimum of $F$ to get a lower bound of
\[ \frac{\displaystyle\int_{\partial B^{\mathbb{H}^1}_1(0)}\frac{\left|\nabla_{\mathbb{H}^1}u\right|^{2}}{\sqrt{x^{2}+y^{2}}}\hspace{0.1cm}d\sigma_{\mathbb{H}^1}(\xi)}{\displaystyle\int_{B^{\mathbb{H}^1}_1(0)}\frac{\left|\nabla_{\mathbb{H}^1}u\right|^{2}}{\left|\xi\right|_{\mathbb{H}^1}^{2}}\hspace{0.1cm}d\xi}.\]
Specifically, by virtue of \eqref{def-F}, we have
\begin{align*}
&F'(s)=\frac{1+\sqrt{s}-(s+\lambda_{\varphi_{\left(\Sigma\right)}})\frac{1}{2\sqrt{s}}}{(1+\sqrt{s})^{2}}=\frac{(1+\sqrt{s})2\sqrt{s}-s-\lambda_{\varphi_{\left(\Sigma\right)}}}{2\sqrt{s}(1+\sqrt{s})^{2}}=\frac{2\sqrt{s}+2s-s-\lambda_{\varphi_{\left(\Sigma\right)}}}{2\sqrt{s}(1+\sqrt{s})^{2}}\\
&=\frac{2\sqrt{s}+s-\lambda_{\varphi_{\left(\Sigma\right)}}}{2\sqrt{s}(1+\sqrt{s})^{2}},
\end{align*}
which gives
\begin{equation}\label{expression-F'}
F'(s)=\frac{2\sqrt{s}+s-\lambda_{\varphi_{\left(\Sigma\right)}}}{2\sqrt{s}(1+\sqrt{s})^{2}}.
\end{equation}
At this point, we notice that the denominator of \eqref{expression-F'} is always positive, so we have to study the numerator to find the minimum.\newline
In particular, it results
\begin{equation}\label{equivalence-condition-minimum}
2\sqrt{s}+s-\lambda_{\varphi_{\left(\Sigma\right)}}\geq 0 \stackrel{z=\sqrt{s}}{\Longleftrightarrow}2z+z^{2}-\lambda_{\varphi_{\left(\Sigma\right)}}\geq 0.
\end{equation}
Now, the roots of $z^{2}+2z-\lambda_{\varphi_{\left(\Sigma\right)}}$ are
\[z_{\pm}=-1\pm\sqrt{1+\lambda_{\varphi_{\left(\Sigma\right)}}},\]
but inasmuch $z>0$ from \eqref{def-F} and \eqref{equivalence-condition-minimum}, we obtain 
\[z^{2}+2z-\lambda_{\varphi_{\left(\Sigma\right)}}\geq 0\Longleftrightarrow z\geq -1+\sqrt{1+\lambda_{\varphi_{\left(\Sigma\right)}}},\]
which implies, using \eqref{equivalence-condition-minimum}, that
\[s=z^{2}=\left(-1+\sqrt{1+\lambda_{\varphi_{\left(\Sigma\right)}}}\right)^{2},\]
is the minimum point of $F.$\newline
Consequently, from \eqref{def-F}, we achieve
\begin{align*}
F(s)&=\frac{s+\lambda_{\varphi_{\left(\Sigma\right)}}}{1+\sqrt{s}}\geq F\left(\left(-1+\sqrt{1+\lambda_{\varphi_{\left(\Sigma\right)}}}\right)^{2}\right)=\frac{\left(-1+\sqrt{1+\lambda_{\varphi_{\left(\Sigma\right)}}}\right)^{2}+\lambda_{\varphi_{\left(\Sigma\right)}}}{1+\sqrt{\left(-1+\sqrt{1+\lambda_{\varphi_{\left(\Sigma\right)}}}\right)^{2}}}\\
&=\frac{1+1+\lambda_{\varphi_{\left(\Sigma\right)}}-2\sqrt{1+\lambda_{\varphi_{\left(\Sigma\right)}}}+\lambda_{\varphi_{\left(\Sigma\right)}}}{1-1+\sqrt{1+\lambda_{\varphi_{\left(\Sigma\right)}}}}=2\frac{1+\lambda_{\varphi_{\left(\Sigma\right)}}-\sqrt{1+\lambda_{\varphi_{\left(\Sigma\right)}}}}{\sqrt{1+\lambda_{\varphi_{\left(\Sigma\right)}}}}\\
&=2\frac{\sqrt{1+\lambda_{\varphi_{\left(\Sigma\right)}}}\left(\sqrt{1+\lambda_{\varphi_{\left(\Sigma\right)}}}-1\right)}{{\sqrt{1+\lambda_{\varphi_{\left(\Sigma\right)}}}}}=2\left(\sqrt{1+\lambda_{\varphi_{\left(\Sigma\right)}}}-1\right),
\end{align*}
which gives
\[\frac{s+\lambda_{\varphi_{\left(\Sigma\right)}}}{1+\sqrt{s}}\geq 2\left(\sqrt{1+\lambda_{\varphi_{\left(\Sigma\right)}}}-1\right),\]
and thus, using \eqref{def-s},
\begin{equation*}
\frac{\displaystyle\frac{A_{\rho}}{A_{u}}+\lambda_{\varphi_{\left(\Sigma\right)}}}{1+\left(\displaystyle\frac{A_{\rho}}{A_{u}}\right)^{1/2}}\geq 2\left(\sqrt{1+\lambda_{\varphi_{\left(\Sigma\right)}}}-1\right).
\end{equation*}

This fact, together with \eqref{general-term-condition-monotonicity-lower-ineq-3}, entails
\begin{equation*}\label{general-term-condition-monotonicity-lower-bound_x}
\frac{\displaystyle \int_{\partial B^{\mathbb{H}^1}_1(0)}\frac{\left|\nabla_{\mathbb{H}^1}u\right|^{2}}{\sqrt{x^{2}+y^{2}}}\hspace{0.1cm}d\sigma_{\mathbb{H}^1}(\xi)}{\displaystyle \int_{B^{\mathbb{H}^1}_1(0)}\frac{\left|\nabla_{\mathbb{H}^1}u\right|^{2}}{\left|\xi\right|_{\mathbb{H}^1}^{2}}\hspace{0.1cm}d\xi}\geq 2\left(\sqrt{1+\lambda_{\varphi_{\left(\Sigma\right)}}}-1\right).
\end{equation*}

 \end{proof}

 We show now an alternative proof of Theorem \ref{lowerbound_f}, following the idea contained in \cite{CS}.
 
 We first consider for every $\beta\in (0,1)$ the following lower bound, recalling the definition of $\lambda_{\varphi_{\left(\Sigma\right)}}$ in \eqref{def-lambda-varphi}:
 \begin{equation}\label{ottantatre}
 \begin{split}
& \int_{\partial B^{\mathbb{H}^1}_1(0)}\frac{\left|\nabla_{\mathbb{H}^1}u\right|^{2}}{\sqrt{x^{2}+y^{2}}}\hspace{0.1cm}d\sigma_{\mathbb{H}^1}(\xi)=\int_{\partial B^{\mathbb{H}^1}_1(0)}\frac{\left|\nabla^{\rho}_{\mathbb{H}^1}u\right|^{2}}{\sqrt{x^{2}+y^{2}}}\hspace{0.1cm}d\sigma_{\mathbb{H}^1}(\xi)+\displaystyle\int_{\partial B^{\mathbb{H}^1}_1(0)}\frac{\left|\nabla^{\varphi}_{\mathbb{H}^1}u\right|^{2}}{\sqrt{x^{2}+y^{2}}}\hspace{0.1cm}d\sigma_{\mathbb{H}^1}(\xi)\\
 &\geq \int_{\partial B^{\mathbb{H}^1}_1(0)}\frac{\left|\nabla^{\rho}_{\mathbb{H}^1}u\right|^{2}}{\sqrt{x^{2}+y^{2}}}\hspace{0.1cm}d\sigma_{\mathbb{H}^1}(\xi)+\lambda_{\varphi_{\left(\Sigma\right)}}\int_{\partial B^{\mathbb{H}^1}_1(0)}u^2\sqrt{x^{2}+y^{2}}\hspace{0.1cm}d\sigma_{\mathbb{H}^1}(\xi)\\
 &=\int_{\partial B^{\mathbb{H}^1}_1(0)}\frac{\left|\nabla^{\rho}_{\mathbb{H}^1}u\right|^{2}}{\sqrt{x^{2}+y^{2}}}\hspace{0.1cm}d\sigma_{\mathbb{H}^1}(\xi)+\lambda_{\varphi_{\left(\Sigma\right)}}\beta \int_{\partial B^{\mathbb{H}^1}_1(0)}u^2\sqrt{x^{2}+y^{2}}\hspace{0.1cm}d\sigma_{\mathbb{H}^1}(\xi)\\
 &+\lambda_{\varphi_{\left(\Sigma\right)}}(1-\beta) \int_{\partial B^{\mathbb{H}^1}_1(0)}u^2\sqrt{x^{2}+y^{2}}\hspace{0.1cm}d\sigma_{\mathbb{H}^1}(\xi)\\
 &\geq 2\left(\int_{\partial B^{\mathbb{H}^1}_1(0)}\frac{\left|\nabla^{\rho}_{\mathbb{H}^1}u\right|^{2}}{\sqrt{x^{2}+y^{2}}}\hspace{0.1cm}d\sigma_{\mathbb{H}^1}(\xi)\right)^{\frac{1}{2}}\left(\lambda_{\varphi_{\left(\Sigma\right)}}\beta \int_{\partial B^{\mathbb{H}^1}_1(0)}u^2\sqrt{x^{2}+y^{2}}\hspace{0.1cm}d\sigma_{\mathbb{H}^1}(\xi)\right)^{\frac{1}{2}}\\
 &+\lambda_{\varphi_{\left(\Sigma\right)}}(1-\beta) \int_{\partial B^{\mathbb{H}^1}_1(0)}u^2\sqrt{x^{2}+y^{2}}\hspace{0.1cm}d\sigma_{\mathbb{H}^1}(\xi).
 \end{split}
 \end{equation}
In view of (\ref{ottantatre}) and (\ref{denominator-general-term-condition-monotonicity-final}), it then follows, since $A_u\neq 0:$
 \begin{align*}
&\frac{\displaystyle\int_{\partial B^{\mathbb{H}^1}_1(0)}\frac{\left|\nabla_{\mathbb{H}^1}u\right|^{2}}{\sqrt{x^{2}+y^{2}}}\hspace{0.1cm}d\sigma_{\mathbb{H}^1}(\xi)}{\displaystyle \int_{B^{\mathbb{H}^1}_1(0)}\frac{\left|\nabla_{\mathbb{H}^1}u\right|^{2}}{\left|\xi\right|_{\mathbb{H}^1}^{2}}\hspace{0.1cm}d\xi}\geq \frac{2(\lambda_{\varphi_{\left(\Sigma\right)}} \beta)^{1/2}A_{\rho}^{1/2}A_{u}^{1/2}+(1-\beta)\lambda_{\varphi_{\left(\Sigma\right)}} A_{u}}{A_{u}+A_{u}^{1/2}A_{\rho}^{1/2}}\\
&=\frac{2(\lambda_{\varphi_{\left(\Sigma\right)}}\beta)^{1/2}+(1-\beta)\lambda_{\varphi_{\left(\Sigma\right)}} \frac{A_{\rho}^{1/2}}{A_{u}^{1/2}}}{1+\frac{A_{\rho}^{1/2}}{A_{u}^{1/2}}}\geq \min\{(1-\beta)\lambda_{\varphi_{\left(\Sigma\right)}}, 2(\lambda_{\varphi_{\left(\Sigma\right)}}\beta)^{1/2}\}.
\end{align*}
At this point, let $\beta\in (0,1)$  be such that  
\begin{equation}\label{beta_f}
(1-\beta)\lambda_{\varphi_{\left(\Sigma\right)}}=2(\lambda_{\varphi_{\left(\Sigma\right)}}\beta)^{1/2}.
\end{equation}
Then, denoting $\alpha\coloneqq (\lambda_{\varphi_{\left(\Sigma\right)}}\beta)^{1/2},$ we obtain that the previous relationship is satisfied whenever
\begin{equation}\label{fundamental_relation}
\alpha^2+2\alpha-\lambda_{\varphi_{\left(\Sigma\right)}}=0.
\end{equation}
We also point out that, from (\ref{beta_f}), it follows 
$$
(1-\beta)\sqrt{\lambda_{\varphi_{\left(\Sigma\right)}}}=2\sqrt{\beta},
$$
so that, denoting $\gamma=\sqrt{\beta},$ we get
$$
\sqrt{\lambda_{\varphi_{\left(\Sigma\right)}}} \gamma^2+2\gamma-\sqrt{\lambda_{\varphi_{\left(\Sigma\right)}}}=0
$$
which yields
$$
\gamma=\frac{-1\pm\sqrt{1+\lambda_{\varphi_{\left(\Sigma\right)}}}}{\sqrt{\lambda_{\varphi_{\left(\Sigma\right)}}}},
$$
but, being $\beta>0,$ it results
$$
\gamma=\frac{-1+\sqrt{1+\lambda_{\varphi_{\left(\Sigma\right)}}}}{\sqrt{\lambda_{\varphi_{\left(\Sigma\right)}}}}=-\frac{1}{\sqrt{\lambda_{\varphi_{\left(\Sigma\right)}}}}+\sqrt{1+\frac{1}{\lambda_{\varphi_{\left(\Sigma\right)}}}}.
$$
Now, the function $r\to -r+\sqrt{1+r^2}$ is positive in $[0,+\infty)$ and since $( -r+\sqrt{1+r^2})'=-1+\frac{r}{1+r^2}<0$ for every $r>0,$ this function is monotone decreasing, so that $0< -r+\sqrt{1+r^2}\leq 1 $.

As a consequence, 
there exists $\beta\in (0,1)$ given by 
$$
\beta=\left(\frac{-1+\sqrt{1+\lambda_{\varphi_{\left(\Sigma\right)}}}}{\sqrt{\lambda_{\varphi_{\left(\Sigma\right)}}}}\right)^2
$$ 
such that, when (\ref{beta_f}) is realized, it holds
$$
\min\{(1-\beta)\lambda_{\varphi_{\left(\Sigma\right)}}, 2(\lambda_{\varphi_{\left(\Sigma\right)}} \beta)^{1/2}\}=2\left(\sqrt{1+\lambda_{\varphi_{\left(\Sigma\right)}}}-1\right),
$$
as stated in Theorem \ref{lowerbound_f}.
%


\section{Further remarks about the dependence on $\varphi$}\label{phidependence}
We remarked in \eqref{lapl-horiz-f-varphi} that,
if $u=\rho^{\alpha}f(\varphi),$ then
\begin{equation}\label{lapl-horiz-f-varphi-2}
\Delta_{\mathbb{H}^1}u=\Delta_{\mathbb{H}^1}(\rho^{\alpha}f(\varphi))=\rho^{\alpha-2}\bigg(\alpha(\alpha+2)(\sin\varphi)f(\varphi)+4\sin\varphi\frac{\partial^{2}f}{\partial \varphi^{2}}\nonumber+4\cos\varphi\frac{\partial f}{\partial \varphi}\bigg).
\end{equation}
As a consequence, whenever $u=\rho^{\alpha}f(\varphi)$ and 
$$
\alpha(\alpha+2)(\sin\varphi)f(\varphi)+4\sin\varphi\frac{\partial^{2}f}{\partial \varphi^{2}}+4\cos\varphi\frac{\partial f}{\partial \varphi}=0,$$
then $\Delta_{\mathbb{H}^1}u=0.$
This equation, in particular, may be reduced to the following one
$$
(\sin(\varphi)f'(\varphi))'+\alpha(\alpha+2)\sin(\varphi)f(\varphi)=0.
$$
Thus, denoting by $\mathcal{L}f\coloneqq (\sin(\varphi)f'(\varphi))',$ and considering
the following eigenvalues problem
\begin{equation}\label{eigenvp}
\left\{ 
\begin{array}{l}
\mathcal{L}f+\lambda\sin (\varphi)f=0,\quad\varphi_1<\varphi<\varphi_2\\
f(\varphi_1)=0\\
f(\varphi_2)=0,
\end{array}
\right.
\end{equation}
it results that $\alpha$ has to satisfy the following relationship
$$\alpha(\alpha+2)=\lambda,$$
which is exactly the same one obtained in (\ref{fundamental_relation}).

On the other hand, by writing $f=f(\varphi)$ and $\frac{\partial}{\partial\varphi}\left(\sin\varphi\frac{\partial f}{\partial \varphi}\right)=(\sin\varphi f')',$ since $f$ is a function depending only on $\varphi,$ we have
$$-4(\sin\varphi f')'=\alpha(\alpha+2)(\sin\varphi)f,$$
and multiplying both the terms of the equality by $\eta$ sufficiently smooth with $\eta\left(\varphi_0\right)=0,$ $\varphi_0\in (0,\frac{\pi}{2}],$
\begin{equation}\label{weak-equality-lapl-horiz-1x}
\alpha(\alpha+2)(\sin\varphi)f\eta=-4(\sin\varphi f')'\eta.
\end{equation}
Integrating over $\left[0,\varphi_0\right]$ the equality in \eqref{weak-equality-lapl-horiz-1x}, we then obtain
\begin{equation}\label{weak-equality-lapl-horiz-2x}
\alpha(\alpha+2)\int_{0}^{\varphi_0}(\sin\varphi)f\eta\hspace{0.1cm}d\varphi=-4\int_{0}^{\varphi_0}(\sin\varphi f')'\eta\hspace{0.1cm}d\varphi.
\end{equation}
In particular, if we choose $\eta=f,$ we get, from \eqref{weak-equality-lapl-horiz-2x}, using the Theorem of Integration by Parts:
\begin{align*}
&\alpha(\alpha+2)\int_{0}^{\varphi_0}(\sin\varphi)f^{2}\hspace{0.1cm}d\varphi=-4\int_{0}^{\varphi_0}(\sin\varphi f')'f\hspace{0.1cm}d\varphi=-4\bigg(\bigg[(\sin\varphi f')f\bigg]^{\varphi=\varphi_0}_{\varphi=0}-\int_{0}^{\varphi_0}\sin\varphi f'f'\hspace{0.1cm}d\varphi\bigg)\\
&=-4\bigg(\sin\left(\varphi_0\right)f'\left(\varphi_0\right)f\left(\varphi_0\right)-\sin(0)f'(0)f(0)-\int_{0}^{\varphi_0}\sin\varphi (f')^{2}\hspace{0.1cm}d\varphi\bigg).
\end{align*}
This implies that, because $\sin(0)=0$ and recalling that $f\left(\varphi_0\right)=0,$ by virtue of the choice of $f$,
\begin{equation}\label{weak-equality-lapl-horiz-3x}
\alpha(\alpha+2)\int_{0}^{\varphi_0}(\sin\varphi)f^{2}\hspace{0.1cm}d\varphi=4\int_{0}^{\varphi_0}\sin\varphi(f')^{2}\hspace{0.1cm}d\varphi.
\end{equation}
In addition, in view of \eqref{weak-equality-lapl-horiz-3x}, we also have
\begin{equation}\label{weak-equality-lapl-horiz-4x}
\alpha(\alpha+2)=\frac{4\displaystyle\int_{0}^{\varphi_0}\sin\varphi (f')^{2}\hspace{0.1cm}d\varphi}{\displaystyle\int_{0}^{\varphi_0}(\sin\varphi)f^{2}\hspace{0.1cm}d\varphi}.
\end{equation}
On the other hand, performing a change of variable $\tau= \pi-\varphi,$ we get
\begin{equation}\label{weak-equality-lapl-horiz-5x}
\begin{split}
\alpha(\alpha+2)&=\frac{-4\displaystyle\int_{\pi}^{\pi-\varphi_0}\sin(\pi-\tau) (f')^{2}(\pi-\tau)\hspace{0.1cm}d\tau}{-\displaystyle\int_{\pi}^{\pi-\varphi_0}\sin(\pi-\tau)f^{2}(\pi-\tau)\hspace{0.1cm}d\tau}=\frac{4\displaystyle\int^{\pi}_{\pi-\varphi_0}\sin(\tau) (f')^{2}(\pi-\tau)\hspace{0.1cm}d\tau}{\displaystyle\int^{\pi}_{\pi-\varphi_0}\sin(\tau)f^{2}(\pi-\tau)\hspace{0.1cm}d\tau}.
\end{split}
\end{equation}
In this way, we achieve that 
\begin{equation}\label{weak-equality-lapl-horiz-4xx}
\alpha_1(\varphi_0)(\alpha_1(\varphi_0)+2)=\frac{4\displaystyle\int_{0}^{\varphi_0}\sin\varphi (f')^{2}\hspace{0.1cm}d\varphi}{\displaystyle\int_{0}^{\varphi_0}(\sin\varphi)f^{2}\hspace{0.1cm}d\varphi}
\end{equation}
and
\begin{equation}\label{weak-equality-lapl-horiz-4xxx}
\begin{split}
\alpha_1(\eta_0)(\alpha_1(\eta_0)+2)
&=\frac{4\displaystyle\int^{\pi}_{\eta_0}\sin(\tau) (f')^{2}(\pi-\tau)\hspace{0.1cm}d\tau}{\displaystyle\int^{\pi}_{\eta_0}\sin(\tau)f^{2}(\pi-\tau)\hspace{0.1cm}d\tau}
\end{split}
\end{equation}
where $\varphi_0+\eta_0=\pi.$ 
\begin{lem}\label{go_go_1}
The function
\[
G(\varphi)=\alpha_1(\varphi)(\alpha_1(\varphi)+2)+\alpha_1(\pi-\varphi)(\alpha_1(\pi-\varphi)+2),\quad \varphi\in [0,\pi]
\]
is symmetric with respect to $\frac{\pi}{2}.$
\end{lem}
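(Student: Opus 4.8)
The plan is to read the symmetry off directly from the two-term structure of the definition of $G$. Setting $g(\psi) := \alpha_1(\psi)\bigl(\alpha_1(\psi)+2\bigr)$, the definition of $G$ reads
\[
G(\varphi) = g(\varphi) + g(\pi-\varphi),
\]
so the claim amounts to the statement that this expression is invariant under the involution $\varphi \mapsto \pi-\varphi$ of $[0,\pi]$.

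First I would record that $\alpha_1$, and hence $g$, is a well-defined function on $[0,\pi]$. For each half-opening $\psi$, the quantity $\alpha_1(\psi)$ is the unique strictly positive root of the relation (\ref{fundamental_relation}), that is $\alpha_1(\psi) = -1 + \sqrt{1+\lambda_0(\psi)}$, where $\lambda_0(\psi)$ denotes the first eigenvalue of the Sturm--Liouville problem (\ref{eigenvp}) on the corresponding cap; existence and positivity of this root is exactly the computation already carried out in the alternative proof of Theorem \ref{lowerbound_f}. In particular $g(\psi) = \lambda_0(\psi)$ by (\ref{weak-equality-lapl-horiz-4xx}), so that $G(\varphi) = \lambda_0(\varphi) + \lambda_0(\pi-\varphi)$; this identification clarifies the link with the function $h$ of the introduction but is not needed for the symmetry itself.

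The decisive step is then a single substitution. Replacing $\varphi$ by $\pi-\varphi$ and using $\pi-(\pi-\varphi)=\varphi$, the two summands of $G$ merely interchange:
\[
G(\pi-\varphi) = g(\pi-\varphi) + g\bigl(\pi-(\pi-\varphi)\bigr) = g(\pi-\varphi) + g(\varphi) = G(\varphi),
\]
which is precisely the asserted symmetry of $G$ with respect to $\frac{\pi}{2}$.

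I do not expect a genuine obstacle: the symmetry is a purely formal consequence of the involution $\varphi\mapsto\pi-\varphi$ swapping the two arguments that appear in the definition of $G$. The only point that merits a line of care is the well-definedness of $\alpha_1$ on the whole of $[0,\pi]$, which reduces to the solvability of (\ref{eigenvp}) for every admissible half-opening, the endpoints $\varphi=0,\pi$ being handled by continuity.
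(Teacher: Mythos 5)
Your proof is correct and follows essentially the same route as the paper: both arguments observe that the substitution $\varphi\mapsto\pi-\varphi$ simply interchanges the two summands in the definition of $G$, using $\pi-(\pi-\varphi)=\varphi$. Your additional remark identifying $g(\psi)=\lambda_0(\psi)$ via \eqref{weak-equality-lapl-horiz-4xx} is a harmless clarification that the paper does not spell out but does not change the argument.
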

\begin{proof}
For every $\varphi_0\in [0,\frac{\pi}{2}],$ we have
\begin{align*}
&G(\varphi_0)=\alpha_1(\varphi_0)(\alpha_1(\varphi_0)+2)+\alpha_1(\pi-\varphi_0)(\alpha_1(\pi-\varphi_0)+2)\\
&=\alpha_1(\pi-(\pi-\varphi_0))(\alpha_1(\pi-(\pi-\varphi_0))+2)+\alpha_1(\pi-\varphi_0)(\alpha_1(\pi-\varphi_0)+2)\\
&=\alpha_1(\pi-\varphi_0)(\alpha_1(\pi-\varphi_0)+2)+\alpha_1(\pi-(\pi-\varphi_0))(\alpha_1(\pi-(\pi-\varphi_0))+2)=G(\pi-\varphi_0).
\end{align*}
\end{proof}

We note that determining $\alpha$ and $f$ so that
$$
\left\{
\begin{array}{l}
4\sin\varphi\frac{\partial^{2}f}{\partial \varphi^{2}}+4\cos\varphi\frac{\partial f}{\partial \varphi}+\alpha(\alpha+2)(\sin\varphi)f(\varphi)=0\\
f'(0)=0\\
f(\varphi_1)=0,
\end{array}
\right.$$
we find $\mathbb{H}^1-$harmonic functions on sets radially symmetric with respect to the $t$-axis. For instance, if $\varphi_1=\frac{\pi}{2},$ then $\alpha=2$  and $f(\varphi)=\cos(\varphi)$ is the solution. In particular $u=\rho^2\cos(\varphi)$ is $\mathbb{H}^1-$harmonic in $\{(x,y,t)\in \mathbb{H}^1:\hspace{0.2cm} t\geq 0\}.$

Let us denote, at this point, by $\lambda_{\varphi_{1}}(\varphi_2)$ the eigenvalue of the problem (\ref{eigenvp}).  Moreover, let
\begin{equation}\label{h_func}
\begin{split}
&h(\varphi)\coloneqq 2(\sqrt{1+\lambda_0(\varphi)}-1)+2(\sqrt{1+\lambda_0(\pi-\varphi)}-1).
\end{split}
\end{equation}
Then, we get the following result.
\begin{lem}\label{go_go_2}
In case the minimum value corresponds to the configuration in which the Koranyi ball is split in two parts by the plane $t=0,$ then
\begin{equation}\label{half}\begin{split}
&
\lambda_0(\varphi)+ \lambda_{\varphi} (\pi)
=\lambda_0(\varphi)+ \lambda_{0} (\pi-\varphi)\geq 2\lambda_0(\frac{\pi}{2})=16
\end{split}
\end{equation}
and
 \begin{equation*}
\begin{split}
&h(\varphi)\geq 2(\sqrt{18}-2)> 2(4-2)=4.
\end{split}
\end{equation*}
In general supposing only that 
\begin{equation}\label{half3}\begin{split}
&
\lambda_0(\varphi)+ \lambda_0 (\pi-\varphi)\geq q>0,
\end{split}
\end{equation}
for a value $\varphi\not= \frac{\pi}{2}$ we obtain
 \begin{equation*}
\begin{split}
&h(\varphi)\geq 2(\sqrt{2+q}-2).
\end{split}
\end{equation*}
\end{lem}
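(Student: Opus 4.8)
The plan is to reduce the whole statement to two ingredients that are already at our disposal: the reflection symmetry of the eigenvalue problem (\ref{eigenvp}) about $\varphi=\frac{\pi}{2}$, and an elementary inequality for sums of square roots. First I would record the symmetry. The change of variable $\tau=\pi-\varphi$ carried out in (\ref{weak-equality-lapl-horiz-5x}) leaves both the operator $\mathcal{L}f=(\sin\varphi\,f')'$ and the weight $\sin\varphi$ invariant, since $\sin(\pi-\tau)=\sin\tau$; hence the eigenvalue of the cap problem on the interval $(\varphi,\pi)$ coincides with the one on $(0,\pi-\varphi)$, that is
\begin{equation*}
\lambda_\varphi(\pi)=\lambda_0(\pi-\varphi),
\end{equation*}
which is exactly the first equality in (\ref{half}). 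I would also recall that $u=\rho^2\cos\varphi$ is $\mathbb{H}^1$-harmonic with $\alpha=2$ and $f(\tfrac{\pi}{2})=0$, so that by the relation $\alpha(\alpha+2)=\lambda$ of (\ref{fundamental_relation}) together with (\ref{weak-equality-cos-varphi}) the function $\cos\varphi$ is the first eigenfunction on $(0,\tfrac{\pi}{2})$ and $\lambda_0(\tfrac{\pi}{2})=\alpha(\alpha+2)=8$, whence $2\lambda_0(\tfrac{\pi}{2})=16$.

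Next I would use the hypothesis. The assumption that the minimal splitting configuration is the one cut out by the plane $t=0$ means precisely that the symmetric function $\varphi\mapsto\lambda_0(\varphi)+\lambda_0(\pi-\varphi)$ attains its minimum at $\varphi=\frac{\pi}{2}$; combining this with the previous computation gives
\begin{equation*}
\lambda_0(\varphi)+\lambda_0(\pi-\varphi)\geq 2\lambda_0\!\left(\tfrac{\pi}{2}\right)=16,
\end{equation*}
which completes the proof of (\ref{half}). The heart of the estimate on $h$ is then the elementary bound: for all $a,b\geq 0$,
\begin{equation*}
\sqrt{1+a}+\sqrt{1+b}\geq\sqrt{2+a+b},
\end{equation*}
which I would obtain simply by squaring the left-hand side, since $(\sqrt{1+a}+\sqrt{1+b})^2=2+a+b+2\sqrt{(1+a)(1+b)}\geq 2+a+b$.

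Applying this inequality with $a=\lambda_0(\varphi)$ and $b=\lambda_0(\pi-\varphi)$, and recalling from (\ref{h_func}) that $h(\varphi)=2\big(\sqrt{1+\lambda_0(\varphi)}+\sqrt{1+\lambda_0(\pi-\varphi)}\big)-4$, I arrive at
\begin{equation*}
h(\varphi)\geq 2\sqrt{2+\lambda_0(\varphi)+\lambda_0(\pi-\varphi)}-4.
\end{equation*}
Inserting the bound $\lambda_0(\varphi)+\lambda_0(\pi-\varphi)\geq 16$ yields $h(\varphi)\geq 2\sqrt{18}-4=2(\sqrt{18}-2)$, and since $\sqrt{18}>\sqrt{16}=4$ this is strictly larger than $2(4-2)=4$. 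For the general assertion I would merely replace $16$ by $q$ in the last display, obtaining $h(\varphi)\geq 2\sqrt{2+q}-4=2(\sqrt{2+q}-2)$.

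The computations above are entirely routine, so I do not expect a serious obstacle; the only genuine point requiring care is the honest interpretation and use of the hypothesis, namely that \emph{minimality of the $t=0$ configuration} is to be read as the statement that $\varphi\mapsto\lambda_0(\varphi)+\lambda_0(\pi-\varphi)$ is minimized at $\frac{\pi}{2}$. Beyond that, the single delicate verification is making the symmetry $\lambda_\varphi(\pi)=\lambda_0(\pi-\varphi)$ rigorous at the level of the eigenvalue rather than of a particular test function; this follows from the invariance of $\mathcal{L}$ and of the weight $\sin\varphi$ under $\varphi\mapsto\pi-\varphi$ together with the variational characterisation underlying (\ref{weak-equality-lapl-horiz-4xx}) and (\ref{weak-equality-lapl-horiz-4xxx}).
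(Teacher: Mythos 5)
Your proof is correct and follows essentially the same route as the paper: the core step is the identical elementary inequality $\sqrt{1+a}+\sqrt{1+b}\geq\sqrt{2+a+b}$ applied to $a=\lambda_0(\varphi)$, $b=\lambda_0(\pi-\varphi)$, followed by insertion of the lower bound $16$ (resp.\ $q$). You in fact supply more detail than the paper does for the first display \eqref{half} — the symmetry $\lambda_\varphi(\pi)=\lambda_0(\pi-\varphi)$ via $\tau=\pi-\varphi$ and the identification $\lambda_0(\tfrac{\pi}{2})=8$ from the harmonicity of $\rho^2\cos\varphi$ — which the paper's proof leaves implicit.
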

\begin{proof}
We know that $\sqrt{a+b}\leq \sqrt{a}+\sqrt{b}\leq \sqrt{2}\sqrt{a+b}$ so that from \eqref{h_func} it follows
\begin{equation}\label{mah2}
\begin{split}
&h(\varphi)\geq 2(\sqrt{2+\lambda_0(\varphi)+\lambda_0(\pi-\varphi)}-2)\geq 2(\sqrt{2+q}-2).
\end{split}
\end{equation}
Thus, if $q$ were $16,$ that is if we suppose that the minimum of (\ref{half}) is realized for the half ball split by the plane $t=0,$ then
\begin{equation}\label{mah23}
\begin{split}
&h(\varphi)> 2(\sqrt{18}-2)\geq 2(4-2)=4.
\end{split}
\end{equation}
\end{proof}
%
Thus, we conclude that if the minimum for $h$ is realized for $t=\frac{\pi}{2},$ then, in order to obtain a monotone increasing formula, we have to assume that $\beta\leq 8.$  
In fact, as we have seen in \eqref{rho-2-cos-varphi-H-harmonic}, since $\rho^{2}\cos\varphi$ is $\mathbb{H}^1$-harmonic, where $\alpha=2$ and $f(\varphi)=\cos\varphi,$ with $\cos\left(\dfrac{\pi}{2}\right)=0,$ we obtain repeating the same argument used to achieve \eqref{weak-equality-lapl-horiz-4x}, that
$$8=\frac{4\displaystyle \int_{0}^{\frac{\pi}{2}}\sin\varphi ((\cos\varphi)')^{2}\hspace{0.1cm}d\varphi}{\displaystyle \int_{0}^{\frac{\pi}{2}}\sin\varphi \cos^{2}(\varphi)\hspace{0.1cm}d\varphi}.$$
On the other hand, by fixing $\beta =4$ our previous computations hold and if the minimum of $h$ were realized for the value $16,$ the monotonicity formula will hold for $\beta=4.$
\section{Computation having the dependence on $\varphi$ and $\theta$}
We achieved in Lemma \ref{lapl-horiz-f-theta-phi} that if $u=\rho^{\alpha}f(\theta,\varphi),$ 
\begin{align*}
&\Delta_{\mathbb{H}^1}u=\Delta_{\mathbb{H}^1}\left(\rho^{\alpha}f(\theta,\varphi)\right)=\rho^{\alpha-2}\bigg(\alpha(\alpha+2)(\sin\varphi)f(\theta,\varphi)-2\alpha(\cos\varphi)\frac{\partial f}{\partial \theta}+\frac{1}{\sin\varphi}\frac{\partial f^{2}}{\partial \theta^{2}}\\
&+4\sin\varphi\frac{\partial ^{2}f}{\partial \varphi \partial \theta}+4\sin\varphi\frac{\partial^{2}f}{\partial \varphi^{2}}+4\cos\varphi\frac{\partial f}{\partial \varphi}\bigg),
\end{align*}
which also implied
\begin{align}\label{laplacian-horiz-rho-^-alpha-f-theta-phi-partial-B-H-0-1-bis}
&\Delta_{\mathbb{H}^1}u\restrict{\partial B^{\mathbb{H}^1}_1(0)}=\Delta_{\mathbb{H}^1}\left(\rho^{\alpha}f(\theta,\varphi)\right)\restrict{\partial B^{\mathbb{H}^1}_1(0)}=\alpha(\alpha+2)(\sin\varphi)f(\theta,\varphi)-2\alpha(\cos\varphi)\frac{\partial f}{\partial \theta}+\frac{1}{\sin\varphi}\frac{\partial f^{2}}{\partial \theta^{2}}\nonumber\\
&+4\sin\varphi\frac{\partial ^{2}f}{\partial \varphi \partial \theta}+4\sin\varphi\frac{\partial^{2}f}{\partial \varphi^{2}}+4\cos\varphi\frac{\partial f}{\partial \varphi},
\end{align}
since $\rho=1$ on $\partial B^{\mathbb{H}^1}_1(0).$\newline

Let now
\begin{equation}\label{matrix-divergence-form_x}
A(\theta,\varphi)=
\begin{bmatrix}
\displaystyle\frac{1}{\sin\varphi}&(4+2\alpha)\sin\varphi\\
\\
-2\alpha\sin\varphi& 4\sin\varphi
\end{bmatrix}.
\end{equation}
and we define
\begin{align}\label{laplacian-horiz-rho-^-alpha-f-theta-phi-partial-B-H-0-1-equal-to-0-bis-3} 
&\mathcal{L}_{\theta,\varphi}\coloneqq \diver_{\theta,\varphi}\left(A(\theta,\varphi)\nabla_{\theta,\varphi}\right)=\diver_{\theta,\varphi}\left(
\begin{bmatrix}
\displaystyle\frac{1}{\sin\varphi}&(4+2\alpha)\sin\varphi\\
\\
-2\alpha\sin\varphi& 4\sin\varphi
\end{bmatrix}
\begin{bmatrix}
\displaystyle\frac{\partial}{\partial \theta}\\
\\
\displaystyle\frac{\partial}{\partial \varphi}
\end{bmatrix}\right).
\end{align}
\begin{lem}
Let  $\Omega_{\theta,\varphi}\subseteq [0,2\pi]\times[0,\pi]$ and $T(\Omega_{\theta,\varphi})\subset \partial B^{\mathbb{H}^1}_1(0).$ If $u=\rho^\alpha f(\theta, \varphi)$ is solution  of $\Delta_{\mathbb{H}^1}u=0$ in $\delta_{R}(T(\Omega_{\theta,\varphi})),$ $R>0,$ then
\[ \mathcal{L}_{\theta,\varphi}f=-\alpha(\alpha+2)(\sin\varphi)f\quad\mbox{in }\Omega_{\theta,\varphi}.\]
Furthermore, it results
\begin{equation}\label{weak-equation-theta-phi-6_x}
\alpha(\alpha+2)=\frac{\displaystyle\int_{\Omega_{\theta,\varphi}}\bigg(\frac{1}{\sin\varphi}\bigg(\frac{\partial f}{\partial \theta}\bigg)^{2}+4\sin\varphi\frac{\partial f}{\partial \theta}\frac{\partial f}{\partial \varphi}+4\sin\varphi\bigg(\frac{\partial f}{\partial \varphi}\bigg)^{2}\bigg)\hspace{0.1cm}d\theta d\varphi}{\displaystyle\int_{\Omega_{\theta,\varphi}}(\sin\varphi)f^{2}\hspace{0.1cm}d\theta d\varphi}.
\end{equation}
\end{lem}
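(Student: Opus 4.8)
The plan is to recognize $\mathcal{L}_{\theta,\varphi}$ as exactly the first- and second-order part of $\Delta_{\mathbb{H}^1}$ restricted to $\partial B_1^{\mathbb{H}^1}(0)$, so that the whole operator splits into a zeroth-order multiplier plus $\mathcal{L}_{\theta,\varphi}$. Concretely, I would first expand $\mathcal{L}_{\theta,\varphi}f=\diver_{\theta,\varphi}(A(\theta,\varphi)\nabla_{\theta,\varphi}f)$ from \eqref{matrix-divergence-form_x}--\eqref{laplacian-horiz-rho-^-alpha-f-theta-phi-partial-B-H-0-1-equal-to-0-bis-3}, carrying out the two outer derivatives. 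Using that $\sin\varphi$ is independent of $\theta$ and that the mixed second derivatives of $f$ commute, the two off-diagonal contributions combine as $[(4+2\alpha)-2\alpha]\sin\varphi\,\partial^2_{\theta\varphi}f=4\sin\varphi\,\partial^2_{\theta\varphi}f$, while the $\varphi$-derivative hitting the entry $-2\alpha\sin\varphi$ produces the drift $-2\alpha\cos\varphi\,\partial_\theta f$. Comparing with \eqref{laplacian-horiz-rho-^-alpha-f-theta-phi-partial-B-H-0-1-bis}, this yields
\[
\Delta_{\mathbb{H}^1}u\restrict{\partial B^{\mathbb{H}^1}_1(0)}=\alpha(\alpha+2)(\sin\varphi)f+\mathcal{L}_{\theta,\varphi}f .
\]

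Next I would pass from the sphere to the cone. Since $u=\rho^{\alpha}f(\theta,\varphi)$, Lemma \ref{lapl-horiz-f-theta-phi} gives $\Delta_{\mathbb{H}^1}u=\rho^{\alpha-2}\big(\alpha(\alpha+2)(\sin\varphi)f+\mathcal{L}_{\theta,\varphi}f\big)$, where the bracket depends on $(\theta,\varphi)$ only. As $\rho^{\alpha-2}\neq0$ for $\rho>0$ and $\delta_R(T(\Omega_{\theta,\varphi}))$ sweeps out the whole cone over $\Omega_{\theta,\varphi}$ as $R$ varies, the hypothesis $\Delta_{\mathbb{H}^1}u=0$ forces the bracket to vanish pointwise on $\Omega_{\theta,\varphi}$, that is $\mathcal{L}_{\theta,\varphi}f=-\alpha(\alpha+2)(\sin\varphi)f$, which is the first assertion.

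For the identity \eqref{weak-equation-theta-phi-6_x} I would test this equation against $f$ and integrate over $\Omega_{\theta,\varphi}$. A planar integration by parts (the divergence theorem applied to $fA\nabla_{\theta,\varphi}f$) gives
\[
-\int_{\Omega_{\theta,\varphi}}\nabla_{\theta,\varphi}f\cdot A\nabla_{\theta,\varphi}f\,d\theta d\varphi+\int_{\partial\Omega_{\theta,\varphi}}f\,(A\nabla_{\theta,\varphi}f)\cdot n\,ds=-\alpha(\alpha+2)\int_{\Omega_{\theta,\varphi}}(\sin\varphi)f^2\,d\theta d\varphi ,
\]
and the boundary integral drops out since $f$ vanishes on $\partial\Omega_{\theta,\varphi}$ (the Dirichlet condition inherited from $u=0$ on the nodal set, exactly as in \eqref{eigenvp}). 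It then remains to evaluate the quadratic form: a direct expansion gives $\nabla_{\theta,\varphi}f\cdot A\nabla_{\theta,\varphi}f=\tfrac{1}{\sin\varphi}(\partial_\theta f)^2+4\sin\varphi\,\partial_\theta f\,\partial_\varphi f+4\sin\varphi(\partial_\varphi f)^2$, where again the antisymmetric off-diagonal entries of $A$ cancel down to the single cross term $4\sin\varphi\,\partial_\theta f\,\partial_\varphi f$. Solving for $\alpha(\alpha+2)$ produces \eqref{weak-equation-theta-phi-6_x}.

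The only genuinely delicate point is the legitimacy of the integration by parts and the vanishing of the boundary term. The non-symmetric matrix $A$ is designed precisely so that its antisymmetric part feeds the first-order drift $-2\alpha\cos\varphi\,\partial_\theta f$ into divergence form while contributing nothing to the energy $\nabla_{\theta,\varphi}f\cdot A\nabla_{\theta,\varphi}f$; one must therefore verify that the Dirichlet datum $f|_{\partial\Omega_{\theta,\varphi}}=0$ genuinely holds and that $f$ is regular enough (say $f\in H^1_0(\Omega_{\theta,\varphi})$, with the trace of $A\nabla_{\theta,\varphi}f$ under control near $\varphi=0,\pi$, where the coefficient $\tfrac{1}{\sin\varphi}$ is singular) for the divergence theorem to apply.
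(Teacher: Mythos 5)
Your proposal is correct and follows essentially the same route as the paper: identifying $\Delta_{\mathbb{H}^1}u\restrict{\partial B^{\mathbb{H}^1}_1(0)}=\alpha(\alpha+2)(\sin\varphi)f+\mathcal{L}_{\theta,\varphi}f$ with $\mathcal{L}_{\theta,\varphi}=\diver_{\theta,\varphi}(A\nabla_{\theta,\varphi})$, factoring out $\rho^{\alpha-2}$ to localize the equation on $\Omega_{\theta,\varphi}$, testing against $f$, and observing that only the symmetric part of $A$ (the paper writes this via $A^{S}_{\theta,\varphi}$) survives in the quadratic form. Your closing remark on the vanishing of the boundary term and the singular coefficient $1/\sin\varphi$ near $\varphi=0,\pi$ is a fair caveat that the paper handles only by assuming $f$ has compact support in $\Omega_{\theta,\varphi}$.
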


\begin{proof}
First of all, we point out that \eqref{laplacian-horiz-rho-^-alpha-f-theta-phi-partial-B-H-0-1-bis} yields that if $\Delta_{\mathbb{H}^1}u\restrict{\partial B^{\mathbb{H}^1}_1(0)}=0,$ with $u=\rho^{\alpha}f(\theta,\varphi),$ then
\begin{align}\label{laplacian-horiz-rho-^-alpha-f-theta-phi-partial-B-H-0-1-equal-to-0-bis-1}
&\alpha(\alpha+2)(\sin\varphi)f(\theta,\varphi)-2\alpha(\cos\varphi)\frac{\partial f}{\partial \theta}+\frac{1}{\sin\varphi}\frac{\partial^{2} f}{\partial \theta^{2}}+4\sin\varphi\frac{\partial^{2}f}{\partial\varphi\partial\theta}\nonumber\\
&+4\sin\varphi\frac{\partial^{2}f}{\partial \varphi^{2}}+4\cos\varphi\frac{\partial f}{\partial \varphi}=0.
\end{align}
So, if we denote
\begin{align}\label{definition-operator-theta-phi}
\mathcal{L}_{\theta,\varphi}\coloneqq -2\alpha(\cos\varphi)\frac{\partial }{\partial \theta}+\frac{1}{\sin\varphi}\frac{\partial^{2}}{\partial \theta^{2}}+4\sin\varphi\frac{\partial^{2}}{\partial\varphi\partial\theta}+4\sin\varphi\frac{\partial^{2}}{\partial \varphi^{2}}+4\cos\varphi\frac{\partial}{\partial \varphi},
\end{align}
\eqref{laplacian-horiz-rho-^-alpha-f-theta-phi-partial-B-H-0-1-equal-to-0-bis-1} implies, writing $f(\theta,\varphi)=f,$
\begin{equation}\label{laplacian-horiz-rho-^-alpha-f-theta-phi-partial-B-H-0-1-equal-to-0-bis-2}
\alpha(\alpha+2)(\sin\varphi)f+\mathcal{L}_{\theta,\varphi}f=0.
\end{equation}
We would like to see, at this point, if $\mathcal{L}_{\theta,\varphi}$ can be written in divergence form, i.e.
\[ \mathcal{L}_{\theta,\varphi}=\diver_{\theta,\varphi}\left(A(\theta,\varphi)\nabla_{\theta,\varphi}\right),\]
where $A(\theta,\varphi)$ is a matrix-valued function.\newline
In particular, if we take
\begin{equation}\label{matrix-divergence-form}
A(\theta,\varphi)=
\begin{bmatrix}
\displaystyle\frac{1}{\sin\varphi}&(4+2\alpha)\sin\varphi\\
\\
-2\alpha\sin\varphi& 4\sin\varphi
\end{bmatrix},
\end{equation}
we have
\begin{align*}
&\diver_{\theta,\varphi}\left(A(\theta,\varphi)\nabla_{\theta,\varphi}\right)=\diver_{\theta,\varphi}\left(
\begin{bmatrix}
\displaystyle\frac{1}{\sin\varphi}&(4+2\alpha)\sin\varphi\\
\\
-2\alpha\sin\varphi& 4\sin\varphi
\end{bmatrix}
\begin{bmatrix}
\displaystyle\frac{\partial}{\partial \theta}\\
\\
\displaystyle\frac{\partial}{\partial \varphi}
\end{bmatrix}\right)\\
&=\diver_{\theta,\varphi}\left(\frac{1}{\sin\varphi}\frac{\partial}{\partial \theta}+(4+2\alpha)\sin\varphi\frac{\partial}{\partial \varphi},-2\alpha\sin\varphi\frac{\partial}{\partial \theta}+4\sin\varphi\frac{\partial}{\partial \varphi}\right)\\
&=\frac{\partial}{\partial \theta}\left(\frac{1}{\sin\varphi}\frac{\partial}{\partial \theta}+(4+2\alpha)\sin\varphi\frac{\partial}{\partial \varphi}\right)+\frac{\partial}{\partial \varphi}\left(-2\alpha\sin\varphi\frac{\partial}{\partial \theta}+4\sin\varphi\frac{\partial}{\partial \varphi}\right)\\
&=\frac{1}{\sin\varphi}\frac{\partial^{2}}{\partial \theta^{2}}+(4+2\alpha)\sin\varphi\frac{\partial^{2}}{\partial \theta \partial \varphi}-2\alpha\cos\varphi\frac{\partial}{\partial \theta}-2\alpha\sin\varphi\frac{\partial^{2}}{\partial \varphi \partial \theta}+4\cos\varphi\frac{\partial}{\partial \varphi}\\
&+4\sin\varphi\frac{\partial^{2}}{\partial \varphi^{2}}=\frac{1}{\sin\varphi}\frac{\partial^{2}}{\partial \theta^{2}}+4\sin\varphi\frac{\partial^{2}}{\partial \theta \partial \varphi}-2\alpha\cos\varphi\frac{\partial}{\partial \theta}+4\cos\varphi\frac{\partial}{\partial \varphi}+4\sin\varphi\frac{\partial^{2}}{\partial \varphi^{2}}=\mathcal{L}_{\theta,\varphi},
\end{align*}
in other words
\[\mathcal{L}_{\theta,\varphi}=\diver_{\theta,\varphi}\left(A_{\theta,\varphi}\nabla_{\theta,\varphi}\right),\]
with $A_{\theta,\varphi}$ defined in \eqref{matrix-divergence-form}.\newline
From this fact, we obtain, by virtue of \eqref{laplacian-horiz-rho-^-alpha-f-theta-phi-partial-B-H-0-1-equal-to-0-bis-2},
\begin{equation}\label{laplacian-horiz-rho-^-alpha-f-theta-phi-partial-B-H-0-1-equal-to-0-bis-3_x}\alpha(\alpha+2)(\sin\varphi)f+\diver_{\theta,\varphi}\left(A_{\theta,\varphi}\nabla_{\theta,\varphi}f\right)=0.
\end{equation}
Therefore, 
if we consider $T(\Omega_{\theta,\varphi})\subseteq \partial B^{\mathbb{H}^1}_1(0),$ \eqref{laplacian-horiz-rho-^-alpha-f-theta-phi-partial-B-H-0-1-equal-to-0-bis-3_x} entails
\[ \diver_{\theta,\varphi}\left(A_{\theta,\varphi}\nabla_{\theta,\varphi}f\right)=-\alpha(\alpha+2)(\sin\varphi)f\quad\mbox{in }\Omega_{\theta,\varphi},\]
and, multiplying both the terms of the equality by $\eta$ sufficiently smooth, with compact support in $\Omega_{\theta,\varphi},$ we get
\begin{equation}\label{weak-equation-theta-phi-1}
\diver_{\theta,\varphi}\left(A_{\theta,\varphi}\nabla_{\theta,\varphi}f\right)\eta=-\alpha(\alpha+2)(\sin\varphi)f\eta\quad\mbox{in }\Omega_{\theta,\varphi}.
\end{equation}
Integrating the equality in \eqref{weak-equation-theta-phi-1} over $\Omega_{\theta,\varphi},$ we then achieve
\begin{equation}\label{weak-equation-theta-phi-2}
\int_{\Omega_{\theta,\varphi}}\diver_{\theta,\varphi}\left(A_{\theta,\varphi}\nabla_{\theta,\varphi}f\right)\eta\hspace{0.1cm}d\theta d\varphi=-\alpha(\alpha+2)\int_{\Omega_{\theta,\varphi}}(\sin\varphi)f\eta\hspace{0.1cm}d\theta d\varphi.
\end{equation}
In particular, if we choose $\eta=f,$ we have, in view of \eqref{weak-equation-theta-phi-2},
\begin{equation}\label{weak-equation-theta-phi-3}
\int_{\Omega_{\theta,\varphi}}\diver_{\theta,\varphi}\left(A_{\theta,\varphi}\nabla_{\theta,\varphi}f\right)f\hspace{0.1cm}d\theta d\varphi=-\alpha(\alpha+2)\int_{\Omega_{\theta,\varphi}}(\sin\varphi)f^{2}\hspace{0.1cm}d\theta d\varphi.
\end{equation}
In addition, by the Divergence Theorem, we obtain
\begin{align*}
&\int_{\Omega_{\theta,\varphi}}\diver_{\theta,\varphi}\left(A_{\theta,\varphi}\nabla_{\theta,\varphi}f\right)f\hspace{0.1cm}d\theta d\varphi=\int_{\partial \Omega_{\theta,\varphi}}\langle fA_{\theta,\varphi}\nabla_{\theta,\varphi}f, \nu\rangle\hspace{0.1cm}d\sigma(\theta,\varphi)-\int_{\Omega_{\theta,\varphi}}\langle A_{\theta,\varphi}\nabla_{\theta,\varphi}f,\nabla_{\theta,\varphi}f\rangle \hspace{0.1cm}d\theta d\varphi\\
&=\int_{\partial \Omega_{\theta,\varphi}}f\langle A_{\theta,\varphi}\nabla_{\theta,\varphi}f, \nu\rangle\hspace{0.1cm}d\sigma(\theta,\varphi)-\int_{\Omega_{\theta,\varphi}}\langle A_{\theta,\varphi}\nabla_{\theta,\varphi}f,\nabla_{\theta,\varphi}f\rangle\hspace{0.1cm}d\theta d\varphi=-\int_{\Omega_{\theta,\varphi}}\langle A_{\theta,\varphi}\nabla_{\theta,\varphi}f,\nabla_{\theta,\varphi}f\rangle\hspace{0.1cm}d\theta d\varphi,
\end{align*}
which implies
\begin{equation}\label{result-divergence-theorem-bis}
\int_{\Omega_{\theta,\varphi}}\diver_{\theta,\varphi}\left(A_{\theta,\varphi}\nabla_{\theta,\varphi}f\right)f\hspace{0.1cm}d\theta d\varphi=-\int_{\Omega_{\theta,\varphi}}\langle A_{\theta,\varphi}\nabla_{\theta,\varphi}f,\nabla_{\theta,\varphi}f\rangle\hspace{0.1cm}d\theta d\varphi,
\end{equation}
because $f$ has compact support in $\Omega_{\theta,\varphi},$ by the choice of $f.$\newline
As a consequence, substituting \eqref{result-divergence-theorem-bis} in \eqref{weak-equation-theta-phi-3}, we then have
\begin{equation}\label{weak-equation-theta-phi-4}
\int_{\Omega_{\theta,\varphi}}A_{\theta,\varphi}\nabla_{\theta,\varphi}f\cdot \nabla_{\theta,\varphi}f\hspace{0.1cm}d\theta d\varphi=\alpha(\alpha+2)\int_{\Omega_{\theta,\varphi}}(\sin\varphi)f^{2}\hspace{0.1cm}d\theta d\varphi.
\end{equation}
Now, let us note that with $4+2\alpha\neq-2\alpha,$ namely $\alpha\neq-1,$ we have, from \eqref{matrix-divergence-form}, that $A_{\theta,\varphi}$ is not symmetric.\newline
Hence, we can consider the symmetrized form of $A_{\theta,\varphi},$
\begin{equation*}
A^{S}_{\theta,\varphi}\coloneqq \frac{A_{\theta,\varphi}+A_{\theta,\varphi}^{T}}{2},
\end{equation*}
and we observe that
\begin{align*}
&\langle A^{S}_{\theta,\varphi}v, v\rangle=\langle\bigg(\frac{A_{\theta,\varphi}+A_{\theta,\varphi}^{T}}{2}\bigg)v, v\rangle=\frac{1}{2}\langle A_{\theta,\varphi}v+A_{\theta,\varphi}^{T}v, v\rangle=\frac{1}{2}(\langle A_{\theta,\varphi}v,v\rangle+\langle A_{\theta,\varphi}^{T}v,v\rangle)\\
&=\frac{1}{2}(\langle A_{\theta,\varphi}v,v\rangle+\langle v, A_{\theta,\varphi}v\rangle)=\frac{1}{2}(\langle A_{\theta,\varphi}v,v\rangle+\langle A_{\theta,\varphi}v,v\rangle)=\langle A_{\theta,\varphi}v,v\rangle,\quad v\in \mathbb{R}^{2},
\end{align*}
i.e.
\begin{equation}\label{quadratic-form-symmetrized-A-theta-phi}
\langle A^{S}_{\theta,\varphi}v, v\rangle=\langle A_{\theta,\varphi}v,v\rangle,\quad v\in \mathbb{R}^{2}.
\end{equation}
Using \eqref{quadratic-form-symmetrized-A-theta-phi}, we then achieve from \eqref{weak-equation-theta-phi-4}
\begin{equation}\label{weak-equation-theta-phi-5}
\int_{\Omega_{\theta,\varphi}}\langle A^{S}_{\theta,\varphi}\nabla_{\theta,\varphi}f,\nabla_{\theta,\varphi}f\rangle\hspace{0.1cm}d\theta d\varphi=\alpha(\alpha+2)\int_{\Omega_{\theta,\varphi}}(\sin\varphi)f^{2}\hspace{0.1cm}d\theta d\varphi.
\end{equation}
At this point, we look for an explicit expression for $A^{S}_{\theta,\varphi}.$\newline
Specifically, we have
\begin{align*}
&A^{S}_{\theta,\varphi}=\frac{A_{\theta,\varphi}+A_{\theta,\varphi}^{T}}{2}=\frac{1}{2}\left(
\begin{bmatrix}
\displaystyle \frac{1}{\sin\varphi}&(4+2\alpha)\sin\varphi\\
\\
-2\alpha\sin\varphi &4\sin\varphi\\
\end{bmatrix}+
\begin{bmatrix}
\displaystyle \frac{1}{\sin\varphi}&-2\alpha\sin\varphi\\
\\
(4+2\alpha)\sin\varphi & 4\sin\varphi
\end{bmatrix}\right)\\
&=\frac{1}{2}\begin{bmatrix}
\displaystyle\frac{2}{\sin\varphi}&4\sin\varphi\\
\\
4\sin\varphi & 8\sin\varphi
\end{bmatrix}=
\begin{bmatrix}
\displaystyle \frac{1}{\sin\varphi}&2\sin\varphi\\
\\
2\sin\varphi & 4\sin\varphi
\end{bmatrix},
\end{align*}
which yields
\begin{equation}\label{symmetrized-form-A-theta-phi}
A^{S}_{\theta,\varphi}=
\begin{bmatrix}
\displaystyle \frac{1}{\sin\varphi}&2\sin\varphi\\
\\
2\sin\varphi & 4\sin\varphi
\end{bmatrix}.
\end{equation}
Consequently, according to \eqref{symmetrized-form-A-theta-phi}, we get
\begin{align*}
&\langle A^{S}_{\theta,\varphi}\nabla_{\theta,\varphi}f,\nabla_{\theta,\varphi}f\rangle=\langle
\begin{bmatrix}
\displaystyle\frac{1}{\sin\varphi}&2\sin\varphi\\
\\
2\sin\varphi & 4\sin\varphi
\end{bmatrix}
\begin{bmatrix}
\displaystyle \frac{\partial f}{\partial \theta}\\
\\
\displaystyle \frac{\partial f}{\partial \varphi}
\end{bmatrix},
\begin{bmatrix}
\displaystyle \frac{\partial f}{\partial \theta}\\
\\
\displaystyle \frac{\partial f}{\partial \varphi}
\end{bmatrix}\rangle
=\langle\begin{bmatrix}
\displaystyle\frac{1}{\sin\varphi}\frac{\partial f}{\partial \theta}+2\sin\varphi\frac{\partial f}{\partial \varphi}\\
\\
\displaystyle 2\sin\varphi\frac{\partial f}{\partial \theta}+4\sin\varphi\frac{\partial f}{\partial \varphi}
\end{bmatrix},
\begin{bmatrix}
\displaystyle \frac{\partial f}{\partial \theta}\\
\\
\displaystyle \frac{\partial f}{\partial \varphi}
\end{bmatrix}\rangle\\
&=\frac{1}{\sin\varphi}\left(\frac{\partial f}{\partial \theta}\right)^{2}+2\sin\varphi\frac{\partial f}{\partial \varphi}\frac{\partial f}{\partial\theta}+2\sin\varphi\frac{\partial f}{\partial \theta}\frac{\partial f}{\partial \varphi}+4\sin\varphi\left(\frac{\partial f}{\partial \varphi}\right)^{2}\\
&=\frac{1}{\sin\varphi}\left(\frac{\partial f}{\partial \theta}\right)^{2}+4\sin\varphi\frac{\partial f}{\partial \theta}\frac{\partial f}{\partial \varphi}+4\sin\varphi\left(\frac{\partial f}{\partial \varphi}\right)^{2},
\end{align*}
i.e.
\begin{equation}\label{explicit-expression-quadratic-form-symmetrized-A-theta-phi}
\langle A^{S}_{\theta,\varphi}\nabla_{\theta,\varphi}f,\nabla_{\theta,\varphi}f\rangle =\frac{1}{\sin\varphi}\left(\frac{\partial f}{\partial \theta}\right)^{2}+4\sin\varphi\frac{\partial f}{\partial \theta}\frac{\partial f}{\partial \varphi}+4\sin\varphi\left(\frac{\partial f}{\partial \varphi}\right)^{2}.
\end{equation}
Substituting \eqref{explicit-expression-quadratic-form-symmetrized-A-theta-phi} in \eqref{weak-equation-theta-phi-5}, we then have
\begin{align*}
&\int_{\Omega_{\theta,\varphi}}\bigg(\frac{1}{\sin\varphi}\bigg(\frac{\partial f}{\partial \theta}\bigg)^{2}+4\sin\varphi\frac{\partial f}{\partial \theta}\frac{\partial f}{\partial \varphi}+4\sin\varphi\bigg(\frac{\partial f}{\partial \varphi}\bigg)^{2}\bigg)\hspace{0.1cm}d\theta d\varphi=\alpha(\alpha+2)\int_{\Omega_{\theta,\varphi}}(\sin\varphi)f^{2}\hspace{0.1cm}d\theta d\varphi,
\end{align*}
which gives
\begin{equation}\label{weak-equation-theta-phi-6}
\alpha(\alpha+2)=\frac{\displaystyle\int_{\Omega_{\theta,\varphi}}\bigg(\frac{1}{\sin\varphi}\bigg(\frac{\partial f}{\partial \theta}\bigg)^{2}+4\sin\varphi\frac{\partial f}{\partial \theta}\frac{\partial f}{\partial \varphi}+4\sin\varphi\bigg(\frac{\partial f}{\partial \varphi}\bigg)^{2}\bigg)\hspace{0.1cm}d\theta d\varphi}{\displaystyle\int_{\Omega_{\theta,\varphi}}(\sin\varphi)f^{2}\hspace{0.1cm}d\theta d\varphi}.
\end{equation}
\end{proof}
At this point, we remark that if $\alpha=1$ and $f=\sqrt{\sin\varphi}\cos\theta,$ we get, in view of \eqref{laplacian-horiz-rho-^-alpha-f-theta-phi-partial-B-H-0-1-bis},
\begin{align*}
&\Delta_{\mathbb{H}^1}\left(\rho \sqrt{\sin\varphi}\cos\theta\right)\restrict{\partial B^{\mathbb{H}^1}_1(0)}=3\sin\varphi\sqrt{\sin\varphi}\cos\theta-2\cos\varphi\frac{\partial}{\partial \theta}\left(\sqrt{\sin\varphi}\cos\theta\right)\\
&+\frac{1}{\sin\varphi}\frac{\partial^{2}}{\partial \theta^{2}}\left(\sqrt{\sin\varphi}\cos\theta\right)+4\sin\varphi\frac{\partial^{2}}{\partial \varphi \partial \theta}\left(\sqrt{\sin\varphi}\cos\theta\right)+4\sin\varphi\frac{\partial^{2}}{\partial \varphi^{2}}\left(\sqrt{\sin\varphi}\cos\theta\right)\\
&+4\cos\varphi\frac{\partial}{\partial \varphi}\left(\sqrt{\sin\varphi}\cos\theta\right)=3(\sin\varphi)^{3/2}\cos\theta+2\cos\varphi\sqrt{\sin\varphi}\sin\theta-\frac{\sqrt{\sin\varphi}\cos\theta}{\sin\varphi}\\
&+4\sin\varphi\frac{\partial}{\partial \varphi}\left(-\sqrt{\sin\varphi}\sin\theta\right)+4\sin\varphi\frac{\partial}{\partial \varphi}\left(\frac{\cos\varphi}{2\sqrt{\sin\varphi}}\cos\theta\right)+4\cos\varphi\frac{\cos\varphi}{2\sqrt{\sin\varphi}}\cos\theta\\
&=3\left(\sin\varphi\right)^{3/2}\cos\theta+2\sqrt{\sin\varphi}\cos\varphi\sin\theta-\frac{\sqrt{\sin\varphi}\cos\theta}{\sin\varphi}+4\sin\varphi\bigg(-\frac{\cos\varphi}{2\sqrt{\sin\varphi}}\sin\theta\bigg)\\
&+4\sin\varphi\frac{\left(-\sin\varphi\right)2\sqrt{\sin\varphi}-\cos\varphi\left(\displaystyle\frac{\cos\varphi}{\sqrt{\sin\varphi}}\right)}{4\sin\varphi}\cos\theta+2\hspace{0.05cm}\frac{\cos^{2}\varphi\cos\theta}{\sqrt{\sin\varphi}}\\
&=3\left(\sin\varphi\right)^{3/2}\cos\theta+2\sqrt{\sin\varphi}\cos\varphi\sin\theta-\frac{\cos\theta}{\sqrt{\sin\varphi}}-2\hspace{0.05cm}\frac{\sin\varphi\cos\varphi\sin\theta}{\sqrt{\sin\varphi}}\\
&-2(\sin\varphi)^{3/2}\cos\theta-\frac{\cos^{2}\varphi\cos\theta}{\sqrt{\sin\varphi}}+2\hspace{0.05cm}\frac{\cos^{2}\varphi\cos\theta}{\sqrt{\sin\varphi}}\\
&=\frac{\sin^{2}\varphi\cos\theta+2\sin\varphi\cos\varphi\sin\theta-\cos\theta-2\sin\varphi\cos\varphi\sin\theta+\cos^{2}\varphi\cos\theta}{\sqrt{\sin\varphi}}\\
&=\frac{\sin^{2}\varphi\cos\theta-\cos\theta+\cos^{2}\varphi\cos\theta}{\sqrt{\sin\varphi}}=\frac{\cos\theta-\cos\theta}{\sqrt{\sin\varphi}}=0,
\end{align*}
namely
\begin{equation}\label{rho-sqrt-sin-phi-cos-theta-harmonic-horiz}
\Delta_{\mathbb{H}^1}\left(\rho\sqrt{\sin\varphi}\cos\theta\right)\restrict{\partial B^{\mathbb{H}^1}_1(0)}=0.
\end{equation}
Hence, in view of \eqref{weak-equation-theta-phi-6}, with $\Omega_{\theta,\varphi}=(0,\pi)\times(0,\pi),$ we should have
\[
3=\frac{\displaystyle\int_{\Omega_{\theta,\varphi}}\bigg(\frac{1}{\sin\varphi}\bigg(\frac{\partial f}{\partial \theta}\bigg)^{2}+4\sin\varphi\frac{\partial f}{\partial \theta}\frac{\partial f}{\partial \varphi}+4\sin\varphi\bigg(\frac{\partial f}{\partial \varphi}\bigg)^{2}\bigg)\hspace{0.1cm}d\theta d\varphi}{\displaystyle\int_{\Omega_{\theta,\varphi}}(\sin\varphi)f^{2}\hspace{0.1cm}d\theta d\varphi},
\]
with $f=\sqrt{\sin\varphi}\cos\theta.$

Let us check now that this equality holds.\newline
Precisely, we have:
\begin{align*}
&\int_{(0,\pi)\times(0,\pi)}(\sin\varphi)(\sqrt{\sin\varphi}\cos\theta)^{2}\hspace{0.1cm}d\theta d\varphi=\int_{(0,\pi)\times(0,\pi)}\sin^{2}\varphi\cos^{2}\theta\hspace{0.1cm}d\theta d\varphi=\int_{0}^{\pi}\int_{0}^{\pi}\sin^{2}\varphi\cos^{2}\theta\hspace{0.1cm}d\theta d\varphi\\
&=\int_{0}^{\pi}\sin^{2}\varphi\hspace{0.1cm}d\varphi\int_{0}^{\pi}\cos^{2}\theta\hspace{0.1cm}d\theta,
\end{align*}
i.e.
\[\int_{(0,\pi)\times(0,\pi)}(\sin\varphi)(\sqrt{\sin\varphi}\cos\theta)^{2}\hspace{0.1cm}d\theta d\varphi=\int_{0}^{\pi}\sin^{2}\varphi\hspace{0.1cm}d\varphi\int_{0}^{\pi}\cos^{2}\theta\hspace{0.1cm}d\theta,\]
which implies, since, in general,
\begin{align}\label{formula-sin-^-2-cos-^-2}
\cos^{2}\tau=\frac{1+\cos(2\tau)}{2}\nonumber,\\
\sin^{2}\tau=\frac{1-\cos(2\tau)}{2},
\end{align}
\begin{align*}
&\int_{(0,\pi)\times(0,\pi)}(\sin\varphi)(\sqrt{\sin\varphi}\cos\theta)^{2}\hspace{0.1cm}d\theta d\varphi=\int_{0}^{\pi}\left(\frac{1-\cos(2\varphi)}{2}\right)\hspace{0.1cm}d\varphi\int_{0}^{\pi}\left(\frac{1+\cos(2\theta)}{2}\right)\hspace{0.1cm}d\theta\\
&=\left(\frac{\pi}{2}-\left[\frac{\sin(2\varphi)}{4}\right]^{\varphi=\pi}_{\varphi=0}\right)\left(\frac{\pi}{2}+\left[\frac{\sin(2\theta)}{4}\right]^{\theta=\pi}_{\theta=0}\right)=\frac{\pi}{2}\frac{\pi}{2}=\frac{\pi^{2}}{4},
\end{align*}
that is
\begin{equation}\label{denominator-weak-equation-theta-phi-6}
\int_{(0,\pi)\times(0,\pi)}(\sin\varphi)(\sqrt{\sin\varphi}\cos\theta)^{2}\hspace{0.1cm}d\theta d\varphi=\frac{\pi^{2}}{4}.
\end{equation}
In parallel, we also have
\begin{align*}
&\int_{(0,\pi)\times(0,\pi)}\bigg(\frac{1}{\sin\varphi}\bigg(\frac{\partial}{\partial \theta}(\sqrt{\sin\varphi}\cos\theta)\bigg)^{2}+4\sin\varphi\frac{\partial}{\partial \theta}(\sqrt{\sin\varphi}\cos\theta)\frac{\partial}{\partial \varphi}(\sqrt{\sin\varphi}\cos\theta)\hspace{0.1cm}d\theta d\varphi\nonumber\\
&+\int_{(0,\pi)\times(0,\pi)}4\sin\varphi\bigg(\frac{\partial}{\partial \varphi}(\sqrt{\sin\varphi}\cos\theta)\bigg)^{2}\bigg)\hspace{0.1cm}d\theta d\varphi=\int_{(0,\pi)\times(0,\pi)}\bigg(\frac{1}{\sin\varphi}\sin\varphi\sin^{2}\theta\\
&+4\sin\varphi(-\sqrt{\sin\varphi}\cos\theta)(\frac{\cos\varphi}{2\sqrt{\sin\varphi}}\cos\theta)+4\sin\varphi\bigg(\frac{\cos\varphi}{2\sqrt{\sin\varphi}}\cos\theta\bigg)^{2}\bigg)\hspace{0.1cm}d\theta d\varphi\\
&=\int_{(0,\pi)\times(0,\pi)}\bigg(\sin^{2}\theta-2\sin\varphi\cos\varphi\cos^{2}\theta+\cos^{2}\varphi\cos^{2}\theta\bigg)\hspace{0.1cm}d\theta d\varphi\\
&=\int_{(0,\pi)\times(0,\pi)}\bigg(\sin^{2}\theta-\sin(2\varphi)\cos^{2}\theta+\cos^{2}\varphi\cos^{2}\theta\bigg)\hspace{0.1cm}d\theta d\varphi
=\int_{(0,\pi)\times(0,\pi)}\sin^{2}\theta\hspace{0.1cm}d\theta d\varphi\\
&-\int_{(0,\pi)\times(0,\pi)}\sin(2\varphi)\cos^{2}\theta\hspace{0.1cm}d\theta d\varphi+\int_{(0,\pi)\times(0,\pi)}\cos^{2}\varphi\cos^{2}\theta\hspace{0.1cm}d\theta d\varphi,
\end{align*}
and thus, from \eqref{formula-sin-^-2-cos-^-2},
\begin{align*}
&\int_{(0,\pi)\times(0,\pi)}\bigg(\frac{1}{\sin\varphi}\bigg(\frac{\partial}{\partial \theta}(\sqrt{\sin\varphi}\cos\theta)\bigg)^{2}+4\sin\varphi\frac{\partial}{\partial \theta}(\sqrt{\sin\varphi}\cos\theta)\frac{\partial}{\partial \varphi}(\sqrt{\sin\varphi}\cos\theta)\hspace{0.1cm}d\theta d\varphi\nonumber\\
&+\int_{(0,\pi)\times(0,\pi)}4\sin\varphi\bigg(\frac{\partial}{\partial \varphi}(\sqrt{\sin\varphi}\cos\theta)\bigg)^{2}\bigg)\hspace{0.1cm}d\theta d\varphi=\int_{0}^{\pi}\int_{0}^{\pi}\sin^{2}\theta\hspace{0.1cm}d\theta d\varphi-\int_{0}^{\pi}\int_{0}^{\pi}\sin(2\varphi)\cos^{2}\theta\hspace{0.1cm}d\theta d\varphi\\
&+\int_{0}^{\pi}\int_{0}^{\pi}\cos^{2}\varphi\cos^{2}\theta\hspace{0.1cm}d\theta d\varphi=\pi\int_{0}^{\pi}\sin^{2}\theta\hspace{0.1cm}d\theta-\int_{0}^{\pi}\sin(2\varphi)\hspace{0.1cm}d\varphi\int_{0}^{\pi}\cos^{2}\theta\hspace{0.1cm}d\theta+\int_{0}^{\pi}\cos^{2}\varphi\hspace{0.1cm}d\varphi\int_{0}^{\pi}\cos^{2}\theta\hspace{0.1cm}d\theta,\\
&=\pi\int_{0}^{\pi}\bigg(\frac{1-\cos(2\theta)}{2}\bigg)\hspace{0.1cm}d\theta-\left[-\frac{\cos(2\varphi)}{2}\right]^{\varphi=\pi}_{\varphi=0}\int_{0}^{\pi}\bigg(\frac{1+\cos(2\theta)}{2}\bigg)\hspace{0.1cm}d\theta\\
&+\int_{0}^{\pi}\bigg(\frac{1+\cos(2\varphi)}{2}\bigg)\hspace{0.1cm}d\varphi\int_{0}^{\pi}\bigg(\frac{1+\cos(2\theta)}{2}\bigg)\hspace{0.1cm}d\theta=\pi\bigg(\frac{\pi}{2}-\left[\frac{\sin(2\theta)}{4}\right]^{\theta=\pi}_{\theta=0}\bigg)\\
&+\bigg(\frac{\pi}{2}+\left[\frac{\sin(2\varphi)}{4}\right]^{\varphi=\pi}_{\varphi=0}\bigg)\bigg(\frac{\pi}{2}+\left[\frac{\sin(2\theta)}{4}\right]^{\theta=\pi}_{\theta=0}\bigg)=\frac{\pi^{2}}{2}+\frac{\pi^{2}}{4}=\frac{3}{4}\pi^{2},
\end{align*}
namely
\begin{align}\label{numerator-weak-equation-theta-phi-6}
&\int_{(0,\pi)\times(0,\pi)}\bigg(\frac{1}{\sin\varphi}\bigg(\frac{\partial}{\partial \theta}(\sqrt{\sin\varphi}\cos\theta)\bigg)^{2}+4\sin\varphi\frac{\partial}{\partial \theta}(\sqrt{\sin\varphi}\cos\theta)\frac{\partial}{\partial \varphi}(\sqrt{\sin\varphi}\cos\theta)\hspace{0.1cm}d\theta d\varphi\nonumber\\
&+\int_{(0,\pi)\times(0,\pi)}4\sin\varphi\bigg(\frac{\partial}{\partial \varphi}(\sqrt{\sin\varphi}\cos\theta)\bigg)^{2}\bigg)\hspace{0.1cm}d\theta d\varphi=\frac{3}{4}\pi^{2}.
\end{align}
As a consequence, putting together \eqref{denominator-weak-equation-theta-phi-6} and \eqref{numerator-weak-equation-theta-phi-6}, we get
\begin{align*}
&\frac{\displaystyle \int_{(0,\pi)\times(0,\pi)}\bigg(\frac{1}{\sin\varphi}\bigg(\frac{\partial}{\partial \theta}(\sqrt{\sin\varphi}\cos\theta)\bigg)^{2}+4\sin\varphi\frac{\partial}{\partial \theta}(\sqrt{\sin\varphi}\cos\theta)\frac{\partial}{\partial \varphi}(\sqrt{\sin\varphi}\cos\theta)\hspace{0.1cm}d\theta d\varphi}{\displaystyle \int_{(0,\pi)\times(0,\pi)}(\sin\varphi)(\sqrt{\sin\varphi}\cos\theta)^{2}\hspace{0.1cm}d\theta d\varphi}\\
&+\frac{\displaystyle\int_{(0,\pi)\times(0,\pi)}4\sin\varphi\bigg(\frac{\partial}{\partial \varphi}(\sqrt{\sin\varphi}\cos\theta)\bigg)^{2}\bigg)\hspace{0.1cm}d\theta d\varphi}{\displaystyle\int_{(0,\pi)\times(0,\pi)}(\sin\varphi)(\sqrt{\sin\varphi}\cos\theta)^{2}\hspace{0.1cm}d\theta d\varphi}=3.
\end{align*}

\section{Straightforward computation of the two basic cases}

We want to state and prove two further lemmas.
\begin{lem}\label{lemma-integral-quotient-gradient-x-+}
	If $u=x^+,$ then
	\[\frac{\displaystyle \int_{\partial B^{\mathbb{H}^1}_1(0)}\frac{\left|\nabla_{\mathbb{H}^1}u\right|^2}{\sqrt{x^2+y^2}}\hspace{0.1cm}d\sigma_{\mathbb{H}^1}(\xi)}{\displaystyle \int_{B^{\mathbb{H}^1}_1(0)}\frac{\left|\nabla_{\mathbb{H}^1}u\right|^2}{\left|\xi\right|_{\mathbb{H}^1}^2}\hspace{0.1cm}d\xi}=2.\]
\end{lem}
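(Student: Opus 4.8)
The plan is to reduce both integrals to integrals of the single indicator $\chi_{\{x>0\}}$ and then exploit the dilation homogeneity of the gauge sphere to relate the bulk integral to the boundary integral, so that the ratio collapses to a pure number without ever evaluating either integral explicitly.

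First I would compute the horizontal gradient of $u=x^+$. Since $X=\partial_x+2y\partial_t$ and $Y=\partial_y-2x\partial_t$, and $x^+$ depends only on $x$, one gets $Xu=\chi_{\{x>0\}}$ and $Yu=0$, hence $\left|\nabla_{\mathbb{H}^1}u\right|^2=\chi_{\{x>0\}}$ almost everywhere, the set $\{x=0\}$ being negligible for both the volume measure and the perimeter measure. In passing I would record that $u=x^+$ is admissible for Theorem \ref{lowerbound_f}: it is Lipschitz, hence lies in $C(B_1^{\mathbb{H}^1}(0))\cap H^1_{\mathbb{H}^1}(B_1^{\mathbb{H}^1}(0))$, it vanishes at the origin, and $\Delta_{\mathbb{H}^1}x^+\ge 0$ in the distributional sense (it equals the nonnegative surface measure carried by the plane $\{x=0\}$), so that $u$ is $\mathbb{H}^1$-subharmonic.

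With $\left|\nabla_{\mathbb{H}^1}u\right|^2=\chi_{\{x>0\}}$ the numerator is $N:=\int_{\partial B_1^{\mathbb{H}^1}(0)}\frac{\chi_{\{x>0\}}}{\sqrt{x^2+y^2}}\,d\sigma_{\mathbb{H}^1}(\xi)$ and the denominator is $\int_{B_1^{\mathbb{H}^1}(0)}\frac{\chi_{\{x>0\}}}{\rho^2}\,d\xi$, where $\rho=|\xi|_{\mathbb{H}^1}$. The core step is to apply the Heisenberg co-area formula for the gauge $\rho$, exactly as in the proof of Lemma \ref{lemma-quotient-derivative-of-J-value-of-J}, together with $\left|\nabla_{\mathbb{H}^1}\rho\right|=\sqrt{x^2+y^2}/\rho$ from Lemma \ref{norm-quad-nabla-horiz-theta-phi-rho}, to write
\[
\int_{B_1^{\mathbb{H}^1}(0)}\frac{\chi_{\{x>0\}}}{\rho^2}\,d\xi=\int_0^1\frac{1}{\rho}\left(\int_{\partial B_\rho^{\mathbb{H}^1}(0)}\frac{\chi_{\{x>0\}}}{\sqrt{x^2+y^2}}\,d\sigma_{\mathbb{H}^1}\right)d\rho.
\]
Then I would parametrize $\partial B_\rho^{\mathbb{H}^1}(0)=\delta_\rho(\partial B_1^{\mathbb{H}^1}(0))$ and use the three homogeneities under the dilation $\delta_\rho$: the perimeter measure scales with degree $Q-1=3$, the weight $\sqrt{x^2+y^2}$ with degree $1$, and $\chi_{\{x>0\}}$ is invariant. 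This yields $\int_{\partial B_\rho^{\mathbb{H}^1}(0)}\frac{\chi_{\{x>0\}}}{\sqrt{x^2+y^2}}\,d\sigma_{\mathbb{H}^1}=\rho^2 N$, so that the displayed integral becomes $N\int_0^1\rho\,d\rho=\tfrac{N}{2}$, whence the ratio equals $N/(N/2)=2$.

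The main obstacle is the rigorous justification of the two measure-theoretic facts near the characteristic set $\{x^2+y^2=0\}$: the applicability of the co-area formula for the gauge, where $\nabla_{\mathbb{H}^1}\rho$ vanishes at the poles of the sphere, and the integrability of the weight $1/\sqrt{x^2+y^2}$ against $d\sigma_{\mathbb{H}^1}$ near those poles. Both are controlled because the finiteness of the bulk integral, guaranteed for subharmonic $u$ with $u(0)=0$ by Lemma \ref{limitato}, forces $N<\infty$ through the very identity above. As a consistency check, the value $2$ matches the equality case of Theorem \ref{teo_teo_lb}: since $u=x^+=(\rho\sqrt{\sin\varphi}\cos\theta)^+$ corresponds to $\alpha=1$ and $\lambda=\alpha(\alpha+2)=3$, one has $2\left(\sqrt{1+3}-1\right)=2$.
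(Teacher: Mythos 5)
Your proof is correct, but it reaches the value $2$ by a genuinely different mechanism than the paper. Both arguments begin identically, with $\left|\nabla_{\mathbb{H}^1}x^+\right|^2=\chi_{\{x>0\}}$. The paper then evaluates the two integrals separately and explicitly: the denominator via the spherical change of variables $T(\rho,\varphi,\theta)$ with Jacobian $\rho^3$, giving $\pi^2/2$, and the numerator by observing that $d\sigma_{\mathbb{H}^1}=\sqrt{x^2+y^2}\,d\sigma/\left|\nabla\rho\right|$ cancels the weight and that $\left|\partial_\theta K\wedge\partial_\varphi K\right|/\left|\nabla\rho\right|=1$ on the Koranyi sphere, giving $\pi^2$. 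You never compute either number: the co-area formula for the gauge, combined with $\left|\nabla_{\mathbb{H}^1}\rho\right|=\sqrt{x^2+y^2}/\rho$ and the homogeneities under $\delta_\rho$ of $\sigma_{\mathbb{H}^1}$ (degree $Q-1=3$), of the weight (degree $1$) and of $\chi_{\{x>0\}}$ (degree $0$), shows that the bulk integral equals $N\int_0^1\rho\,d\rho=N/2$, so the ratio is $2$. Your route is shorter and more structural: it makes transparent that the answer is forced by the $\delta_\rho$-invariance of $\left|\nabla_{\mathbb{H}^1}u\right|^2$ alone, and the same scaling count with homogeneity degree $2(\alpha-1)$ immediately yields $2\alpha$, recovering Lemma \ref{lemma-integral-quotient-gradient-t-+} for $u=t^+$ as well; the paper's computation, while longer, produces the explicit values $\pi^2$ and $\pi^2/2$. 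Your handling of the singular weight near the poles is adequate, since all integrands are nonnegative, so the co-area identity holds in $[0,+\infty]$ and finiteness of the left-hand side (Lemma \ref{limitato}, or directly $d\sigma_{\mathbb{H}^1}/\sqrt{x^2+y^2}=d\theta\,d\varphi$) transfers to $N$. One marginal caveat: in your closing consistency check, the number $3=\alpha(\alpha+2)$ attached to $x=\rho\sqrt{\sin\varphi}\cos\theta$ is the eigenvalue of the full $(\theta,\varphi)$ problem, not the quantity $\lambda_{\varphi(\Sigma)}$ appearing in Theorem \ref{teo_teo_lb} (which for $x^+$ the paper bounds by $2$ in Lemma \ref{lemma-eigenvalues-depending-on-theta-phi-and-phi}), so equality in that theorem should not be read off from it; this remark is inessential to your proof.
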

\begin{proof}
	First of all, we note that
	\[\nabla_{\mathbb{H}^1}x^+\equiv\chi_{\left\{x>0\right\}}(1,0),\]
	hence
	\begin{equation}\label{integral-quotient-gradient-x-1}
	\frac{\displaystyle \int_{\partial B^{\mathbb{H}^1}_1(0)}\frac{\left|\nabla_{\mathbb{H}^1}u\right|^2}{\sqrt{x^2+y^2}}\hspace{0.1cm}d\sigma_{\mathbb{H}^1}(\xi)}{\displaystyle \int_{B^{\mathbb{H}^1}_1(0)}\frac{\left|\nabla_{\mathbb{H}^1}u\right|^2}{\left|\xi\right|_{\mathbb{H}^1}^2}\hspace{0.1cm}d\xi}=\frac{\displaystyle \int_{\partial B^{\mathbb{H}^1}_1(0)\cap\left\{x>0\right\}}\frac{1}{\sqrt{x^2+y^2}}\hspace{0.1cm}d\sigma_{\mathbb{H}^1}(\xi)}{\displaystyle \int_{B^{\mathbb{H}^1}_1(0)\cap\left\{x>0\right\}}\frac{1}{\left|\xi\right|_{\mathbb{H}^1}^2}\hspace{0.1cm}d\xi}.
	\end{equation}
	Let us calculate now
	\[\int_{B^{\mathbb{H}^1}_1(0)\cap\left\{x>0\right\}}\frac{1}{\left|\xi\right|_{\mathbb{H}^1}^2}\hspace{0.1cm}d\xi.\]
	To this end, we apply the change of variables in spherical coordinates, that is, denoting $\xi=(x,y,t),$
	\begin{equation}\label{polar-coordinates}
	T(\rho,\varphi,\theta)\coloneqq \begin{cases}
	x=\rho\sqrt{\sin\varphi}\cos\theta\\
	y=\rho\sqrt{\sin\varphi}\sin\theta\\
	t=\rho^2\cos\varphi,
	\end{cases}
	\end{equation}
	and we get, since $\left|\det J_T\right|=\rho^3$ and \begin{equation}\label{condition-x->-0}
	x=\rho\sqrt{\sin\varphi}\cos\theta>0\Longleftrightarrow-\frac{\pi}{2}<\theta<\frac{\pi}{2},
	\end{equation}
	\begin{equation}\label{integral-quotient-gradient-x-denominator-1}
    \int_{B^{\mathbb{H}^1}_1(0)\cap\left\{x>0\right\}}\frac{1}{\left|\xi\right|_{\mathbb{H}^1}^2}\hspace{0.1cm}d\xi=\int_{0}^{\pi}\int_{-\frac{\pi}{2}}^{\frac{\pi}{2}}\int_{0}^{1}\frac{\rho^3}{\rho^2}\hspace{0.1cm}d\rho\hspace{0.05cm}d\varphi\hspace{0.05cm}d\theta=\pi^2\left[\frac{\rho^2}{2}\right]^{\rho=1}_{\rho=0}=\frac{\pi^2}{2}.
	\end{equation}
	Regarding
	\[\int_{\partial B^{\mathbb{H}^1}_1(0)\cap\left\{x>0\right\}}\frac{1}{\sqrt{x^2+y^2}}\hspace{0.1cm}d\sigma_{\mathbb{H}^1}(\xi),\]
	instead, we recall first that, because $\rho=1$ on $\partial B^{\mathbb{H}^1}_1(0),$
	\begin{equation}\label{heisenberg-area-element}
	d\sigma_{\mathbb{H}^1}(\xi)=\frac{\left|\nabla_{\mathbb{H}^1}\rho\right|}{\left|\nabla\rho\right|}\hspace{0.1cm}d\sigma(\xi)=\frac{\sqrt{x^2+y^2}}{\left|\nabla\rho\right|}\hspace{0.1cm}d\sigma(\xi),
	\end{equation}
	hence we achieve
    \begin{align*}
    &\int_{\partial B^{\mathbb{H}^1}_1(0)\cap\left\{x>0\right\}}\frac{1}{\sqrt{x^2+y^2}}\hspace{0.1cm}d\sigma_{\mathbb{H}^1}(\xi)=\int_{\partial B^{\mathbb{H}^1}_1(0)\cap\left\{x>0\right\}}\frac{1}{\sqrt{x^2+y^2}}\frac{\sqrt{x^2+y^2}}{\left|\nabla \rho\right|}\hspace{0.1cm}d\sigma(\xi)\\
    &=\int_{\partial B^{\mathbb{H}^1}_1(0)\cap\left\{x>0\right\}}\frac{1}{\left|\nabla \rho\right|}\hspace{0.1cm}d\sigma(\xi),
    \end{align*}
    namely
    \begin{equation}\label{integral-quotient-gradient-x-numerator-1}
    \int_{\partial B^{\mathbb{H}^1}_1(0)\cap\left\{x>0\right\}}\frac{1}{\sqrt{x^2+y^2}}\hspace{0.1cm}d\sigma_{\mathbb{H}^1}(\xi)=\int_{\partial B^{\mathbb{H}^1}_1(0)\cap\left\{x>0\right\}}\frac{1}{\left|\nabla \rho\right|}\hspace{0.1cm}d\sigma(\xi).
    \end{equation}
	At this point, we consider the following parametrization of $\partial B^{\mathbb{H}^1}_1(0):$
	\begin{equation}\label{parametrization-of-the-boundary-of-Koranyi-ball}
	K(\theta,\varphi)\coloneqq 
	\begin{cases}
	x=\sqrt{\sin\varphi}\cos\theta\\
	y=\sqrt{\sin\varphi}\sin\theta\\
	t=\cos\varphi.
	\end{cases}
	\end{equation}
	Then, we obtain
	\[d\sigma(\xi)=\left|\frac{\partial K}{\partial \theta}\wedge\frac{\partial K}{\partial\varphi}\right|\hspace{0.1cm}d\theta\hspace{0.05cm}d\varphi,\]
	which yields
	\begin{equation}\label{integral-quotient-gradient-x-numerator-2}
    \int_{\partial B^{\mathbb{H}^1}_1(0)\cap\left\{x>0\right\}}\frac{1}{\left|\nabla \rho\right|}\hspace{0.1cm}d\sigma(\xi)=\int_{0}^{\pi}\int_{-\frac{\pi}{2}}^{\frac{\pi}{2}}\frac{1}{\left|\nabla\rho\right|}\left|\frac{\partial K}{\partial \theta}\wedge\frac{\partial K}{\partial\varphi}\right|\hspace{0.1cm}d\theta\hspace{0.05cm}d\varphi,
	\end{equation}
	using \eqref{condition-x->-0}.
	
	Specifically, we have
	\begin{align*}
	&\left|\nabla \rho\right|\restrict{\partial B^{\mathbb{H}^1}_1(0)}=\left|\nabla(((x^2+y^2)^2+t^2)^{1/4})\right|\\
	&=\left|\frac{1}{4}((x^2+y^2)^2+t^2)^{-3/4}(2(x^2+y^2)2x,2(x^2+y^2)2y,2t)\right|\\
	&=\frac{1}{2}\rho^{-3}\sqrt{4x^2(x^2+y^2)^2+4y^2(x^2+y^2)^2+t^2}=\frac{1}{2}\sqrt{4(x^2+y^2)^3+t^2}=\frac{1}{2}\sqrt{4\sin^3\varphi+\cos^2\varphi},
	\end{align*}
	that is
	\begin{equation}\label{gradient-rho-on-the-boundary-of-koranyi-ball}
	\left|\nabla\rho\right|\restrict{\partial B^{\mathbb{H}^1}_1(0)}=\frac{1}{2}\sqrt{4\sin^3\varphi+\cos^2\varphi}.
	\end{equation}
	On the other hand, in view of \eqref{parametrization-of-the-boundary-of-Koranyi-ball},
	\begin{align*}
	&\left|\frac{\partial K}{\partial \theta}\wedge\frac{\partial K}{\partial \varphi}\right|=\left|\det
	\begin{bmatrix}
	\vec{i} & \vec{j} &\vec{k}\\
	\\
	-\sqrt{\sin\varphi}\sin\theta & \sqrt{\sin\varphi}\cos\theta & 0\\
	\\
	\dfrac{\cos\varphi\cos\theta}{2\sqrt{\sin\varphi}}&\dfrac{\cos\varphi\sin\theta}{2\sqrt{\sin\varphi}} &-\sin\varphi
	\end{bmatrix}\right|\\
	&=\left|\left(-\sin^{3/2}(\varphi)\cos\theta,-\sin^{3/2}(\varphi)\sin\theta,-\frac{\cos\varphi}{2}\right)\right|=\sqrt{\sin^3\varphi+\frac{\cos^2\varphi}{4}}=\frac{1}{2}\sqrt{4\sin^3\varphi+\cos^2\varphi},
	\end{align*}
	which implies, according to \eqref{gradient-rho-on-the-boundary-of-koranyi-ball},
	\begin{equation}\label{quotient-vector-product-of-area-element-gradient-rho-on-the-boundary-of-koranyi-ball}
	\frac{\left|\frac{\partial K}{\partial \theta}\wedge\frac{\partial K}{\partial\varphi}\right|}{\left|\nabla\rho\right|}=1\quad\mbox{on }\partial B^{\mathbb{H}^1}_1(0).
	\end{equation}
	Consequently, by virtue of \eqref{quotient-vector-product-of-area-element-gradient-rho-on-the-boundary-of-koranyi-ball}, we have
	\begin{align*}
	\int_0^\pi\int_{-\frac{\pi}{2}}^{\frac{\pi}{2}}\frac{1}{\left|\nabla\rho\right|}\left|\frac{\partial K}{\partial \theta}\wedge\frac{\partial K}{\partial\varphi}\right|\hspace{0.1cm}d\theta\hspace{0.05cm}d\varphi=\int_0^\pi\int_{-\frac{\pi}{2}}^{\frac{\pi}{2}}\hspace{0.1cm}d\theta\hspace{0.05cm}d\varphi=\pi^2,
	\end{align*}
	hence, from \eqref{integral-quotient-gradient-x-numerator-1} and \eqref{integral-quotient-gradient-x-numerator-2}, we get
	\begin{equation}\label{integral-quotient-gradient-x-numerator-3}
	\int_{\partial B^{\mathbb{H}^1}_1(0)\cap\left\{x>0\right\}}\frac{1}{\sqrt{x^2+y^2}}\hspace{0.1cm}d\sigma_{\mathbb{H}^1}(\xi)=\pi^2.
	\end{equation}
	Putting together \eqref{integral-quotient-gradient-x-denominator-1} and \eqref{integral-quotient-gradient-x-numerator-3}, we finally achieve, by \eqref{integral-quotient-gradient-x-1},
	\[\frac{\displaystyle \int_{\partial B^{\mathbb{H}^1}_1(0)}\frac{\left|\nabla_{\mathbb{H}^1}u\right|^2}{\sqrt{x^2+y^2}}\hspace{0.1cm}d\sigma_{\mathbb{H}^1}(\xi)}{\displaystyle \int_{B^{\mathbb{H}^1}_1(0)}\frac{\left|\nabla_{\mathbb{H}^1}u\right|^2}{\left|\xi\right|_{\mathbb{H}^1}^2}\hspace{0.1cm}d\xi}=\frac{\pi^2}{\displaystyle\frac{\pi^2}{2}}=2,\]
	i.e.
	\[\frac{\displaystyle \int_{\partial B^{\mathbb{H}^1}_1(0)}\frac{\left|\nabla_{\mathbb{H}^1}u\right|^2}{\sqrt{x^2+y^2}}\hspace{0.1cm}d\sigma_{\mathbb{H}^1}(\xi)}{\displaystyle \int_{B^{\mathbb{H}^1}_1(0)}\frac{\left|\nabla_{\mathbb{H}^1}u\right|^2}{\left|\xi\right|_{\mathbb{H}^1}^2}\hspace{0.1cm}d\xi}=2.\]
\end{proof}
\begin{lem}\label{lemma-integral-quotient-gradient-t-+}
	If $u=t^+,$ then
	\[\frac{\displaystyle \int_{\partial B^{\mathbb{H}^1}_1(0)}\frac{\left|\nabla_{\mathbb{H}^1}u\right|^2}{\sqrt{x^2+y^2}}\hspace{0.1cm}d\sigma_{\mathbb{H}^1}(\xi)}{\displaystyle \int_{B^{\mathbb{H}^1}_1(0)}\frac{\left|\nabla_{\mathbb{H}^1}u\right|^2}{\left|\xi\right|_{\mathbb{H}^1}^2}\hspace{0.1cm}d\xi}=4.\]
\end{lem}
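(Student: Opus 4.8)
The plan is to follow the proof of Lemma~\ref{lemma-integral-quotient-gradient-x-+} almost verbatim, the only genuinely new ingredient being the computation of the horizontal gradient of $t^+$. First I would use the vector fields \eqref{heisen-vect-fields} to compute $Xt=2y$ and $Yt=-2x$, so that on the half-space $\{t>0\}$ one has $\nabla_{\mathbb{H}^1}t=(2y,-2x)$ and hence
\begin{equation*}
\left|\nabla_{\mathbb{H}^1}t^+\right|^2=4(x^2+y^2)\chi_{\{t>0\}}.
\end{equation*}
This is precisely where the Heisenberg geometry enters: unlike the Euclidean case, the horizontal gradient of the vertical coordinate does not vanish, and its squared norm $4(x^2+y^2)$ is what ultimately produces the value $4$ instead of the $2$ obtained for $x^+$. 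Substituting this into the quotient, the constant $4$ factors out of both integrals and cancels, leaving
\begin{equation*}
\frac{\displaystyle\int_{\partial B^{\mathbb{H}^1}_1(0)\cap\{t>0\}}\sqrt{x^2+y^2}\,d\sigma_{\mathbb{H}^1}(\xi)}{\displaystyle\int_{B^{\mathbb{H}^1}_1(0)\cap\{t>0\}}\frac{x^2+y^2}{\left|\xi\right|_{\mathbb{H}^1}^2}\,d\xi}.
\end{equation*}

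Next I would evaluate the denominator in the spherical coordinates \eqref{polar-coordinates}, for which $x^2+y^2=\rho^2\sin\varphi$, $\left|\xi\right|_{\mathbb{H}^1}=\rho$ and $\left|\det J_T\right|=\rho^3$. Since $t=\rho^2\cos\varphi$, the condition $t>0$ restricts $\varphi$ to $(0,\tfrac{\pi}{2})$ while leaving $\theta$ free over a full interval of length $2\pi$; this is the decisive bookkeeping difference from the $x^+$ case, where it was instead $\theta$ that got restricted. The integrand reduces to $(\sin\varphi)\rho^3$, and the triple integral factors as $2\pi\cdot\int_0^{\pi/2}\sin\varphi\,d\varphi\cdot\int_0^1\rho^3\,d\rho=2\pi\cdot 1\cdot\tfrac14=\tfrac{\pi}{2}$, so the denominator equals $\tfrac{\pi}{2}$.

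For the numerator I would reuse the two surface-measure facts already established for $x^+$: the identity $d\sigma_{\mathbb{H}^1}(\xi)=\tfrac{\sqrt{x^2+y^2}}{\left|\nabla\rho\right|}\,d\sigma(\xi)$ from \eqref{heisenberg-area-element}, and the relation $\tfrac{\left|\partial_\theta K\wedge\partial_\varphi K\right|}{\left|\nabla\rho\right|}=1$ on $\partial B^{\mathbb{H}^1}_1(0)$ from \eqref{quotient-vector-product-of-area-element-gradient-rho-on-the-boundary-of-koranyi-ball}, with $K$ the parametrization \eqref{parametrization-of-the-boundary-of-Koranyi-ball}. Together these convert $\sqrt{x^2+y^2}\,d\sigma_{\mathbb{H}^1}$ into $(x^2+y^2)\,d\theta\,d\varphi$; since $\rho=1$ on the sphere gives $x^2+y^2=\sin\varphi$, the numerator becomes $\int_0^{2\pi}\int_0^{\pi/2}\sin\varphi\,d\varphi\,d\theta=2\pi$. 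Dividing yields $\tfrac{2\pi}{\pi/2}=4$, which is the claim. I expect the only delicate point to be the opening step, namely recognizing that $\nabla_{\mathbb{H}^1}t$ is the horizontal rather than the Euclidean gradient, so that $\left|\nabla_{\mathbb{H}^1}t^+\right|^2=4(x^2+y^2)$, together with the correct splitting of the coordinate ranges; once these are in place, the two integrals are as elementary as in the $x^+$ lemma.
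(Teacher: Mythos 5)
Your argument is correct and follows the paper's own proof essentially step for step: the same computation $\left|\nabla_{\mathbb{H}^1}t^+\right|^2=4(x^2+y^2)\chi_{\{t>0\}}$, the same spherical coordinates with Jacobian $\rho^3$ and the restriction $0<\varphi<\tfrac{\pi}{2}$, and the same surface-measure identities recycled from the $x^+$ lemma. The only cosmetic difference is that you cancel the factor $4$ at the outset while the paper carries it through both integrals ($8\pi/2\pi$ versus your $2\pi/(\pi/2)$), which changes nothing.
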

\begin{proof}
	We point out first that in this case we have
	\[\nabla_{\mathbb{H}^1}t^+\equiv 2(y,-x)\chi_{\left\{t>0\right\}}.\]
	Therefore
	\begin{align*}
	&\frac{\displaystyle \int_{\partial B^{\mathbb{H}^1}_1(0)}\frac{\left|\nabla_{\mathbb{H}^1}u\right|^2}{\sqrt{x^2+y^2}}\hspace{0.1cm}d\sigma_{\mathbb{H}^1}(\xi)}{\displaystyle \int_{B^{\mathbb{H}^1}_1(0)}\frac{\left|\nabla_{\mathbb{H}^1}u\right|^2}{\left|\xi\right|_{\mathbb{H}^1}^2}\hspace{0.1cm}d\xi}=\frac{\displaystyle \int_{\partial B^{\mathbb{H}^1}_1(0)\cap \left\{t>0\right\}}\frac{4(x^2+y^2)}{\sqrt{x^2+y^2}}\hspace{0.1cm}d\sigma_{\mathbb{H}^1}(\xi)}{\displaystyle \int_{B^{\mathbb{H}^1}_1(0)\cap \left\{t>0\right\}}\frac{4(x^2+y^2)}{\left|\xi\right|_{\mathbb{H}^1}^2}\hspace{0.1cm}d\xi}\\
	&=\frac{\displaystyle \int_{\partial B^{\mathbb{H}^1}_1(0)\cap \left\{t>0\right\}}4\sqrt{x^2+y^2}\hspace{0.1cm}d\sigma_{\mathbb{H}^1}(\xi)}{\displaystyle \int_{B^{\mathbb{H}^1}_1(0)\cap \left\{t>0\right\}}\frac{4(x^2+y^2)}{\left|\xi\right|_{\mathbb{H}^1}^2}\hspace{0.1cm}d\xi},
	\end{align*}
	that is 
	\begin{equation}\label{integral-quotient-gradient-t-+-1}
	\frac{\displaystyle \int_{\partial B^{\mathbb{H}^1}_1(0)}\frac{\left|\nabla_{\mathbb{H}^1}u\right|^2}{\sqrt{x^2+y^2}}\hspace{0.1cm}d\sigma_{\mathbb{H}^1}(\xi)}{\displaystyle \int_{B^{\mathbb{H}^1}_1(0)}\frac{\left|\nabla_{\mathbb{H}^1}u\right|^2}{\left|\xi\right|_{\mathbb{H}^1}^2}\hspace{0.1cm}d\xi}=\frac{\displaystyle \int_{\partial B^{\mathbb{H}^1}_1(0)\cap \left\{t>0\right\}}4\sqrt{x^2+y^2}\hspace{0.1cm}d\sigma_{\mathbb{H}^1}(\xi)}{\displaystyle \int_{B^{\mathbb{H}^1}_1(0)\cap \left\{t>0\right\}}\frac{4(x^2+y^2)}{\left|\xi\right|_{\mathbb{H}^1}^2}\hspace{0.1cm}d\xi}.
	\end{equation}
	Now, let us compute
	\[\int_{B^{\mathbb{H}^1}_1(0)\cap \left\{t>0\right\}}\frac{4(x^2+y^2)}{\left|\xi\right|^2_{\mathbb{H}^1}}\hspace{0.1cm}d\xi.\]
	Using the change of variables in spherical coordinates \eqref{polar-coordinates} and noting that
	\begin{equation}\label{condition-t->-0}
	t=\rho^2\cos\varphi>0\Longleftrightarrow 0<\varphi<\frac{\pi}{2},
	\end{equation}
	we then achieve
	\begin{align*}
	&\int_{B^{\mathbb{H}^1}_1(0)\cap \left\{t>0\right\}}\frac{4(x^2+y^2)}{\left|\xi\right|^2_{\mathbb{H}^1}}\hspace{0.1cm}d\xi=\int_{-\pi}^{\pi}\int_0^{\frac{\pi}{2}}\int_0^1\frac{4\rho^2\sin\varphi}{\rho^2}\rho^3\hspace{0.1cm}d\rho d\varphi d\theta\\
	&=8\pi\int_0^{\frac{\pi}{2}}\sin\varphi\hspace{0.1cm}d\varphi\int_0^1\rho^3\hspace{0.1cm}d\rho=8\pi\bigg[-\cos\varphi\bigg]^{\varphi=\frac{\pi}{2}}_{\varphi=0}\left[\frac{\rho^4}{4}\right]^{\rho=1}_{\rho=0}=\frac{8\pi}{4}=2\pi,
	\end{align*}
	which gives
	\begin{equation}\label{integral-quotient-gradient-t-+-denominator-1}
	\int_{B^{\mathbb{H}^1}_1(0)\cap \left\{t>0\right\}}\frac{4(x^2+y^2)}{\left|\xi\right|^2_{\mathbb{H}^1}}\hspace{0.1cm}d\xi=2\pi.
	\end{equation}
	In parallel, in view of \eqref{heisenberg-area-element}, \eqref{quotient-vector-product-of-area-element-gradient-rho-on-the-boundary-of-koranyi-ball} and \eqref{condition-t->-0}, we also get
	\begin{align*}
	&\int_{\partial B^{\mathbb{H}^1}_1(0)\cap\left\{t>0\right\}}4\sqrt{x^2+y^2}\hspace{0.1cm}d\sigma_{\mathbb{H}^1}(\xi)=\int_{\partial B^{\mathbb{H}^1}_1(0)\cap\left\{t>0\right\}}4(x^2+y^2)\frac{1}{\left|\nabla\rho\right|}\hspace{0.1cm}d\sigma(\xi)\\
	&4\int_{-\pi}^{\pi}\int_0^{\frac{\pi}{2}}\sin\varphi\hspace{0.1cm}d\varphi\hspace{0.05cm}d\theta=8\pi\bigg[-\cos\varphi\bigg]^{\varphi=\frac{\pi}{2}}_{\varphi=0}=8\pi,
	\end{align*}
	i.e.
	\begin{equation}\label{integral-quotient-gradient-t-+-numerator-1}
	\int_{\partial B^{\mathbb{H}^1}_1(0)\cap\left\{t>0\right\}}4\sqrt{x^2+y^2}\hspace{0.1cm}d\sigma_{\mathbb{H}^1}(\xi)=8\pi.
	\end{equation}
	As a consequence, from \eqref{integral-quotient-gradient-t-+-1}, \eqref{integral-quotient-gradient-t-+-denominator-1} and \eqref{integral-quotient-gradient-t-+-numerator-1}, we finally have
	\[\frac{\displaystyle \int_{\partial B^{\mathbb{H}^1}_1(0)}\frac{\left|\nabla_{\mathbb{H}^1}u\right|^2}{\sqrt{x^2+y^2}}\hspace{0.1cm}d\sigma_{\mathbb{H}^1}(\xi)}{\displaystyle \int_{B^{\mathbb{H}^1}_1(0)}\frac{\left|\nabla_{\mathbb{H}^1}u\right|^2}{\left|\xi\right|_{\mathbb{H}^1}^2}\hspace{0.1cm}d\xi}=\frac{8\pi}{2\pi}=4,\]
	which yields
	\[\frac{\displaystyle \int_{\partial B^{\mathbb{H}^1}_1(0)}\frac{\left|\nabla_{\mathbb{H}^1}u\right|^2}{\sqrt{x^2+y^2}}\hspace{0.1cm}d\sigma_{\mathbb{H}^1}(\xi)}{\displaystyle \int_{B^{\mathbb{H}^1}_1(0)}\frac{\left|\nabla_{\mathbb{H}^1}u\right|^2}{\left|\xi\right|_{\mathbb{H}^1}^2}\hspace{0.1cm}d\xi}=4.\]
\end{proof}

\section{Evaluation of the first eigenvalue for symmetric caps}
We observe, at this point, that a symmetric cap with respect to the $t-$axis may be described by only using the variable $\varphi$ in the  change of variables $T$ see \eqref{polar-coordinates}. Therefore, the following results hold.   
\begin{lem}\label{lemma-eigenvalues-depending-on-theta-phi-and-phi}
If $u=x^+,$ then
\[\frac{\displaystyle \int_{\partial B^{\mathbb{H}^1}_1(0)\cap\left\{u>0\right\}}\frac{\left|\nabla_{\mathbb{H}^1}^{\varphi}u\right|^2}{\sqrt{x^2+y^2}}\hspace{0.1cm}d\sigma_{\mathbb{H}^1}(\xi)}{\displaystyle\int_{\partial B^{\mathbb{H}^1}_1(0)\cap\left\{u>0\right\}}u^2\sqrt{x^2+y^2}\hspace{0.1cm}d\sigma_{\mathbb{H}^1}(\xi)}=2.\]
If $u=t^+,$ then
\[\frac{\displaystyle \int_{\partial B^{\mathbb{H}^1}_1(0)\cap\left\{u>0\right\}}\frac{\left|\nabla_{\mathbb{H}^1}^{\varphi}u\right|^2}{\sqrt{x^2+y^2}}\hspace{0.1cm}d\sigma_{\mathbb{H}^1}(\xi)}{\displaystyle\int_{\partial B^{\mathbb{H}^1}_1(0)\cap\left\{u>0\right\}}u^2\sqrt{x^2+y^2}\hspace{0.1cm}d\sigma_{\mathbb{H}^1}(\xi)}=8.\]
As a consequence it results in both cases
$$
\lambda(\partial B^{\mathbb{H}^1}_1(0)\cap\{u>0\})\leq 2.
$$
\end{lem}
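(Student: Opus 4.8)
The plan is to reduce both quotients to elementary integrals over rectangles in the $(\theta,\varphi)$ variables, exploiting the orthogonal decomposition \eqref{norm-quad-lapl-horiz-u-two-components_f} together with the inner-product and norm formulas already established for $\nabla_{\mathbb{H}^1}\varphi$. The single identity that drives everything is the following: since $\nabla^\varphi_{\mathbb{H}^1}u=\langle\nabla_{\mathbb{H}^1}u,e_\varphi\rangle e_\varphi$ with $e_\varphi=\nabla_{\mathbb{H}^1}\varphi/|\nabla_{\mathbb{H}^1}\varphi|$, one has
\[
\left|\nabla^\varphi_{\mathbb{H}^1}u\right|^2=\frac{\langle\nabla_{\mathbb{H}^1}u,\nabla_{\mathbb{H}^1}\varphi\rangle^2}{\left|\nabla_{\mathbb{H}^1}\varphi\right|^2}.
\]
Writing $u=u(\theta,\varphi)$ on $\partial B^{\mathbb{H}^1}_1(0)$, using $\nabla_{\mathbb{H}^1}u=u_\theta\nabla_{\mathbb{H}^1}\theta+u_\varphi\nabla_{\mathbb{H}^1}\varphi$ and inserting the values for $\rho=1$ supplied by Lemma \ref{norm-quad-nabla-horiz-theta-phi-rho} and Lemma \ref{lemma-inner-product-nabla-phi-rho-theta}, namely $|\nabla_{\mathbb{H}^1}\varphi|^2=4\sin\varphi$ and $\langle\nabla_{\mathbb{H}^1}\theta,\nabla_{\mathbb{H}^1}\varphi\rangle=2\sin\varphi$, this collapses to the clean expression
\[
\left|\nabla^\varphi_{\mathbb{H}^1}u\right|^2=\sin\varphi\,(u_\theta+2u_\varphi)^2\qquad\text{on }\partial B^{\mathbb{H}^1}_1(0).
\]
Combined with $\sqrt{x^2+y^2}=\sqrt{\sin\varphi}$ and the area element $d\sigma_{\mathbb{H}^1}=\sqrt{\sin\varphi}\,d\theta\,d\varphi$ (which follows from \eqref{heisenberg-area-element} and \eqref{quotient-vector-product-of-area-element-gradient-rho-on-the-boundary-of-koranyi-ball}), the numerator integrand becomes simply $(u_\theta+2u_\varphi)^2\,d\theta\,d\varphi$ and the denominator integrand $u^2\sin\varphi\,d\theta\,d\varphi$.

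For $u=x^+$ I would parametrise $\Sigma=\partial B^{\mathbb{H}^1}_1(0)\cap\{x>0\}$ by $\theta\in(-\tfrac{\pi}{2},\tfrac{\pi}{2})$, $\varphi\in(0,\pi)$, with $x=\sqrt{\sin\varphi}\cos\theta$ as in \eqref{parametrization-of-the-boundary-of-Koranyi-ball}. A short computation gives $x_\theta+2x_\varphi=\cos(\theta+\varphi)/\sqrt{\sin\varphi}$, hence $|\nabla^\varphi_{\mathbb{H}^1}x|^2=\cos^2(\theta+\varphi)$; integrating $\cos^2(\theta+\varphi)$ over the rectangle yields $\pi^2/2$, while the denominator $\int\sin^2\varphi\cos^2\theta\,d\theta\,d\varphi$ equals $\pi^2/4$, so the ratio is $2$. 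For $u=t^+$ I would parametrise $\Sigma=\partial B^{\mathbb{H}^1}_1(0)\cap\{t>0\}$ by $\theta\in(-\pi,\pi)$, $\varphi\in(0,\tfrac{\pi}{2})$, with $t=\cos\varphi$; then $t_\theta+2t_\varphi=-2\sin\varphi$, so $|\nabla^\varphi_{\mathbb{H}^1}t|^2=4\sin^3\varphi$, and the one-variable integrals $\int_0^{\pi/2}\sin^3\varphi\,d\varphi=\tfrac23$ and $\int_0^{\pi/2}\cos^2\varphi\sin\varphi\,d\varphi=\tfrac13$ (times the $\theta$-length $2\pi$) produce numerator $16\pi/3$ and denominator $2\pi/3$, i.e.\ the ratio $8$.

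For the final assertion I would read each computed value as the Rayleigh quotient of an admissible competitor in \eqref{def-lambda-varphi}: $x$ vanishes on $\partial\Sigma=\{\theta=\pm\tfrac{\pi}{2}\}$ and $t=\cos\varphi$ vanishes on $\partial\Sigma=\{\varphi=\tfrac{\pi}{2}\}$, so each belongs to $H^1_0(\Sigma)$ and its quotient bounds the infimum $\lambda_{\varphi_{\left(\Sigma\right)}}$ from above. The first evaluation thus gives at once $\lambda_{\varphi_{\left(\partial B^{\mathbb{H}^1}_1(0)\cap\{x>0\}\right)}}\le 2$, and since any vertical half-sphere $\{ax+by>0\}$ is carried onto $\{x>0\}$ by a rotation of the $(x,y)$-plane, which is an isometry preserving the gauge, the same bound $\le 2$ holds for every such half-sphere; this is the configuration realising the smaller eigenvalue and hence the operative bound. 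The main obstacle is not conceptual but one of bookkeeping: one must keep the cross term $\langle\nabla_{\mathbb{H}^1}\theta,\nabla_{\mathbb{H}^1}\varphi\rangle$ when projecting onto $e_\varphi$ (this is the \emph{only} tangential horizontal direction on the sphere, so $\theta$- and $\varphi$-derivatives genuinely mix in $u_\theta+2u_\varphi$), and verify that the Jacobian and perimeter factors combine exactly into $\sqrt{\sin\varphi}\,d\theta\,d\varphi$. Once the identity $|\nabla^\varphi_{\mathbb{H}^1}u|^2=\sin\varphi\,(u_\theta+2u_\varphi)^2$ is in hand, both integrals are routine.
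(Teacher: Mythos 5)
Your two quotient computations are correct and follow essentially the same route as the paper: the same key identity
\[
\left|\nabla^\varphi_{\mathbb{H}^1}u\right|^2\restrict{\partial B^{\mathbb{H}^1}_1(0)}=\sin\varphi\,\Big(\frac{\partial f}{\partial\theta}+2\frac{\partial f}{\partial\varphi}\Big)^2,
\]
obtained from $\langle\nabla_{\mathbb{H}^1}\theta,\nabla_{\mathbb{H}^1}\varphi\rangle=2\sin\varphi$ and $|\nabla_{\mathbb{H}^1}\varphi|^2=4\sin\varphi$ at $\rho=1$, the same surface element $\sqrt{\sin\varphi}\,d\theta\,d\varphi$, and the same elementary integrals ($\pi^2/2$ over $\pi^2/4$, and $16\pi/3$ over $2\pi/3$). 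One small imprecision: for $u=x$ or $u=t$ the expansion $\nabla_{\mathbb{H}^1}u=u_\theta\nabla_{\mathbb{H}^1}\theta+u_\varphi\nabla_{\mathbb{H}^1}\varphi$ is missing the radial term coming from $u=\rho^\alpha f(\theta,\varphi)$ with $\alpha=1,2$; it is harmless here only because $\langle\nabla_{\mathbb{H}^1}\rho,\nabla_{\mathbb{H}^1}\varphi\rangle=0$, so the $\rho$-term drops out of the projection onto $e_\varphi$. You should say this explicitly rather than assume $u$ depends only on $(\theta,\varphi)$.

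The genuine gap is in your treatment of the final assertion. Your competitor argument gives $\lambda_{\varphi}\big(\partial B^{\mathbb{H}^1}_1(0)\cap\{x>0\}\big)\leq 2$ directly, but for the cap $\{t>0\}$ the only admissible competitor you exhibit is $t=\cos\varphi$, whose quotient is $8$; this yields $\lambda_{\varphi}(\{t>0\})\leq 8$, not $\leq 2$. The phrase \emph{``this is the configuration realising the smaller eigenvalue and hence the operative bound''} does not bridge that gap: the two sets are different, no isometry carries the vertical half-sphere onto the cap, and a bound for one set says nothing about the infimum over $H^1_0$ of the other. Indeed, restricting to the natural separated families $\eta(\varphi)g(\varphi-2\theta)$ or $\cos\varphi\cos(k\theta)$ one always finds quotients $\geq 8$ on the cap, so a proof of ``$\leq 2$'' there would require a new competitor that you (and, it must be said, the paper itself, whose proof simply stops after the two computations) do not supply. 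You should either restrict the conclusion to $\lambda_{\varphi}(\{x>0\})\leq 2$ and $\lambda_{\varphi}(\{t>0\})\leq 8$, or produce an explicit function in $H^1_0$ of the cap with quotient at most $2$.
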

\begin{proof}
We start from $u=x^+.$ In particular, we want to compute
	\[\frac{\displaystyle \int_{\partial B^{\mathbb{H}^1}_1(0)\cap\left\{u>0\right\}}\frac{\left|\nabla_{\mathbb{H}^1}^{\varphi}u\right|^2}{\sqrt{x^2+y^2}}\hspace{0.1cm}d\sigma_{\mathbb{H}^1}(\xi)}{\displaystyle\int_{\partial B^{\mathbb{H}^1}_1(0)\cap\left\{u>0\right\}}u^2\sqrt{x^2+y^2}\hspace{0.1cm}d\sigma_{\mathbb{H}^1}(\xi)},\]
	with $u=x^+.$ Using the parametrization in spherical coordinates of the boundary of unitary Koranyi ball in \eqref{parametrization-of-the-boundary-of-Koranyi-ball},
	$u=x^+$ reads 
	\begin{equation}\label{x-in-spherical-coordinates-1}
	u=(\sqrt{\sin\varphi}\cos\theta)^+.
	\end{equation}
	As a consequence, from \eqref{x-in-spherical-coordinates-1}, we get
	\begin{equation}\label{u->-0-spherical-coordinates-1}
	u>0\Longleftrightarrow -\frac{\pi}{2}<\theta<\frac{\pi}{2}.
	\end{equation}
	At this point, we want to express $\left|\nabla_{\mathbb{H}^1}^{\varphi}u\right|^2$ according to \eqref{x-in-spherical-coordinates-1}. Let us recall first that if $v=\rho^{\alpha}f(\theta,\varphi),$ we have 
	\begin{equation}\label{heisenberg-gradient-rho-^-alpha-f-theta-varphi-1}
    \nabla_{\mathbb{H}^1}v=\alpha\rho^{\alpha-1}\nabla_{\mathbb{H}^1}\rho+\rho^{\alpha}\left(\frac{\partial f}{\partial \theta}\nabla_{\mathbb{H}^1}\theta+\frac{\partial f}{\partial \varphi}\nabla_{\mathbb{H}^1}\varphi\right).
	\end{equation}
	Moreover, we know by \eqref{two-components-nabla-horiz} that
	\[\nabla_{\mathbb{H}^1}^{\varphi}v=\langle\nabla_{\mathbb{H}^1}v,e_{\varphi}\rangle e_\varphi,\]
	where $e_{\varphi}=\frac{\nabla_{\mathbb{H}^1}\varphi}{\left|\nabla_{\mathbb{H}^1}\varphi\right|},$ thus, in view of \eqref{heisenberg-gradient-rho-^-alpha-f-theta-varphi-1}, we achieve
	\begin{align*}
	&\nabla_{\mathbb{H}^1}^{\varphi}v=\langle\nabla_{\mathbb{H}^1}v,e_{\varphi}\rangle e_\varphi=\langle\alpha\rho^{\alpha-1}\nabla_{\mathbb{H}^1}\rho+\rho^{\alpha}\left(\frac{\partial f}{\partial \theta}\nabla_{\mathbb{H}^1}\theta+\frac{\partial f}{\partial \varphi}\nabla_{\mathbb{H}^1}\varphi\right),\frac{\nabla_{\mathbb{H}^1}\varphi}{\left|\nabla_{\mathbb{H}^1}\varphi\right|}\rangle e_\varphi\\
	&=\frac{\rho^{\alpha}}{\left|\nabla_{\mathbb{H}^1}\varphi\right|}\left(\frac{\partial f}{\partial \theta}\langle\nabla_{\mathbb{H}^1}\theta,\nabla_{\mathbb{H}^1}\varphi\rangle+\frac{\partial f}{\partial\varphi}\left|\nabla_{\mathbb{H}^1}\varphi\right|^2\right)e_\varphi=\frac{\rho^{\alpha+2}}{2\sqrt{x^2+y^2}}\bigg(\frac{\partial f}{\partial\theta}\frac{2(x^2+y^2)}{\rho^4}\\
	&+\frac{\partial f}{\partial \varphi}\frac{4(x^2+y^2)}{\rho^4}\bigg)e_\varphi=\rho^{\alpha-2}\sqrt{x^2+y^2}\left(\frac{\partial f}{\partial\theta}+2\frac{\partial f}{\partial\varphi}\right)e_\varphi,
	\end{align*}
	namely
	\[\nabla_{\mathbb{H}^1}^{\varphi}v=\rho^{\alpha-2}\sqrt{x^2+y^2}\left(\frac{\partial f}{\partial\theta}+2\frac{\partial f}{\partial\varphi}\right)e_\varphi,\]
	which implies
	\begin{equation}\label{phi-component-of-the-heisenberg-gradient-of-v-square-norm-1}
	\left|\nabla_{\mathbb{H}^1}^{\varphi}v\right|^2=\rho^{2(\alpha-2)}(x^2+y^2)\left(\frac{\partial f}{\partial\theta}+2\frac{\partial f}{\partial\varphi}\right)^2.
	\end{equation}
	Let us note that, in the previous computation, we have used the facts that, from Lemma \ref{lemma-inner-product-nabla-phi-rho-theta} and Lemma \ref{norm-quad-nabla-horiz-theta-phi-rho},
	\begin{equation*}
	\nabla_{\mathbb{H}^1}\varphi \cdot \nabla_{\mathbb{H}^1}\rho=0,\quad
	\nabla_{\mathbb{H}^1}\varphi \cdot \nabla_{\mathbb{H}^1}\theta=\frac{2(x^{2}+y^{2})}{\rho^{4}},\quad
	\left|\nabla_{\mathbb{H}^1}\varphi\right|^{2}=\frac{4(x^{2}+y^{2})}{\rho^{4}}.
	\end{equation*}
	Now, in particular, on $\partial B^{\mathbb{H}^1}_1(0)$ \eqref{phi-component-of-the-heisenberg-gradient-of-v-square-norm-1} reads, by virtue of \eqref{parametrization-of-the-boundary-of-Koranyi-ball},
	\begin{equation}\label{phi-component-of-the-heisenberg-gradient-of-v-square-norm-on-the-boundary-of-unitary-koranyi-ball-1}
	\left|\nabla_{\mathbb{H}^1}^{\varphi}v\right|^2\restrict{\partial B^{\mathbb{H}^1}_1(0)}=\sin\varphi\left(\frac{\partial f}{\partial\theta}+2\frac{\partial f}{\partial\varphi}\right)^2.
	\end{equation}
	In addition, we can choose $f=\sqrt{\sin\varphi}\cos\theta,$ so that with $u=\sqrt{\sin\varphi}\cos\theta,$ we obtain
	\begin{align*}
	&\left|\nabla_{\mathbb{H}^1}^{\varphi}u\right|^2\restrict{\partial B^{\mathbb{H}^1}_1(0)}=\sin\varphi\bigg(-\sqrt{\sin\varphi}\sin\theta+2\frac{\cos\varphi\cos\theta}{2\sqrt{\sin\varphi}}\bigg)^2=\sin\varphi\bigg(\sin\varphi\sin^2\theta+\frac{\cos^2\varphi\cos^2\theta}{\sin\varphi}\\
	&-2\cos\varphi\cos\theta\sin\theta\bigg)=\sin^2\varphi\sin^2\theta+\cos^2\varphi\cos^2\theta-2\cos\varphi\sin\varphi\cos\theta\sin\theta\\
	&=(\sin\varphi\sin\theta-\cos\varphi\cos\theta)^2=\cos^2(\theta+\varphi),
	\end{align*}
	that is 
	\begin{equation}\label{phi-component-of-the-heisenberg-gradient-of-u-square-norm-on-the-boundary-of-unitary-koranyi-ball-1}
	\left|\nabla_{\mathbb{H}^1}^{\varphi}u\right|^2\restrict{\partial B^{\mathbb{H}^1}_1(0)}=\cos^2(\theta+\varphi).
	\end{equation}
	Let us recall, at this point, that
	\begin{equation}\label{surface-element-heisenberg-1}
	d\sigma_{\mathbb{H}^1}(\xi)=\frac{\left|\nabla_{\mathbb{H}^1}\rho\right|}{\left|\nabla\rho\right|}\hspace{0.1cm}d\sigma(\xi)=\sqrt{\sin\varphi}\hspace{0.1cm}d\theta\hspace{0.05cm}d\varphi,
	\end{equation}
	therefore, using \eqref{parametrization-of-the-boundary-of-Koranyi-ball}, \eqref{x-in-spherical-coordinates-1}, \eqref{u->-0-spherical-coordinates-1} and \eqref{phi-component-of-the-heisenberg-gradient-of-u-square-norm-on-the-boundary-of-unitary-koranyi-ball-1}, we get
	\begin{align*}
	&\frac{\displaystyle \int_{\partial B^{\mathbb{H}^1}_1(0)\cap\left\{u>0\right\}}\frac{\left|\nabla_{\mathbb{H}^1}^{\varphi}u\right|^2}{\sqrt{x^2+y^2}}\hspace{0.1cm}d\sigma_{\mathbb{H}^1}(\xi)}{\displaystyle\int_{\partial B^{\mathbb{H}^1}_1(0)\cap\left\{u>0\right\}}u^2\sqrt{x^2+y^2}\hspace{0.1cm}d\sigma_{\mathbb{H}^1}(\xi)}=\frac{\displaystyle\int_0^\pi\int_{-\frac{\pi}{2}}^{\frac{\pi}{2}}\frac{\cos^2(\theta+\varphi)}{\sqrt{\sin\varphi}}\sqrt{\sin\varphi}\hspace{0.1cm}d\theta\hspace{0.05cm}d\varphi}{\displaystyle\int_0^\pi\int_{-\frac{\pi}{2}}^{\frac{\pi}{2}}\sin\varphi\cos^2\theta\sqrt{\sin\varphi}\sqrt{\sin\varphi}\hspace{0.1cm}d\theta\hspace{0.05cm}d\varphi}\\
	&=\frac{\displaystyle\int_0^\pi\int_{-\frac{\pi}{2}}^{\frac{\pi}{2}}\cos^2(\theta+\varphi)\hspace{0.1cm}d\theta\hspace{0.05cm}d\varphi}{\displaystyle\int_0^\pi\sin^2\varphi\hspace{0.1cm}d\varphi\int_{-\frac{\pi}{2}}^{\frac{\pi}{2}}\cos^2\theta\hspace{0.1cm}d\theta},
	\end{align*}
	i.e.
	\begin{equation}\label{quotient-lambda-phi-x-spherical-coordinates-1-1}
	\frac{\displaystyle \int_{\partial B^{\mathbb{H}^1}_1(0)\cap\left\{u>0\right\}}\frac{\left|\nabla_{\mathbb{H}^1}^{\varphi}u\right|^2}{\sqrt{x^2+y^2}}\hspace{0.1cm}d\sigma_{\mathbb{H}^1}(\xi)}{\displaystyle\int_{\partial B^{\mathbb{H}^1}_1(0)\cap\left\{u>0\right\}}u^2\sqrt{x^2+y^2}\hspace{0.1cm}d\sigma_{\mathbb{H}^1}(\xi)}=\frac{\displaystyle\int_0^\pi\int_{-\frac{\pi}{2}}^{\frac{\pi}{2}}\cos^2(\theta+\varphi)\hspace{0.1cm}d\theta\hspace{0.05cm}d\varphi}{\displaystyle\int_0^\pi\sin^2\varphi\hspace{0.1cm}d\varphi\int_{-\frac{\pi}{2}}^{\frac{\pi}{2}}\cos^2\theta\hspace{0.1cm}d\theta}.
	\end{equation}
	Let us compute now the numerator and the denominator of the right term in \eqref{quotient-lambda-phi-x-spherical-coordinates-1-1} separately. In both cases, we use the duplication formulas recalled in \eqref{formula-sin-^-2-cos-^-2}.
	
	As regards the numerator, in particular, we have
	\begin{align*}
	&\int_0^\pi\int_{-\frac{\pi}{2}}^{\frac{\pi}{2}}\cos^2(\theta+\varphi)\hspace{0.1cm}d\theta\hspace{0.05cm}d\varphi=\int_0^\pi\int_{-\frac{\pi}{2}}^{\frac{\pi}{2}}\left(\frac{1+\cos(2(\theta+\varphi))}{2}\right)\hspace{0.1cm}d\theta\hspace{0.05cm}d\varphi\\
	&=\frac{1}{2}\int_0^\pi\left[\theta+\frac{\sin(2(\theta+\varphi))}{2}\right]^{\theta=\frac{\pi}{2}}_{\theta=-\frac{\pi}{2}}\hspace{0.1cm}d\varphi=\frac{1}{2}\int_0^\pi\pi\hspace{0.1cm}d\varphi=\frac{\pi^2}{2},
	\end{align*}
	which gives
	\begin{equation}\label{quotient-lambda-phi-x-spherical-coordinates-numerator-1}
	\int_0^\pi\int_{-\frac{\pi}{2}}^{\frac{\pi}{2}}\cos^2(\theta+\varphi)\hspace{0.1cm}d\theta\hspace{0.05cm}d\varphi=\frac{\pi^2}{2}.
	\end{equation}
	Concerning the denominator, instead, it holds
	\begin{align*}
	&\int_0^\pi\sin^2\varphi\hspace{0.1cm}d\varphi\int_{-\frac{\pi}{2}}^{\frac{\pi}{2}}\cos^2\theta\hspace{0.1cm}d\theta=\int_0^\pi\left(\frac{1-\cos(2\varphi)}{2}\right)\hspace{0.1cm}d\varphi\int_{-\frac{\pi}{2}}^{\frac{\pi}{2}}\left(\frac{1+\cos(2\theta)}{2}\right)\hspace{0.1cm}d\theta\\
	&=\frac{1}{4}\left[\varphi-\frac{\sin(2\varphi)}{2}\right]^{\varphi=\pi}_{\varphi=0}\left[\theta+\frac{\sin(2\theta)}{2}\right]^{\theta=\frac{\pi}{2}}_{\theta=-\frac{\pi}{2}}=\frac{\pi^2}{4},
	\end{align*}
	namely
	\begin{equation}\label{quotient-lambda-phi-x-spherical-coordinates-denominator-1}
	\int_0^\pi\sin^2\varphi\hspace{0.1cm}d\varphi\int_{-\frac{\pi}{2}}^{\frac{\pi}{2}}\cos^2\theta\hspace{0.1cm}d\theta=\frac{\pi^2}{4}.
	\end{equation}
	Consequently, from \eqref{quotient-lambda-phi-x-spherical-coordinates-numerator-1}, \eqref{quotient-lambda-phi-x-spherical-coordinates-denominator-1} and \eqref{quotient-lambda-phi-x-spherical-coordinates-1-1}, we finally obtain
	\[\frac{\displaystyle \int_{\partial B^{\mathbb{H}^1}_1(0)\cap\left\{u>0\right\}}\frac{\left|\nabla_{\mathbb{H}^1}^{\varphi}u\right|^2}{\sqrt{x^2+y^2}}\hspace{0.1cm}d\sigma_{\mathbb{H}^1}(\xi)}{\displaystyle\int_{\partial B^{\mathbb{H}^1}_1(0)\cap\left\{u>0\right\}}u^2\sqrt{x^2+y^2}\hspace{0.1cm}d\sigma_{\mathbb{H}^1}(\xi)}=\frac{\displaystyle\frac{\pi^2}{2}}{\displaystyle\frac{\pi^2}{4}}=2,\]
	which yields
	\begin{equation}\label{quotient-lambda-phi-x-spherical-coordinates-final}
	\frac{\displaystyle \int_{\partial B^{\mathbb{H}^1}_1(0)\cap\left\{u>0\right\}}\frac{\left|\nabla_{\mathbb{H}^1}^{\varphi}u\right|^2}{\sqrt{x^2+y^2}}\hspace{0.1cm}d\sigma_{\mathbb{H}^1}(\xi)}{\displaystyle\int_{\partial B^{\mathbb{H}^1}_1(0)\cap\left\{u>0\right\}}u^2\sqrt{x^2+y^2}\hspace{0.1cm}d\sigma_{\mathbb{H}^1}(\xi)}=2.
	\end{equation}	
	In case $u=t^+,$ instead, in view of \eqref{parametrization-of-the-boundary-of-Koranyi-ball}, we achieve $u^+=(\cos\varphi)^+,$ which entails
	\begin{equation}\label{u->-0-spherical-coordinates-2}
	u>0\Longleftrightarrow 0<\varphi<\frac{\pi}{2}.
	\end{equation}
	Then, keeping in mind \eqref{phi-component-of-the-heisenberg-gradient-of-v-square-norm-on-the-boundary-of-unitary-koranyi-ball-1}, \eqref{surface-element-heisenberg-1} and \eqref{u->-0-spherical-coordinates-2}, we get
	\begin{equation*}\begin{split}&\frac{\displaystyle \int_{\partial B^{\mathbb{H}^1}_1(0)\cap\left\{u>0\right\}}\frac{\left|\nabla_{\mathbb{H}^1}^{\varphi}u\right|^2}{\sqrt{x^2+y^2}}\hspace{0.1cm}d\sigma_{\mathbb{H}^1}(\xi)}{\displaystyle\int_{\partial B^{\mathbb{H}^1}_1(0)\cap\left\{u>0\right\}}u^2\sqrt{x^2+y^2}\hspace{0.1cm}d\sigma_{\mathbb{H}^1}(\xi)}=
	\frac{\displaystyle \int_0^{\frac{\pi}{2}}\left(\int_0^{2\pi}4\sin\varphi\left(\frac{\partial f}{\partial\varphi}\right)^2\hspace{0.1cm}d\theta\right) d\varphi }{\displaystyle\int_0^{\frac{\pi}{2}}\left(\int_{0}^{2\pi}\cos^2\varphi \sqrt{\sin\varphi}\sqrt{\sin\varphi}\hspace{0.1cm}d\theta\right)\hspace{0.05cm}d\varphi}\\
	&=\frac{8\pi\displaystyle \int_0^{\frac{\pi}{2}}\sin\varphi\sin^2\varphi d\varphi }{2\pi\displaystyle\int_0^{\frac{\pi}{2}}\cos^2\varphi \sin\varphi d\varphi}=\frac{8\pi\displaystyle \int_0^{\frac{\pi}{2}}\sin\varphi(1-\cos^2\varphi) d\varphi }{2\pi\left[-\frac{1}{3}\cos^3\varphi\right]_{\varphi=0}^{\varphi=\frac{\pi}{2}}}=\frac{8\pi\left[-\cos\varphi+\frac{1}{3}\cos^3\varphi\right]_{\varphi=0}^{\varphi=\frac{\pi}{2}}}{\frac{2}{3}\pi}=8,
	\end{split}
	\end{equation*}	
	that is
	\[\frac{\displaystyle \int_{\partial B^{\mathbb{H}^1}_1(0)\cap\left\{u>0\right\}}\frac{\left|\nabla_{\mathbb{H}^1}^{\varphi}u\right|^2}{\sqrt{x^2+y^2}}\hspace{0.1cm}d\sigma_{\mathbb{H}^1}(\xi)}{\displaystyle\int_{\partial B^{\mathbb{H}^1}_1(0)\cap\left\{u>0\right\}}u^2\sqrt{x^2+y^2}\hspace{0.1cm}d\sigma_{\mathbb{H}^1}(\xi)}=8.\]
	
\end{proof}
\begin{cor} \label{corolmain}
If $\beta=8$ and $u=a t^+-b t^-,$ for $a^2+b^2\not=0,$ then $J_{8,\mathbb{H}^1}'(r)=0.$ 
Moreover, for every $\beta> 8,$ $J_{\beta,\mathbb{H}^1}$ is not monotone so that, if a monotonicity formula exists, then $\beta\leq 8.$ 

If $\beta=4$  and $u=(ax+by)^+-(ax+by)^-,$ for $a^2+b^2\not=0,$   then $J_{4,\mathbb{H}^1}'(r)=0.$ Furthermore, for every $\beta> 4,$ $J_{\beta,\mathbb{H}^1}$ is not monotone so that, if a monotonicity formula exists, then $\beta\leq 4.$ 

If the minimum $\lambda_\varphi$ were realized by a function like $u=(ax+by)^+-(ax+by)^-$, or the minimum of the function $h,$ see \eqref{half}, is greater or equal than $16,$ then, in order to obtain a monotonicity formula, we have to require that $\beta=4.$ 

\end{cor}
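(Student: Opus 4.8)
The plan is to reduce every assertion to the single quotient computed in Lemma \ref{lemma-quotient-derivative-of-J-value-of-J}, namely
\[
\frac{J_{\beta,\mathbb{H}^1}'(1)}{J_{\beta,\mathbb{H}^1}(1)}=R_1+R_2-\beta,\qquad
R_i:=\frac{\displaystyle\int_{\partial B_1^{\mathbb{H}^1}(0)}\frac{|\nabla_{\mathbb{H}^1}u_i|^2}{\sqrt{x^2+y^2}}\,d\sigma_{\mathbb{H}^1}}{\displaystyle\int_{B_1^{\mathbb{H}^1}(0)}\frac{|\nabla_{\mathbb{H}^1}u_i|^2}{|\xi|_{\mathbb{H}^1}^2}\,d\xi},
\]
where $u_1=u^+$ and $u_2=u^-$. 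Since $J_{\beta,\mathbb{H}^1}(1)>0$, the sign of $J_{\beta,\mathbb{H}^1}'(1)$ equals that of $R_1+R_2-\beta$; moreover, because the competitors below are $\delta_\lambda$-homogeneous, the blow-up $v_i(\kappa)=u_i(\delta_r\kappa)/r$ reproduces the same angular profile, so the bracket in the proof of Lemma \ref{lemma-quotient-derivative-of-J-value-of-J} keeps a fixed sign and $J_{\beta,\mathbb{H}^1}'(r)$ has the sign of $J_{\beta,\mathbb{H}^1}'(1)$ for every $r>0$. Thus the whole statement becomes the arithmetic of evaluating $R_1+R_2$ for two basic profiles.

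For the first two statements I would compute $R_i$ from the explicit lemmas. In the case $u=at^+-bt^-$ I set $u_1=at^+$, $u_2=bt^-$ (nonnegative, $\mathbb{H}^1$-subharmonic, vanishing at $0$, with $u_1u_2=0$); the constants cancel in each ratio, and the reflection $(x,y,t)\mapsto(x,y,-t)$ — which preserves the Koranyi gauge, the Lebesgue and perimeter measures, $\sqrt{x^2+y^2}$ and $|\nabla_{\mathbb{H}^1}(\cdot)|^2$ — carries the $t^-$ problem onto the $t^+$ problem, so Lemma \ref{lemma-integral-quotient-gradient-t-+} gives $R_1=R_2=4$. Hence $J_{8,\mathbb{H}^1}'(1)/J_{8,\mathbb{H}^1}(1)=4+4-8=0$, i.e. $J_{8,\mathbb{H}^1}'\equiv0$, while for $\beta>8$ the quotient equals $8-\beta<0$, forcing $J_{\beta,\mathbb{H}^1}$ to be strictly decreasing and hence non-monotone. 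In the case $u=(ax+by)^+-(ax+by)^-$ I use that rotations about the $t$-axis are automorphisms of $\mathbb{H}^1$ preserving all quantities entering $R_i$: composing with such a rotation and rescaling by $\sqrt{a^2+b^2}$ reduces both $(ax+by)^+$ and $(ax+by)^-=\big((-a)x+(-b)y\big)^+$ to $x^+$, so Lemma \ref{lemma-integral-quotient-gradient-x-+} yields $R_1=R_2=2$. Therefore $J_{4,\mathbb{H}^1}'(1)/J_{4,\mathbb{H}^1}(1)=2+2-4=0$, $J_{4,\mathbb{H}^1}'\equiv0$, and for $\beta>4$ the quotient is $4-\beta<0$, again a strict decrease. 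This proves the necessities $\beta\le8$ and $\beta\le4$.

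For the last statement I would pass from single profiles to the worst admissible configuration. By Theorem \ref{lowerbound_f}, any admissible pair satisfies $R_1+R_2\ge 2\sum_{i=1}^2\big(\sqrt{1+\lambda_\varphi(\Sigma_i)}-1\big)$, so $J_{4,\mathbb{H}^1}$ is monotone increasing as soon as $\inf(R_1+R_2)\ge 4$, the infimum running over all splittings $\overline{\Sigma}_1\cup\overline{\Sigma}_2=\partial B_1^{\mathbb{H}^1}(0)$ with disjoint interiors. Under the first hypothesis — that this infimum is attained by the vertical-plane profile $u=(ax+by)^+-(ax+by)^-$ — the infimum equals the value $R_1+R_2=4$ computed above, so $J_{4,\mathbb{H}^1}'\ge0$ for every admissible pair. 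Under the second hypothesis I would invoke Lemma \ref{go_go_2}: if $\min_\varphi[\lambda_0(\varphi)+\lambda_0(\pi-\varphi)]\ge16$ (the axisymmetric minimum being the $t=0$ split, where the relevant eigenvalue equals $8$ by the harmonicity of $\rho^2\cos\varphi$ and Lemma \ref{lemma-eigenvalues-depending-on-theta-phi-and-phi}), then $h(\varphi)\ge2(\sqrt{18}-2)>4$ for every axisymmetric cap, so no cap configuration can undercut the value $4$ delivered by the vertical plane. Either way $\inf(R_1+R_2)=4$, and combining with the necessity $\beta\le4$ from the second statement forces $\beta=4$.

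The genuine obstacle is the last paragraph: both sufficiency hypotheses are assumptions about the \emph{globally minimizing} configuration on the Koranyi sphere, which is the non-commutative analogue of the rearrangement/isoperimetric step closing the Euclidean proof in \cite{CS}, \cite{Sperner}, \cite{FH}. The lower bound of Theorem \ref{lowerbound_f} is not sharp for the vertical split — it only yields $4(\sqrt3-1)\approx2.93<4$ there — so it cannot by itself certify $\inf(R_1+R_2)=4$; one must genuinely know that the minimizer is the half-space profile, or control all competitors through the $\varphi$-eigenvalue $\lambda_0$. This identification, tied to the shape realizing equality in the Heisenberg isoperimetric inequality, is exactly what remains open, which is why the statement is phrased conditionally.
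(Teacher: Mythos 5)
Your proof is correct and follows essentially the same route as the paper's: formula \eqref{quotient-derivative-of-J-value-of-J} together with the explicit ratios $2$ and $4$ from Lemma \ref{lemma-integral-quotient-gradient-x-+} and Lemma \ref{lemma-integral-quotient-gradient-t-+} for the first two statements, and Lemma \ref{go_go_1} and Lemma \ref{go_go_2} for the conditional one, with the final caveat about the non-sharpness of the lower bound of Theorem \ref{lowerbound_f} at the vertical split being exactly the open point the paper acknowledges. The only cosmetic imprecision is attributing the $t^-\!\to t^+$ reduction to the map $(x,y,t)\mapsto(x,y,-t)$ ``preserving'' $|\nabla_{\mathbb{H}^1}(\cdot)|^2$ --- that map is not a group automorphism --- but since $|\nabla_{\mathbb{H}^1}t^{\pm}|^2=4(x^2+y^2)\chi_{\{\pm t>0\}}$ and all weights and measures involved are symmetric in $t$, the equality of the two ratios holds anyway, as the paper also verifies by direct computation in the section on particular cases.
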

\begin{proof}
The first two statements immediately follow by Lemma \ref{lemma-integral-quotient-gradient-x-+}, Lemma \ref{lemma-integral-quotient-gradient-t-+} and formula \eqref{quotient-derivative-of-J-value-of-J}. The last statement is a consequence of Lemma \ref{lemma-quotient-derivative-of-J-value-of-J}, Lemma \ref{go_go_1} and Lemma \ref{go_go_2}.
\end{proof}
\section{Particular cases}
\begin{lem}
	If
	\begin{equation}\label{def-u}
	u=\alpha t^{+}-\beta t^{-},\quad \alpha,\beta \in \mathbb{R},\quad\alpha,\beta>0,
	\end{equation}
	then
	\[\int_{B_R^{\mathbb{H}^1}(0)}\frac{\left|\nabla_{\mathbb{H}^1}u^{+}\right|^{2}}{\left|\xi\right|_{\mathbb{H}^1}^{2}}\hspace{0.1cm}d\xi\int_{B_R^{\mathbb{H}^1}(0)}\frac{\left|\nabla_{\mathbb{H}^1}u^{-}\right|^{2}}{\left|\xi\right|_{\mathbb{H}^1}^{2}}\hspace{0.1cm}d\xi=4\pi^2\alpha^2\beta^2R^8.\]
\end{lem}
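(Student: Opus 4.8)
The plan is to reduce the statement to the two single-factor integrals, each of which is just a rescaled copy of a computation already performed in Lemma \ref{lemma-integral-quotient-gradient-t-+}. First I would record the positive and negative parts of $u$. Since $u=\alpha t^{+}-\beta t^{-}$ with $\alpha,\beta>0$, on $\{t>0\}$ we have $u=\alpha t>0$ and on $\{t<0\}$ we have $u=\beta t<0$, so that $u^{+}=\alpha t^{+}$ and $u^{-}=\beta t^{-}$, supported in $\{t>0\}$ and $\{t<0\}$ respectively. Recalling from the proof of Lemma \ref{lemma-integral-quotient-gradient-t-+} that $\nabla_{\mathbb{H}^1}t^{+}\equiv 2(y,-x)\chi_{\{t>0\}}$, together with the analogous identity $\nabla_{\mathbb{H}^1}t^{-}\equiv 2(-y,x)\chi_{\{t<0\}}$, I would obtain
\begin{equation*}
\left|\nabla_{\mathbb{H}^1}u^{+}\right|^{2}=4\alpha^{2}(x^{2}+y^{2})\chi_{\{t>0\}},\qquad
\left|\nabla_{\mathbb{H}^1}u^{-}\right|^{2}=4\beta^{2}(x^{2}+y^{2})\chi_{\{t<0\}}.
\end{equation*}

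Next I would evaluate each factor separately by passing to the spherical coordinates $T$ of \eqref{polar-coordinates}, for which $x^{2}+y^{2}=\rho^{2}\sin\varphi$, $\left|\xi\right|_{\mathbb{H}^1}=\rho$ and $\left|\det J_{T}\right|=\rho^{3}$, so that the integrand $4(x^{2}+y^{2})\left|\xi\right|_{\mathbb{H}^1}^{-2}$ collapses to $4\sin\varphi$. The condition $t>0$ reads $0<\varphi<\frac{\pi}{2}$ and $t<0$ reads $\frac{\pi}{2}<\varphi<\pi$, exactly as in the computation leading to \eqref{integral-quotient-gradient-t-+-denominator-1}, the only change being that the radial variable now runs up to $R$ instead of $1$. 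Hence
\begin{equation*}
\int_{B_R^{\mathbb{H}^1}(0)}\frac{\left|\nabla_{\mathbb{H}^1}u^{+}\right|^{2}}{\left|\xi\right|_{\mathbb{H}^1}^{2}}\,d\xi
=4\alpha^{2}\int_{-\pi}^{\pi}\int_{0}^{\frac{\pi}{2}}\int_{0}^{R}\sin\varphi\,\rho^{3}\,d\rho\,d\varphi\,d\theta
=2\pi\alpha^{2}R^{4},
\end{equation*}
and the same integration over $\frac{\pi}{2}<\varphi<\pi$ yields $2\pi\beta^{2}R^{4}$ for the factor involving $u^{-}$, since $\int_{0}^{\pi/2}\sin\varphi\,d\varphi=\int_{\pi/2}^{\pi}\sin\varphi\,d\varphi=1$ makes the two half-ball contributions equal.

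Finally I would multiply the two factors, obtaining $2\pi\alpha^{2}R^{4}\cdot 2\pi\beta^{2}R^{4}=4\pi^{2}\alpha^{2}\beta^{2}R^{8}$, which is the assertion. The argument presents no genuine obstacle: the only points demanding care are fixing the correct sign and support of $\nabla_{\mathbb{H}^1}t^{\pm}$, checking that both half-ball angular integrals equal $1$ so the two factors differ only by the constants $\alpha^{2},\beta^{2}$, and verifying the clean scaling in $\rho$ that turns the known $R=1$ value $2\pi$ into $2\pi R^{4}$.
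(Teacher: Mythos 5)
Your proposal is correct and follows essentially the same route as the paper: identify $u^{+}=\alpha t^{+}$, $u^{-}=\beta t^{-}$, compute $\left|\nabla_{\mathbb{H}^1}u^{\pm}\right|^{2}=4\alpha^{2}(x^{2}+y^{2})\chi_{\{t>0\}}$ resp. $4\beta^{2}(x^{2}+y^{2})\chi_{\{t<0\}}$, and evaluate via the spherical coordinates \eqref{polar-coordinates} with Jacobian $\rho^{3}$. The only cosmetic difference is that the paper invokes the symmetry of $(x^{2}+y^{2})\left|\xi\right|_{\mathbb{H}^1}^{-2}$ with respect to $\{t=0\}$ to write the product as $16\alpha^{2}\beta^{2}$ times the square of a single half-ball integral, whereas you compute the two half-ball integrals separately and observe they coincide; both give $2\pi\alpha^{2}R^{4}\cdot 2\pi\beta^{2}R^{4}=4\pi^{2}\alpha^{2}\beta^{2}R^{8}$.
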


\begin{proof}
	Let us begin pointing out that, from \eqref{def-u}, we have
	\begin{align*}
	&u^{+}=\alpha t^{+},\\
	&u^{-}=\beta t^{-},
	\end{align*}
	which implies
	\begin{align*}
	&\nabla_{\mathbb{H}^1}u^{+}=\left(Xu^{+},Yu^{+}\right)=
	\begin{cases}
	(2y\alpha,-2x\alpha)&t>0\\
	0&t<0
	\end{cases}
	=\begin{cases}
	2\alpha(y,-x)&t>0\\
	0&t<0,
	\end{cases}\\
	&\nabla_{\mathbb{H}^1}u^{-}=\left(Xu^{-},Yu^{-}\right)=\begin{cases}
	(2y\beta,-2x\beta)&t<0\\
	0&t>0
	\end{cases}
	=\begin{cases}
	2\beta(y,-x)&t<0\\
	0&t>0,
	\end{cases}
	\end{align*}
	that is
	\begin{align}\label{nabla-horiz-u-+-nabla-horiz-u--}
	&\nabla_{\mathbb{H}^1}u^{+}=
	\begin{cases}
	2\alpha(y,-x)&t>0\\
	0&t<0,
	\end{cases}\notag\\
	&\nabla_{\mathbb{H}^1}u^{-}=
	\begin{cases}
	2\beta(y,-x)&t<0\\
	0&t>0.
	\end{cases}
	\end{align}
	Consequently, in view of \eqref{nabla-horiz-u-+-nabla-horiz-u--}, we get
	\begin{align*}\
	\left|\nabla_{\mathbb{H}^1}u^{+}\right|^{2}=
	\begin{cases}
	4\alpha^{2}\left(x^{2}+y^{2}\right)&t>0\\
	0&t<0,
	\end{cases}\notag\\
	\left|\nabla_{\mathbb{H}^1}u^{-}\right|^{2}=
	\begin{cases}
	4\beta^{2}(x^{2}+y^{2})&t<0\\
	0&t>0,
	\end{cases}
	\end{align*}
	which yields
	\begin{align}\label{prod-int-u-+-u---2}
	&\int_{B_R^{\mathbb{H}^1}(0)}\frac{\left|\nabla_{\mathbb{H}^1}u^{+}\right|^{2}}{\left|\xi\right|_{\mathbb{H}^1}^{2}}\hspace{0.1cm}d\xi\int_{B_R^{\mathbb{H}^1}(0)}\frac{\left|\nabla_{\mathbb{H}^1}u^{-}\right|^{2}}{\left|\xi\right|_{\mathbb{H}^1}^{2}}\hspace{0.1cm}d\xi=16\alpha^{2}\beta^{2}\int_{B_R^{\mathbb{H}^1}(0)\cap\left\{t>0\right\}}\frac{x^{2}+y^{2}}{\left|\xi\right|_{\mathbb{H}^1}^{2}}\hspace{0.1cm}d\xi\notag\\
	&\times\int_{B_R^{\mathbb{H}^1}(0)\cap\left\{t<0\right\}}\frac{x^{2}+y^{2}}{\left|\xi\right|_{\mathbb{H}^1}^{2}}\hspace{0.1cm}d\xi.
	\end{align}
	In particular, since 
	\begin{equation}\label{norm-horiz-xi}
	\left|\xi\right|_{\mathbb{H}^1}=((x^{2}+y^{2})^{2}+t^{2})^{1/4},
	\end{equation}
	we have that the function
	\[\frac{x^{2}+y^{2}}{\left|\xi\right|_{\mathbb{H}^1}^{2}}\]
	is symmetric with respect to $\left\{t=0\right\}$ and thus
	\[\int_{B_R^{\mathbb{H}^1}(0)\hspace{0.05cm}\cap\hspace{0.05cm}\left\{t>0\right\}}\frac{x^{2}+y^{2}}{\left|\xi\right|_{\mathbb{H}^1}^{2}}\hspace{0.1cm}d\xi=\int_{B_R^{\mathbb{H}^1}(0)\hspace{0.05cm}\cap\hspace{0.05cm}\left\{t<0\right\}}\frac{x^{2}+y^{2}}{\left|\xi\right|_{\mathbb{H}^1}^{2}}\hspace{0.1cm}d\xi.\]
	This fact, together with \eqref{prod-int-u-+-u---2}, gives
	\begin{equation}\label{prod-int-u-+-u---3}
	\int_{B_R^{\mathbb{H}^1}(0)}\frac{\left|\nabla_{\mathbb{H}^1}u^{+}\right|^{2}}{\left|\xi\right|_{\mathbb{H}^1}^{2}}\hspace{0.1cm}d\xi \int_{B_R^{\mathbb{H}^1}(0)}\frac{\left|\nabla_{\mathbb{H}^1}u^{-}\right|^{2}}{\left|\xi\right|_{\mathbb{H}^1}^{2}}\hspace{0.1cm}d\xi=16\alpha^{2}\beta^{2}\bigg(\int_{B_R^{\mathbb{H}^1}(0)\hspace{0.05cm}\cap\hspace{0.05cm}\left\{t>0\right\}}\frac{x^{2}+y^{2}}{\left|\xi\right|_{\mathbb{H}^1}^{2}}\hspace{0.1cm}d\xi\bigg)^{2}.
	\end{equation}
	Let us analyze, at this point,
	\[\int_{B_R^{\mathbb{H}^1}(0)\hspace{0.05cm}\cap\hspace{0.05cm}\left\{t>0\right\}}\frac{x^{2}+y^{2}}{\left|\xi\right|_{\mathbb{H}^1}^{2}}\hspace{0.1cm}d\xi.\]
	Specifically, we use the change of variables \eqref{polar-coordinates}.
	In particular, by virtue of this and \eqref{norm-horiz-xi}, we obtain
	\begin{align*}
	&x^{2}+y^{2}=(\rho\sqrt{\sin\varphi}\cos\theta)^{2}+(\rho\sqrt{\sin\varphi}\sin\theta)^{2}=\rho^{2}\sin\varphi\cos^{2}\theta+\rho^{2}\sin\varphi\sin^{2}\theta=\rho^{2}\sin\varphi,
	\end{align*}
	and $\left|\xi\right|_{\mathbb{H}^1}^{2}=\rho^{2},$ namely
	\begin{equation}
	\begin{split}\label{x-^-2-+-y-^-2-norm-horiz-xi-^-2-spherical-coordinates}
	&x^{2}+y^{2}=\rho^{2}\sin\varphi\\
	&\left|\xi\right|_{\mathbb{H}^1}^{2}=\rho^{2}.
	\end{split}
	\end{equation}
	In addition, using \eqref{polar-coordinates} and \eqref{x-^-2-+-y-^-2-norm-horiz-xi-^-2-spherical-coordinates}, we also achieve, because $\rho>0,$
	\begin{align*}
	&B_R^{\mathbb{H}^1}(0)\cap \left\{t>0\right\}=\left\{0<\rho<R\right\}\cap \left\{\rho^{2}\cos\varphi>0\right\}=\left\{0<\rho<R\right\}\cap\left\{\cos\varphi>0\right\}\\
	&=\left\{0<\rho<R\right\}\cap\left\{0<\varphi<\frac{\pi}{2}\right\}=(0,R)\times\left(0,2\pi\right)\times\left(0,\frac{\pi}{2}\right),
	\end{align*}
	which entails
	\begin{equation}\label{B-H-0-R-cap-t->-0-spherical-coordinates}
	B_R^{\mathbb{H}^1}(0)\cap \left\{t>0\right\}=(0,R)\times\left(0,2\pi\right)\times \left(0,\frac{\pi}{2}\right).
	\end{equation}
	At this point, we compute the Jacobian matrix of \eqref{polar-coordinates}. Specifically, we rewrite \eqref{polar-coordinates} as 
	\[\xi=(x,y,t)=T(\rho,\theta,\varphi)=(\rho\sqrt{\sin\varphi}\cos\theta,\rho\sqrt{\sin\varphi}\sin\theta,\rho^{2}\cos\varphi),\]
	and we get, denoting $J_{T}$ the Jacobian matrix of $T(\rho,\theta,\varphi),$
	\begin{align*}
	&J_{T}=
	\begin{bmatrix}
	\dfrac{\partial }{\partial \rho}\left(\rho\sqrt{\sin\varphi}\cos\theta\right)&\dfrac{\partial }{\partial \theta}\left(\rho\sqrt{\sin\varphi}\cos\theta\right)& \dfrac{\partial }{\partial \varphi}\left(\rho\sqrt{\sin\varphi}\cos\theta\right)\\
	\\
	\dfrac{\partial }{\partial \rho}\left(\rho\sqrt{\sin\varphi}\sin\theta\right)&\dfrac{\partial }{\partial \theta}\left(\rho\sqrt{\sin\varphi}\sin\theta\right)& \dfrac{\partial }{\partial \varphi}\left(\rho\sqrt{\sin\varphi}\sin\theta\right)\\
	\\
	\dfrac{\partial }{\partial \rho}\left(\rho^{2}\cos\varphi\right)&\dfrac{\partial }{\partial \theta}\left(\rho^{2}\cos\varphi\right)& \dfrac{\partial }{\partial \varphi}\left(\rho^{2}\cos\varphi\right)
	\end{bmatrix}\\
	&\\
	&=
	\begin{bmatrix}
	\sqrt{\sin\varphi}\cos\theta&-\rho\sqrt{\sin\varphi}\sin\theta& \displaystyle\frac{\cos\varphi}{2\sqrt{\sin\varphi}}\rho\cos\theta\\
	\\
	\sqrt{\sin\varphi}\sin\theta&\rho\sqrt{\sin\varphi}\cos\theta& \displaystyle\frac{\cos\varphi}{2\sqrt{\sin\varphi}}\rho\sin\theta\\
	\\
	2\rho\cos\varphi&0& -\rho^{2}\sin\varphi
	\end{bmatrix},
	\end{align*}
	in other words
	\begin{equation}\label{Jacobian-matrix-change-of-variables}
	J_{T}=
	\begin{bmatrix}
	\sqrt{\sin\varphi}\cos\theta&-\rho\sqrt{\sin\varphi}\sin\theta& \displaystyle\frac{\cos\varphi}{2\sqrt{\sin\varphi}}\rho\cos\theta\\
	\\
    \sqrt{\sin\varphi}\sin\theta&\rho\sqrt{\sin\varphi}\cos\theta& \displaystyle\frac{\cos\varphi}{2\sqrt{\sin\varphi}}\rho\sin\theta\\
	\\
	2\rho\cos\varphi&0& -\rho^{2}\sin\varphi
	\end{bmatrix}.
	\end{equation}
	We now compute $\left|\det J_{T}\right|.$	Precisely, we obtain, in view of \eqref{Jacobian-matrix-change-of-variables},
	\begin{align*}
	&\left|\det J_{T}\right|=\left|-\rho^{3}\cos^{2}\varphi \cos^{2}\theta-\rho^{3}\sin^{2}\varphi \sin^{2}\theta-\rho^{3}\cos^{2}\varphi\sin^{2}\theta-\rho^{3}\sin^{2}\varphi\cos^{2}\theta\right|\\
	&=|-\rho^{3}\cos^{2}\varphi-\rho^3\sin^{2}\varphi|=|-\rho^3|=\rho^3,\\
	\end{align*}
	which gives
	\begin{equation}\label{absolute-value-det-Jacobian-matrix-spherical-coordinates}
	\left|\det J_{T}\right|=\rho^{3}.
	\end{equation}
	Therefore, using \eqref{x-^-2-+-y-^-2-norm-horiz-xi-^-2-spherical-coordinates}, \eqref{B-H-0-R-cap-t->-0-spherical-coordinates} and \eqref{absolute-value-det-Jacobian-matrix-spherical-coordinates}, we achieve
	\begin{align*}
	&\int_{B_R^{\mathbb{H}^1}(0)\hspace{0.05cm}\cap\hspace{0.05cm}\left\{t>0\right\}}\frac{x^{2}+y^{2}}{\left|\xi\right|_{\mathbb{H}^1}^{2}}\hspace{0.1cm}d\xi=\int_{(0,R)\times\left(0,2\pi\right)\times\left(0,\frac{\pi}{2}\right)}\frac{\rho^{2}\sin\varphi}{\rho^{2}}\hspace{0.05cm}\rho^{3}\hspace{0.1cm}d\rho\hspace{0.05cm}d\theta\hspace{0.05cm}d\varphi=\int_{0}^{R}\int_{0}^{2\pi}\int_{0}^{\frac{\pi}{2}}\rho^{3}\sin\varphi\hspace{0.1cm}d\rho\hspace{0.05cm}d\theta\hspace{0.05cm}d\varphi\\
	&=2\pi\int_{0}^{R}\rho^{3}\hspace{0.1cm}d\rho \int_{0}^{\frac{\pi}{2}}\sin\varphi\hspace{0.1cm}d\varphi=2\pi\bigg[\frac{\rho^{4}}{4}\bigg]^{\rho=R}_{\rho=0}\bigg[-\cos\varphi\bigg]^{\varphi=\frac{\pi}{2}}_{\varphi=0}=2\pi\frac{R^{4}}{4}=\frac{\pi R^{4}}{2},
	\end{align*}
	which yields
	\begin{equation}\label{int-u-B-H-0-R-cap-t->-0}
	\int_{B_R^{\mathbb{H}^1}(0)\hspace{0.05cm}\cap\hspace{0.05cm}\left\{t>0\right\}}\frac{x^{2}+y^{2}}{\left|\xi\right|_{\mathbb{H}^1}^{2}}\hspace{0.1cm}d\xi=\frac{\pi R^{4}}{2}.
	\end{equation}
	At this point, by virtue of \eqref{int-u-B-H-0-R-cap-t->-0}, we finally get, from \eqref{prod-int-u-+-u---3},
	\begin{align*}
	&\int_{B_R^{\mathbb{H}^1}(0)}\frac{\left|\nabla_{\mathbb{H}^1}u^{+}\right|^{2}}{\left|\xi\right|_{\mathbb{H}^1}^{2}}\hspace{0.1cm}d\xi\int_{B_R^{\mathbb{H}^1}(0)}\frac{\left|\nabla_{\mathbb{H}^1}u^{-}\right|^{2}}{\left|\xi\right|_{\mathbb{H}^1}^{2}}\hspace{0.1cm}d\xi=16\alpha^{2}\beta^{2}\left(\frac{\pi R^{4}}{2}\right)^{2}=16\alpha^{2}\beta^{2}\frac{\pi^{2}R^{8}}{4}=4\pi^{2}\alpha^{2}\beta^{2}R^{8},
	\end{align*}
	that is
	\[
	\int_{B_R^{\mathbb{H}^1}(0)}\frac{\left|\nabla_{\mathbb{H}^1}u^{+}\right|^{2}}{\left|\xi\right|_{\mathbb{H}^1}^{2}}\hspace{0.1cm}d\xi \int_{B_R^{\mathbb{H}^1}(0)}\frac{\left|\nabla_{\mathbb{H}^1}u^{-}\right|^{2}}{\left|\xi\right|_{\mathbb{H}^1}^{2}}\hspace{0.1cm}d\xi=4\pi^{2}\alpha^{2}\beta^{2}R^{8}.
	\]
\end{proof}
\section{Domains depending on $\varphi$ and $\theta$}
\begin{lem} Let $u$ be one of the two functions of the Theorem \ref{lowerbound_f}. Then
	\[\frac{\displaystyle \int_{\partial B^{\mathbb{H}^1}_1(0)\cap\left\{u>0\right\}}\frac{\left|\nabla_{\mathbb{H}^1}^{\varphi}u\right|^2}{\sqrt{x^2+y^2}}\hspace{0.1cm}d\sigma_{\mathbb{H}^1}(\xi)}{\displaystyle\int_{\partial B^{\mathbb{H}^1}_1(0)\cap\left\{u>0\right\}}u^2\sqrt{x^2+y^2}\hspace{0.1cm}d\sigma_{\mathbb{H}^1}(\xi)}=\frac{\displaystyle\int_{\Omega_{\theta,\varphi}}\sin(\varphi) \bigg(\frac{\partial f}{\partial\theta}+2\frac{\partial f}{\partial\varphi}\bigg)^2d\theta d\varphi}{\displaystyle\int_{\Omega_{\theta,\varphi}}\sin(\varphi)f^2d\theta d\varphi},\]
	where $T(\Omega_{\theta,\varphi})=\partial B^{\mathbb{H}^1}_1(0)\cap\left\{u>0\right\}$ and $u=\rho^\alpha f(\theta,\varphi).$
	In particular, if $u=x,$ it results
	\[\frac{\displaystyle \int_{\partial B^{\mathbb{H}^1}_1(0)\cap\left\{u>0\right\}}\frac{\left|\nabla_{\mathbb{H}^1}^{\varphi}u\right|^2}{\sqrt{x^2+y^2}}\hspace{0.1cm}d\sigma_{\mathbb{H}^1}(\xi)}{\displaystyle\int_{\partial B^{\mathbb{H}^1}_1(0)\cap\left\{u>0\right\}}u^2\sqrt{x^2+y^2}\hspace{0.1cm}d\sigma_{\mathbb{H}^1}(\xi)}=2.\]
	\end{lem}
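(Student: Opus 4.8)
The plan is to reduce both integrals over $\partial B^{\mathbb{H}^1}_1(0)\cap\{u>0\}$ to ordinary double integrals over $\Omega_{\theta,\varphi}$ via the spherical parametrization \eqref{parametrization-of-the-boundary-of-Koranyi-ball}, and then to read off the general identity from the cancellations forced by the weights. The two ingredients I would invoke are already available in the excerpt: the boundary value of the $\varphi$-component of the gradient, $\left|\nabla_{\mathbb{H}^1}^{\varphi}u\right|^2\restrict{\partial B^{\mathbb{H}^1}_1(0)}=\sin\varphi\left(\frac{\partial f}{\partial\theta}+2\frac{\partial f}{\partial\varphi}\right)^2$ from \eqref{phi-component-of-the-heisenberg-gradient-of-v-square-norm-on-the-boundary-of-unitary-koranyi-ball-1}, and the surface element identity $d\sigma_{\mathbb{H}^1}(\xi)=\sqrt{\sin\varphi}\,d\theta\,d\varphi$ from \eqref{surface-element-heisenberg-1}. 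I would also record that on $\partial B^{\mathbb{H}^1}_1(0)$ one has $\sqrt{x^2+y^2}=\sqrt{\sin\varphi}$, since $\rho=1$ forces $x^2+y^2=\sin\varphi$ by \eqref{x-^-2-+-y-^-2-norm-horiz-xi-^-2-spherical-coordinates}, and $u=f(\theta,\varphi)$, since $\rho^{\alpha}=1$ there.

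For the numerator these three facts give the integrand $\frac{\sin\varphi(\partial_\theta f+2\partial_\varphi f)^2}{\sqrt{\sin\varphi}}\cdot\sqrt{\sin\varphi}=\sin\varphi(\partial_\theta f+2\partial_\varphi f)^2$, so the $\sqrt{\sin\varphi}$ coming from the weight cancels the one coming from the surface element and leaves exactly $\int_{\Omega_{\theta,\varphi}}\sin\varphi(\partial_\theta f+2\partial_\varphi f)^2\,d\theta\,d\varphi$. For the denominator, $u^2\sqrt{x^2+y^2}\,d\sigma_{\mathbb{H}^1}=f^2\cdot\sqrt{\sin\varphi}\cdot\sqrt{\sin\varphi}\,d\theta\,d\varphi=\sin\varphi\,f^2\,d\theta\,d\varphi$, which integrates to $\int_{\Omega_{\theta,\varphi}}\sin\varphi\,f^2\,d\theta\,d\varphi$. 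Taking the ratio yields the claimed general formula. The only point demanding care is matching the powers of $\sin\varphi$ produced by the weight $1/\sqrt{x^2+y^2}$, by $\left|\nabla_{\mathbb{H}^1}^{\varphi}u\right|^2$, and by the Jacobian hidden in $d\sigma_{\mathbb{H}^1}$; this is the main thing to get right, but it is bookkeeping rather than a genuine obstacle.

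For the special case $u=x$, I would write $x=\rho\sqrt{\sin\varphi}\cos\theta$, so that $\alpha=1$ and $f=\sqrt{\sin\varphi}\cos\theta$, and note that on the sphere the quotient only sees $\{u>0\}$, which corresponds to $\Omega_{\theta,\varphi}=(0,\pi)\times(-\frac{\pi}{2},\frac{\pi}{2})$ in the $(\varphi,\theta)$ order. Plugging this $f$ into the general identity, the numerator integrand becomes $\cos^2(\theta+\varphi)$ by the computation already recorded in \eqref{phi-component-of-the-heisenberg-gradient-of-u-square-norm-on-the-boundary-of-unitary-koranyi-ball-1}, while the denominator integrand becomes $\sin\varphi\cdot\sin\varphi\cos^2\theta=\sin^2\varphi\cos^2\theta$. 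Both double integrals have been evaluated earlier: $\int_0^\pi\int_{-\pi/2}^{\pi/2}\cos^2(\theta+\varphi)\,d\theta\,d\varphi=\frac{\pi^2}{2}$ in \eqref{quotient-lambda-phi-x-spherical-coordinates-numerator-1}, and $\int_0^\pi\sin^2\varphi\,d\varphi\int_{-\pi/2}^{\pi/2}\cos^2\theta\,d\theta=\frac{\pi^2}{4}$ in \eqref{quotient-lambda-phi-x-spherical-coordinates-denominator-1}. Hence the ratio equals $\frac{\pi^2/2}{\pi^2/4}=2$, the asserted value, in agreement with Lemma \ref{lemma-eigenvalues-depending-on-theta-phi-and-phi}.
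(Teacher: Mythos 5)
Your proposal is correct and follows essentially the same route as the paper, which simply cites \eqref{phi-component-of-the-heisenberg-gradient-of-v-square-norm-on-the-boundary-of-unitary-koranyi-ball-1}, the surface element identity \eqref{surface-element-heisenberg-1}, and Lemma \ref{lemma-eigenvalues-depending-on-theta-phi-and-phi}; you merely make the $\sqrt{\sin\varphi}$ bookkeeping explicit. Note that you also silently correct a slip in the paper's proof, which states $\sqrt{x^{2}+y^{2}}=\sin\varphi$ when the parametrization \eqref{polar-coordinates} with $\rho=1$ actually gives $x^{2}+y^{2}=\sin\varphi$, i.e. $\sqrt{x^{2}+y^{2}}=\sqrt{\sin\varphi}$, which is what your cancellation uses.
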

	\begin{proof} The proof immediately follows from \eqref{phi-component-of-the-heisenberg-gradient-of-v-square-norm-on-the-boundary-of-unitary-koranyi-ball-1}, the fact that $\sqrt{x^2+y^2}=\sin\varphi$ using \eqref{polar-coordinates} and Lemma \ref{lemma-eigenvalues-depending-on-theta-phi-and-phi}.
\end{proof}

\end{document}